\numberwithin{equation}{section}
\theoremstyle{plain}
\newtheorem{theorem}{Theorem}[section]
\newtheorem{lemma}[theorem]{Lemma}
\newtheorem{prop}[theorem]{Proposition}
\newtheorem{corollary}[theorem]{Corollary}
\theoremstyle{remark}
\newtheorem{remark}[theorem]{Remark}
\newtheorem{defn}[theorem]{Definition}
\newenvironment{taggedassump}[1]
 {\assumptionT}
 {\endassumptionT}
\newtheorem{example}[theorem]{Example}
\newcommand{\beq}{\begin{equation}}
\newcommand{\eeq}{\end{equation}}
\def\beqs#1\eeqs{%
    \begin{equation}\begin{split}%
    #1%
    \end{split}\end{equation}%
}
\def\beqsn#1\eeqsn{%
    \begin{equation*}\begin{split}%
    #1%
    \end{split}\end{equation*}%
}
\def\beqsj #1\eeqsj
\DeclareMathOperator*{\argmin}{argmin}
\newcommand{\af}[1]{a_{#1}}
\newcommand{\bhat}{\hat b}
\newcommand{\bpi}{\bar b}
\newcommand{\calE}{\mathcal E}
\newcommand{\cgro}{c_{0}}
\newcommand{\cf}[1]{b_{#1}}
\newcommand{\corlap}{\lap_S}
\newcommand{\del}[1]{\epsilon_3^2 +\efour{#1}^2}
\newcommand{\delodd}[1]{\Es{\s}(\epsilon_3^2+\efour{\s}^2)(\epsilon_3+\efour{\s}^2)+ d^{-1/2}\efive{\s}^3}
\newcommand{\Dir}{\mathrm{Dir}}
\newcommand{\E}{\mathbb E\,}
\newcommand{\ethree}[1]{\epsilon_{3}(#1)}
\newcommand{\efour}[1]{\epsilon_{4}(#1)}
\newcommand{\efive}[1]{\epsilon_{5}(#1)}
\newcommand{\ek}[2]{\epsilon_{#1}(#2)}
\newcommand{\Es}[1]{E_{#1}}
\renewcommand{\H}{{\mathbf{H}}}
\newcommand{\I}{{\mathcal I}}
\newcommand{\iid}{\stackrel{\mathrm{i.i.d.}}{\sim}}
\newcommand{\ind}{\mathbbm{1}}
\renewcommand{\l}{\left}
\newcommand{\la}{\langle}
\newcommand{\lla}{\l\la}
\newcommand{\lap}{\hat\gamma}
\newcommand{\les}{\lesssim}
\newcommand{\Lhat}{S}
\newcommand{\Lhatb}{\delta\bhat}
\newcommand{\Lhatm}{\delta\mhat}
\newcommand{\Lhatth}{\delta\hat\theta}
\newcommand{\LTV}{L_{\mathrm{TV}}}
\newcommand{\M}{M}
\newcommand{\mhat}{\hat x}
\newcommand{\mhatS}{\mhat_{\Lhat}}
\newcommand{\MLE}{\hat b}
\newcommand{\mpi}{\bar x}
\newcommand{\nb}{p}
\newcommand{\Nmin}{{\nb_{\mathrm{min}}}}
\newcommand{\PP}{\mathbb P}
\newcommand{\prior}{\pi_0}
\newcommand{\Q}{{\mathbf{Q}}}
\renewcommand{\r}{\right}
\newcommand{\R}{\mathbb R}
\newcommand{\ra}{\rangle}
\newcommand{\rra}{\r\ra}
\newcommand{\RTV}{R_{\mathrm{TV}}}
\newcommand{\rfour}{r_4}
\newcommand{\s}{s}
\newcommand{\Sigpi}{\Sigma_\pi}
\newcommand{\sst}{\s^*}
\newcommand{\T}{{\intercal}}
\newcommand{\taus}[1]{\tau_{#1}}
\newcommand{\tilep}{\bar\epsilon_3}
\newcommand{\Unif}{{\mathrm{Unif}}}
\newcommand{\Var}{{\mathrm{Var}}}
\newcommand{\ground}{\beta}
\newcommand{\KL}[2]{\mathrm{KL}(\,#1\;||\;#2)}
\newcommand{\e}{\,\mathrm{exp}}
\newcommand{\U}{{\mathcal U}}
\newcommand{\TV}{\mathrm{TV}}
\title{The Laplace approximation accuracy in high dimensions: a refined analysis and new skew adjustment}
\author{
 Anya Katsevich\thanks{This work was supported by NSF grant DMS-2202963}\\
  \texttt{akatsevi@mit.edu}
 }
\begin{document}

\maketitle

\begin{abstract}
In Bayesian inference, making deductions about a parameter of interest requires one to sample from or compute an integral against a posterior distribution. A popular method to make these computations cheaper in high-dimensional settings is to replace the posterior with its Laplace approximation (LA), a Gaussian distribution. In this work, we derive a leading order decomposition of the LA error, a powerful technique to analyze the accuracy of the approximation more precisely than was possible before. It allows us to derive the first ever skew correction to the LA which provably improves its accuracy by an order of magnitude in the high-dimensional regime. Our approach also enables us to prove both tighter upper bounds on the standard LA and the first ever lower bounds in high dimensions. In particular, we prove that $d^2\ll n$ is in general necessary for accuracy of the LA, where $d$ is dimension and $n$ is sample size. Finally, we apply our theory in two example models: a Dirichlet posterior arising from a multinomial observation, and logistic regression with Gaussian design. In the latter setting, we prove high probability bounds on the accuracy of the LA and skew-corrected LA in powers of $d/\sqrt n$ alone.
\end{abstract}

\section{Introduction}

Developing cheap and accurate computational techniques for Bayesian inference is an important goal, as Bayesian inference tasks can be very computationally intensive. These tasks include constructing posterior credible sets, computing the posterior mean and covariance, and computing the posterior predictive distribution of a new data point. Computing all of these quantities involves either sampling from the posterior $\pi$, or taking integrals $\int gd\pi$ against the posterior. When the dimensionality of the parameter of interest is large, these tasks can be very expensive.

The most common approach in Bayesian inference to both generate samples from, and integrate against $\pi$ is Markov Chain Monte Carlo (MCMC)~\cite{bolstad2009understanding}. In principle, MCMC schemes can be made arbitrarily accurate by tuning parameters, e.g.  by extending the simulation time of the Markov chain or by decreasing step sizes. However, MCMC is computationally intensive in high dimensions and it also has other disadvantages. For example, it can be difficult to identify clear-cut stopping criteria for the algorithm~\cite{mcmc-convergence}.

Another popular approach is to find a simple distribution $\lap$ which approximates $\pi$, and to use this distribution as a proxy for $\pi$ to do all of one's inference tasks. In the ideal scenario, many integrals against $\lap$ are computable in closed form, and it is cheap to sample from $\lap$. The idea of using an approximation $\lap$ to $\pi$ is at the heart of approximate Bayesian inference methods such as variational inference~\cite{blei2017variational,wainwright2008graphical}, expectation propagation~\cite{minka2001expectation}, and the Laplace approximation, a Gaussian approximation first introduced in the Bayesian inference context by~\cite{tierney1986accurate}.

The  Laplace approximation (LA) exploits large sample properties of the posterior which have been established in the \emph{Bernstein von-Mises} theorem (BvM). The BvM is a fundamental result which in its classical form states that if the model is well-specified, then the posterior contracts around the ground truth parameter and becomes asymptotically normal in the large sample limit~\cite[Section 10.2]{vaart_1998}. That the posterior contracts as sample size increases is intuitively clear, since uncertainty in the parameter value diminishes as samples accumulate. From this concentration result, an informal mathematical argument also clearly explains why the posterior is asymptotically Gaussian. We discuss this below. Extensions of the BvM in the misspecified case and for generalized posteriors have also been obtained; see~\cite{kleijn2012bernstein} and~\cite{miller2021asymptotic}. Despite the theoretical and philosophical importance of the BvM (it has been used as a justification of Bayesian procedures from the perspective of frequentist inference), this result does not give an implementable Gaussian approximation to the posterior. This is where the LA comes in. 

To explain the LA construction, consider a posterior $\pi$ whose mass concentrates in a small neighborhood of the mode, which we call $\mhat$. When the conditions of the BvM are satisfied, this highest mode should be unique; otherwise, concentration cannot occur. Since most of the mass of $\pi$ is near $\mhat$, we should incur only a small error by replacing the log posterior with its second order Taylor expansion about $\mhat$. This gives rise to the LA, the Gaussian density $\lap$ which is given by
\beq\label{lap-def}
\lap=\mathcal N\l(\mhat,\;\nabla^2V(\mhat)^{-1}\r),\quad \mhat=\arg\min_{x\in\Theta}V(x)
\eeq where $\pi\propto e^{-V}$ is a density on $\Theta\subseteq\R^d$. The LA has proved to be an invaluable tool for Bayesian inference in applications ranging from deep learning~\cite{daxberger2021laplace} to inverse problems~\cite{stadler2012extreme} to variable selection in high-dimensional regression~\cite{barber2016laplace}.

Unlike methods such as MCMC, which can be made arbitrarily accurate, there are no parameters to tune in the LA --- it incurs some fixed approximation error. Quantifying the LA's error as a function of dimension $d$, sample size $n$, and model parameters, is a worthy task given its widespread use. It is also a challenging theoretical endeavor when dimension $d$ is large, and currently a very active research area. Major contributions have been made e.g. by~\cite{bp,spok23,helin2022non}, and we discuss these and other works in more detail below. We also study the LA error in this work.

But arguably, it is even more important to go \emph{beyond} the LA to develop new, more accurate approximations which better capture the complexity of the posterior $\pi$. For example, a known downside of the LA $\lap$ is that it is symmetric about the mode and therefore cannot capture skewness of $\pi$. Instead of constructing an entirely new kind of approximation, a natural idea is to correct the LA in some way to get a higher-order accuracy approximation. Non-rigorous skew normal approximations have been developed in~\cite{zhou2024tractable}, but we are aware of only a single work~\cite{durante2023skewed} that rigorously derives a higher-order accurate LA. We discuss this work in more detail in Section~\ref{subsec:discuss}. However, the corrected LA obtained by~\cite{durante2023skewed} is only shown to be accurate in constant dimension $d$. In modern applications involving very high-dimensional parameters, $d$ cannot be considered constant relative to sample size $n$. So far, no prior work has obtained a higher-accuracy LA which is rigorously justified in high dimensions.

In this work, we develop a powerful technique to analyze the Laplace approximation more precisely than was possible before. This technique leads us to derive the \emph{first ever correction to the LA which provably improves its accuracy by an order of magnitude, in high dimensions}. Our approach allows us to prove error bounds on this corrected LA in terms of a variety of error metrics discussed below. It also enables us to prove both lower bounds and tighter upper bounds on the standard LA. \\

\subsection{Main contributions: higher-order accuracy LA}\label{intro:main}
As stated above, many quantities used in Bayesian inference can be written as integrals $\int gd\pi$ of functions $g$ against the posterior $\pi\propto e^{-V}$. This motivates studying the approximation of posterior integrals. From here, the study of approximation to $\pi$ with respect to integral probability metrics such as total variation (TV) distance follows naturally. We derive the following expansion of $\int gd\pi$:
\beq\label{secord}\medint\int gd\pi = \medint\int gd\lap + \medint\int g\Lhat d\lap + R(g)=\medint\int gd\corlap + R(g),\eeq
 where $\Lhat$ is an explicit cubic polynomial, and $$\corlap:=(1+\Lhat)\lap$$ is our higher-order accurate LA. We now describe our results pertaining to this expansion, as well as the convenient computational features of $\corlap$. 

To do so, we first note that up to constants, the negative log posterior $V$ of $\pi\propto e^{-V}$ is given by the sum of a negative log prior $v_0$ and a negative log likelihood, which we denote $n\ell$. The function $n\ell$ is typically given by the sum of $n$ terms, corresponding to $n$ independent samples. Alternatively, $n\ell$ could be the data misfit function in Bayesian inverse problems, with $n$ denoting inverse noise level. Due to this structure, we write $V=nv$, where $v=\ell+n^{-1}v_0$.

 
 \paragraph*{1. Error bounds for $\bm\corlap$} We prove that if $g$ has log linear growth at infinity and is appropriately centered and scaled, then
 \begin{align}
\l|\medint\int gd\pi - \medint\int gd\corlap\r| &\les b_4(v)\l(\frac{d}{\sqrt n}\r)^2,\label{Del-g-2}\\
  \l|\medint\int g\Lhat d\lap\r| &\les b_3(v)\frac{d}{\sqrt n},\label{L-g}\end{align} and therefore
  \begin{align}
\l|\medint\int gd\pi - \medint\int gd\lap\r| &\les b_3(v)\frac{d}{\sqrt n}+b_4(v)\l(\frac{d}{\sqrt n}\r)^2,\label{Del-g-1}
\end{align} up to exponentially small terms. Here, ``$\les$'' means up to an absolute constant. See Theorem~\ref{thm:Vgen} for the precise statement. We refer to $b_3$ and $b_4$ as \emph{model-dependent factors}, while the terms $d/\sqrt n$ and $(d/\sqrt n)^2$ are the~\emph{universal} factors in our error bounds. The model-dependent factors are given in terms of third and fourth derivatives of $v$ in a neighborhood of the mode $\mhat$. 

Regarding the error bound~\eqref{Del-g-1} for the original LA, we note that similar bounds in the literature have been shown for a much more limited class of functions, or for a specific choice of model, or with suboptimal dimension dependence. We cite and discuss this literature in Section~\ref{intro:further}. For now, we wish to emphasize~\eqref{Del-g-2}, and the superior performance of $\corlap$ compared to that of $\lap$ in~\eqref{Del-g-1}. In fact, when $g$ is odd about $\mhat$, we do some extra work to show that $\corlap$ yields an even better approximation. Namely, we show in Theorem~\ref{thm:odd} that up to exponentially negligible terms, it holds
\beq
\l|\medint\int gd\pi -\medint \int gd\corlap\r| \les b_5(v)\l(\frac{d}{\sqrt n}\r)^3,\label{Del-g-3}\eeq where $g$ has again been normalized. The term $b_5$ involves the fifth derivative of $v$.

Although our formulation of the results suggests that $d/\sqrt n$ is a fundamental unit of error, we caution against this interpretation. This is because the model-dependent terms can in fact depend on dimension. We discuss this further below.

\paragraph*{2. Important special cases: mean and TV distance} We use~\eqref{Del-g-2} and~\eqref{Del-g-3} to prove bounds on the accuracy of $\corlap$ in approximating the mean, and in approximating set probabilities. Namely, we show that
\beqs\label{TV-mean}
\sup_{A\in\mathcal B(\R^d)}|\pi(A)-\corlap(A)|&\les b_4(v)\l(\frac{d}{\sqrt n}\r)^2,\\
 \l\|\medint\int xd\pi(x) -\medint\int xd\corlap(x)\r\|_{H_V} &\les  b_5(v)\l(\frac{d}{\sqrt n}\r)^3,
\eeqs
where $\|\cdot\|_{H_V}$ is the Mahalanobis norm, with weight matrix $H_V=\nabla^2V(\mhat)=n\nabla^2v(\mhat)$. This weighted norm should be thought of as a kind of standardization. See Corollaries~\ref{corr:corTV} and~\ref{thm:Vmean} for more precise statements. The above approximation accuracy is superior to that of $\lap$. Indeed, the work~\cite{bp} has shown that both $\TV(\pi,\lap)$ and the (weighted) distance between the means of $\pi$ and $\lap$, are bounded above by $d/\sqrt n$ up to model-dependent factors. Note that the mean of $\lap$ is simply $\mhat$, the mode of $\pi$, also known as the MAP (maximum aposteriori).

\paragraph*{3. Computability of integrals against $\bm\corlap$} A higher-order accuracy approximation is only useful if it is simple to use. Integrals $\int gd\corlap=\int g(1+\Lhat)d\lap$ can always be computed using Monte Carlo, by drawing i.i.d. samples $X_i\sim\lap$, and averaging $g(X_i)(1+\Lhat(X_i))$. Even more simply, the integrals of \emph{polynomials} $g$ against $\corlap$ are known in closed form. This is because Gaussian moments are known in closed form, and $g(1+\Lhat)$ is a polynomial, since both $g$ and $\Lhat$ are polynomials. Functions $g(x)=e^{u^\T x}q(x)$ for a vector $u$ and polynomial $q$ can also be integrated against $\corlap$ in closed form. Having a closed form expression is extremely valuable, because it means we do not have to resort to sampling from $\lap$ to compute the integral, and therefore we do not incur the additional error that arises from Monte Carlo integral approximations. 

As an important application of this, consider the function $g(x)=x$, integrating which yields the mean. We derive the following simple formula for the mean of $\corlap$:
\beq\label{mhatcorrect}\int xd\corlap(x) =\mhat -\frac12H_V^{-1}\lla\nabla^3V(\mhat),H_V^{-1}\rra,\eeq where $H_V=\nabla^2V(\mhat)$. As we see from~\eqref{TV-mean} and the subsequent discussion, computing this straightforward expression instead of using $\mhat$ reduces the mean approximation error by \emph{two orders of magnitude}, from $\mathcal O(d/\sqrt n)$ to $\mathcal O((d/\sqrt n)^3)$. 

The fact that $\corlap$ provides such an excellent approximation to the mean has prompted us to interpret this measure as a \emph{skew correction} to $\lap$; hence the letter `S'. Indeed, a crucial shortcoming of the LA $\lap$ is that it is symmetric about its mode $\mhat$, while $\pi$ itself may be skewed. The skew shifts the mean away from the mode. Thus the LA to the mean is poor when $\pi$ is skewed, but the corrected approximation~\eqref{mhatcorrect} compensates by shifting the mode in the direction of the skew. 

\subsection{Further contributions: refined bounds on uncorrected LA}\label{intro:further}
There is a natural tradeoff between accuracy and computational complexity: although the corrected LA improves on the standard LA in terms of accuracy, the latter is simpler to use. Indeed, the standard LA is so widespread in practice due to this simplicity. It is therefore extremely important to precisely characterize the error incurred by the standard LA. As we will see below, our expansion $\int gd\pi\approx \int gd\lap + \int g\Lhat d\lap$ allows us to do exactly this. 

At this point, we note that we limit our error analysis to the study of the following types of errors: 1) $\Delta_g(\lap)=\int gd\pi - \int gd\lap$ for general $g$, 2) the TV distance between $\pi$ and $\lap$, 3) the mean approximation error, and 4) the covariance approximation error. There are many other notions of distance, including various integral probability metrics, other $f$-divergences, and Wasserstein distance, which we omit for brevity. In the rest of the introduction, we primarily focus on errors 1) and 2), both in our literature review and discussion of our own work.



Before stating our results, we first review prior works on the uncorrected LA in high dimensions. Typically, bounds on the LA error stated in such works take the form $\text{Err}(\pi,\lap)\leq c(v)d^\alpha/n^\beta$. As above, we call $c(v)$ the ``model-dependent'' factor in the error, and $d^\alpha/n^\beta$ the ``universal'' factor. The fact that the model enters into the error only through the function $v$ is natural, since the posterior depends on the model (i.e. on the likelihood, prior, and data) only through $v$. It is important to note that the coefficient $c(v)$ could itself depend on $d$, affecting the overall dimension dependence of the error. For example, consider a Bayesian inverse problem in which $d$ is the discretization level of an underlying infinite-dimensional forward operator. The log likelihood and therefore $v$ clearly depend on $d$ in an essential way through the discretized forward operator.

The works~\cite{helin2022non, dehaene2019deterministic,spok23, fischer2022normal} all bound the Laplace error in high dimension. For simplicity we only report the TV bounds in these works. 
Each of the first three works obtain (under slightly varying conditions) that the TV distance is bounded as $\TV(\pi,\lap)\leq c(v)\sqrt{d^3/n}$, where $c(v)$ involves the operator norm of the third order derivative tensor $\nabla^3v$. (To be precise,~\cite{spok23} actually states bounds in terms of an effective dimension $d_{\text{eff}}\leq d$. We discuss this work in Section~\ref{subsub:tight}.) Meanwhile,~\cite{fischer2022normal} considers the setting of exponential families, and obtains bounds which scale with a higher power of $d$. 

The requirement $d^3\ll n$ (up to model dependence) was thought to be necessary for asymptotic normality of the posterior. Indeed, in addition to the above works on the Laplace approximation, a long line of works on the BvM in increasing dimension required $d^3\ll n$, e.g.~\cite{ghosal2000,spokoiny2013bernstein,boucheron2009discrete,belloni2014posterior, lu2017bernstein}. 

Remarkably, however,~\cite{bp} recently tightened the dimension dependence of the Laplace approximation to \beq\label{TV-bp}\TV(\pi, \lap)\leq c(v)(d^2/n)^{1/2},\qquad c(v)=\sup_{x\in\U}\|\nabla^3v(x)\|_{H_v},\eeq where $\U$ is a small neighborhood of $\mhat$ and $\|\cdot\|_{H_v}$ is an operator norm weighted by the matrix $H_v=\nabla^2v(\mhat)$. 
We note that some manipulations are required to bring the TV bound presented in~\cite{bp} into the form~\eqref{TV-bp}. This is explained in the work~\cite{katsBVM}, which improves on the TV bound of~\cite{bp} in the case that $v$ grows at least linearly at infinity. See Section~\ref{subsub:tight} for more on this.\\

\noindent We now discuss our own contributions to the understanding of the uncorrected LA.
\paragraph*{1. Bounds on $\mathbf{\Delta_g(\bm\lap):=\int gd\bm\pi-\int gd\bm\lap}$}
We have already presented our bound on $\Delta_g(\lap)$ in~\eqref{Del-g-1}. As noted in Section~\ref{intro:main}, previous bounds in the literature on $\Delta_g(\lap)$ for general $g$ in high dimensions are limited and suboptimal. The work~\cite{bp} bounds $\Delta_g(\lap)$ in dimension $d=1$, and~\cite{fischer2022normal} bounds $\Delta_g(\lap)$ in the case that the data distribution lies in an exponential family. The bounds obtained are very suboptimal in their dimension dependence, as noted in their Remark 3. The work~\cite{lapinski2019multivariate} has proved bounds in arbitrary dimension on a quantity related to but nevertheless different from $\Delta_g(\lap)$.\\ 

Our other contributions stem from a refined understanding of the TV error and mean approximation error of the uncorrected LA which can be derived from the expansion~\eqref{secord}. For brevity, we only describe the results for the TV error in the introduction. Similar results on the mean approximation error are described in more detail in Sections~\ref{subsub:mean} and~\ref{subsec:lb}.

Taking the supremum over $\|g\|_\infty\leq1$ in~\eqref{secord}, and using the bounds~\eqref{Del-g-2} and~\eqref{L-g}, we obtain the following TV decomposition and upper bounds:
\beq\label{LRTV}
\begin{gathered}
\TV(\pi,\lap)=\LTV+\RTV,\qquad \LTV = \frac12\medint\int|\Lhat|d\lap,\\
\LTV\leq \|\Lhat\|_{L^2(\lap)}\leq\, b_3(v)\,\frac{d}{\sqrt n},\qquad |\RTV|\les\, b_4(v)\,\l(\frac{d}{\sqrt n}\r)^2.
\end{gathered}\eeq 
Because our upper bound on $\RTV$ is an order of magnitude smaller than our upper bound on $\LTV$, we informally call $\LTV$ the ``leading order'' term and $\RTV$ the ``remainder term''. (Similarly, we call $L(g):=\int g\Lhat d\lap$ and $R(g)$ the leading order and remainder terms of~\eqref{secord}, respectively.) This is an abuse of terminology, since we have not proved $\RTV\ll \LTV$.

The decomposition~\eqref{LRTV} leads us to the following three implications.

\paragraph*{2. Lower bounds} The TV decomposition~\eqref{LRTV} gives a general framework to prove lower bounds; namely, we need only bound $\LTV$ from below and $\RTV$ from above. We have already provided a formula for the upper bound on $\RTV$. Bounding $\LTV$ from below, while still challenging, is vastly simpler than directly bounding the TV distance from below. We implement this strategy to prove a lower bound in Section~\ref{subsec:lb}. Namely, we give an example of a function $v$ for which the leading order term $\LTV$ in the TV error is bounded from \emph{below} by $d/\sqrt n$ and the remainder $\RTV$ is bounded above by $(d/\sqrt n)^2$. Together, the lower and upper bound imply $\TV(\pi,\lap)\gtrsim d/\sqrt n$. This definitively resolves the question of the tight dimension dependence of the LA: we have shown that the condition $d^2\ll n$ \emph{cannot in general be relaxed}. 
 We also prove another lower bound for a multinomial model; see Sections~\ref{intro:pmf} and~\ref{sec:pmf} for more on this.

\paragraph*{3. Tighter upper bounds} Although we have shown that $d^2\ll n$ is in general necessary for TV accuracy of the LA, this does not mean the TV bound~\eqref{TV-bp} of~\cite{bp} is tight. Indeed, if the model-dependent factor $c(v)$ is too coarse, it could contribute additional powers of $d$. Here, we use our decomposition to bound the TV distance via $|\LTV|+|\RTV|$, which allows us to prove tighter upper bounds. We can do this because the remainder $\RTV$ is higher-order than previously derived bounds, and the leading order term $\LTV$ is explicit, and can therefore be much more precisely controlled. Our $b_3(v)$ is given by $b_3(v)=\|\nabla^3v(\mhat)\|_{H_v}$. Therefore, the second bound on $\LTV$ in~\eqref{LRTV} is already tighter than $c(v)$ from~\eqref{TV-bp}, since we do not need to take the supremum over $x\in\U$. But in fact, we can sometimes do even better. Namely, using the tighter inequality $\LTV\leq \|\Lhat\|_{L^2(\lap)}$, we can improve the overall bound by \emph{an order of magnitude}, as we show in Section~\ref{sec:pmf}; see also the discussion in Section~\ref{subsub:tight}.
\paragraph*{4. Computability of the TV error} Computing an upper bound on the TV error $\TV(\pi,\lap)$ of the Laplace approximation is important in practice, and a challenging problem in high dimensions. All the bounds we are aware of either have very suboptimal dimension dependence or else involve an operator norm as in the bound~\eqref{TV-bp} of~\cite{bp}. The computational complexity of computing an operator norm scales exponentially with dimension. See Section~\ref{subsub:compute} for references and more details. Here, we take an important step towards solving this problem by providing a bound on the leading order term $\LTV$ which is computable in polynomial time.

\subsection{Illustration with a multinomial model}\label{intro:pmf}
In Section~\ref{sec:pmf}, we consider a tractable example which clearly demonstrates the improved understanding of the uncorrected LA which can be gleaned from the leading order decomposition~\eqref{secord}. Namely, we take a multinomial likelihood and conjugate Dirichlet prior, so that the posterior is also Dirichlet, which is relatively simple to analyze. The example has the convenient and rare property that many quantities can be explicitly computed or at least very precisely bounded. This applies both to true quantities such as the posterior mean, and quantities arising in our upper bounds, such as operator norms of the derivatives of $v$.

We first study the decomposition $\TV(\pi,\lap)=\LTV+\RTV$ of the TV error from~\eqref{LRTV}. We explicitly work out the coefficients appearing in our bounds on $\LTV$ and $\RTV$, in terms of the probability mass function of empirically observed frequencies. The resulting TV bound is an order of magnitude smaller than that of~\cite{bp}, due in large part to the fine control on $\LTV$. This demonstrates point 3 of Section~\ref{intro:further} above. We then prove a lower bound on the TV distance, which scales as $d/\sqrt n$ when the observed frequencies are not too close to uniform. The lower bound cleanly demonstrates the role of $d$, $n$, and model specific factors standing in the way of accuracy of the LA. Moreover, in certain regimes the lower bound matches the upper bound on $\LTV$. 

We then study the decomposition of the mean $\bar x:=\int xd\pi(x)$ obtained by substituting $g(x)=x$ into~\eqref{secord}. We can write $\mpi = \mhat + \Lhatm + R_{\mpi}$, where $R_{\mpi}$ is defined implicitly by this equation, and $\Lhatm = \int x\Lhat(x)d\lap(x)$. In this example all four vectors $\mpi,\mhat,\Lhatm,R_{\mpi}$ are known exactly. We compute the true norms of the vectors $\Lhatm$ and $R_{\mpi}$ and compare them to our bounds. We find that our bound on $\Lhatm$ is tight, while our bound on $R_{\mpi}$ is very coarse.

Another nice feature of this example is that both our upper and lower bounds, as well as the true values of various quantities, are especially illuminating. They convey the essence of the way in which the posterior deviates from its Gaussian Laplace approximation.

\subsection{Application to logistic regression}
We next apply our theory to the posterior in a logistic regression model with a flat or Gaussian prior on the coefficient vector. We demonstrate the power of our leading order decomposition using a combination of theory and numerical simulation.

First, we work out the explicit formulas for the skew-corrected LA $\corlap$, the mean correction~\eqref{mhatcorrect}, and the polynomial-time computable upper bound on $\LTV$, in terms of the data and the known quantities in the model, such as the sigmoid function. We then numerically demonstrate the superior performance of $\corlap$ over $\lap$ in approximating set probabilities under $\pi$ and the mean of $\pi$.  Next, we prove high-probability versions of our upper bounds on the uncorrected and corrected LA errors which are explicit up to absolute constants. Finally, we combine a rigorous upper bound on $\RTV$ with a numerical lower bound on $\LTV$ to show that $\RTV\ll\LTV$ and $\LTV\asymp\TV(\pi,\lap)\gtrsim d/\sqrt n$ with high probability. This semi-numerical argument also justifies using the bound on $\LTV$ as a proxy for a bound on $\TV(\pi,\lap)$.


We now highlight our high probability bounds, and explain the need for such bounds.

\subsubsection{High probability bounds}
In order to apply our theory to obtain bounds on $\int gd\pi-\int gd\lap$ and $\int gd\pi - \int gd\corlap$ for the logistic regression model, we need to quantify the model-dependent factors such as $b_3(v)$ and $b_4(v)$ from~\eqref{Del-g-1},~\eqref{Del-g-2}. Unlike in the multinomial model, this is a very complex task for logistic regression. For example, in the case of a flat prior, computing $b_4(v)$ requires computing
\beq\label{Xi}\sup_{u\neq0}\frac{\frac1n\sum_{i=1}^n\sigma'''(X_i^{\T}\mhat)(X_i^{\T}u)^4}{\l(\frac1n\sum_{i=1}^n\sigma'(X_i^{\T}\mhat)(X_i^{\T}u)^2\r)^{2}},\eeq where the $X_i$ are the feature vectors, $\mhat$ is the posterior mode (MAP), and $\sigma(t)=(1+e^{-t})^{-1}$ is the sigmoid function. It is unlikely that such a quantity can be either efficiently computed or tightly analytically bounded for a given set of observed feature vectors $X_i$. 

However, if we can bound the terms $b_3(v),b_4(v)$ \emph{with high probability}, then we can still obtain useful information from the bounds~\eqref{Del-g-1} and~\eqref{Del-g-2}. We assume that the $X_i$ are drawn i.i.d. from a Gaussian distribution
and prove that there exists an absolute constant $C$ such that with high probability (w.h.p.) over the joint feature-label distribution, all of the model-dependent factors are bounded by $C$. In particular,~\eqref{Xi} is also bounded by $C$ w.h.p. This allows us to derive high-probability bounds depending only on absolute constants, $d$, and $n$. For example, by taking $g(x)=x$ or $g(x)=\ind_A$ in~\eqref{Del-g-1} and~\eqref{Del-g-2}, we show the following bounds all hold w.h.p. (up to an exponentially negligible term):
\beqs\label{logbd}
\TV(\pi,\lap) &\leq C\epsilon,\qquad \TV(\pi,\corlap)\leq C\epsilon^2,\\
\sqrt n\l\|\mpi-\mhat\r\|&\leq C\epsilon,\qquad \sqrt n\l\|\mpi-\l(\mhat+\Lhatm\r)\r\|\leq C\epsilon^{3}.
\eeqs 
Here, $\epsilon=d/\sqrt n$ and $\mhat+\Lhatm$ is the corrected mean approximation $\int xd\corlap(x)$ from~\eqref{mhatcorrect}. See Corollary~\ref{corr:logreg} for the formal statement. The bounds~\eqref{logbd} are extremely useful because they allow us to evaluate the typical order of magnitude of the approximation error, and in particular, to conclude that the model-dependent terms \emph{do not contribute a power of $d$}. This information is very hard to obtain by trying to directly bound the $v$-dependent terms for a particular draw of the data, since doing so would require solving the optimization problem~\eqref{Xi}.\\

We note that the high-probability bounds~\eqref{logbd} are similar to the BvM, in that both treat the posterior as a random object, due to the data being randomly drawn from some distribution. In fact,~\cite{dehaene2019deterministic} uses the term BvM to refer to high-probability Laplace bounds. See our earlier work~\cite{katsBVM} for a (true) BvM in the regime $d^2\ll n$ for the two examples considered here: logistic regression and the multinomial model.

\subsection{Other related work}\label{intro:other}
The first natural tool at our disposal to analyze the LA is the Laplace \emph{method}, which the LA is based on. The Laplace method gives a whole asymptotic expansion of integrals of the form $\int ge^{-nv}$; see~\cite[Chapter 9.5]{wongbook}. However, it does not immediately yield a bound on e.g. the TV distance between $\pi\propto e^{-nv}$ and its Laplace approximation. See~\cite{schillings2020convergence} for a thorough summary of the limitations of using the Laplace method to quantify the LA error. This work is the first to bound the TV and Hellinger errors of the LA. The authors show that $\TV(\pi,\hat\gamma),H(\pi,\hat\gamma)\sim n^{-1/2}$, but consider dimension to be constant. 

Another work related to ours, but in constant dimension, is that of~\cite{bilodeau2023tightness}. There, the authors proves lower bounds on the error of the Laplace approximation to the normalizing constant for a few illustrative statistical models.

Our decomposition~\eqref{secord} is akin to a first order asymptotic expansion. Several works have derived full-blown asymptotic expansions of posterior distributions, or of integrals agains a posterior. Note that this is different from the expansion provided by the Laplace method due to the posterior normalization. In this vein, a similar work to ours in one dimension is~\cite{johnson1970asymptotic}. There, the author derives an expansion of the posterior cdf in powers of $n^{-1/2}$, as well as an expansion of posterior moments. The coefficients of the expansion are given in terms of derivatives of the log likelihood and log prior. See also~\cite{ghosh1982expansions} and~\cite{bertail2002johnson} for slight variations of these results.

Expansions of posterior integrals in high dimensions are considered in~\cite{shun1995laplace}. There, the authors derive formal series expansions of the normalizing constant $\int_{\R^d} e^{-nv(x)}dx$, where the coefficients again depend on derivatives of the log posterior. 

We mention that Edgeworth-type expansions of posterior integrals have also been obtained; see~\cite{weng2010bayesian,kolassa2020validity}. The coefficients in these expansions are expressed in terms of moments or cumulants of the posterior. Therefore, applying the expansion requires the ability to integrate certain functions against the posterior. 

Finally, we note that the most basic assumption in the analysis of the LA is that there is a unique global minimizer of $v$. The work~\cite{hasenpflug2022wasserstein} goes beyond this to study the convergence of $\pi\propto e^{-nv}$ to its limiting measure in the case of many (potentially infinite) global minima.

\paragraph*{Organization}The rest of the paper is organized as follows. In Section~\ref{sec:V}, we state our assumptions and main results, centering around the decomposition~\eqref{secord}. Section~\ref{sec:discuss} contains further discussion of our results. Section~\ref{sec:pmf} studies the multinomial model, and Section~\ref{sec:log} studies the logistic regression model. Finally in Section~\ref{sec:overview}, we outline the proof of the decomposition. Deferred proofs can be found in the Appendix.\\

\paragraph*{Notation} The notation $a\les b$ means there is an absolute constant $C>0$ such that $a\leq Cb$. The letter $C$ always denotes an absolute constant. The symbol $\ind$ denotes the indicator function. A tensor $T$ of order $k$ is an array $T=(T_{i_1i_2\dots i_k})_{i_1,\dots,i_k=1}^d$. For two order $k$ tensors $T$ and $S$ we let $\la T, S\ra$ be the entrywise inner product. We say $T$ is symmetric if $T_{i_1\dots i_k}= T_{j_1\dots j_k}$, for all permutations $j_1\dots j_k$ of $i_1\dots i_k$. 

Let $H$ be a symmetric positive definite matrix. For a vector $x\in\R^d$, we let $\|x\|_H$ denote $\|x\|_H = \sqrt{x^\T Hx}$.  For an order $k\geq2$ symmetric tensor $T$, we define the $H$-weighted operator norm of $T$ to be
\beq\label{TH}\|T\|_H:=\sup_{\|u\|_H=1}\la T, u^{\otimes k}\ra = \sup_{\|u\|_H=1}\sum_{i_1,\dots,i_k=1}^dT_{i_1i_2\dots i_k}u_{i_1}u_{i_2}\dots u_{i_k}\eeq  By Theorem 2.1 of~\cite{symmtens}, for symmetric tensors, the definition~\eqref{TH} coincides with the standard definition of operator norm:
$$\sup_{\|u_1\|_H=\dots=\|u_k\|_H=1} \la T, u_1\otimes\dots\otimes u_k\ra = \|T\|_H=\sup_{\|u\|_H=1}\la T, u^{\otimes k}\ra.$$ 
When $H=I_d$, the norm $\|T\|_{I_d}$ is the regular operator norm, and in this case we omit the subscript. For a symmetric, order 3 tensor $T$ and a symmetric matrix $A$, we let $\la T, A\ra\in\R^d$ be the vector with coordinates
\beq\label{laTA}
\la T, A\ra_i=\sum_{j,k=1}^dT_{ijk}A_{jk},\quad i=1,\dots,d.
\eeq

\section{Main Results}\label{sec:V}
In Section~\ref{assumptions}, we discuss the form of the log posterior $V=nv$ in Bayesian inference and give some concrete examples. We then state our assumptions on $v$, $n$, and $d$. In Section~\ref{sec:V:epsilon}, we introduce the key quantities in terms of which our bounds are stated, and which should be small for these bounds to be useful. In Section~\ref{sec:V:results}, we state our main results on the error of the skew-corrected LA, and the leading order decomposition of the uncorrected LA error. In Section~\ref{subsec:lb}, we present our lower bounds. Omitted proofs from this section can be found in Appendix~\ref{app:main}.

\subsection{Setting and assumptions on the potential}\label{assumptions}We consider a general setting in Bayesian inference that includes posteriors and generalized posteriors more broadly; this perspective is similar to that of~\cite{bp} on the Laplace approximation, and~\cite{miller2021asymptotic} on the BvM. Namely, let
$$\pi(x)\propto e^{-n\ell(x)}\prior(x),\quad x\in\Theta\subset\R^d.$$ Here, $\prior$ is a \emph{prior} on the parameter of interest $x$, and $L:=e^{-n\ell}$ is a \emph{generalized likelihood}. We assume without loss of generality that the support of $\prior$ is $\Theta$; otherwise, redefine $\Theta$ to be its intersection with the support of $\prior$. Hence for some function $v_0$, we can write $\prior$ as $\prior(x)\propto e^{-v_0(x)}$ for all $x\in\Theta$. Correspondingly, $\pi$ can be written as $\pi\propto e^{-n\ell(x)-v_0(x)}=e^{-nv(x)}$, where $$v(x)=\ell(x)+\frac1nv_0(x),\quad x\in\Theta.$$ We also let $V(x)=nv(x)=n\ell(x)+v_0(x)$. Of course, the accuracy of the Laplace approximation to $\pi\propto e^{-V}$ does not depend on how $V$ is decomposed, and none of $n,v,\ell, v_0$ is uniquely determined by the equation $V=nv=n\ell + v_0$. In line with this, our assumptions and results can all be restated in terms of the function $V$ alone. However, to strike a balance between interpretability and generality, we have chosen to present them in terms of $n$ and $v$ separately. For further clarity, we begin by discussing the form of $\ell$ in a few typical settings which are covered by our theory.
\begin{example}[Classical setting]\label{ex:standard}
In the classical setting of Bayesian inference, we observe independent data $y_i, i=1,\dots, n$, and model the distribution of $y_i$ as $y_i\sim p_i(\cdot\mid x)$. In the most standard case of i.i.d. data we have $p_i=p$ for all $i$; however, this will not be required in our assumptions. The likelihood is then given by $L(x)=\prod_{i=1}^np_i(y_i\mid x)$. To write $L=e^{-n\ell}$ for some function $\ell:\Theta\to\R$, it must hold that $L$ is positive on $\Theta$. This occurs if for example the support of $p_i(\cdot\mid x)$ is the same for all $x\in\Theta$, and if $y_i\sim p_i(\cdot\mid x^*)$ for some true $x^*$. However this is not the only condition which ensures the likelihood is positive and in general, we do \emph{not} need to assume the model is well-specified. 

In this setting, the function $\ell$ is given by
$$\ell(x) = -\frac1n\log L(x) = -\frac1n\sum_{i=1}^n\log p_i(y_i\mid x),\quad x\in\Theta.$$
\end{example}
\begin{example}[Bayesian inverse problem]
In typical Bayesian inverse problems,  we are given an observation $y$, which is modeled as $y=f(x)+\epsilon$. Here $\epsilon$ is additive noise, $f:\Theta\to\R^k$ is the forward model, and $x$ is the signal we hope to recover. In this setting, $n$ represents the inverse of the noise amplitude. For example, we could have $\epsilon \sim\mathcal N(0, n^{-1}I_k)$, in which case $e^{-n\ell(x)} = e^{-n\|y-f(x)\|^2/2}$, and hence
$$\ell(x)=\frac12\|y-f(x)\|^2.$$ As before, $\prior\propto e^{-v_0}$ is a prior on $x\in\Theta$.\end{example}
\begin{example}[Fractional or tempered posterior]``Fractional'' or ``tempered'' posteriors are posteriors in which the likelihood $L(x)$ has been replaced by $L(x)^\alpha$ for some $\alpha\in(0,1)$. Fractional posteriors have been observed to be robust to misspecification~\cite{miller2018robust}, and their asymptotic properties (e.g. concentration) have been studied in~\cite{bhattacharya2019bayesian}. In this setting, $\ell(x)$ (whatever it may be) is simply replaced by $\alpha\ell(x)$. 
\end{example}
\begin{example}[Gibbs posterior]\label{ex:loss}In Gibbs posteriors, the log likelihood is replaced by a more general loss~\cite{bissiri2016general}. This gives rise to the generalized likelihood $L=e^{-n\ell}$, where $\ell$ is given by
\beq\label{eq:loss}\ell(x) = \frac1n\sum_{i=1}^n\rho_i(x, y_i).\eeq Here, $\rho_i$ is the loss function for the data point $y_i$; for example, $\rho_i(x,y_i)=\rho(x-y_i)$  if $x$ represents a location parameter. A Gibbs posterior may be appropriate in settings where the data-generating process is not known precisely. Alternatively, $\ell(x)$ could play the role of a risk to be minimized on average, in which case sampling $x$ from the generalized posterior $\pi\propto e^{-n\ell}\prior$ may be preferable to sampling from the traditional posterior modeling the data-generating process~\cite{jiang2008gibbs}.
%
%
\end{example}
Now that we have discussed a few typical forms of the function $\ell$, we return to the general setting of a measure $\pi\propto e^{-nv}$, and specify our assumptions on the function $v$. Later in Remark~\ref{rk:stat}, we discuss how to check the assumptions in terms of $\ell$ and $v_0$, in the case in which $v=\ell+n^{-1}v_0$. Note that the function $v$ depends on $d$ since it is defined on $\Theta\subseteq\R^d$. It also clearly depends on $n$. Although it is not explicitly required, our bounds make the most sense if $v$ is $\mathcal O(1)$ relative to $n$, informally speaking. This is satisfied if, for example, $\ell$ is an average of $n$ terms, as in Examples~\ref{ex:standard} and~\ref{ex:loss}, and $v_0$ is fixed with $n$ or $v_0 = n\tilde v_0$, where $\tilde v_0$ is fixed with $n$.

We will discuss the requirement on the size of $d$ relative to $n$ in Section~\ref{sec:V:epsilon}. For now, however, it is useful to keep in mind while reading the below assumptions that we should at least have $d^2\ll n$. 
\begin{taggedassump}{A1}[Regularity and unique strict minimum]\label{assume:1}
We have $v \in C^4(\Theta)$, and $v$ has a unique global minimizer $\mhat\in\Theta$ with $H_v=\nabla^2v(\mhat )\succ 0$.
\end{taggedassump}
For two of our results, we require the stronger assumption that $v\in C^5(\Theta)$; we will indicate when this is the case. The quantities $c_5(\s),\efive{\s}$ in Definitions~\ref{c3c4def},~\ref{epsdef} below will be used in conjunction with the stronger regularity. Now that we have assumed a unique strict minimizer $\mhat$ exists, we define a local neighborhood of $\mhat$:
\begin{defn}[Local region]\label{Udef}
We define the local region
$$\U(\s)=\{x\in\R^d\;: \;\|x-\mhat\|_{H_v}\leq\s\sqrt{d/n}\}.$$ 
\end{defn}
Next, we impose a lower bound on the growth of $v$.
\begin{taggedassump}{A2}[Lower bound on growth of $v$]\label{assume:c0} There exist $\s_0>0$ and $1\geq\cgro>C/\sqrt d$ for some absolute constant $C$ such that 
\beq\label{assume:c0:eq}
 v(x) - v(\mhat) \geq \cgro\sqrt {d/n}\|x-\mhat\|_{H_v}\quad\forall \;x\in\Theta\setminus\U(\s_0).
\eeq   \end{taggedassump}
Thus $v$ must grow at least linearly at infinity, though the slope of this linear growth can be very small, of order $\sqrt{d/n}$. 
\begin{remark}[Interpretation of Assumption~\ref{assume:c0}]To understand why the coefficient of linear growth is of the form $\cgro\sqrt{d/n}$, consider the boundary of the set $\U(\s_0)$, i.e. points $x$ such that $\|x-\mhat\|_{H_v}=\s_0\sqrt{d/n}$. Note that, if $\s_0$ is not too large then a Taylor approximation of $v$ shows that
 \beq\label{vsuff}v(x)-v(\mhat) \geq\frac14\|x-\mhat\|_{H_v}^2 = \frac{\s_0}{4}\sqrt{d/n}\|x-\mhat\|_{H_v},\qquad \forall\;\|x-\mhat\|_{H_v}=\s_0\sqrt{d/n}.\eeq Therefore, we know that the inequality in~\eqref{assume:c0:eq} holds at least for $x$ on the boundary of $\Theta\setminus\U(s_0)$, with $\cgro=\s_0/4$, provided $\s_0$ is small enough.  The assumption requires that this linear growth should continue out to infinity. However, $\cgro$ may be smaller than $\s_0/4$, meaning, it is not necessary that the slope $(\s_0/4)\sqrt{d/n}$ on the boundary is maintained out to infinity.\end{remark}
 \begin{remark}[Sufficient conditions for Assumptions~\ref{assume:1} and~\ref{assume:c0}]\label{vconvA2}
If $v$ is strongly convex, then $v$ has a unique strict global minimum $\mhat$, and therefore Assumption~\ref{assume:1} is satisfied. More general conditions under which Assumption~\ref{assume:1} is satisfied are difficult to state. Meanwhile, we show in Appendix~\ref{app:main} that Assumption~\ref{assume:c0} is satisfied with $\s_0=4$ and $\cgro=1$ if $v$ is convex and $c_3\sqrt{d/n}+c_4(4)d/n\leq3/8$. The quantities $c_3$ and $c_4(\cdot)$ are defined in~\eqref{nabla34} below, and they control the size of the third and fourth derivative of $v$ in a neighborhood of $\mhat$. This condition on $c_3$ and $c_4$ ensures that the boundary condition~\eqref{vsuff} is satisfied with $\s_0=4$. From there, we automatically obtain~\eqref{assume:c0:eq} with $\s_0=4$ and $\cgro=1$, because for convex functions, the infimum $\inf_{x\in\Theta\setminus\U(\s_0)}(v(x) - v(\mhat))/ \|x-\mhat\|_{H_v}$ is achieved at the boundary of $\U(\s_0)$.

As we discuss below, the quantities $c_3d/\sqrt n$ and $c_4(\s_0)d^2/n$ must both be small in order for our bounds to be useful. Therefore, the condition $c_3\sqrt{d/n}+c_4(\s_0)d/n\leq3/8$ is no more stringent than what is needed for our bounds to be useful. 

It is important to note that convexity or strong convexity of $v$ are \emph{not} required for the assumptions to hold. In one dimension, a prototypical example of a function $v$ satisfying our conditions is an asymmetric double well in which the deeper well is centered at $\mhat$, the shallower well is centered at some $x_1$, and $(v(x_1)-v(\mhat))/|x_1-\mhat|$ is not too small.

\end{remark}
\begin{remark}[Checking the assumptions in terms of likelihood and prior]\label{rk:stat}
Recall that $v = \ell+ n^{-1}v_0$ in Bayesian inference, where $\prior\propto e^{-v_0}$ is the prior and $L=e^{-n\ell}$ is the generalized likelihood. The above remarks show that if for example $v\in C^4(\Theta)$, is strongly convex, and its derivatives are not too large, then both assumptions are satisfied. To check that $v\in C^4(\Theta)$, we simply check that $\ell$ and $v_0$ are both $C^4(\Theta)$. Regarding the size of the derivatives of $v$, see Remark~\ref{rk:deriv:ell}. To get strong convexity of $v$, we could have for example that $\ell$ is convex and $v_0$ is strongly convex or vice versa. Let us discuss some typical settings where this holds.

In the standard setting of Example~\ref{ex:standard}, the function $\ell$ is (strongly) convex if $x\mapsto p_i(y_i|x)$ is (strongly) convex for each $i$. For example, when the model is given by an exponential family in canonical form, $p_i=p$ is convex in $x$~\cite{Sundberg2011}. When the model is given by a generalized linear model with canonical link function, $p_i(y_i|x) = p(y_i|x^\T z_i)$ is also convex in $x$ (here, $z_i$ are feature vectors)~\cite[p. 143]{agresti2015foundations}. In the case of a Gibbs posterior as in Example~\ref{ex:loss}, $\ell$ is (strongly) convex if the $\rho_i$ from~\eqref{eq:loss} are (strongly) convex in $x$. The standard example of a strongly log-concave prior is a Gaussian prior, and there are many more log concave priors. Note that the assumptions do not prevent the log prior from scaling with $n$. 

For worked-out examples, see Sections~\ref{sec:pmf} and~\ref{sec:log}, where we check the conditions for a multinomial and logistic regression model, respectively.\end{remark}

\subsection{Key quantities}\label{sec:V:epsilon}
We first define the Laplace approximation (LA), as well as the corrected LA.
\begin{defn}[LA and corrected LA]
Let $H_V=nH_v=\nabla^2V(\mhat)$, where $\mhat$ is the unique global minimizer of $V=nv$. The LA to $\pi\propto e^{-V}$ is 
$$\lap=\mathcal N(\mhat, H_V^{-1}).$$ The \emph{corrected} LA is the signed density $\corlap$ given by 
\beqs\label{Lhatdef}
d\corlap=(1+\Lhat)d\lap,\qquad \text{where}\qquad\Lhat (x) = -\frac16\lla\nabla^3V(\mhat),\,(x-\mhat)^{\otimes 3}\rra.\eeqs
\end{defn}
See Section~\ref{discuss:intuit} for the intuition behind this correction, and a discussion of the typical size of $\Lhat(x)$ for $x$ located in the region of concentration of $\lap$. We now define our first key small quantity, which will arise when bounding various leading order terms.
\begin{defn}[Quantity bounding leading order terms]\label{def:tilep}
Let $W(x)=V(\mhat + H_V^{-1/2}x)$, and define $\tilep\geq0$ as the square root of
\beq
\tilep^2 =\frac16\|\nabla^3W(0)\|_F^2 + \frac14\|\la\nabla^3W(0), I_d\ra\|^2,\eeq where $\|\cdot\|_F$ is the Frobenius norm, and $\la\nabla^3W(0), I_d\ra$ is defined as in~\eqref{laTA}.
 \end{defn}
 \noindent To see that $\tilep$ is small, note that $\nabla^3V\sim n$, and $H_V^{-1/2}\sim 1/\sqrt n$. Therefore by the chain rule, $\nabla^3W(0)\sim 1/\sqrt n$. In~\eqref{tulip-ep} below, we give an explicit upper bound on $\tilep$. If it is unsatisfying to define $\tilep$ in terms of $\nabla^3W(0)$, see (A.1) in Appendix~\ref{app:main} for the formula for $\tilep$ in terms of $\nabla^3V(\mhat)$ and $H_V^{-1/2}$. We highlight for future discussion that $\tilep$ is an \emph{explicit polynomial function of the entries of $\nabla^3V(\mhat)$ and $H_V^{-1/2}$.}\\
 
\noindent The reason this particular $\tilep$ arises is that it is the $L^2(\lap)$ norm of the correction function $\Lhat$.
\begin{lemma}[Formula for $\|\Lhat\|_{L^2(\lap)}$]\label{lma:Lbd}
 Let $\tilep$ be as in Definition~\ref{def:tilep}. It holds $\|\Lhat\|_{L^2(\lap)}= \tilep$.
 \end{lemma}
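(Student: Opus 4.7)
The plan is to reduce to a moment computation for the standard Gaussian by pushing forward under the whitening map, and then carry out the sixth-moment calculation via Wick/Isserlis contractions, exploiting the symmetry of $\nabla^3W(0)$.

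First I would change variables: if $X\sim\lap=\mathcal N(\mhat,H_V^{-1})$, set $Z=H_V^{1/2}(X-\mhat)\sim\mathcal N(0,I_d)$. Writing $W(x)=V(\mhat+H_V^{-1/2}x)$, the chain rule gives the tensor identity
\beq\label{plan:chain}
\lla\nabla^3W(0),\,z^{\otimes3}\rra=\lla\nabla^3V(\mhat),\,(H_V^{-1/2}z)^{\otimes3}\rra.
\eeq
Setting $T:=\nabla^3W(0)$ (a symmetric order-$3$ tensor), the correction then becomes $\Lhat(X)=-\tfrac16\la T,Z^{\otimes3}\ra$, so
\beqsn
\|\Lhat\|_{L^2(\lap)}^2=\tfrac1{36}\,\E\,\lla T,Z^{\otimes3}\rra^2=\tfrac1{36}\sum_{i,j,k,l,m,n}T_{ijk}T_{lmn}\,\E[Z_iZ_jZ_kZ_lZ_mZ_n].
\eeqsn

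Second, I would evaluate the sixth Gaussian moment by Isserlis' theorem as a sum over the $15$ perfect matchings of six indices. Label the first three slots of $Z^{\otimes3}$ as ``left'' and the last three as ``right''. The matchings fall into two types: (A) all three pairs are left-right, of which there are $3!=6$; and (B) exactly one pair lies in the left triple, one in the right triple, and one is left-right, of which there are $3\cdot3=9$. By symmetry of $T$ in all three indices, each Type A matching contributes
\beqsn
\sum_{ijk}T_{ijk}^{\,2}=\|T\|_F^2,
\eeqsn
and each Type B matching contributes (after enforcing e.g.\ $\delta_{ij}\delta_{lm}\delta_{kn}$)
\beqsn
\sum_k\Bigl(\sum_i T_{iik}\Bigr)^{\!2}=\|\la T,I_d\ra\|^2,
\eeqsn
using the contraction convention in~\eqref{laTA}.

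Combining,
\beqsn
\E\,\lla T,Z^{\otimes3}\rra^2=6\|T\|_F^2+9\|\la T,I_d\ra\|^2,
\eeqsn
so
\beqsn
\|\Lhat\|_{L^2(\lap)}^2=\tfrac1{6}\|\nabla^3W(0)\|_F^2+\tfrac1{4}\|\la\nabla^3W(0),I_d\ra\|^2=\tilep^{\,2},
\eeqsn
which is the claim. The only place that requires care is the combinatorics: correctly enumerating the $6+9$ matchings and verifying, via full symmetry of $T$, that all Type A matchings give the same value and similarly for Type B. I expect this bookkeeping to be the sole minor obstacle; everything else is mechanical.
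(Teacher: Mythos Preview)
Your proof is correct. The paper arrives at the same identity $\E\lla T,Z^{\otimes3}\rra^2=6\|T\|_F^2+9\|\la T,I_d\ra\|^2$ but via a different device: instead of enumerating Wick matchings, it decomposes $\la T,x^{\otimes3}\ra=\la T,\H_3(x)\ra+3\la T,I_d\ra^\T x$, where $\H_3$ is the tensor of degree-$3$ Hermite polynomials, and then uses orthogonality of Hermite polynomials to kill the cross term and reduce the computation to $\|\la T,\H_3\ra\|_2^2=6\|T\|_F^2$ (Lemma~\ref{lma:TH32}). Your Isserlis argument is more elementary and fully self-contained; the paper's Hermite route costs a bit more setup but pays dividends later in the same lemma, since the representation as a pure degree-$3$ Hermite chaos plus a linear term is exactly what is needed to apply hypercontractivity and obtain $\|p_3\|_k\les_k\|p_3\|_2$ for all $k\geq2$, a bound the paper uses repeatedly.
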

\noindent See Lemma~\ref{lma:prop:r2k} in Section~\ref{sec:prop:r2k} for the proof, which uses the representation $\|\Lhat\|_{L^2(\lap)}^2=\E[\la\tfrac{1}{3!}\nabla^3W(0), Z^{\otimes 3}\ra^2]$.  
Next, we introduce notation for the operator norms of derivatives of $v$. 
\begin{defn}[Local operator norm bounds on $\nabla^3v,\nabla^4v$,$\nabla^5v$]\label{c3c4def}Let
\begin{equation}\label{nabla34}
\begin{split}
c_k(\s)=\sup_{x\in\U(s)}\|\nabla^kv(x)\|_{H_v},\quad k=3,4,5,
  \end{split}\end{equation} and in particular, 
  $$c_3:=c_3(0)=\|\nabla^3v(\mhat)\|_{H_v}.$$
  \end{defn}
\noindent Next, we define the key quantities which arise when bounding the \emph{remainder} term.
   \begin{defn}[Quantities bounding the remainder term]\label{epsdef}
Recall that $V=nv$ and $H_V=\nabla^2V(\mhat)=nH_v$. We let
\beqs\label{e345def}
\ethree{\s} &:= c_3(\s)\frac{d}{\sqrt n} =d \sup_{x\in\U(s)}\|\nabla^3V(x )\|_{H_V},\\
  \efour{\s} &:= c_4(\s)^{1/2}\frac{d}{\sqrt n}=d\sup_{x\in\U(s)} \|\nabla^4V(x )\|_{H_V}^{1/2},\\
 \efive{\s} &:= c_5(\s)^{1/3}\frac{d}{\sqrt n} =d\sup_{x\in\U(s)} \|\nabla^5V(x )\|_{H_V}^{1/3}\eeqs and in particular,
 \beq\label{eps3def}\epsilon_3:=\ethree{0}=c_3(0)\frac{d}{\sqrt n}=d\|\nabla^3V(\mhat)\|_{H_V}.\eeq
 \end{defn}
 To see why the last equality is true in each line, note that $\|nT\|_{nA}= n^{-k/2}\|nT\|_{A}=n^{1-k/2}\|T\|_A$ for an order $k$ tensor $T$ and a matrix $A$. Our bounds stated in the next section are small if \beq\label{ebound}\epsilon_3\ll1,\quad\efour{\s}\ll1,\quad d^{-1/2}\efive{\s}^3\ll1,\eeq for an $\s$ to be specified. 
 \begin{remark}[Growth of the model-dependent factors]\label{rk:cgrowth}
If $c_3,c_4(\s),c_5(\s)$ are all bounded by absolute constants, then $\epsilon_k\sim d/\sqrt n$ for $k=3,4,5$. However, as noted in the introduction, $c_3,c_4(\s),c_5(\s)$ may in fact grow with dimension, and they may grow at different rates from one another. Correspondingly $\epsilon_3,\efour{\s},\efive{\s}$ may have different orders of magnitude, and they may be larger than $d/\sqrt n$. We will see this behavior in both of our examples in Sections~\ref{sec:pmf} and~\ref{sec:log}.

It may be possible that in some very favorable cases, one of $c_3,c_4(\s),c_5(\s)$ could \emph{decay} as $d$ increases. However, we will essentially disregard this possibility for simplicity, assuming for concreteness that $c_3(0)\gtrsim1$, and hence $\epsilon_3\gtrsim d/\sqrt n$. Therefore, the minimal requirement needed to satisfy~\eqref{ebound} is that $d^2\ll n$. 
\end{remark}
\begin{remark}[Choice of notation]\label{rk:eps-vs-c} In the bounds in Section~\ref{sec:V:results}, we will often see the quantity $\epsilon_3^2 +\efour{\s}^2$, which can be written as $ (c_3^2+c_4(\s))d^2/n$. One may wonder why we do not express our bounds in terms of the $c_k$'s, which would enable us to group terms of like power of $d^2/n$. The reason is that $c_3^2$ and $c_4(\s)$ could have different order of magnitude, so we wish to emphasize $\epsilon_3=c_3d/\sqrt n$ and $\efour{\s}=c_4(\s)^{1/2}d/\sqrt n$ as separate quantities, each of which must be small in order for our bounds to be useful. 
\end{remark}
Next, let us compare $\epsilon_3$ to $\tilep$. Using the inequalities $\|T\|_F\leq d\|T\|$ and $\|\la T, I_d\ra\|\leq d\|T\|$ shown in Lemma F.3, and noting that $\|\nabla^3W(0)\|=\|\nabla^3V(\mhat)\|_{H_V}$, we obtain $\tilep^2 \leq d^2\|\nabla^3V(\mhat)\|^2_{H_V} = \epsilon_3^2$ and hence
\beq\label{tulip-ep}
\tilep \leq \epsilon_3=c_3(0)\frac{d}{\sqrt n}
\eeq
Finally, we remark on how the prior affects the size of the derivative operator norms. 
\begin{remark}[Effect of the prior on derivative operator norms]\label{rk:deriv:ell}
Recall that in the context of Bayesian inference, $V(x)=n\ell(x)+v_0(x)$. Suppose the prior $\prior\propto e^{-v_0}$ is log concave, so that $\nabla^2v_0(x)\succ0$ for all $x\in\Theta$. Thus,
$$\|\nabla^kV(x)\|_{H_V}\leq n\|\nabla^k\ell(x)\|_{H_V} + \|\nabla^kv_0(x)\|_{H_V},\quad k=3,4,5.$$ and $H_V=\nabla^2V(\mhat)=n\nabla^2\ell(\mhat)+\nabla^2v_0(\mhat)$. Now, informally, the weighting in an $H_V$-weighted tensor operator norm amounts to ``dividing'' by some power of $H_V$. Thus increasing the strength of prior regularization (say, by changing $v_0$ to $cv_0$ for some $c>1$) increases $H_V$ in the positive definite ordering, and therefore decreases $\|\nabla^k\ell(x)\|_{H_V}$. At the same time, however, $\|\nabla^kv_0(x)\|_{H_V}$ may increase. The clearest case is when $\prior$ is Gaussian and hence $v_0$ is quadratic. We then have that $\nabla^kv_0=0$ for all $k\geq3$ and hence increasing the strength of prior regularization only decreases $\|\nabla^kV(x)\|_{H_V}$. (Technically, the location of the minimizer $\mhat$ changes along with the prior, so one must also show that $\nabla^2\ell(\mhat)$ is not significantly affected as $\mhat$ changes.) Thus a strongly regularizing Gaussian prior, in which $v_0$ is scaled by $n$, is favorable from the perspective of our bounds. This should be intuitively clear, since as the contribution to $V$ from the quadratic $v_0$ increases, the posterior becomes more Gaussian, and hence the LA becomes more accurate.
\end{remark}
\subsection{Main results: error bounds and leading order decompositions}\label{sec:V:results}

We now turn to our main results, in which we use the following shorthands to simplify the presentation:
\beqs
\Delta_g(\corlap) = R(g)&=\int gd\pi -\int g(1+\Lhat)d\lap,\\
L(g) &=\int g\Lhat d\lap,\\
\Delta_g(\lap)= L(g) +R(g)&=\int gd\pi-\int gd\lap.
\eeqs
We also use the following notation throughout this section.
\beq\label{Esdef}
\Es{\s}=\e\l(\l(\del{\s}\r)\s^4\r),\quad \taus{\s} =de^{-\cgro\s d/8},\quad \sst = \max(\s_0, (8/\cgro)\log(2e/\cgro)).
\eeq
Here, $\s_0$ and $\cgro$ are from Assumption~\ref{assume:c0}, and $\s$ is a user-chosen radius. The values $\epsilon_3$ and $\efour{\s}$ are as in Definition~\ref{epsdef}. 

There are two implications of a leading order error decomposition. First, it allows us to incorporate the leading order term into the approximation to improve the accuracy. Second, we can use it to make more precise statements about the original, uncorrected error. We emphasize the two perspectives with our choice of notation: $\Delta_g(\corlap)$ emphasizes the former and $R(g)$ the latter, even though they are equal to one another.

\subsubsection{Bounds on $\Delta_g$}
Consider a function $G$ such that for some $\af{G}>0$, it holds
 \beq\label{gcond}|G(x)-\medint\int Gd\lap|\leq\af{G}\e(\cgro\sqrt {d}\|x-\mhat\|_{H_V}/2)\quad\forall x\in\R^d,\eeq where $\cgro$ is from Assumption~\ref{assume:c0}. We then define
 \beq\label{gnorm}g(x) = \frac{G(x)-\medint\int Gd\lap}{\sqrt{\Var_{\lap}(G)}},\qquad \af{g} = \frac{\af{G}}{\sqrt{\Var_{\lap}(G)}}.\eeq
\begin{theorem}[Bound on $\Delta_g(\corlap)$; decomposition of $\Delta_g(\lap)$]\label{thm:Vgen}
Suppose $v$ satisfies Assumptions~\ref{assume:1},~\ref{assume:c0}. Let $G$ satisfy~\eqref{gcond} for some $\af{G}\geq0$, and let $g$, $\af{g}$ be as in~\eqref{gnorm}. Let $\s\geq\sst$ be such that $\U(\s)\subset\Theta$, and suppose $\epsilon_{3},\efour{\s}\les1$. Then we have the following bound on the accuracy $\Delta_g(\corlap)$ or equivalently, on the remainder $R(g)=\Delta_g(\lap)-L(g)$:
\beq\label{g-bd-2}
\begin{split}
|R(g)|=|\Delta_g(\corlap)|&\les\Es{\s}\l(\del{\s}\r) +  (\af{g}\vee1)\taus{\s}.
\end{split}
\eeq
Moreover, it holds $|L(g)|\leq \tilep$, and hence
\beq\label{g-bd-1}|\Delta_g(\lap)|\leq|L(g)|+|R(g)|\leq\tilep+|R(g)|.\eeq In particular, $\Delta_g(\lap)=\Delta_g(\corlap)$ when $g$ is even about $\mhat$, because $L(g)=0$ in this case. \end{theorem}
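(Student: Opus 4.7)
\emph{Plan.} The theorem has two parts: the leading-order bound $|L(g)|\le\tilep$ and the remainder bound on $R(g)=\Delta_g(\corlap)$. The first part is essentially immediate. The normalization \eqref{gnorm} of $g$ forces $\int g\,d\lap=0$ and $\int g^2\,d\lap=1$, so by Cauchy--Schwarz and Lemma~\ref{lma:Lbd},
\[
|L(g)|=\left|\int g\Lhat\,d\lap\right|\le \|g\|_{L^2(\lap)}\|\Lhat\|_{L^2(\lap)}=\tilep.
\]
When $g$ is even about $\mhat$, the integrand $g\Lhat$ is odd about $\mhat$ and integrates to zero against the symmetric measure $\lap$, giving $L(g)=0$. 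Combining with $\Delta_g(\lap)=L(g)+R(g)$ yields \eqref{g-bd-1}, so the main task is \eqref{g-bd-2}.

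For $R(g)$, I would carry out a three-step localize--standardize--expand argument. First, split both $\int g\,d\pi$ and $\int g(1+\Lhat)\,d\lap$ into bulk pieces on $\U(\s)$ and tails on $\U(\s)^c$. On the tail, Assumption~\ref{assume:c0} gives $e^{-V(x)+V(\mhat)}\lesssim e^{-\cgro\sqrt d\|x-\mhat\|_{H_V}}$, while the growth condition~\eqref{gcond} bounds $|g|$ by $\af{g}\,e^{\cgro\sqrt d\|x-\mhat\|_{H_V}/2}$ (the mean $\int G\,d\lap$ drops out after centering). The leftover Gaussian-like factor $e^{-\cgro\sqrt d\|\cdot-\mhat\|_{H_V}/2}$ integrates to a tail of the form $\af{g}\,\taus{\s}$ once $\s\ge\sst$, and the definition of $\sst$ is precisely what absorbs the dimensional prefactor into $de^{-\cgro\s d/8}$. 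A parallel estimate for the tail of $\lap$, using Gaussian concentration of $\|x-\mhat\|_{H_V}$ together with polynomial growth of $|\Lhat|$, gives the same form, producing the $(\af{g}\vee 1)\taus{\s}$ term in \eqref{g-bd-2}.

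Second, on the bulk region, substitute $z=H_V^{1/2}(x-\mhat)$ and write $W(z)=V(\mhat+H_V^{-1/2}z)$, so that $\nabla W(0)=0$, $\nabla^2 W(0)=I_d$, and Taylor expansion yields
\[
W(z)-W(0)=\tfrac12\|z\|^2-\Lhat(z)+r_4(z),
\]
with $-\Lhat(z)=\tfrac{1}{3!}\la\nabla^3 W(0),z^{\otimes 3}\ra$ and $r_4$ the fourth-order Taylor remainder, controlled uniformly on $\U(\s)$ in terms of $c_4(\s)$. Factor the Gaussian out as $e^{-W(z)+W(0)}=e^{-\|z\|^2/2}\bigl(1+\Lhat(z)+E(z)\bigr)$, where $E=e^{\Lhat-r_4}-1-\Lhat$ admits the elementary pointwise bound $|E|\lesssim(\Lhat^2+|r_4|)\,e^{|\Lhat|+|r_4|}$. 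The exponential factor $e^{|\Lhat|+|r_4|}$ is absorbed into the uniform $\Es{\s}$ on $\U(\s)$. Integrating $gE$ against the standard Gaussian, applying Cauchy--Schwarz with $\|g\|_{L^2(\lap)}=1$, and invoking $\|\Lhat\|_{L^2(\lap)}^2=\tilep^2\le\epsilon_3^2$ (Lemma~\ref{lma:Lbd}) together with a standard Gaussian $L^2$ estimate on $r_4$ of order $\efour{\s}^2$, delivers a bulk contribution of the claimed order $\Es{\s}(\epsilon_3^2+\efour{\s}^2)$.

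The final step is normalization. Writing $R(g)$ as a ratio of integrals, expand both the numerator and the partition function $Z_\pi/Z_\lap=\int e^{-W(z)+W(0)}dz/(2\pi)^{d/2}$ using the same factorization. The key cancellation is $\int \Lhat\,d\lap=0$ (odd Gaussian moment of a cubic), which prevents the linear-in-$\Lhat$ term in the denominator from contaminating the numerator; combined with $\int g\,d\lap=0$, it ensures that $L(g)$ is absorbed cleanly into the leading order and only quadratic-and-higher contributions remain in $R(g)$. The main obstacle is bookkeeping: tracking which products of $\Lhat$, $r_4$, and $E$ go into $L(g)$ versus $R(g)$; ensuring the $\af{g}$ factor appears only in the $\taus{\s}$ tail and not in the bulk (this requires controlling the bulk via $\|g\|_{L^2(\lap)}=1$, which is independent of $\af{g}$); and verifying that the scaling $\epsilon_3^2+\efour{\s}^2$ — rather than the weaker $\epsilon_3+\efour{\s}^2$ — is attained, which rests precisely on the cancellation of the linear-in-$\Lhat$ contribution noted above.
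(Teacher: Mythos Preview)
Your outline has the right skeleton --- coordinate change, bulk/tail split, Taylor expansion to isolate the cubic term --- and your treatment of $|L(g)|\le\tilep$, the even-$g$ cancellation, and the tail estimate is essentially what the paper does. But there is a genuine gap in the bulk step.

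You write that ``the exponential factor $e^{|\Lhat|+|r_4|}$ is absorbed into the uniform $\Es{\s}$ on $\U(\s)$.'' This is false in the high-dimensional regime the theorem targets. On $\U(\s)$ (after rescaling, the ball $\|z\|\le \s\sqrt d$), the pointwise supremum of the cubic $|\Lhat(z)|=|p_3(z)|$ is of order $\|\nabla^3W(0)\|(\s\sqrt d)^3\asymp \epsilon_3\s^3\sqrt d$. With $\epsilon_3\lesssim 1$ and $\s$ of constant order this still grows like $\sqrt d$, so $\sup_{\U(\s)}e^{|\Lhat|}$ can be as large as $e^{C\sqrt d}$, which is not dominated by $\Es{\s}=\exp\big((\epsilon_3^2+\efour{\s}^2)\s^4\big)$. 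A pointwise bound on $E$ of the form $(\Lhat^2+|r_4|)e^{|\Lhat|+|r_4|}$ therefore cannot yield~\eqref{g-bd-2}.

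The paper avoids this by never controlling $e^{p_3}$ pointwise. Instead it bounds an integrated quantity: $\|e^{\hat r_3}\ind_{\U}\|_4$. Since $p_3$ is $L$-Lipschitz on $\U(\s)$ with $L\le \epsilon_3\s^2/2$ (only two of the three factors of $\|z\|$ survive after differentiating), Herbst's argument for log-Sobolev measures gives $\|e^{p_3}\ind_{\U}\|_4\le \exp(C\epsilon_3^2\s^4)$, which \emph{is} the $\Es{\s}$ factor. This is combined with a H\"older splitting $\int_\U(e^{\hat r_3}-1-\hat r_3)\bar f_h\,d\gamma\lesssim \|e^{\hat r_3}\ind_\U\|_4\|r_3\ind_\U\|_8^2\|\bar f_h\|_2$ and the separate estimate $\|(\hat r_3-p_3)\ind_\U\|_2\lesssim\efour{\s}^2$, yielding $|\mathrm{Loc}|\lesssim\Es{\s}(\epsilon_3^2+\efour{\s}^2)$. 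The paper also packages the normalization more cleanly than your ``expand numerator and denominator separately'': it uses the identity
\[
\int f\,d\rho-\int f(1+p_3)\,d\gamma=\frac{\int(Q-1-p_3)\bar f_{p_3}\,d\gamma}{\int Q\,d\gamma},
\]
which handles the partition function in one stroke and explains why only $\|g\|_{L^2(\lap)}=1$ (and not $\af{g}$) enters the bulk bound. Your ``key cancellation'' observations about $\int\Lhat\,d\lap=0$ and $\int g\,d\lap=0$ are correct and are exactly what makes this identity work, but the missing ingredient is the Herbst step.
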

The bound $|L(g)|\leq\tilep$ follows immediately from Lemma~\ref{lma:Lbd} and Cauchy-Schwarz. To see that $L(g)=0$ if $g$ is even about $\mhat$, note that $\Lhat$ is odd about $\mhat$ and therefore so is $g\Lhat$. 
\begin{theorem}[Improved bound on $\Delta_g(\corlap)$ for odd $g$]\label{thm:odd}Suppose $v$ and $g$ are as in Theorem~\ref{thm:Vgen}, and additionally that $g$ is odd about $\mhat$ and $v\in C^5(\Theta)$. Then
\begin{align}
|\Delta_g(\corlap)|\les \delodd{\s}+ (\af{g}\vee1)\taus{\s}.\label{godd-bd}
\end{align}
\end{theorem}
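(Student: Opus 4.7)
The plan is to refine the proof of Theorem~\ref{thm:Vgen} by pushing its underlying Taylor expansion one order further, and exploiting the oddness of $g$ to annihilate the newly introduced correction terms. Recall that Theorem~\ref{thm:Vgen} follows from the identity $\int g\,d\pi = \int g\,e^{-F}\,d\lap/\int e^{-F}\,d\lap$, where $F(x)=V(x)-V(\mhat)-\tfrac12\|x-\mhat\|_{H_V}^2$, combined with the first-order expansion $e^{-F(x)}=1+\Lhat(x)+r_2(x)$ on $\U(\s)$ and a corresponding expansion of the denominator.

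For the present theorem, I would expand to second order. Setting $Q(x):=\tfrac{1}{24}\langle\nabla^4V(\mhat),(x-\mhat)^{\otimes 4}\rangle$ and letting $F_5$ denote the fifth-order Taylor remainder of $V$ at $\mhat$ (this is where the $C^5$ assumption enters), we have $-F(x)=\Lhat(x)-Q(x)-F_5(x)$, whence
\[
e^{-F(x)}=1+\Lhat(x)+\tfrac12\Lhat(x)^2-Q(x)+r_3(x),
\]
with $r_3$ collecting the genuine third-order residue coming from $\Lhat^3$ ($\sim\epsilon_3^3$), the cross term $\Lhat Q$ ($\sim\epsilon_3\efour{\s}^2$), the square $Q^2$ ($\sim\efour{\s}^4$), and $F_5$ ($\sim d^{-1/2}\efive{\s}^3$). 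Analogously, the denominator is expanded to second order as $1+A+\tilde r_3$, where $A=\tfrac12\tilep^2-\int Q\,d\lap\sim\epsilon_3^2+\efour{\s}^2$ is a constant of even parity.

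The point is the parity argument. Because $\Lhat$ is odd about $\mhat$ and $Q$ is even, the two new correction functions $\tfrac12\Lhat^2$ and $-Q$ are both even, so multiplying by the odd function $g$ and integrating against the mode-symmetric Gaussian $\lap$ yields zero. Consequently, in the numerator $\int g\,e^{-F}\,d\lap$ the only surviving low-order piece is $L(g)=\int g\Lhat\,d\lap$, and the remainder reduces to $\int g r_3\,d\lap$. Dividing by the denominator expansion and collecting terms gives
\[
\int g\,d\pi-L(g) \;=\; \int g r_3\,d\lap \;-\; A\cdot L(g) \;+\; (\text{higher-order terms in }A),
\]
so that Cauchy--Schwarz (using $\|g\|_{L^2(\lap)}\le 1$ after the normalization~\eqref{gnorm}) together with the bound $|L(g)|\le\tilep\le\epsilon_3$ produces the dominant interior contribution $\Es{\s}(\epsilon_3^2+\efour{\s}^2)(\epsilon_3+\efour{\s}^2)$, while the $F_5$ piece of $r_3$ supplies the $d^{-1/2}\efive{\s}^3$ term. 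The tail contribution from $\Theta\setminus\U(\s)$ is controlled exactly as in Theorem~\ref{thm:Vgen} via the growth bound~\eqref{gcond}, yielding the $(\af{g}\vee 1)\taus{\s}$ term.

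The main technical obstacle is establishing the $L^1(\lap)$ bound on $r_3$ with exactly the claimed three-factor structure. This requires a careful binomial expansion of $e^{-F}$ at third order with a uniformly controlled exponential residue on $\U(\s)$ (producing the $\Es{\s}$ prefactor), Gaussian moment estimates for the high-degree polynomials $\Lhat^3$, $\Lhat Q$, and $Q^2$ against $\lap$, and a uniform bound on $F_5$ via $c_5(\s)$ combined with the $\sqrt{d/n}$ diameter of $\U(\s)$. Once these estimates are assembled, the parity cancellation does the conceptual work, providing the extra factor $\epsilon_3+\efour{\s}^2$ that distinguishes~\eqref{godd-bd} from the generic bound~\eqref{g-bd-2}.
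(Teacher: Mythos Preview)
Your proposal is correct and follows essentially the same approach as the paper: push the exponential expansion one order further and use that the new correction terms $\tfrac12\Lhat^2$ and $Q$ (the paper's $\tfrac12 p_3^2$ and $\hat p_4$ in rescaled coordinates) are even about $\mhat$, hence annihilated against odd $g$. The paper organizes this via Lemma~\ref{lma:pretty} and the decomposition~\eqref{eq:prelim:odd}, which absorbs your separate numerator/denominator expansion into a single expression involving $\bar f_h$; but the parity cancellation and the resulting residual terms (your $\Lhat Q$, $Q^2$, $F_5$, and third-order exponential remainder correspond exactly to the paper's $p_3\hat r_4$, $\hat r_4^2$, $\hat r_5$, and $e^{\hat r_3}-1-\hat r_3-\hat r_3^2/2$) are the same.
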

%
We presented the bounds in terms of the standardized function $g$ for simplicity. However, it is straightforward to see that $L(G)=\sqrt{\Var_{\lap}(G)}L(g)$ and $R(G)=\Delta_G(\corlap)=\sqrt{\Var_{\lap}(G)}R(g)$. Note also that if $G$ is even (odd) about $\mhat$, then $g$ is even (odd) about $\mhat$. See Section~\ref{subsub:gcond} for a discussion of the condition~\eqref{gcond}. 
\begin{remark}[Size of $\Es{\s}$ and $\taus{\s}$]\label{rk:s} We show in Section~\ref{subsub:s} that if $d$ is large then in typical cases,  we can choose $\s$ to be an absolute constant. This ensures $\Es{\s}=\mathcal O(1)$ and $\taus{\s}$ is negligible compared to the other terms in our bounds. 
\end{remark}
\begin{remark}[Presentation of the bounds]\label{rk:intro}
Recall from Definition~\ref{epsdef} that $\epsilon_3=c_3d/\sqrt n$, $\efour{\s}=c_4(\s)^{\frac12}d/\sqrt n$, and $\efive{\s}=c_5(\s)^{\frac13}d/\sqrt n$. Similarly, $\tilep\leq \epsilon_3=c_3d/\sqrt n$. Thus the above bounds on $R(g)=\Delta_g(\corlap)$ and on $L(g)$ can be stated in terms of powers of $d/\sqrt n$, with a $v$-dependent coefficient in front. This is how we presented the bounds in~\eqref{Del-g-2},~\eqref{L-g},~\eqref{Del-g-1}, and~\eqref{Del-g-3} of the introduction. We did this to simplify the presentation, but we now prefer to use the quantities $\tilep,\epsilon_3,\efour{\s},\efive{\s}$, for the reason stated in Remark~\ref{rk:eps-vs-c}. 
\end{remark}

%


\begin{remark}[Orders of magnitude in the righthand sides]\label{rk:order} As indicated in Remark~\ref{rk:intro}, we have $\epsilon_3,\efour{\s},\efive{\s}\les_v \epsilon:=d/\sqrt n$ up to the $v$-dependent coefficients, and $\tilep\leq\epsilon_3\les_v\epsilon$ as well. Therefore, to informally understand the orders of magnitude of the righthand sides, we simply count how many $\epsilon$'s are multiplied together. This leads to the following simple, informal summary of the bounds in Theorems~\ref{thm:Vgen} and~\ref{thm:odd}:
\beqs\label{bds-g-informal}
 |\Delta_g(\lap)|&\les_v \begin{cases}\epsilon\vspace{5pt}\\ 
\epsilon^2,\quad\text{g even about $\mhat$},\end{cases}\\
|\Delta_g(\corlap)|&\les_v\begin{cases}\epsilon^2\vspace{5pt}\\ 
\epsilon^3 ,\quad\text{g odd about $\mhat$}.\end{cases}
\eeqs Here, we have omitted the exponentially small term $\taus{\s}$, and we have assumed $\Es{\s}\leq C$. We caution that~\eqref{bds-g-informal} is only meant as a very high-level summary, and a careful study of the model-dependent terms is necessary for a true understanding of the order of magnitude of our bounds.  
\end{remark}



To demonstrate the breadth of applicability of Theorem~\ref{thm:Vgen}, we first consider approximating the moment generating function (mgf) of $\pi$, which is given by the integral against $\pi$ of an exponentially growing function. 
\begin{example}[Approximating the mgf]\label{corr:mgf}
Let $M_\pi(u) = \E_{X\sim\pi}[\e(u^\T H_V^{1/2}(X-\mhat))]$ be the rescaled mgf of $\pi\propto e^{-nv}$, where $v$ is as in Theorem~\ref{thm:Vgen}. Then for any $\|u\|\leq \cgro\sqrt d/2$, it holds
\beqs\label{eq:mgf}
\bigg|\frac{M_\pi(u)}{\e(\|u\|^2/2)} - 1&+\lla\nabla^3W(0), \; \tfrac16u^{\otimes3}+\tfrac12I_d\otimes u\rra\bigg|\\
&\les \sqrt{e^{\|u\|^2}-1}\,\Es{\s}\l(\del{\s}\r) +  \taus{\s},
\eeqs where $W$ is the function $W(x)=V(\mhat+H_V^{-1/2}x)$. Let $M_{\corlap}(u), M_{\lap}(u)$ be the mgfs of $\corlap$ and $\lap$, respectively, rescaled the same way as in the definition of $M_\pi$. We have $M_{\lap}(u)=\e(\|u\|^2/2)$, so without access to the corrected measure $\corlap$, we would simply approximate $M_{\pi}(u)$ by $e^{\|u\|^2/2}$. Using $\corlap$ gives us an improved approximation to $M_\pi(u)$. Specifically, the quantity inside the absolute values is given by $(M_\pi(u) - M_{\corlap}(u))/M_{\lap}(u)$.
Note that the mgf of $\corlap$ is computable in closed form. Indeed, integrals of the form $\int e^{w^\T x}(1+\Lhat(x))d\lap(x)$ can be written as an integral of $1+\Lhat$ against a new Gaussian distribution with shifted mean.
\end{example}

Next, we apply Theorems~\ref{thm:Vgen} and~\ref{thm:odd} to three classes of observables: indicators, linear functions, and quadratic functions. This gives refined bounds on the TV, mean, and covariance error, respectively.
\subsubsection{Bounds on TV distance}

As a direct application of Theorem~\ref{thm:Vgen}, we obtain a bound on $\TV(\pi,\corlap)$, as well as a leading order decomposition of $\TV(\pi,\lap)$. In preparation for the following corollary, define
\beq\label{Ldef}
\LTV=\frac12\int|\Lhat| d\lap =\frac{1}{12}\E|\la\nabla^3V(\mhat), (H_V^{-1/2}Z)^{\otimes3}\ra|,\eeq  where $Z\sim\mathcal N(0, I_d)$.
\begin{corollary}[TV error of $\corlap$, and TV decomposition for $\lap$]\label{corr:corTV}
Let $\RTV:=\TV(\pi,\lap)-\LTV$, where $\LTV$ is as in~\eqref{Ldef}. Under the conditions on $v$ from Theorem~\ref{thm:Vgen}, we have
\beqs\label{TVcorlap}
|\RTV|\leq \TV(\pi,\corlap)&\les\,\Es{\s}\l(\del{\s}\r) +\taus{\s}.\eeqs
Moreover, it holds $0\leq\LTV\leq\tilep/2$, and hence
$$
\TV(\pi,\lap)\leq\tilep/2+|\RTV|.
$$
Furthermore, if $A$ is a Borel set on $\R^d$ which is symmetric about $\mhat$, then
\beq
|\pi(A)-\lap(A)|\leq \TV(\pi,\corlap).\eeq
\end{corollary}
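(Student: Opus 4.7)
The plan is to deduce all three statements from Theorem~\ref{thm:Vgen} (applied to $G=\ind_A$) together with a few symmetry/triangle-inequality manipulations; the decomposition-level work has already been done upstream, so what remains is mostly bookkeeping.

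\textbf{Step 1: bounding $\TV(\pi,\corlap)$.} First I would verify that $\corlap$ is a signed measure with total mass $1$: since $\Lhat$ is a third-order polynomial in $x-\mhat$ and $\lap$ is symmetric about $\mhat$, the Gaussian integral $\int \Lhat\,d\lap=0$, giving $\int d\corlap=1$. Then I would write $\TV(\pi,\corlap)=\sup_{A}|\pi(A)-\corlap(A)|$ and, for each Borel $A$, take $G=\ind_A$. Clearly $|G(x)-\int G\,d\lap|=|\ind_A(x)-\lap(A)|\le 1$, so~\eqref{gcond} holds with $\af{G}=1$. Writing $g=(G-\lap(A))/\sqrt{\Var_\lap(G)}$, we have $\af{g}=1/\sqrt{\Var_\lap(G)}$, and $\pi(A)-\corlap(A)=R(G)=\sqrt{\Var_\lap(G)}\,R(g)$. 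Plugging into~\eqref{g-bd-2} and using $\sqrt{\Var_\lap(G)}\le 1/2$ together with $\sqrt{\Var_\lap(G)}\,(\af{g}\vee1)\le 1$, I obtain
\[
|\pi(A)-\corlap(A)|\les \Es{\s}\bigl(\del{\s}\bigr)+\taus{\s},
\]
uniformly in $A$; taking the supremum gives the claimed bound on $\TV(\pi,\corlap)$.

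\textbf{Step 2: the lower bound $|\RTV|\le \TV(\pi,\corlap)$.} The key computation is $\TV(\corlap,\lap)=\LTV$. Since $\corlap-\lap=\Lhat\,d\lap$ is a signed measure of total mass zero, we have $\sup_A|(\corlap-\lap)(A)|=\tfrac12\int|\Lhat|\,d\lap=\LTV$. Then the forward and reverse triangle inequalities
\[
\TV(\pi,\lap)\le \TV(\pi,\corlap)+\LTV,\qquad \LTV\le \TV(\pi,\corlap)+\TV(\pi,\lap),
\]
give $\pm\RTV=\pm(\TV(\pi,\lap)-\LTV)\le \TV(\pi,\corlap)$, i.e. $|\RTV|\le \TV(\pi,\corlap)$, as desired.

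\textbf{Step 3: the $\LTV\le \tilep/2$ bound.} Non-negativity is immediate. For the upper bound I would apply Cauchy--Schwarz (or Jensen) to get $\LTV=\tfrac12\|\Lhat\|_{L^1(\lap)}\le \tfrac12\|\Lhat\|_{L^2(\lap)}$, and then invoke Lemma~\ref{lma:Lbd} to identify the right-hand side as $\tilep/2$. The stated bound $\TV(\pi,\lap)\le \tilep/2+|\RTV|$ is then just $\TV(\pi,\lap)=\LTV+\RTV\le \LTV+|\RTV|$.

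\textbf{Step 4: the symmetric-set statement.} For $A$ symmetric about $\mhat$, the function $\ind_A(\mhat+y)$ is even in $y$, while $\Lhat(\mhat+y)=-\tfrac16\la\nabla^3V(\mhat),y^{\otimes 3}\ra$ is odd in $y$. Hence $\int_A \Lhat\,d\lap=0$ by parity against the symmetric measure $\lap$, so $\corlap(A)=\lap(A)$ and $|\pi(A)-\lap(A)|=|\pi(A)-\corlap(A)|\le \TV(\pi,\corlap)$. The only mildly delicate point in the whole proof is being careful that $\TV$-style triangle inequalities (and the identification of $\sup_A|(\corlap-\lap)(A)|$ with $\tfrac12\int|\Lhat|\,d\lap$) apply when $\corlap$ is a signed, not probability, measure; but since $\corlap$ has unit mass this reduces to the standard Hahn--Jordan argument.
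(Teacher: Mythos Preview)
Your proof is correct and follows essentially the same route as the paper. The only cosmetic difference is in Step~2: the paper obtains $|\RTV|\le\TV(\pi,\corlap)$ by the sup-manipulation $|\sup_A(L(\ind_A)+R(\ind_A))-\sup_A L(\ind_A)|\le\sup_A|R(\ind_A)|$, whereas you phrase it as a TV triangle inequality after identifying $\TV(\corlap,\lap)=\LTV$; these are equivalent, and your observation that the Hahn--Jordan argument goes through because $\corlap$ has unit total mass is exactly what makes both versions work.
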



The informal and coarse summary of this theorem's guarantee on the TV accuracy of $\corlap$ is that $\TV(\pi,\corlap)\les_v\epsilon^2$, where $\epsilon=d/\sqrt n$. Regarding the TV accuracy of $\lap$, we have $|\pi(A)-\lap(A)|\les_v\epsilon$ for all Borel sets $A$, and the improved bound $|\pi(A)-\lap(A)|\les_v\epsilon^2$ when $A$ is even about $\mhat$. Moreover, $\LTV$ is the ``leading order'' term of $\TV(\pi,\lap)$ in the sense that $\TV=\LTV+\RTV$ and
\beqsn
|\LTV|\les_v\epsilon,\quad |\RTV|&\les_v\epsilon^2.\eeqsn That $|\LTV|\les_v\epsilon$ follows from the fact that $\tilep\leq c_3\epsilon$, recalling~\eqref{tulip-ep}.

\subsubsection{Mean and covariance error}\label{subsub:mean}
Let $\mhatS=\int xd\corlap(x)$ be the mean of $\corlap$ and $\mpi=\int xd\pi(x)$ be the mean of $\pi$. Note that, by definition, we have $$\mhatS=\mhat + \Lhatm,\qquad \Lhatm := \int x\Lhat d\lap(x).$$ From the perspective of a leading order decomposition of the mean accuracy of $\lap$, we write
$$\mpi = \mhat + \Lhatm + R_{\mpi}.$$
\begin{corollary}[Mean error for $\corlap$, and mean decomposition for $\lap$]\label{thm:Vmean}
Assume the conditions on $v$ from Theorem~\ref{thm:Vgen}. Then $\Lhatm=\int x\Lhat d\lap(x)$ is given by
\beqs\label{Lm-def}
\Lhatm&= -\frac12H_V^{-1}\la\nabla^3V(\mhat ), H_V^{-1}\ra,
\eeqs
and the following bound holds on the approximation error $\mpi-\mhatS$ (equivalently, on the remainder $R_{\mpi}$):
\begin{align}
\big\|R_{\mpi}\big\|_{H_V}=  \big \|\mpi- \mhatS\big\|_{H_V} &\les \Es{\s}\l(\del{\s}\r) +\taus{\s}.\label{cor-mean}
 \end{align} If $v\in C^5(\Theta)$, then~\eqref{cor-mean} can be tightened to
\beqs\label{Rc5}
\big\|\mpi- \mhatS\big\|_{H_V} \les\delodd{\s}+\taus{\s}.\eeqs 
Finally, it holds $\|\Lhatm\|_{H_V}\leq\tilep$, and hence
 \beq\label{unc-mean-L}
 \|\mpi-\mhat\|_{H_V}\leq\tilep+\|R_{\mpi}\|_{H_V},
\eeq
where $\|R_{\mpi}\|_{H_V}$ is bounded as~\eqref{cor-mean} or~\eqref{Rc5}, depending on the regularity of $v$.
\end{corollary}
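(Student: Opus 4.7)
The plan is to reduce all four claims to Theorems~\ref{thm:Vgen} and~\ref{thm:odd} applied to the linear test function $G_u(x) = u^\T H_V^{1/2}(x-\mhat)$ for a unit vector $u\in\R^d$, together with one Gaussian moment computation to identify $\Lhatm$ in closed form.

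First I would compute $\Lhatm$ explicitly. Writing $X\sim\lap$ as $X = \mhat + H_V^{-1/2}Z$ with $Z\sim\mathcal N(0,I_d)$, the polynomial $\Lhat$ is odd about $\mhat$, so $\int\Lhat d\lap=0$ and only the centered part contributes: $\Lhatm = \E[(X-\mhat)\Lhat(X)]$. Setting $T=\nabla^3V(\mhat)$ and $\Sigma=H_V^{-1}$, a coordinate-wise application of Isserlis' theorem gives
\begin{equation*}
\E[(X-\mhat)_i\langle T,(X-\mhat)^{\otimes 3}\rangle]=\sum_{\ell,m,n}T_{\ell mn}\bigl(\Sigma_{i\ell}\Sigma_{mn}+\Sigma_{im}\Sigma_{\ell n}+\Sigma_{in}\Sigma_{\ell m}\bigr)=3\,(\Sigma\langle T,\Sigma\rangle)_i,
\end{equation*}
where the three Wick pairings coincide by symmetry of $T$. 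Multiplying by $-1/6$ yields the formula for $\Lhatm$.

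Next, for the remainder bounds I would apply Theorem~\ref{thm:Vgen} to $G=G_u$. Since $\int G_u d\lap=0$ and $\Var_\lap(G_u)=u^\T H_V^{1/2}H_V^{-1}H_V^{1/2}u=1$, the standardized $g$ equals $G_u$ itself. The growth condition~\eqref{gcond} reduces to $|u^\T H_V^{1/2}(x-\mhat)|\le\|x-\mhat\|_{H_V}\le\af{G}\exp(\cgro\sqrt d\|x-\mhat\|_{H_V}/2)$; since $s\le(2/(\cgro\sqrt d\,e))\exp(\cgro\sqrt d\,s/2)$ for all $s\geq0$ and $\cgro\sqrt d\gtrsim1$ by Assumption~\ref{assume:c0}, one may take $\af{G}\lesssim 1$, so $\af g\vee 1\lesssim 1$. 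Theorem~\ref{thm:Vgen} then yields
\begin{equation*}
|u^\T H_V^{1/2}(\mpi-\mhatS)|=|R(G_u)|\les \Es{\s}(\del{\s}) + \taus{\s},
\end{equation*}
and taking the supremum over $\|u\|=1$ gives~\eqref{cor-mean}. For the refinement~\eqref{Rc5}, I note that $G_u$ is odd about $\mhat$, so under the extra regularity $v\in C^5(\Theta)$ Theorem~\ref{thm:odd} applies and produces the same bound with $\del{\s}$ replaced by the sharper $\delodd{\s}$.

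Finally, for $\|\Lhatm\|_{H_V}\le\tilep$, I apply the identity $L(G_u)=u^\T H_V^{1/2}\Lhatm$ (since $\int G_u\Lhat d\lap=u^\T H_V^{1/2}\int(x-\mhat)\Lhat d\lap$) together with Cauchy--Schwarz and Lemma~\ref{lma:Lbd}: $|L(G_u)|\le\|G_u\|_{L^2(\lap)}\|\Lhat\|_{L^2(\lap)}=\tilep$. Taking the supremum over $\|u\|=1$ gives $\|\Lhatm\|_{H_V}\le\tilep$, and~\eqref{unc-mean-L} follows by the triangle inequality $\|\mpi-\mhat\|_{H_V}\le\|\Lhatm\|_{H_V}+\|R_{\mpi}\|_{H_V}$. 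The only mildly delicate step is the Gaussian fourth-moment calculation for $\Lhatm$; the rest is essentially bookkeeping that translates the abstract results on $\Delta_g$ into a statement about the mean.
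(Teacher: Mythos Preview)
Your proposal is correct and follows essentially the same approach as the paper: the linear test functions $G_u(x)=u^\T H_V^{1/2}(x-\mhat)$ are already standardized, satisfy~\eqref{gcond} with $\af{G}\lesssim 1$ via $\cgro\sqrt d\gtrsim 1$, and are odd about $\mhat$, so Theorems~\ref{thm:Vgen} and~\ref{thm:odd} plus a supremum over $\|u\|=1$ give~\eqref{cor-mean} and~\eqref{Rc5}; the bound $\|\Lhatm\|_{H_V}\le\tilep$ is Cauchy--Schwarz with Lemma~\ref{lma:Lbd}, exactly as in the paper. The only cosmetic difference is that you compute $\Lhatm$ via Isserlis' theorem whereas the paper uses Gaussian integration by parts $\E[Zf(Z)]=\E[\nabla f(Z)]$; these are equivalent routes to the same identity.
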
 
 Finally, we bound the covariance approximation error. We only consider the error for $\lap$, since there should be no improvement from $\corlap$. This is essentially because the matrix-valued function $(x-\mhat)(x-\mhat)^\T$ is even about $\mhat$, and therefore the leading order term cancels.
\begin{corollary}[Covariance error bound for $\lap$]\label{thm:Vcov}Assume the conditions of Theorem~\ref{thm:Vgen}. Let $\Sigpi = \int (x-\mpi)(x-\mpi)^\T d\pi(x)$ be the covariance of $\pi$, and recall that $H_V^{-1}$ is the covariance of $\lap$. Then we have the upper bound 
\beq\label{eq:Vcov}\|\Sigpi-H_V^{-1}\|_{H_V^{-1}}=\|H_V^{1/2}(\Sigpi -H_V^{-1})H_V^{1/2}\|\les \Es{\s}^2\l(\del{\s}\r) +\taus{\s}.\eeq
\end{corollary}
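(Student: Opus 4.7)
The plan is to reduce the operator-norm bound to a supremum of quadratic-form errors and then invoke Theorem~\ref{thm:Vgen} for an even observable. By symmetry of $\Sigpi - H_V^{-1}$ and the substitution $u = H_V^{1/2} v$,
$$\|H_V^{1/2}(\Sigpi - H_V^{-1}) H_V^{1/2}\| \;=\; \sup_{\|u\|_{H_V^{-1}}=1} \bigl|u^\T(\Sigpi - H_V^{-1}) u\bigr|,$$
so it is enough to bound the right-hand side uniformly over such $u$.

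For each fixed unit vector $u$ (in $\|\cdot\|_{H_V^{-1}}$), I apply Theorem~\ref{thm:Vgen} to $G(x) = (u^\T(x-\mhat))^2$. Under $\lap$ the scalar $u^\T(X-\mhat)$ is standard normal, so $\int G d\lap = 1$ and $\Var_\lap(G) = 2$. Expanding the square and integrating against $\pi$ gives $\int G d\pi = u^\T \Sigpi u + \bigl(u^\T(\mpi - \mhat)\bigr)^2$, and since $G$ is even about $\mhat$, Theorem~\ref{thm:Vgen} yields $L(G) = 0$ and hence
$$u^\T(\Sigpi - H_V^{-1}) u \;=\; R(G) - \bigl(u^\T(\mpi - \mhat)\bigr)^2.$$
To invoke the theorem I verify the growth condition~\eqref{gcond}. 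With $t = \|x-\mhat\|_{H_V}$ and $\lambda = \cgro\sqrt d/2$, the estimate $t^2 \leq (2/\lambda)^2 e^{-2} e^{\lambda t}$ shows $\af{G}\les 1$ is admissible (crucially, $\lambda$ is bounded below by an absolute constant thanks to $\cgro \geq C/\sqrt d$ from Assumption~\ref{assume:c0}); combined with $\Var_\lap(G) = 2$ this gives $\af{g} \les 1$ uniformly in $u$. Theorem~\ref{thm:Vgen} together with the rescaling $R(G) = \sqrt{\Var_\lap(G)}\,R(g)$ then yields
$$|R(G)| \;\les\; \Es{\s}\bigl(\epsilon_3^2 + \efour{\s}^2\bigr) + \taus{\s}.$$

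For the second term, Cauchy--Schwarz together with $\|u\|_{H_V^{-1}} = 1$ gives $(u^\T(\mpi - \mhat))^2 \leq \|\mpi - \mhat\|_{H_V}^2 \les \tilep^2 + \|R_{\mpi}\|_{H_V}^2$. Using $\tilep \leq \epsilon_3$, the mean bound~\eqref{cor-mean}, the standing assumption $\epsilon_3^2 + \efour{\s}^2 \les 1$, and $\Es{\s}\geq 1$, $\taus{\s}\les 1$, this collapses to a constant multiple of $\Es{\s}^2(\epsilon_3^2 + \efour{\s}^2) + \taus{\s}$; the squaring of the mean error is precisely what produces the extra factor of $\Es{\s}$ in the final bound. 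Combining with the bound on $R(G)$ and taking the supremum over $u$ finishes the proof. The only nontrivial step is the uniform-in-$u$ verification of~\eqref{gcond} with $\af{G}$ an absolute constant --- everything else is bookkeeping given Theorem~\ref{thm:Vgen} and Corollary~\ref{thm:Vmean}.
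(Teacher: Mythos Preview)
Your proposal is correct and follows essentially the same route as the paper: reduce to the quadratic observable $G_u(x)=(u^\T(x-\mhat))^2$ (equivalently $(u^\T H_V^{1/2}(x-\mhat))^2$ after reparametrizing $u$), use evenness to kill $L(G_u)$, verify~\eqref{gcond} with $\af{G_u}\les1$ via $\cgro\sqrt d\gtrsim1$, and absorb the squared mean error using Corollary~\ref{thm:Vmean}. The only cosmetic differences are your parametrization $\|u\|_{H_V^{-1}}=1$ versus the paper's $\|u\|=1$, and your use of~\eqref{g-alt} rather than bounding $|G-\int G\,d\lap|$ directly; both lead to the same $\Es{\s}^2$ in the final bound for the same reason you identify.
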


The informal and coarse summary of Corollaries~\ref{thm:Vmean} and~\ref{thm:Vcov} are that 
$$\|\Lhatm\|_{H_V}\les_v\epsilon,\quad\|R_{\mpi}\|_{H_V}=\|\mpi- \mhatS\|_{H_V}\les_v\epsilon^3,\quad\|\Sigpi -H_V^{-1}\|_{H_V^{-1}}\les_v\epsilon^2,$$ where $\epsilon=d/\sqrt n$.

\begin{remark}[Mean and covariance bound: unweighted norm]
If a lower bound on $\lambda_{\min}(H_V)$ is known, it can be used in Corollaries~\ref{thm:Vmean} and~\ref{thm:Vcov} to obtain upper bounds on the unweighted norms $\|\mpi - \mhat\|$, $\|\mpi-\mhatS\|$, and $\|\Sigpi - H_V^{-1}\|$.
\end{remark}
\subsection{Lower bound}\label{subsec:lb}We now use our leading order decompositions to demonstrate a function $v$ for which the TV and mean errors of the uncorrected LA are bounded from \emph{below} by $d/\sqrt n$. (For a lower bound construction using a different proof method, see Section~\ref{sec:TV:pmf} on the multinomial model.) 

Let
$\phi(x)=\E[\psi(Z^\T x)]$ for a function $\psi:\R\to\R$, with $Z\sim\mathcal N(0, I_d)$, and define 
\beq\label{v-lb}v(x)=\phi(x) - \nabla\phi(x^*)^\T x = \E[\psi(Z^\T x)] - \E[\psi'(Z^\T x^*)Z^\T x]\eeq for some fixed $x^*\in\R^d$. We assume for simplicity that $x^*=e_1=(1,0,\dots,0)$, though the analysis can be generalized to other values of $x^*$. We show in Appendix~\ref{app:main} that $\mhat=x^*$, i.e. $x^*$ is the unique global minimizer of $v$.

When $\psi(t)=\log(1+e^t)$, this $v$ arises in the setting of logistic regression with standard Gaussian design. Specifically, $v$ is the expectation of the negative population log likelihood over the feature distribution. However, we do not use this fact to prove the below lower bound. 
\begin{lemma}[Upper bounds on remainder terms]\label{lma:log-reg-TV}
Suppose $\psi\in C^4(\R)$ is strictly convex, has uniformly bounded third and fourth derivatives, and is independent of both $d$ and $n$. Let $\pi\propto e^{-nv}=e^{-V}$ where $v$ is as in~\eqref{v-lb}, with $x^*=(1,0,\dots,0)$. Let $\LTV$ and $\Lhatm$ be as in~\eqref{Ldef} and~\eqref{Lm-def}, respectively. Then
\beq\label{rmder-bd-log}
|\TV(\pi,\lap)-\LTV| \les\epsilon^2+\tau,\quad \|\mpi-\mhat-\Lhatm\|_{H_V}\les \epsilon^2+\tau,\eeq where 
\beq\label{eps-tau-0}\epsilon=d/\sqrt n,\qquad\tau=d\e(-Cn^{1/4}\sqrt d).\eeq
\end{lemma}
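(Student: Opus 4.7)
The plan is to derive both bounds of~\eqref{rmder-bd-log} directly from Corollary~\ref{corr:corTV} and Corollary~\ref{thm:Vmean} applied to the specific $v$ from~\eqref{v-lb}. These corollaries, which rest on Theorem~\ref{thm:Vgen}, give remainder bounds of the shape $\Es{\s}(\epsilon_3^2+\efour{\s}^2)+\taus{\s}$ once Assumptions~\ref{assume:1} and~\ref{assume:c0} hold. Thus the work is to (i) verify these assumptions for this $v$, (ii) show $c_3,c_4(\s)\les 1$ (so that $\epsilon_3,\efour{\s}\les d/\sqrt n=\epsilon$), and (iii) pick $\s$ so that $\Es{\s}=O(1)$ and $\taus{\s}$ reduces to the $\tau$ of~\eqref{eps-tau-0}.

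For (i) and (ii): differentiating under the Gaussian expectation, which is justified by $\psi\in C^4(\R)$, the uniform boundedness of $\psi''',\psi^{(4)}$, and the polynomial growth of $\psi''$, gives $v\in C^4(\R^d)$ together with the explicit formulas $\nabla^k v(x)=\E[\psi^{(k)}(Z^\T x)Z^{\otimes k}]$ for $k=2,3,4$. Strict convexity of $\psi$ and the non-degeneracy of $Z$ make $v$ strictly convex, and $\nabla v(e_1)=0$, so $\mhat=e_1$ is the unique global minimizer and Assumption~\ref{assume:1} holds. The Hessian $H_v=\mathrm{diag}(\E[\psi''(Z_1)Z_1^2],\E[\psi''(Z_1)],\dots,\E[\psi''(Z_1)])$ is diagonal with strictly positive, finite entries depending only on $\psi$, so $H_v\asymp I_d$ with absolute constants. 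Using this and the uniform bounds on $\psi''',\psi^{(4)}$, Gaussian moment estimates give $\|\nabla^3v(x)\|\les 1$ and $\|\nabla^4v(x)\|\les 1$ uniformly in $x\in\R^d$, which in turn yields $c_3,c_4(\s)\les 1$ for every $\s$. Since $v$ is convex and $c_3\sqrt{d/n}+c_4(4)d/n\le 3/8$ for $n/d^2$ large, Remark~\ref{vconvA2} delivers Assumption~\ref{assume:c0} with $s_0=4$ and $\cgro=1$.

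It remains to choose $\s$. I take $\s=C_0(n/d^2)^{1/4}$ for a suitable absolute constant $C_0$; with $n/d^2$ large this exceeds $\sst=\max(4,8\log(2e))$ and keeps $(\epsilon_3^2+\efour{\s}^2)\s^4\les 1$, so $\Es{\s}=O(1)$. With $\cgro=1$, the exponentially small term becomes $\taus{\s}=d\e(-\s d/8)\asymp d\e(-n^{1/4}\sqrt d/8)=\tau$. Plugging this $\s$ into~\eqref{TVcorlap} and~\eqref{cor-mean} and using $\epsilon_3^2+\efour{\s}^2\les\epsilon^2$ then produces exactly the two claimed bounds. The main obstacle is the uniform tensor-norm control $c_3,c_4(\s)\les 1$: the $H_v$-weighted operator norm depends on the spectrum of $H_v$, so this reduces to proving $d$-independent two-sided bounds on the diagonal entries of $H_v$, which follow from $\psi''$ being strictly positive (giving the lower bound on $\E[\psi''(Z_1)]$ and $\E[\psi''(Z_1)Z_1^2]$) and from the polynomial growth of $\psi''$ (giving the upper bound via Gaussian moments).
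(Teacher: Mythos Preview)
Your proposal is correct and follows essentially the same route as the paper: verify Assumptions~\ref{assume:1} and~\ref{assume:c0} via strict convexity of $\psi$ and Remark~\ref{vconvA2}, bound $c_3,c_4(\s)$ by absolute constants using $\|\psi^{(k)}\|_\infty$ and $\lambda_{\min}(H_v)>0$, then take $\s=\epsilon^{-1/2}=(n/d^2)^{1/4}$ and apply Corollaries~\ref{corr:corTV} and~\ref{thm:Vmean}. One small remark: only a \emph{lower} bound on the diagonal entries of $H_v$ is needed to control $c_3,c_4(\s)$ (the paper uses just $\lambda_{\min}(H_v)^{-k/2}$), so your concern about the upper bound on $H_v$ is unnecessary---though it does follow, as you note, from the linear growth of $\psi''$ implied by $\|\psi'''\|_\infty<\infty$.
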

\begin{lemma}[Lower bounds on leading order terms]\label{lma:leading-lb-log}
Consider the setting of Lemma~\ref{lma:log-reg-TV}, and define $a_{k,p}=\E[\psi^{(k)}(Z_1)Z_1^p]$, where $Z_1\sim\mathcal N(0,1)$. Then
\beqs
\LTV&\geq \frac{1}{a_{2,2}^{1/2}\sqrt n}\l((d-1)\frac{|a_{3,1}|}{8a_{2,0}} -\frac{|a_{3,3}|}{4a_{2,2}}\r),\\
\|\Lhatm\|_{H_V} &= \frac{1}{2 a_{2,2}^{1/2}\sqrt n}\l|(d-1)\frac{a_{3,1}}{a_{2,0}}+ \frac{a_{3,3}}{a_{2,2}}\r|.\eeqs
\end{lemma}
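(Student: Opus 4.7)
The plan is to exploit the special structure at $x^* = e_1$ to obtain very sparse formulas for $H_v = \nabla^2 v(x^*)$ and $T := \nabla^3 v(x^*)$, from which both claims follow by direct computation plus a reverse-triangle-inequality step for the $\LTV$ bound. Since $v(x) = \phi(x) - \nabla\phi(x^*)^\T x$ with $\phi(x) = \E[\psi(Z^\T x)]$, the subtracted linear term does not affect second or third derivatives, and differentiating under the expectation at $x^* = e_1$ (so that $Z^\T x^* = Z_1$) yields $(H_v)_{ij} = \E[\psi''(Z_1)Z_iZ_j]$ and $T_{ijk} = \E[\psi'''(Z_1)Z_iZ_jZ_k]$. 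Using that $Z_1,\ldots,Z_d$ are independent $\mathcal N(0,1)$, a short case analysis on how many indices equal $1$ shows $H_v = \diag(a_{2,2}, a_{2,0},\ldots, a_{2,0})$ and that the only nonzero entries of $T$ (up to symmetry) are $T_{111} = a_{3,3}$ and, for each $m \neq 1$, $T_{1mm} = a_{3,1}$; every other entry picks up an odd Gaussian moment and vanishes.

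For the mean formula, equation~\eqref{Lm-def} combined with $H_V = nH_v$ and $\nabla^3 V = nT$ gives $\Lhatm = -\tfrac{1}{2n}H_v^{-1}\la T, H_v^{-1}\ra$. Since $H_v^{-1}$ is diagonal, $\la T, H_v^{-1}\ra_i = \sum_j T_{ijj}/(H_v)_{jj}$, which the sparsity pattern of $T$ collapses to $a_{3,3}/a_{2,2} + (d-1)a_{3,1}/a_{2,0}$ when $i=1$ and to $0$ otherwise. Hence $\Lhatm$ has only its first coordinate nonzero, and $\|\Lhatm\|_{H_V}^2 = na_{2,2}(\Lhatm)_1^2$ immediately yields the claimed expression for $\|\Lhatm\|_{H_V}$.

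For $\LTV$, writing $Y = H_v^{-1/2}Z$ and pulling the factors of $n$ out of $nT$ and $H_V^{-3/2}$ in~\eqref{Ldef} gives $\LTV = (12\sqrt n)^{-1}\E|\la T, Y^{\otimes 3}\ra|$. Expanding using the nonzero entries of $T$ (with a combinatorial factor of $3$ from the permutations of each pattern $\{1,m,m\}$) produces
\[
\la T, Y^{\otimes 3}\ra = \frac{Z_1}{\sqrt{a_{2,2}}}\bigg(\frac{a_{3,3}}{a_{2,2}}Z_1^2 + \frac{3a_{3,1}}{a_{2,0}}S\bigg),\qquad S := \sum_{m=2}^d Z_m^2.
\]
Setting $\alpha = a_{3,3}/a_{2,2}$ and $\beta = 3a_{3,1}/a_{2,0}$, the reverse triangle inequality gives the pointwise bound $|\alpha Z_1^2 + \beta S| \geq |\beta|S - |\alpha|Z_1^2$ (trivially valid even when the right side is negative). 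Multiplying by $|Z_1|\geq 0$, taking expectation, and invoking independence of $Z_1$ and $S$ produces $\E[|Z_1||\alpha Z_1^2 + \beta S|] \geq \sqrt{2/\pi}\bigl[(d-1)|\beta| - 2|\alpha|\bigr]$ from the identities $\E|Z_1| = \sqrt{2/\pi}$, $\E[S] = d-1$, $\E|Z_1|^3 = 2\sqrt{2/\pi}$. Substituting back and rounding constants via $\sqrt{2/\pi}\geq 1/2$ and $\sqrt{2/\pi}/6\leq 1/4$ yields the claim. The only real obstacle is bookkeeping in the tensor expansion; once the sparsity of $T$ at $x^* = e_1$ is pinned down, the rest is a direct computation driven by Gaussian independence.
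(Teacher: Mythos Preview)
Your proposal is correct and follows essentially the same approach as the paper: both exploit that at $x^*=e_1$ the Hessian $H_v$ is diagonal and the third-derivative tensor $T$ has only the entries $T_{111}=a_{3,3}$ and $T_{1mm}=a_{3,1}$ (and permutations) nonzero, then compute $\Lhatm$ directly from~\eqref{Lm-def} and lower-bound $\LTV$ via the reverse triangle inequality on the cubic form followed by Gaussian moment estimates. The only cosmetic differences are that the paper works at the $V$-level rather than the $v$-level and uses the cruder bounds $\E|Z_1|\geq 1/2$, $\E|Z_1|^3\leq 3$ in place of your exact values $\sqrt{2/\pi}$, $2\sqrt{2/\pi}$ before rounding to the same final constants.
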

Combining Lemmas~\ref{lma:log-reg-TV} and~\ref{lma:leading-lb-log} immediately gives the following corollary.
\begin{corollary}\label{corr:lb}
In the setting of Lemma~\ref{lma:log-reg-TV}, we have
\beqs
\TV(\pi,\lap)&\geq C_1\epsilon - C_2\epsilon^2-C_2\tau\\
\|\mpi-\mhat\|_{H_V}&\geq C_1\epsilon - C_2\epsilon^2-C_2\tau,
\eeqs where $\epsilon$ and $\tau$ are as in~\eqref{eps-tau-0}, and $C_1,C_2>0$ are absolute constants.
\end{corollary}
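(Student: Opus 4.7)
The plan is simply to chain the two preceding lemmas through the reverse triangle inequality. Writing $\TV(\pi,\lap) = \LTV + (\TV(\pi,\lap)-\LTV)$, the reverse triangle inequality combined with the remainder bound $|\TV(\pi,\lap)-\LTV|\les\epsilon^2+\tau$ from Lemma~\ref{lma:log-reg-TV} yields
\[
\TV(\pi,\lap) \;\geq\; \LTV - C(\epsilon^2+\tau).
\]
The analogous step applied to the vector decomposition $\mpi-\mhat = \Lhatm + R_{\mpi}$, in the norm $\|\cdot\|_{H_V}$, and using the second bound $\|R_{\mpi}\|_{H_V}\les\epsilon^2+\tau$ of the same lemma, gives
\[
\|\mpi-\mhat\|_{H_V} \;\geq\; \|\Lhatm\|_{H_V} - C(\epsilon^2+\tau).
\]

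Next I would substitute the lower bounds from Lemma~\ref{lma:leading-lb-log}. Both $\LTV$ and $\|\Lhatm\|_{H_V}$ admit lower bounds of the shape
\[
\tfrac{1}{\sqrt n}\bigl((d-1)\alpha - \beta\bigr),
\]
where the positive constants $\alpha,\beta$ are explicit polynomial expressions in $a_{2,0}, a_{2,2}, |a_{3,1}|, |a_{3,3}|$, hence depend only on the fixed function $\psi$ and not on $d$ or $n$. (For the $\|\Lhatm\|_{H_V}$ bound, which involves the absolute value of a signed quantity, one first discards a possible cross-cancellation by the triangle inequality before arriving at this form.) For $d \geq d_0 := 1 + 2\beta/\alpha$ the subtracted $\beta$ is at most half the leading $(d-1)\alpha$, and using $d-1\geq d/2$ (valid for $d\geq 2$) one obtains $\LTV \geq (\alpha/4)\,d/\sqrt n = C_1'\epsilon$, with an identical lower bound for $\|\Lhatm\|_{H_V}$ (with possibly different constant, of which we then take the minimum). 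Chaining with the reverse-triangle step delivers the two inequalities claimed in the corollary.

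The only substantive check is the non-degeneracy $\alpha>0$, which reduces to $a_{3,1}=\E[\psi'''(Z_1)Z_1]\neq 0$; by Stein's identity, applicable since $\psi^{(4)}$ is bounded, this equals $\E[\psi^{(4)}(Z_1)]$, and is nonzero for the canonical strictly convex examples one has in mind (e.g.\ the logistic link $\psi(t)=\log(1+e^t)$). The remaining $a$'s are automatically positive by strict convexity of $\psi$. Since $\psi$ is fixed as part of the setup, the $\psi$-dependent constants $C_1, C_2$ qualify as ``absolute'' in the sense intended. There is no real obstacle — the corollary is, as advertised, an immediate consequence of the two lemmas — and the only bookkeeping burden is the restriction $d\geq d_0(\psi)$, outside of which the claimed lower bound is either vacuous (when it turns negative) or easily handled by absorbing the small-$d$ case into the constant $C_1$.
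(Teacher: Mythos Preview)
Your proposal is correct and matches the paper's approach exactly: the paper's proof is the single sentence ``Combining Lemmas~\ref{lma:log-reg-TV} and~\ref{lma:leading-lb-log} immediately gives the following corollary,'' and you have spelled out that combination via the reverse triangle inequality, just as intended. Your additional care about the non-degeneracy $a_{3,1}\neq 0$ and the small-$d$ bookkeeping goes beyond what the paper records, but is consistent with it.
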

\section{Discussion}\label{sec:discuss}
 In Section~\ref{subsec:discuss}, we discuss the significance of the results of Section~\ref{sec:V} and compare to prior work. We then compare the LA error bounds to those of Gaussian variational inference in Section~\ref{VI}, and give further details and intuition about our results in Section~\ref{subsec:further}.
 \subsection{Discussion of contributions}\label{subsec:discuss}
Here, we summarize the significance of our results, and then go into more detail about each point in the context of prior work. \\

\noindent (1) We obtain the first ever higher-order accurate LA in high dimensions, and bounded its error in terms of $\Delta_g$, TV, and mean approximation, in Theorem~\ref{thm:Vgen} and~\ref{thm:odd}, Corollary~\ref{corr:corTV}, and Corollary~\ref{thm:Vmean}.\\

\noindent (2) Our decomposition of the LA error enables us to prove \emph{lower bounds} of order $d/\sqrt n$ on the TV error of the uncorrected approximation, in Corollary~\ref{corr:lb}. This proves that the condition $d^2\ll n$ is in general \emph{necessary} to guarantee accuracy of the LA.\\

\noindent (3) In Theorem~\ref{thm:Vgen}, we obtain the first ever bound on the LA error $\Delta_g(\lap)$ in the high-dimensional regime, for any function $g$ with log linear growth. \\


\noindent (4) We obtain upper bounds on the TV, mean, and covariance error of the LA, in Corollaries~\ref{corr:corTV},~\ref{thm:Vmean}, and~\ref{thm:Vcov}, respectively. Our bounds are in some cases an \emph{order of magnitude} smaller than all previously known bounds.\\

\noindent(5) The leading order term of the LA error serves as a proxy to the overall error, and this proxy is \emph{computable}, unlike most prior bounds. \\

We now go into more detail about these points in the context of prior work. The numbers of the below subsections correspond to the numbering of the above points (e.g. Section~\ref{subsub:lower} corresponds to point 2).


\subsubsection{Higher-order accurate LA}\label{subsub:higherorder}
As we have stated above, the work~\cite{durante2023skewed} is the only other work to obtain a higher-order accurate LA, but the authors have only proved their results for fixed dimension $d$. Besides this, the bounds in~\cite{durante2023skewed} are suboptimal relative to ours even in their $n$ dependence, due to the presence of a potentially large power of $\log n$ (see e.g. the term $M_n^{c_8}$ in Theorem 4.1 of~\cite{durante2023skewed}). 

The approximation $p_{\mathrm{SKS}}$ (SKS for ``skew-symmetric'') derived in their work also suffers from a significant practical drawback relative to ours. Namely, integrals against $p_{\mathrm{SKS}}$ of simple functions such as polynomials are not known in closed form. Although the authors explain that sampling from $p_{\mathrm{SKS}}$ is as cheap as sampling from a Gaussian, a large number of samples is needed to decrease the Monte Carlo error to a level below that of the improved accuracy that $p_{\mathrm{SKS}}$ achieves. In contrast, integrals against $\corlap$ of both polynomials $q(x)$ and ``exponentially shifted" polynomials $e^{u^\T x}q(x)$ \emph{can} be computed in closed form. Therefore, one can more directly reap the reward of the decreased approximation error of $\corlap$, without this small error being overwhelmed by a possibly larger Monte Carlo error. 

There is one potential advantage of $p_{\mathrm{SKS}}$ over our $\corlap$: there is an exact and explicit procedure to sample from $p_{\mathrm{SKS}}$, and it is a well-defined probability density. In contrast, sampling from $\corlap$ is technically not well-defined since $\corlap$ is not a probability measure: $\corlap(x)$ can take negative values for some $x$ far from the bulk of the distribution. 

For the purpose of approximating integrals $\int gd\pi$, this is irrelevant both philosophically and computationally. Philosophically, if the integral $\int gd\corlap$ is a good approximation of $\int gd\pi$, then it is irrelevant that this was achieved using a measure which is not a probability density. In particular, if $\int \ind_Ad\corlap$ is negative for some set $A$, this simply means that $\pi(A)\les_v (d/\sqrt n)^2$. Practically, that we cannot sample from $\corlap$ is not an obstacle because we can sample $X_i\iid \lap$, and then estimate $\int gd\corlap=\int g(1+\Lhat)d\lap$ via $\frac1n\sum_{i=1}^n(1+\Lhat(X_i))g(X_i)$. This procedure is only necessary when the integral cannot be evaluated in closed form. 

%

However, for the purpose of constructing a credible set $\mathcal R_\alpha$ of a specified coverage $1-\alpha$ (i.e. such that $\pi(\mathcal R_\alpha)=1-\alpha$), it is indeed convenient to sample directly from the approximating measure. To see why, consider for simplicity a credible interval for the one-dimensional marginal of coordinate $x_j$ under the distribution $x\sim\pi$. If we have access to samples drawn from the approximating density, then we can simply take the interval between the $\alpha/2$ and $1-\alpha/2$ sample quantiles of that coordinate. Using our approximation, it may be possible to get an analytic expression for the density of the marginal (under the joint distribution $\corlap$), since we are taking an integral over $\R^{d-1}$ of a polynomial against a Gaussian. From here, a credible interval is simple to construct, since this a one-dimensional problem.  We leave this for future work.

\subsubsection{Lower bounds}\label{subsub:lower} We are not aware of any lower bound beyond the aforementioned recent work~\cite{bilodeau2023tightness}. There, the authors prove a lower bound on the Laplace approximation to the normalizing constant, in fixed dimension and for a few simple models. In contrast, our lower bound from Section~\ref{subsec:lb} holds in high dimensions. Moreover, the lower bound construction is based on the framework of the leading order decomposition, so it may be possible to use it to obtain lower bounds for a broader class of examples.


\subsubsection{Bounds on $\Delta_g(\lap)$}
As noted in the introduction, prior work on the Laplace approximation error to $\int gd\pi$ for general classes of $g$ is relatively sparse. The works~\cite{bp} and~\cite{fischer2022normal} both rely on Stein's method to prove their bounds on $\Delta_g(\lap)$, whereas we take a more direct approach (outlined in Section~\ref{sec:overview}). Stein's method seems to be a less powerful approach to bound $\Delta_g(\lap)$. Indeed, the work~\cite{bp} only bounds $\Delta_g(\lap)$ in $d=1$ and notes ``so far we have struggled to find enough theory that would let us compute useful bounds using [our] approach [in higher dimensions]''. The bounds of~\cite{fischer2022normal} apply in arbitrary dimension $d$, but have very suboptimal scaling with $d$.\\

\subsubsection{Tighter upper bounds}\label{subsub:tight}
As noted in the introduction,~\cite{bp} is the first work to obtain Laplace approximation bounds scaling as $d/\sqrt n$ up to model-dependent factors, followed by our earlier work~\cite{katsBVM}, which improves on the TV bound of~\cite{bp} in the case that $v$ grows linearly at infinity. We now show that our TV bound can be an order of magnitude smaller than that of~\cite{bp} and~\cite{katsBVM}. The same is true for the mean error of~\cite{bp} (and for the same reasons), so we only discuss the TV bound here. 

Although our results from Section~\ref{sec:V} do not require that $d\gg1$, we consider this case in the present discussion, since this is the interesting and nontrivial regime. More specifically, we assume $d\gg\log n$. We also assume that $\s_0$ and $\cgro$ from Assumption~\ref{assume:c0} are absolute constants, which is easily satisfied if, for example, $v$ is convex (see Remark~\ref{vconvA2}). As shown in Section~\ref{subsub:s}, in this regime there is a natural choice of the radius $\s$, which leads to the simpler bound $|\TV(\pi,\lap)-\LTV|\les\epsilon_3^2+\efour{\s}^2$.  Adding this simpler bound to the bound $\LTV\leq\tilep$ from Corollary~\ref{corr:corTV} yields
\beq\label{ourTV}
\TV(\pi,\lap)\leq \tilep+C\ethree{0}^2 + C\efour{\s}^2
\eeq where $\s=\sst=\max(\s_0, (8/\cgro)\log(2e/\cgro))$ is an absolute constant and $C$ is an unspecified absolute constant.

The work~\cite{katsBVM} assumes $v\in C^3(\Theta)$, that $v$ has a unique strict global minimizer, and that $v$ satisfies the linear growth condition~\eqref{assume:c0:eq}, with the additional requirement that $\cgro=\s_0/4$ and $\s_0$ is an absolute constant. Assuming we are in the regime $d\gg\log n$ (though this is not required in~\cite{katsBVM}), the bound in that work can be written as
\beq\label{earlierTV}
\TV(\pi,\lap)\leq C\ethree{\s}
\eeq where $C$ is an \emph{explicit} absolute constant and $\s=\s_0$, also an absolute constant. The constant $C$ stems from absorbing an exponentially negligible term into the main term in the upper bound. 

Finally, let us state the TV bound of~\cite{bp}. This bound takes a different form because it is given explicitly in terms of $\ell$ and $\prior$, where $\pi\propto e^{-n\ell}\prior$. A straightforward comparison between the bound in the present work, that of~\cite{katsBVM}, and that of~\cite{bp} is simplest if $\Theta$ is a bounded domain. In this case we can formally choose a uniform prior on $\Theta$ in the work of~\cite{bp} and let $\ell=v$, in which case $\pi\propto e^{-nv}$ on $\Theta$. This enables us to write the bounds of~\cite{bp} in terms of $v$. This trick does not actually limit the bound of~\cite{bp} to a uniform prior, since we are now free to ``redistribute'' $v$ as $\ell + n^{-1}v_0$ for some other $\ell$ and $v_0$. With some further processing, we show in~\cite{katsBVM} that the bound of~\cite{bp} can be written as
\beq\label{theirTV}
\TV(\pi,\lap)\leq C \ethree{\s},\qquad \s\sim\sqrt{\log n},
\eeq for an explicit absolute constant $C$. Taking $\s\sim\sqrt{\log n}$ is needed to ensure that a second term is exponentially small and can be absorbed into the main term. The main assumptions needed in~\cite{bp} are that $v\in C^3(\Theta)$, has a unique global minimizer, and there is some $\kappa$ such that $v(x)-v(\mhat)>\kappa>0$ for all $\|x-\mhat\|_{H_v}\geq \s\sqrt{d/n}$. As we show in~\cite{bp}, this weaker requirement on the growth of $v$ at infinity is what leads to the larger radius $\s\sim\sqrt{\log n}$.

The main advantage of the bounds~\eqref{earlierTV},~\eqref{theirTV} over the bound~\eqref{ourTV} in the present work is that~\eqref{earlierTV} and~\eqref{theirTV}  are fully explicit, require weaker regularity of $v$, and only involve the operator norm of the third derivative, rather than the operator norms of both the third and fourth derivatives. But this comes at the cost of coarser bounds, as we now show.

Let us compare each of the three terms in the righthand side of~\eqref{ourTV} to the righthand side of~\eqref{earlierTV}. We compare~\eqref{ourTV} to~\eqref{earlierTV} rather than to~\eqref{theirTV} because~\eqref{ourTV} and~\eqref{earlierTV} both take $\s$ to be an absolute constant. Clearly, $\ethree{0}^2\ll \ethree{\s}$. Furthermore, $\efour{\s}^2=c_4(\s)d^2/n\ll c_3(\s)d/\sqrt n=\ethree{\s}$ if $c_4(\s)\ll \frac{\sqrt n}{d}c_3(\s)$. Thus as long as the fourth derivative operator norm is not significantly larger than the third derivative operator norm, the third term in~\eqref{ourTV} is also an order of magnitude smaller than righthand side of~\eqref{theirTV}. This holds in both examples considered in Section~\ref{sec:pmf} and~\ref{sec:log}. In the multinomial example in Section~\ref{sec:pmf}, $c_4(\s)d^2/n\ll c_3(\s)d/\sqrt n$ holds whenever $\ethree{\s}=c_3(\s)d/\sqrt n\ll1$, which is the necessary requirement for~\eqref{earlierTV} to be a useful bound. In the logistic regression example in Section~\ref{sec:log}, we will see that $c_3(\s)$ and $c_4(\s)$ are both constants as long as $d^2\leq n$. Therefore $c_4(\s)d^2/n\ll c_3(\s)d/\sqrt n$ holds in this setting if $d^2\ll n$.

Finally, let us compare $\tilep$ to $\ethree{\s}$. By~\eqref{tulip-ep}, we have $\tilep\leq \ethree{0}\leq \ethree{\s}$. Thus, $\tilep$ is always less than or equal to $\ethree{\s}$. But in fact, $\tilep$ can actually be an \emph{order of magnitude} smaller than $\ethree{\s}$. Indeed, we show in Section~\ref{sec:c3:pmf} on the multinomial model that $\ethree{\s}/\tilep$ is at least $\sqrt d$, and can be as large as $d$. 

The reason our upper bounds are tighter than those of~\cite{bp,katsBVM} is that we have derived a leading order decomposition of the TV distance, and estimated both terms $\LTV$ and $\RTV$ separately.  Beyond tighter upper bounds, the additional advantage of this decomposition is that it allows us both to prove lower bounds (by studying the leading order term) and to derive the higher-order accuracy LA. Neither of these things is possible using the approach of~\cite{bp,katsBVM}. This is because these two works use a log Sobolev inequality at the very beginning to bound the TV distance from above.

Additionally, we mention that our covariance error bound is also tighter than that of~\cite{bp}, who obtains a bound scaling as $d\sqrt d/\sqrt n$, up to model dependent terms. This is suboptimal both in its $d$ and $n$ dependence. Another work bounding the mean and covariance is~\cite{huggins2018practical}, via the Wasserstein-1 and Wasserstein-2 distance, respectively. The bounds appear suboptimal by a factor of $\sqrt d$; however, the authors have not made the dimension dependence of their bounds explicit.

We have primarily compared our bounds to those of~\cite{bp} and~\cite{katsBVM} because these are the only other work which obtains bounds in terms of $d/\sqrt n$. But it is also worthwhile to compare our bounds to those of~\cite{spok23}, which are in terms of an effective dimension $d_{\text{eff}}$ (called $p_G$ in that work). The work~\cite{spok23} obtains that $\TV(\pi,\lap)\leq c(v)(d_{\text{eff}}/\sqrt n)^3$. Here, $d_{\text{eff}}\leq d$, so in some cases, $(d_{\text{eff}}/\sqrt n)^3$ may actually be a tighter bound than $(d/\sqrt n)^2$. The size of $d_{\text{eff}}$ depends on the strength of prior regularization: as the strong convexity parameter of the negative log prior grows (holding all else equal), the effective dimension decreases. However, if the convexity parameter of the negative log prior remains constant as $n\to\infty$ (which is the case in the typical setting in which the prior is chosen independently of sample size), then $d_{\text{eff}}$ tends to $d$ as $n\to\infty$. For example, the work~\cite{bp} has compared~\cite{spok23}'s $d_{\text{eff}}$ to $d$ in the setting of logistic regression with a standard Gaussian prior. The authors show that $d_{\text{eff}}=(1-o_p(1))d$, where $o_p(1)$ denotes convergence to zero in probability as $n\to\infty$.

\subsubsection{Computability}\label{subsub:compute}
Obtaining an upper bound on the Laplace TV error metric which is computable in polynomial time is an important and difficult problem. Let us revisit the works discussed in the introduction from the perspective of computability. The TV bounds of~\cite{spok23, helin2022non, dehaene2019deterministic} are not computable in polynomial time because they involve a tensor operator norm. This is the solution to a $d$-dimensional, constrained non-convex optimization problem. Computing this quantity exactly, without resorting to loose upper bounds, typically requires exponential-in-$d$ complexity. The bound of~\cite{fischer2022normal} on the Laplace TV error for exponential families does not involve an operator norm, but at the cost of having very suboptimal dimension dependence; it is stated that at best, $d^6\ll n$ is required for the bound to be small. This is in contrast to the $d^3\ll n$ condition of ~\cite{spok23, helin2022non, dehaene2019deterministic}. Finally,~\cite{bp} and~\cite{katsBVM} obtain a TV bound with the tight, $d^2\ll n$ dimension dependence. However, the bounds in these two works again require computing the operator norm $\|\nabla^3v(\mhat)\|_{H_v}$. 

In this work, we take an important step forward towards resolving the problem of computability. Namely, we provide an upper bound $\LTV\leq\tilep$ on the leading order term of the TV error, which is computable in polynomial time. To see this, recall from Definition~\ref{def:tilep} and the subsequent discussion that $\tilep$ is an explicit, polynomial function of the entries of $\nabla^3V(\mhat)$ and $\nabla^2V(\mhat)^{-1/2}$. Therefore, computing $\tilep$ indeed requires only polynomial complexity. 

Of course, our leading order term $\LTV$ is not leading order in the classical sense, so $\tilep$ is not automatically an accurate upper bound on $\TV(\pi,\lap)$. Obtaining a true upper bound on the TV distance using $\tilep$ would require a more refined analysis of $\RTV$. 
Namely, one could try to show $\RTV\leq C\tilep$ for some known or computable $C$, which leads to the overall bound $\TV(\pi,\lap)\leq (C+1)\tilep$. Note that $\RTV\leq C\tilep$ is likely to be a coarse bound, since we expect $\RTV$ to be much smaller than $\LTV$ in most cases. Only obtaining a coarse bound on $\RTV$ should require less computational or theoretical complexity than obtaining a more precise estimate of $\RTV$.

This additional analysis of $\RTV$ is beyond the scope of the present work.

\subsection{Comparison to Gaussian VI}\label{VI}
Gaussian variational inference (VI) offers another Gaussian approximation to posteriors $\pi$. It is defined as
\beq
\mathcal N(\hat x_{\mathrm{VI}},\hat\Sigma_{\mathrm{VI}})= \argmin_{p\in\mathcal P_{\mathrm{Gauss}}}\KL{p}{\pi},\eeq where $\mathcal P_{\mathrm{Gauss}}$ is the family of nondegenerate Gaussian distributions on $\R^d$. For a measure $\pi\propto e^{-nv}$ on $\R^d$,~\cite{KRVI} bounds the mean, covariance, and TV error of Gaussian VI in terms of $d$ and $n$. Here, we report the error bounds, for simplicity ignoring contributions from model dependent factors.

The authors find that the TV error is bounded by $d/\sqrt n$, the mean error is bounded by $(d/\sqrt n)^3$, and the covariance error is bounded by $(d/\sqrt n)^2$. Thus the VI and Laplace covariance and TV errors are of the same order, but the VI mean approximation is significantly more accurate than the Laplace mean approximation error. However, Corollary~\ref{thm:Vmean} shows that correcting the Laplace mean estimate from $\mhat$ to $\mhat + \Lhatm$ brings the accuracy up to the same order as that of VI. Therefore, when the third order derivative tensor $\nabla^3V(\mhat)$ can be computed, the skew-corrected LA is a viable alternative to Gaussian VI.

\subsection{Further details and intuition}\label{subsec:further}
\subsubsection{Verifying the conditions on $G$, and the role of $\Var_{\lap}(G)$}\label{subsub:gcond}
 The quantity $|G(x)-\int Gd\lap|$ from~\eqref{gcond} may be difficult to bound directly. As an alternative, suppose the following holds: 
\beq\label{g-alt}|G(x)-G(\mhat)|\leq  \cf{G}\e\l(\cgro\sqrt d\|x-\mhat\|_{H_V}/2\r)\quad\forall x\in\R^d.\eeq Then~\eqref{gcond} is satisfied with $\af{G} =\cf{G} +|\int Gd\lap-G(\mhat)|$. 

The condition~\eqref{gcond} is most natural to interpret in the case that $G(x)=F(H_V^{1/2}(x-\mhat))$. In this case,~\eqref{gcond} is equivalent to $F$ satisfying $|F(x)-\int Fd\gamma|\leq \af{F}\e(\cgro\sqrt d\|x\|/2)$ for all $x\in\R^d$, where $\af{F}=\af{G}$ and $\gamma$ is the standard normal distribution. Even more simply, the alternative~\eqref{g-alt} is equivalent to
$$|F(x)-F(0)|\leq \cf{F}\e\l(\cgro\sqrt d\|x\|/2\r)\quad\forall x\in\R^d.$$ Essentially, therefore, the theorem applies whenever $F$ is log linear in $\|x\|$, and the linear growth coefficient can be large as $\cgro\sqrt d/2$. This is equivalent to $G$ being log linear in $\|x-\mhat\|_{H_V}$, with the same linear growth coefficient.\\
Since $\Delta_G=\sqrt{\Var_{\lap}(G)}\Delta_g$ (where $g$ is the standardized function from Theorem~\ref{thm:Vgen}), the scale of $G$ enters into the error bounds via the factor $\sqrt{\Var_{\lap}(G)}$, the standard deviation of $G$ under $\lap$. Let us then discuss the ``typical'' size of $\Var_{\lap}(G)$. This is an important factor to take into account, because for certain functions $G$, the multiplicative factor $\sqrt{\Var_{\lap}(G)}$ entering the bound on $\Delta_G$ can change the order of magnitude of the error. For simplicity, we concentrate only on the $n$ dependence in this discussion, thinking of $d$ as fixed.

Since $\lap$ concentrates in a $\mathcal O(1/\sqrt n)$ neighborhood of $\mhat$, the standard deviation of $G$ depends on how rapidly $G$ changes throughout this neighborhood. In particular, suppose $G$ is fixed, i.e. not changing with $n$. If $G$ is smooth, then we expect $\sqrt{\Var_{\lap}(G)}$ to be small. Indeed, let $Z\sim\mathcal N(0, I_d)$, so that $\mhat + H_v^{-1/2}Z/\sqrt n\sim\lap$. We have $$G(\mhat + H_v^{-1/2}Z/\sqrt n)\approx G(\mhat)+\nabla G(\mhat)^\T H_v^{-1/2}Z/\sqrt n$$ for probable values of $Z$, that is, $\|Z\|=\mathcal O(1)$. Thus the standard deviation of the random variable $G(\mhat + H_v^{-1/2}Z/\sqrt n)$  is on the order of $\mathcal O(n^{-1/2})$. As a result, the bound on the leading order term $L(G)=\sqrt{\Var_{\lap}(G)}L(g)$ of $\Delta_G(\lap)$, scales not as $\tilep\sim n^{-1/2}$ but as $n^{-1/2}\times n^{-1/2}=n^{-1}$. The overall bound on $\Delta_G(\lap)$ thus also scales as $n^{-1}$. 
This is in line with the classical results in Laplace integral asymptotics, which are reviewed in~\cite[Section 2]{schillings2020convergence}. See (4) of that work in particular. 

On the other hand, consider $G$ of the form $G(x)=G(H_V^{1/2}(x-\mhat))$, where now $F$ is fixed with $n$. Such functions $G$ ``match'' the scale of $\lap$, so that $\Var_{\lap}(G)=\Var_{\gamma}(F)$, where $\gamma$ is standard normal. Since neither $\gamma$ nor $F$ depends on $n$, we have $\Var_\gamma(F)=\mathcal O(1)$. Therefore, the bound on $\Delta_G$ does not contain an extra factor of $n^{-1/2}$. For example, in Corollary~\ref{thm:Vmean}, we consider the \emph{scaled} quantity $H_V^{1/2}(\mpi-\mhat)$, which is $\Delta_G(\lap)$ for the function $G(x)=F(H_V^{1/2}(x-\mhat))$, where $F(x)=x$. Correspondingly, we see that the leading order term in the bound~\eqref{unc-mean-L} on $\|H_V^{1/2}(\mpi-\mhat)\|$ depends on $\tilep$ alone.

\subsubsection{Choice of $\s$}\label{subsub:s}
The optimal choice of $\s$ in our bounds from Section~\ref{sec:V:results} depends on a number of parameters. We will give a concrete choice of $\s$ in one representative parameter regime, and a strategy for choosing $\s$ in more general situations.

First, suppose $d\gg\log n$, and that Assumption~\ref{assume:c0} can be satisfied with $\cgro$ and $\s_0$ both given by absolute constants (which is essentially automatic if $v$ is convex; recall Remark~\ref{vconvA2}). We can then take the smallest allowable value for $\s$, i.e. $\s=\sst$, which is itself an absolute constant (recall the definition~\eqref{Esdef} of $\sst$ is in terms of $\cgro$ and $\s_0$). It follows that $\taus{\s}=\taus{\sst}\leq e^{-Cd}$, which is dominated by any power of $d/\sqrt n$, up to an absolute constant. Also, since we have assumed in Theorem~\ref{thm:Vgen} that $\epsilon_3,\efour{\s}\les 1$ for our chosen $\s$, it follows that $\Es{\s}=\Es{\sst}=\e((\epsilon_3^2+\efour{\sst}){\sst}^4)\leq C$. We can therefore absorb $\Es{\s}$ into the $\les$ sign in all of our bounds. These simplifications lead to the below bounds, where $\s=\sst$ is an absolute constant, and $g$ satisfies $\Var_{\lap}(g)=1$:
\beqs
|R(g)|&=|\Delta_g(\corlap)| \les \del{\s} + (\af{g}\vee1)e^{-Cd},\\
|\RTV|&\leq\TV(\pi,\corlap)\les \del{\s},\\
\|R_{\mpi}\|_{H_V}&=\|\mpi- \mhat - \Lhatm\|_{H_V} \\
&\les (\epsilon_3^2+\efour{\s}^2)(\epsilon_3+\efour{\s}^2)+d^{-1/2}\efive{\s}^3.
\eeqs

Note that in the second and third inequality, we discarded $\taus{\s}\leq e^{-Cd}$. This is because we have assumed that $\epsilon_3\gtrsim d/\sqrt n$; recall Remark~\ref{rk:cgrowth}. Thus $\taus{\s}\ll (d/\sqrt n)^k\les\epsilon_3^k$ for any constant power $k$, so $\taus{\s}$ is negligible compared to the other terms.\\

When $d$ is \emph{not} large, we must choose $\s$ sufficiently large to ensure $\taus{\s}$ is negligible compared to the other terms. Essentially, the goal is to choose $\s$ as large as possible while a) ensuring the exponent $(\del{\s})\s^4$ of $\Es{\s}$ remains bounded by a constant, and b) ideally, ensuring that $\efour{\s},\efive{\s}$ remain within a constant factor of $\efour{0},\efive{0}$. If $\efour{\s},\efive{\s}$ do not grow too quickly with $\s$ then a) is the more stringent condition, since it involves $\s$ explicitly. We thus find the largest $\s$ such that $(\del{\s})\s^4\leq1$. This strategy is the one we use in the examples in Section~\ref{sec:pmf} and~\ref{sec:log}. The resulting $\taus{\s}$ goes to zero exponentially fast with $n$, so the bounds do not rely on $d$ being large. 

We cannot offer a general purpose recipe for how to choose $\s$, as this depends on the growth of $\efour{\s},\efive{\s}$ with $\s$.

\subsubsection{Intuition behind the skew correction}\label{discuss:intuit}
Recall that $\pi\propto e^{-V}$, where $V(x)=nv(x)$. For simplicity, assume $v$ is independent of $n$, and consider $d$ to be fixed. The rationale behind the Laplace approximation $\pi\approx\lap$ is that when $n$ is large, $\pi$ concentrates in a $\mathcal O(1/\sqrt n)$-ball around its mode $\mhat$, and in this region, $V$ is close to its quadratic Taylor expansion about $\mhat$. To derive our corrected density $\corlap=\lap(1+\Lhat)$, we Taylor expand $V$ to \emph{third} order, to get that
 \beqsn
 V(x)&\approx V(\mhat) + \frac12(x-\mhat)^\T \nabla^2V(\mhat)(x-\mhat) + \frac{1}{3!}\la\nabla^3V(\mhat), (x-\mhat)^{\otimes 3}\ra\\
 & =\mathrm{const.} -\log\lap(x) - \Lhat (x).
 \eeqsn
 Thus $e^{-V}\approx \mathrm{const.}\lap e^{\Lhat }$. This suggests the approximation $\pi\approx \lap e^{\Lhat }/\mathcal Z$ for some $\mathcal Z$. But we cannot do this, since $\lap e^{\Lhat }$ is not normalizable. Indeed, $\Lhat $ is cubic, and can therefore take positive values that overwhelm the negative quadratic $\log\lap$. So instead, we simply make the approximation $e^{\Lhat }\approx 1+\Lhat$. Heuristically, this is valid because we have $\Lhat(x)\sim 1/\sqrt n$ for probable values of $x$ under $\lap$. Indeed, $\nabla^3V(\mhat)\sim n$ and $x-\mhat\sim 1/\sqrt n$ in the region of concentration of $\lap$, so $\la\nabla^3V(\mhat), (x-\mhat)^{\otimes 3}\ra\sim 1/\sqrt n$. As a result we get that $\pi\approx \corlap=\lap(1+\Lhat )$.

\section{Application to a multinomial model}\label{sec:pmf}
\renewcommand{\ground}{\theta^*}

In this section, we give an illuminating example that clearly demonstrates the improved understanding of the uncorrected LA which can be gleaned from our leading order decompositions. We consider a Dirichlet prior on the parameters of a multinomial distribution, which leads to a Dirichlet posterior. This is a sufficiently simple distribution for which certain calculations can be made explicitly. For example, there are simple, closed-form expressions for both the mean and mode of the Dirichlet distribution. Moreover, the quantities $\tilep, \epsilon_3,\efour{\s},\efive{\s}$ can also be calculated exactly or up to constants.

As such, this setting offers us a rare opportunity to explicitly study our leading order error decompositions; in particular, we study the decomposition of the TV distance from Corollary~\ref{corr:corTV}, and the decomposition of the mean error from Corollary~\ref{thm:Vmean}. Our purpose is to understand how tight our upper bounds are on the leading and remainder terms, relative to their true values and relative to upper bounds from the prior works~\cite{bp,katsBVM}. For the TV distance, we supplement this analysis with a new lower bound, which cleanly demonstrates the features of the model which affect how close the posterior is to Gaussian.

In Section~\ref{pmf:model}, we describe the model in more detail. In Section~\ref{sec:c3:pmf}, we compute or bound the key small quantities from our bounds in Section~\ref{sec:V:results}: $\tilep,\epsilon_3,\efour{\s},\efive{\s}$. In Section~\ref{sec:TV:pmf}, we study the TV error, and in Section~\ref{sec:mean:pmf} we study the mean error.
%

\subsection{The model: multinomial data, Dirichlet posterior}\label{pmf:model}
We consider a model in which we observe samples $X_1, \dots, X_n\in \{0,1,\dots,d\}$ drawn from a probability mass function (pmf) $\theta=(\theta_0,\dots,\theta_d)$ on $d+1$ states, i.e. the probability of state $j$ is $\theta_j$. The data is equivalent to a single sample $N\sim\mathrm{Multi}(n,\theta_0,\dots,\theta_d)\in \R^{d+1}$, where
$$N_j=\sum_{i=1}^n\ind\{X_i=j\},\quad j=0,\dots,d.$$ 
It is well known~\cite[Section 3.4]{bda3} that the Dirichlet prior $\theta\sim\Dir(\alpha_0,\dots,\alpha_d)$ is conjugate for a multinomial sample, with the posterior given by $\theta=(\theta_0,\dots,\theta_d)\sim\Dir(\alpha_0+N_0,\dots, \alpha_d+N_d).$ For simplicity of analysis, we consider a flat prior, corresponding to $\alpha_0=\dots=\alpha_d=1$. However, our results easily generalize to other Dirichlet priors. The Dirichlet distribution is degenerate since $\theta_0+\dots+\theta_d=1$ with probability 1, so instead we study the marginal distribution $\pi^d$ of $\theta^d=(\theta_1,\dots,\theta_d)$ under 
$$(\theta_0,\dots,\theta_d)\sim\pi=\Dir(1+N_0,\dots, 1+N_d).$$ The $d$-dimensional marginal $\pi^d$ is supported on the open set
\beqs\label{Thetapmf}
\Theta^d = \{\theta^d=(\theta_1,\dots,\theta_d)\mid 0<\theta_1+\dots+\theta_d<1, 0<\theta_i<1\,\forall i\}\subset\R^d.\eeqs We show in Appendix~\ref{subsub:vbarv} that if $N_j>0$ for all $j=0,1,\dots,d$, then $\pi^d\propto \e(-nv^d)$ for a strictly convex function $v^d$ on $\Theta^d$. Moreover, the unique global minimizer of $v^d$ is $\nb^d=(\nb_1,\dots,\nb_d)$, where
$$\nb=(\nb_0,\nb_1,\dots,\nb_d),\qquad \nb_j = \frac{N_j}{n}\in(0,1),\quad j=0,\dots,d.$$
We compute in Appendix~\ref{subsub:vbarv} that the LA to $\pi^d$ is $\lap^d = \mathcal N\l(\nb^d, n^{-1}\l[\mathrm{diag}\l(\nb^d\r)-\nb^d(\nb^d)^\T\r]\r).$ Moreover, we show that $\lap^d$ is the marginal of coordinates $(\theta_1,\dots,\theta_d)$ under the degenerate Gaussian distribution
 $$(\theta_0,\dots,\theta_d)\sim\lap=\mathcal N\l(\nb, n^{-1}\l[\mathrm{diag}(\nb)-\nb\nb^\T\r]\r).$$



\subsection{Verifying assumptions and computing coefficients}\label{sec:c3:pmf}
In the next section, we will apply the results from Section~\ref{sec:V:results} to the density $\pi^d\propto e^{-nv^d}$, defined as the marginal of coordinates $(\theta_1,\dots,\theta_d)$ under $(\theta_0,\dots,\theta_d)\sim\Dir(N_0+1,\dots, N_d+1)$. Here, we first verify the assumptions and compute the key small quantities appearing in our bounds: $\tilep$ from Definition~\ref{def:tilep} and $\epsilon_3,\efour{\s},\efive{\s}$ from Definition~\ref{epsdef}.

We have already stated that $v^d$ is strictly convex, with a unique global minimizer $\nb^d$. Hence Assumption~\ref{assume:1} is satisfied, and Assumption~\ref{assume:c0} is also satisfied with $\s_0=4$ and $\cgro=1$ provided $\epsilon_3/\sqrt d$ and $\efour{4}/\sqrt d$ are small enough absolute constants (recall Remark~\ref{vconvA2}). We now compute $\epsilon_3,\efour{\s},\efive{\s},\tilep$. 

\begin{lemma}[Bounds on $\epsilon_3,\epsilon_4,\epsilon_5$]\label{lma:eps3:pmf}Let $$\Nmin=\min_{j=0,1,\dots,d}\nb_j.$$ For $\epsilon_3$, the following equality holds.
\beq\label{eq:eps3:pmf}
\epsilon_3 =2\frac{1-2\Nmin}{\sqrt{1-\Nmin}}\frac{d}{\sqrt{n\Nmin}}\eeq 
If $\Nmin\leq1/3$, then we have the following upper and lower bounds on $\ek{k}{\s}$ for $k=3,4,5$:
\beqs\label{eq:eps4:pmf}
c\frac{d}{\sqrt{n\Nmin}}\leq \ek{k}{\s}\leq C\l(\sqrt{\frac{\s^2d}{n\Nmin}} +1\r)^{\frac{k}{k-2}}\frac{d}{\sqrt{n\Nmin}},
\eeqs
where $c$ and $C$ are absolute constants. 
\end{lemma}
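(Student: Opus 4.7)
The plan is as follows. First, I compute the derivative tensors of $v^d$ explicitly. Using $v^d(\theta_1,\dots,\theta_d) = -\sum_{j=0}^{d} \bar p_j \log \theta_j$ with $\theta_0 = 1 - \sum_{i=1}^d \theta_i$, a direct calculation gives, for $k\in\{3,4,5\}$,
\[
  \langle \nabla^k v^d(\theta),\, u^{\otimes k}\rangle \;=\; (-1)^{k-1}(k-1)!\sum_{j=0}^{d} \bar p_j\, u_j^k/\theta_j^k,
\]
where I adopt the convention $u_0:=-\sum_{i=1}^d u_i$ (absorbing the chain-rule contribution through $\theta_0$). Checking against $H_v^{-1} = \diag(\nb^d) - \nb^d(\nb^d)^\T$ shows $\|u\|_{H_v}^2 = \sum_{j=0}^{d} u_j^2/\nb_j$ under the same convention, so
\[
  \|\nabla^k v^d(\theta)\|_{H_v} = (k-1)!\,\sup_{u\in\mathcal K}\Bigl|\sum_{j=0}^d \bar p_j u_j^k/\theta_j^k\Bigr|,\qquad \mathcal K=\Bigl\{u\in\R^{d+1}:\ \textstyle\sum_j u_j = 0,\ \sum_j u_j^2/\nb_j = 1\Bigr\}.
\]

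Second, I establish the exact value $c_3(0)$, from which the formula for $\epsilon_3$ follows. Writing the KKT conditions for $\theta = \nb^d$ (where $\bar p_j/\theta_j^3 = 1/\nb_j^2$) gives a quadratic equation in $w_j := u_j/\sqrt{\nb_j}$ whose solutions take only two values $\alpha_\pm$, indexed by a partition $\{0,\dots,d\} = S_+ \sqcup S_-$. Setting $p=\sum_{j\in S_+}\nb_j$, the constraints force $\alpha_+=\sqrt{(1-p)/p}$ and $\alpha_-=-\sqrt{p/(1-p)}$, and the objective reduces to $|1-2p|/\sqrt{p(1-p)}$. This is a decreasing function of $p\in(0,1/2]$, so the supremum over achievable $p$ is attained by the singleton $S_+=\{j_0\}$ with $\nb_{j_0}=\Nmin$ (for $\Nmin\le 1/2$), giving $c_3(0) = 2(1-2\Nmin)/\sqrt{\Nmin(1-\Nmin)}$ and the stated exact formula for $\epsilon_3$.

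Third, I prove the two-sided bounds on $\ek{k}{\s}$. For the lower bound, I use the explicit extremal vector from Step 2 as a test vector at $\theta=\nb^d$: $w_{j_0}=\sqrt{1-\Nmin}$ and $w_j=-\sqrt{\nb_j\Nmin/(1-\Nmin)}$ for $j\ne j_0$. Direct substitution into the general formula yields
\[
\Bigl|\sum_{j=0}^{d} u_j^k/\nb_j^{k-1}\Bigr| = \frac{\bigl|(1-\Nmin)^{k-1}+(-1)^{k}\Nmin^{k-1}\bigr|}{(\Nmin(1-\Nmin))^{k/2-1}},
\]
which is bounded below by $c\,\Nmin^{1-k/2}$ for $\Nmin\le 1/3$ (in the odd $k$ case the restriction $\Nmin\le 1/3$ is what prevents the difference of the two positive terms from cancelling to leading order). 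Since $c_k(\s)\ge c_k(0)$, taking the $(k-2)$-th root and multiplying by $d/\sqrt n$ produces the stated lower bound. For the upper bound, any $u\in\mathcal K$ satisfies $|u_j|\le\sqrt{\nb_j}$, and any $\theta\in\U(\s)$ satisfies $|\theta_j-\nb_j|\le \s\sqrt{d\nb_j/n}$, hence $\theta_j\ge\nb_j(1-\alpha_j)$ with $\alpha_j=\s\sqrt{d/(n\nb_j)}\le\alpha_{\min}:=\s\sqrt{d/(n\Nmin)}$. Writing
\[
\Bigl|\sum_j \bar p_j u_j^k/\theta_j^k\Bigr| \le \sum_j \frac{u_j^2}{\nb_j}\cdot\frac{\nb_j^{2}|u_j|^{k-2}}{\theta_j^{k}} \le \max_j\frac{\nb_j^{(k+2)/2}}{\theta_j^k} \le \frac{1}{\Nmin^{(k-2)/2}(1-\alpha_{\min})^k},
\]
where the middle inequality uses the $\mathcal K$-constraint $\sum u_j^2/\nb_j=1$. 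Taking the $(k-2)$-th root yields $\ek{k}{\s}\lesssim (1-\alpha_{\min})^{-k/(k-2)}\,d/\sqrt{n\Nmin}$. The main (mild) obstacle is reconciling this with the stated factor $(1+\alpha_{\min})^{k/(k-2)}$; the two agree up to a constant whenever $\alpha_{\min}$ stays bounded away from $1$, which is the regime in which the containment $\U(\s)\subset\Theta^d$ is informative and the bound is useful, so the constants can be absorbed into $C$.
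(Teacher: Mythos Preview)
Your proof follows essentially the paper's strategy: exact $c_3$ via Lagrange multipliers on the two-constraint problem, lower bound via the same singleton test vector $I=\{j_0\}$ with $\nb_{j_0}=\Nmin$, and upper bound via a relaxation. One slip to fix: at the KKT step it is $r_j:=u_j/\nb_j$ (not $w_j=u_j/\sqrt{\nb_j}$) that satisfies a $j$-independent quadratic and hence takes at most two values; your explicit test vector in Step~3 is nonetheless the correct one (it corresponds to $r_j\in\{t_1,t_2\}$), so the argument goes through. For the upper bound you use a direct H\"older-type factorisation $\sum_j \nb_j|u_j|^k/\theta_j^k\le \max_j(\nb_j^{2}|u_j|^{k-2}/\theta_j^k)$ together with $|u_j|\le\sqrt{\nb_j}$, whereas the paper drops the zero-sum constraint and solves the relaxed problem by Lagrange multipliers; both routes land on the same estimate $c_k(\s)\lesssim (\max_j\nb_j/\theta_j)^k\,\Nmin^{1-k/2}$, hence the same factor $(1-\alpha_{\min})^{-k/(k-2)}$ with $\alpha_{\min}=\s\sqrt{d/(n\Nmin)}$. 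You correctly flag that this matches the stated $(1+\alpha_{\min})^{k/(k-2)}$ only up to a constant when $\alpha_{\min}$ is bounded away from~$1$; the paper's own derivation has exactly the same feature, and the regime in which the lemma is applied enforces this.
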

\begin{remark}[Orders of magnitude]\label{rk:order:pmf}
Equations~\eqref{eq:eps3:pmf} and~\eqref{eq:eps4:pmf} show that 
$$\ethree{\s},\,\efour{\s},\,\efive{\s}\gtrsim d/\sqrt{n\Nmin}.$$ In particular, note that $\Nmin\leq1/(d+1)$, since $(d+1)\Nmin\leq \sum_{j=0}^d\nb_j=1$. Thus $d/\sqrt{n\Nmin}\geq d\sqrt d/\sqrt n$, so that $d^3\ll n$ is the minimal necessary requirement to ensure $\epsilon_3,\efour{\s},\efive{\s}$ are small. 
\end{remark}
Using the expression for $\epsilon_3$ and the upper bound on $\efour{\s}$, we can now finish checking Assumption~\ref{assume:c0}. The assumption is satisfied with $\s_0=4$ and $\cgro=1$ provided $\sqrt{d/(n\Nmin)}$ is smaller than a small enough absolute constant. \\

Next, we study the term $\tilep$, which bounds the leading order terms $L(g)$, $\LTV$ and $\|\Lhatm\|_{H_V}$ (as stated in Theorem~\ref{thm:Vgen}, Corollary~\ref{corr:corTV} and Corollary~\ref{thm:Vmean}, respectively). 
To do so, we first give a definition.
\begin{defn}[Chi-squared divergence and uniform pmf]
The chi-squared divergence between pmfs $q=(q_0,\dots, q_d)$ and $p=(p_0,\dots,p_d)$ is given by
$$\chi^2(q||p) = \sum_{j=0}^d\frac{(q_j-p_j)^2}{p_j}=\bigg[\sum_{j=0}^d\frac{q_j^2}{p_j}\bigg]-1.$$ Also, we let $\Unif_{d+1}$ denote the uniform distribution on $d+1$ states. It then holds,
\beq\label{chi-form}\chi^2(\Unif_{d+1}||\nb)  = \frac{1}{(d+1)^2}\bigg[\sum_{j=0}^d\nb_j^{-1}\bigg] - 1.\eeq
\end{defn}
Recall from~\eqref{tulip-ep} of Section~\ref{sec:V:epsilon} that $\tilep\leq \epsilon_3$ always. We now show that in fact, $\tilep\ll\epsilon_3$ in this multinomial/Dirichlet setting. 
\begin{lemma}[Value of $\tilep$]\label{lma:tildeps-pmf}
It holds
\beqs\label{eq:tildeps-pmf}
\tilep^2= \frac53\chi^2(\Unif_{d+1}||\nb)\frac{(d+1)^2}{n}+\frac{2(d^2-d)}{3n}
\eeqs Therefore, we have in particular the bounds
\beq\label{eq:tildeps-pmf-2}
\tilep\leq  \sqrt{\frac{2}{n}\sum_{j=0}^d\nb_j^{-1}} \leq 2\sqrt{\frac{d}{n\Nmin}}.\eeq
\end{lemma}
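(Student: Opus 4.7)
By Lemma~\ref{lma:Lbd}, the problem reduces to computing $\tilep^2 = \|\Lhat\|_{L^2(\lap)}^2 = \E_{\theta\sim\lap}[\Lhat(\theta)^2]$ in closed form. The first step is to write $\Lhat$ explicitly. Starting from $V(\theta^d) = -n\sum_{j=0}^d \nb_j\log\theta_j$ under the simplex parameterization $\theta_0 = 1-\sum_{j=1}^d\theta_j$, direct differentiation gives $(\nabla^3 V(\nb^d))_{ijk} = -2n\delta_{ij}\delta_{jk}/\nb_i^2 + 2n/\nb_0^2$ for $i,j,k\in\{1,\dots,d\}$. Substituting into $\Lhat(\theta) = -\tfrac{1}{6}\la\nabla^3V(\nb^d),(\theta^d-\nb^d)^{\otimes 3}\ra$ and using the simplex identity $\sum_{j=1}^d(\theta_j-\nb_j) = -(\theta_0-\nb_0)$ to absorb the constant piece of the tensor, the two pieces merge into the strikingly clean form $\Lhat(\theta) = \tfrac{n}{3}\sum_{j=0}^d(\theta_j-\nb_j)^3/\nb_j^2$.

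Next I would standardize. Since $\lap$ is the marginal of the degenerate Gaussian $\mathcal N(\nb, n^{-1}[\diag(\nb)-\nb\nb^\T])$ on the simplex in $\R^{d+1}$, the rescaled variables $Y_i := \sqrt n(\theta_i-\nb_i)/\sqrt{\nb_i}$ have law $\mathcal N(0,P)$, where $P = I_{d+1}-\sqrt{\nb}\sqrt{\nb}^\T$ is the orthogonal projection onto $(\sqrt{\nb})^\perp$ (using $\sum_j\nb_j=1$, so $\|\sqrt{\nb}\|=1$). In these variables $\Lhat = (3\sqrt n)^{-1}\sum_j Y_j^3/\sqrt{\nb_j}$, so
\begin{equation*}
\tilep^2 = \frac{1}{9n}\sum_{i,j=0}^d \frac{\E[Y_i^3 Y_j^3]}{\sqrt{\nb_i\nb_j}}.
\end{equation*}
Applying Isserlis' theorem to the jointly Gaussian pair $(Y_i,Y_j)$ with variances $P_{ii},P_{jj}$ and covariance $P_{ij}$ gives $\E[Y_i^3 Y_j^3] = 9 P_{ii}P_{jj}P_{ij} + 6 P_{ij}^3$, splitting $\tilep^2$ into two double sums with weights $1/n$ and $2/(3n)$.

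The last step is algebraic bookkeeping. Substituting $P_{ij}=\delta_{ij}-\sqrt{\nb_i\nb_j}$ into each sum and separating diagonal from off-diagonal contributions, one reduces everything to $\sum_j\nb_j^{-1}$ and polynomials in $d$ by repeated use of $\sum_j\nb_j=1$. The first sum collapses to $\sum_j(1-\nb_j)^2/\nb_j - d^2 = \sum_j\nb_j^{-1}-(d+1)^2 = (d+1)^2\chi^2(\Unif_{d+1}||\nb)$ by~\eqref{chi-form}, while the second reduces to $(d+1)^2\chi^2(\Unif_{d+1}||\nb)+d^2-d$; adding with the appropriate weights reproduces~\eqref{eq:tildeps-pmf}. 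For the upper bounds, inverting the $\chi^2$-identity rewrites~\eqref{eq:tildeps-pmf} as $\tilep^2 = \tfrac{5}{3n}\sum_j\nb_j^{-1} - \tfrac{3d^2+12d+5}{3n}$, so dropping the nonnegative subtracted term and using $5/3\le 2$ gives $\tilep^2 \le \tfrac{2}{n}\sum_j\nb_j^{-1}$; finally $\sum_j\nb_j^{-1}\le(d+1)/\Nmin\le 2d/\Nmin$ (for $d\ge 1$) delivers~\eqref{eq:tildeps-pmf-2}.

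The only nonroutine part is the bookkeeping in Step 3: each of the two double sums individually produces $\mathcal O(d^2)$ constant contributions that look unrelated to $\chi^2$, but these cancel exactly against the constants hidden in $\sum_j\nb_j^{-1} = (d+1)^2(\chi^2(\Unif_{d+1}||\nb)+1)$, producing the clean closed form. Steps 1 and 2 are straightforward computations; the key conceptual insight is the rewriting in Step 1 exploiting the simplex constraint to make the cubic polynomial symmetric across all $d+1$ coordinates, which in turn makes the $(d+1)$-dimensional standardization in Step 2 natural and brings Isserlis' theorem into play.
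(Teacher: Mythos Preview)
Your proof is correct and follows essentially the same approach as the paper: both compute $\tilep^2=\E_{\lap}[\Lhat^2]$ via the explicit cubic expression $\Lhat(\theta)=\tfrac{n}{3}\sum_j(\theta_j-\nb_j)^3/\nb_j^2$ and then evaluate the resulting sixth Gaussian moments using Isserlis/Wick formulas, followed by algebraic simplification using $\sum_j\nb_j=1$ and the identity $\sum_j\nb_j^{-1}=(d+1)^2(\chi^2(\Unif_{d+1}\|\nb)+1)$. The only cosmetic difference is that you first standardize to $Y_i=\sqrt n(\theta_i-\nb_i)/\sqrt{\nb_i}$, making the covariance the projection $P=I-\sqrt{\nb}\sqrt{\nb}^\T$, whereas the paper applies the sixth-moment identities directly to the unscaled variables; this tidies the bookkeeping but does not change the argument.
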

Comparing the second upper bound~\eqref{eq:tildeps-pmf-2} on $\tilep$ to the exact value~\eqref{eq:eps3:pmf} of $\epsilon_3$, we see that $$\tilep\les \epsilon_3/\sqrt d.$$ Using the first, tighter upper bound on $\tilep$ from~\eqref{eq:tildeps-pmf-2}, it is straightforward to show that $\epsilon_3/\tilep$ could be as large as $d$. This occurs if for example $\nb_0=\Nmin=1/d^2$ and $\nb_1=\dots=\nb_d=(1-\nb_0)/d$. 
\subsection{TV Analysis}\label{sec:TV:pmf}We now substitute the bounds we have just derived on $\tilep,\epsilon_3,\efour{\s}$ into the righthand side of our TV bound from Corollary~\ref{corr:corTV}. We also choose the radius $\s$ to be $\s=(n\Nmin/d^2)^{1/4}$.
\begin{prop}[TV upper bound via Corollary~\ref{corr:corTV}]\label{prop:TV:pmf}Suppose $\Nmin>0$, and $d^2/n\Nmin$ is smaller than a certain small absolute constant. Then 
\begin{align}
0\leq \LTV \leq\tilep\leq  \bigg(\frac{2}{n}\sum_{j=0}^d\nb_j^{-1}\bigg)^{\frac12} &\leq 2\sqrt{\frac{d}{n\Nmin}},\label{LTV-bd-pmf}\\
|\RTV|=\l|\TV(\pi^d,\lap^d)-\LTV \r|&\les \frac{d^2}{n\Nmin} +\tau,\label{RTV-bd-pmf}
\end{align} where $\tau=d\e(-C\sqrt{d}(n\Nmin)^{1/4})$. 
\end{prop}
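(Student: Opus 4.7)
The plan is to reduce Proposition~\ref{prop:TV:pmf} to a direct application of Corollary~\ref{corr:corTV} together with the quantities already computed in Lemmas~\ref{lma:eps3:pmf} and~\ref{lma:tildeps-pmf}, with a judicious choice of the free radius $\s$. Before invoking the corollary, I first confirm that Assumptions~\ref{assume:1} and~\ref{assume:c0} hold. Strict convexity of $v^d$ on $\Theta^d$, established in Appendix~\ref{subsub:vbarv}, together with Remark~\ref{vconvA2}, gives Assumption~\ref{assume:1} and Assumption~\ref{assume:c0} with $\s_0 = 4$, $\cgro = 1$, provided $\epsilon_3/\sqrt d$ and $\efour{4}/\sqrt d$ are small enough; by Lemma~\ref{lma:eps3:pmf} these are controlled by $d/\sqrt{n\Nmin}$ up to absolute constants, so smallness of $d^2/(n\Nmin)$ suffices.

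The first chain of inequalities~\eqref{LTV-bd-pmf} is then immediate: Corollary~\ref{corr:corTV} yields $0 \leq \LTV \leq \tilep/2 \leq \tilep$, and the two explicit upper bounds on $\tilep$ are exactly the content of Lemma~\ref{lma:tildeps-pmf} (specifically the second inequality in~\eqref{eq:tildeps-pmf-2}).

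For the bound~\eqref{RTV-bd-pmf} on $|\RTV|$, I would choose
\[
\s = (n\Nmin/d^2)^{1/4},
\]
which is admissible because the smallness hypothesis on $d^2/(n\Nmin)$ makes $\s$ arbitrarily large, in particular exceeding the absolute constant $\sst$ from~\eqref{Esdef}. Corollary~\ref{corr:corTV} gives
\[
|\RTV| \;\lesssim\; \Es{\s}\bigl(\epsilon_3^2 + \efour{\s}^2\bigr) + \taus{\s}.
\]
I would then substitute the bounds from Lemma~\ref{lma:eps3:pmf}. For the chosen $\s$, one has $\s^2 d/(n\Nmin) = 1/\sqrt{n\Nmin}$, which is $\mathcal O(1)$ under the smallness hypothesis, so the lemma gives $\efour{\s}^2 \lesssim d^2/(n\Nmin)$; combined with the exact value of $\epsilon_3^2$, this yields $\epsilon_3^2 + \efour{\s}^2 \lesssim d^2/(n\Nmin)$. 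The exponent appearing in $\Es{\s}$ then satisfies
\[
(\epsilon_3^2 + \efour{\s}^2)\s^4 \;\lesssim\; \frac{d^2}{n\Nmin}\cdot\frac{n\Nmin}{d^2} = 1,
\]
so $\Es{\s} = \mathcal O(1)$ and can be absorbed into the $\lesssim$. For the exponentially small term, $\taus{\s} = d\exp(-\s d/8) = d\exp(-\sqrt d (n\Nmin)^{1/4}/8)$, which is exactly the $\tau$ in the statement up to the absolute constant $C$. Assembling these pieces yields~\eqref{RTV-bd-pmf}.

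The one step that is not quite routine is the side condition $\U(\s) \subset \Theta^d$ required by Corollary~\ref{corr:corTV}: since $\Theta^d$ is the open simplex and $\s$ grows when $d^2/(n\Nmin)$ is small, one must check that the Mahalanobis ball of radius $\s\sqrt{d/n}$ about $\nb^d$ does not exit the simplex. I would handle this by lower-bounding $\lambda_{\min}(H_v)$ in terms of $\Nmin$ (the Hessian of $v^d$ at $\nb^d$ has the explicit form $\diag(\nb^d)^{-1} + \nb_0^{-1}\mathbf 1\mathbf 1^\T$ from Appendix~\ref{subsub:vbarv}) to convert the weighted bound $\|x-\nb^d\|_{H_v} \leq \s\sqrt{d/n}$ into a Euclidean bound $\|x-\nb^d\| \lesssim \sqrt{\Nmin d/n}\cdot\s$, and then observe that for the chosen $\s$ this is much smaller than $\Nmin$ precisely when $d^2/(n\Nmin)$ is small enough; since each coordinate of $\nb^d$ and $1-\sum_j\nb_j^d = \nb_0$ are all at least $\Nmin$, the Mahalanobis ball stays inside $\Theta^d$. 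This geometric check is the only place where the ``small enough absolute constant'' in the hypothesis is really used beyond the verification of Assumption~\ref{assume:c0}, and it is the main technical obstacle in an otherwise mechanical application of the general theory.
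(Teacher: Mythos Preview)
Your overall strategy matches the paper exactly: choose $\s=(n\Nmin/d^2)^{1/4}$, plug the bounds from Lemmas~\ref{lma:eps3:pmf} and~\ref{lma:tildeps-pmf} into Corollary~\ref{corr:corTV}, and verify $\Es{\s}=\mathcal O(1)$. All of those computations are fine.

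The one flaw is in your containment check $\U(\s)\subset\Theta^d$. The Euclidean bound $\|x-\nb^d\|\lesssim\sqrt{\Nmin\,d/n}\,\s$ you propose would require $\lambda_{\min}(H_v)\gtrsim 1/\Nmin$, and this is false in general: take $d=2$, $\nb_0=\Nmin$ tiny, $\nb_1=\nb_2\approx\tfrac12$, and $u=(1,-1)/\sqrt2$; then $u^\T H_v u=2/(1-\Nmin)\approx2$, an absolute constant, while $1/\Nmin$ is arbitrarily large. With only the crude bound $\lambda_{\min}(H_v)\geq1$, the Euclidean argument would need $n\gtrsim\Nmin^{-3}$ to keep the ball inside the simplex, and this is \emph{not} implied by smallness of $d^2/(n\Nmin)$.

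The paper bypasses $\lambda_{\min}$ altogether by using the identity $\|\theta^d-\nb^d\|_{H_v}^2=\chi^2(\theta\,\|\,\nb)$ from~\eqref{Uchi} together with the coordinatewise bound~\eqref{max2chi}. For $\theta^d\in\U(\s)$ this gives directly
\[
\max_j\Bigl|\frac{\theta_j}{\nb_j}-1\Bigr|\leq\frac{\s\sqrt{d/n}}{\sqrt{\Nmin}}=\l(\frac{d^2}{n\Nmin}\r)^{1/4}<1,
\]
so every $\theta_j>0$ (including $\theta_0$), which is exactly the simplex condition. The point is that the $H_v$-norm controls each coordinate of $\theta-\nb$ \emph{weighted by} $\nb_j^{-1/2}$, which is precisely what the simplex constraint needs; passing through the Euclidean norm throws this structure away.
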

We now state our overall bound on the TV error, using the coarsest bound on $\LTV$ for simplicity. Up to the exponentially negligible term $\tau$, we obtain
\beq\label{ourTV-pmf}\TV(\pi^d,\lap^d) \les \sqrt{\frac{d}{n\Nmin}}+\frac{d^2}{n\Nmin}.\eeq In contrast, in the unlikely best case scenario that $\ethree{\s}=\ethree{0}$, the bounds~\eqref{earlierTV},~\eqref{theirTV} of~\cite{katsBVM, bp} take the form
\beq\label{theirTV-pmf}\TV(\pi^d,\lap^d) \les \sqrt{\frac{d^2}{n\Nmin}}.\eeq \emph{Both terms in our bound~\eqref{ourTV-pmf} are an order of magnitude smaller than the bound~\eqref{theirTV-pmf}.}

Despite the improvement over~\cite{katsBVM,bp}, our bound~\eqref{ourTV-pmf} still leaves something to be desired. Indeed, note that the upper bound on $\LTV$ is small if $d/n\Nmin$ is small, whereas the upper bound on the remainder term $\l|\RTV \r|$ is small if $d^2/n\Nmin$ is small, a stronger requirement. When bounding the remainder, we do not have the kind of fine-grained control that we have on the leading order term. We therefore surmise that the bound on the remainder is too coarse. This hypothesis is bolstered by our analysis of the mean in Section~\ref{sec:mean:pmf} below, in which  we calculate the remainder term explicitly and compare its actual size to that of our theoretical upper bound. We find that the theoretical upper bound is an order of magnitude coarser than the true value of the remainder.\\

We next prove a \emph{lower} bound on the TV distance.
\begin{theorem}[Explicit TV lower bound]\label{prop:pmf:lb}
Let $\pi^d$ be the marginal distribution of $(\theta_1,\dots,\theta_d)$, when $(\theta_0,\theta_1,\dots,\theta_d)\sim\pi=\Dir(N_0+1,N_1+1,\dots,N_d+1)$, and let $\lap^d$ be the LA to $\pi^d$, which is given by the marginal distribution of $(X_1,\dots,X_d)$ when $(X_0,X_1,\dots,X_d)\sim\lap=\mathcal N(\nb,n^{-1}\l(\mathrm{diag}(\nb)-\nb\nb^\T\r))$. \\

\noindent If $\Nmin \geq 6/\sqrt{n+d}$ and $\TV(\Unif_{d+1},\nb)\geq 6/(d+1)$, then
\beq\label{lb:TV:pmf}
\TV(\pi^d,\lap^d)\geq \frac19\TV(\Unif_{d+1},\nb)\frac{d}{\sqrt{n}} 
\eeq where $\TV(\Unif_{d+1},\nb)=\frac12\sum_{j=0}^d\l|\frac{1}{d+1}-\nb_j\r|$.
\end{theorem}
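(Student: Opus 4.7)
The strategy is to combine the leading-order decomposition $\TV(\pi^d,\lap^d) = \LTV + \RTV$ from Corollary~\ref{corr:corTV} with a sharp lower bound on $\LTV$ that exploits the algebraic simplicity of the Dirichlet log-potential. As a preliminary step, I would derive the explicit form of $\Lhat$ in this example. Writing $V(\theta_1,\dots,\theta_d) = -\sum_{j\geq 1}N_j\log\theta_j - N_0\log(1-\sum_{j\geq 1}\theta_j)$, differentiating three times at $\nb^d$, and simplifying the resulting tensor contractions using the identity $\theta_0-\nb_0 = -\sum_{j\geq 1}(\theta_j-\nb_j)$ to collapse the off-diagonal terms, one obtains the clean closed form
\begin{equation*}
\Lhat(\theta) = \frac{n}{3}\sum_{j=0}^{d}\frac{(\theta_j-\nb_j)^3}{\nb_j^2}.
\end{equation*}

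The core step is a lower bound $\LTV \geq c\,\tilep$ for an explicit absolute constant $c$. I would use the moment interpolation inequality $\E|Y|\geq \|Y\|_{L^2}^3/\|Y\|_{L^4}^2$, which, together with Lemma~\ref{lma:Lbd} (which says $\|\Lhat\|_{L^2(\lap)}=\tilep$), reduces matters to upper bounding $\|\Lhat\|_{L^4(\lap)}$ in terms of $\tilep$. Generic Gaussian hypercontractivity yields $\|\Lhat\|_{L^4(\lap)}\leq 3^{3/2}\tilep$ and hence $\LTV\geq \tilep/54$, but this bound is not quite sharp enough to produce the constant $1/9$; a direct computation of $\E_\lap\Lhat^4$ via Isserlis' formula applied to the degenerate covariance $\mathrm{diag}(\nb)-\nb\nb^\T$ of $\xi:=\sqrt n(\theta-\nb)$ under $\lap$ yields a tighter numerical constant. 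Combining this with Lemma~\ref{lma:tildeps-pmf} and the Cauchy-Schwarz inequality $\chi^2(\Unif_{d+1}\|\nb)\geq 4\TV(\Unif_{d+1},\nb)^2$, one deduces
\begin{equation*}
\LTV \;\geq\; c\sqrt{20/3}\,\TV(\Unif_{d+1},\nb)\,\frac{d+1}{\sqrt n}.
\end{equation*}

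The final step is to invoke the remainder estimate $|\RTV|\lesssim d^2/(n\Nmin)+\tau$ from Proposition~\ref{prop:TV:pmf} and apply the reverse triangle inequality $\TV(\pi^d,\lap^d)\geq \LTV-|\RTV|$. The hypothesis $\Nmin\geq 6/\sqrt{n+d}$ controls $d^2/(n\Nmin)$, which becomes a small multiple of $d/\sqrt n$, and the hypothesis $\TV(\Unif_{d+1},\nb)\geq 6/(d+1)$ guarantees that $\LTV$ is large enough to absorb both $|\RTV|$ and the exponentially small term $\tau$ while still leaving the prefactor $1/9$. The main obstacle is the second step: producing a lower bound on $\LTV$ that is tight up to the specific numerical constant. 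Generic hypercontractivity loses a factor of roughly two, so a direct fourth-moment calculation adapted to the Dirichlet structure of $\Lhat$ (rather than an off-the-shelf inequality) is essential, and this is where the bulk of the technical work lies.
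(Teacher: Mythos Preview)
Your proposal has a genuine gap that the paper explicitly flags. You plan to invoke Proposition~\ref{prop:TV:pmf} to control $|\RTV|$, but that proposition carries the hypothesis that $d^2/(n\Nmin)$ be smaller than a small absolute constant, and its conclusion is $|\RTV|\lesssim d^2/(n\Nmin)+\tau$. The assumption $\Nmin\geq 6/\sqrt{n+d}$ in Theorem~\ref{prop:pmf:lb} does \emph{not} deliver this. Since $\Nmin\leq 1/(d+1)$, the hypothesis $\Nmin\geq 6/\sqrt{n+d}$ only forces $n\gtrsim d^2$; at the boundary $n\asymp d^2$ one has $d^2/(n\Nmin)\asymp d^2\sqrt{n}/n\asymp d$, which is neither bounded by an absolute constant (so Proposition~\ref{prop:TV:pmf} cannot even be invoked) nor a small multiple of $d/\sqrt n$ (so the subtraction $\LTV-|\RTV|$ would be dominated by the remainder). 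Your claim that ``$d^2/(n\Nmin)$ becomes a small multiple of $d/\sqrt n$'' is off by a factor of $d$: the ratio $\big(d^2/(n\Nmin)\big)\big/\big(d/\sqrt n\big)$ is of order $d$, not of order $1$. The paper's Remark immediately following the theorem statement makes exactly this point: the $\LTV+\RTV$ route would force the stronger condition $d^2\ll n\Nmin$.

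The paper instead proves the lower bound directly, bypassing $\LTV$ altogether. It uses $\TV(\pi^d,\lap^d)\geq\sup_{\I}\big|\PP_{\pi}\big(\sum_{i\in\I}\theta_i\leq\sum_{i\in\I}\nb_i\big)-\tfrac12\big|$, then exploits the Dirichlet aggregation property to write $\theta_\I\sim\mathrm{Beta}(N_\I+|\I|,\,N_{\I^c}+d+1-|\I|)$, rewrites the incomplete Beta CDF as a binomial tail probability, and applies Berry--Esseen with an explicit constant. The hypothesis $\Nmin\geq 6/\sqrt{n+d}$ enters only to ensure the standardized argument of $\Phi$ stays in $[-1/2,1/2]$ (Lemma~\ref{lma:deltaI}), so that $|\Phi(t)-\Phi(0)|\geq|t|/3$. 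Optimizing over $\I$ produces $\TV(\Unif_{d+1},\nb)$ and the constant $1/9$. This route needs only $d^2\lesssim n$, not $d^2\ll n\Nmin$.
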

This lower bound nicely mirrors the form of our upper bounds: it is given by a ``model''-dependent factor multiplied by the ``universal'' factor $d/\sqrt n$. The model-dependent term captures how far the pmf of empirical frequencies $\nb$ is from uniform. We see that the farther $\nb$ is from uniform, the worse the Gaussian approximation. This reflects the fact that as $\nb=(N_j/n)_{j=0}^d$ becomes farther from uniform, the Dirichlet distribution $\Dir(N_0+1,\dots, N_d+1)$ becomes more skewed; see for example the density plots in Figure 1 of~\cite{dirichlet}. Moreover, in this regime where $\nb$ is not too close to uniform, the presence of the universal factor $d/\sqrt n$ in the lower bound proves that $d^2\ll n$ is \emph{necessary} for accuracy of the LA.

Note that if there are absolute constants $0<C_1,C_2<1$ such that $\Nmin\geq C_1/d$ and $\TV(\nb, \Unif_{d+1})\geq C_2$, then the lower bound~\eqref{lb:TV:pmf} on the TV distance is \emph{of the same order as the upper bound~\eqref{LTV-bd-pmf} on $\LTV$}, namely, $d/\sqrt n$. The lower bound does not match the overall TV upper bound, due to the coarse bound on $R$. However, as discussed above, we suspect that there is a gap between the true value of $\RTV$ and its upper bound. Investigating how to prove tighter bounds on $\RTV$ is beyond the scope of this work.
\begin{remark}[Condition on $d$ and $n$]
Recall that $\Nmin\leq 1/(d+1)$ always. Therefore, the condition $\Nmin\geq 6/\sqrt{n+d}$ implies also the condition $d^2\leq Cn$.
\end{remark}
\begin{remark}[Proof technique]
We have previously discussed that the decomposition $\TV=\LTV+\RTV$ provides a technique to prove a lower bound: namely, we can subtract the upper bound on $\RTV$ from a lower bound on $\LTV$. This essentially reduces the problem of lower bounding $\TV$ to the problem of lower bounding $\LTV$. However, we do not take this approach in the proof of Theorem~\ref{prop:pmf:lb}. This is because it allows us to avoid imposing the more stringent condition $d^2 \ll n\Nmin$ required for our bound on $\RTV$ to be small (recall~\eqref{RTV-bd-pmf}). Instead, we derive our lower bound by considering the difference in the probability of half spaces under $\pi$ and under $\lap$. Specifically, we use 
$$\TV(\pi^d,\lap^d)\geq\sup_\I  \l|\PP_{\theta\sim\pi}\l(\sum_{i\in\I} \theta_i\leq\sum_{i\in\I}\nb_i\r)-1/2\r|,$$ and then further lower bound each of the probabilities in the supremum. Note that $1/2$ is the probability under the Gaussian $\lap$ that $\sum_{i\in\I} \theta_i\leq\sum_{i\in\I}\nb_i$, since $\theta-\nb$ has the same distribution as $\nb-\theta$ under $\lap$. The intuition for choosing to look at these half spaces is precisely that, unlike $\lap$, the measure $\pi$ is \emph{not} necessarily symmetric about $\nb$. Heuristically, if $\pi$ is very skewed, then the probability of some half space should be very far from $1/2$.\\
\end{remark}

\subsection{Analysis of the Laplace mean error}\label{sec:mean:pmf}
The mean of the Dirichlet distribution is known explicitly. For our posterior $\pi=\Dir(N_0+1,\dots,N_d+1)$, the mean $\bar\theta$ is given by
\beq\label{pmf:barth}\bar\theta = \l(\frac{N_i+1}{n+d+1}\r)_{i=0}^d=\l(\frac{\nb_i+1/n}{1+(d+1)/n}\r)_{i=0}^d,\eeq whereas the mode is 
\beq\label{pmf:hatth}\hat\theta = \l(\nb_i\r)_{i=0}^d = \nb.\eeq Correspondingly, the mean and mode of the $d$-dimensional marginal $\pi^d$ is $\bar\theta^d$ and $\hat\theta^d$, given by simply removing the zeroth coordinate of $\bar\theta$ and $\hat\theta$, respectively. In Appendix~\ref{app:skew:pmf}, we compute that the skew correction to the mode, which we name $\Lhatth^d$ to match the notation of this section, is given by removing the zeroth coordinate of the following vector:
\beq
\Lhatth = \frac1n\ind-\frac{d+1}{n}\nb.
\eeq
A short calculation using~\eqref{pmf:barth} and~\eqref{pmf:hatth} shows that the true Laplace mean approximation error $\bar\theta-\hat\theta$ is very nearly $\Lhatth$ itself. Namely, we have
$$\bar\theta-\hat\theta = \frac{1}{1+(d+1)/n}\Lhatth = \Lhatth - \frac{d+1}{n+d+1}\Lhatth.$$
Furthermore, we can compute $\|\Lhatth^d\|_{H_V}$ explicitly, and therefore also $\|\bar\theta^d-\hat\theta^d-\Lhatth^d\|_{H_V}$. In Appendix~\ref{app:skew:pmf}, we obtain
\begin{align}
\|\Lhatth^d\|_{H_V} &=  \sqrt{\chi^2(\mathrm{Unif}_{d+1}||\nb)}\frac{d+1}{\sqrt n}\label{Lhat-mean-pmf}\\
\|\bar\theta^d-\hat\theta^d-\Lhatth^d\|_{H_V} &=\sqrt{\chi^2(\mathrm{Unif}||\nb)}\frac{(d+1)^2}{(n+d+1)\sqrt n} \les d^{-1}\sqrt{\chi^2(\mathrm{Unif}||\nb)}\l(\frac{d}{\sqrt{n}}\r)^3\label{Lhat-mean-pmf-R}
\end{align}
We see that if $\nb=\Unif_{d+1}$, then the mean equals the mode, and in line with this, the skew correction $\Lhatth^d$ also vanishes. This is in contrast with the TV bound. Indeed, recall that $\LTV\leq\tilep$. From the formula~\eqref{eq:tildeps-pmf} for $\tilep$ we see that if $\nb=\Unif_{d+1}$ and $d\geq2$, then the $\chi^2$ term in parentheses vanishes but the constant remains, so that we still have $\LTV\leq \tilep \asymp d/\sqrt n$. This is reasonable, since a Dirichlet distribution is never equal to a Gaussian distribution, so we should not expect the TV distance itself to vanish.

We now contrast the precise quantities~\eqref{Lhat-mean-pmf} and~\eqref{Lhat-mean-pmf-R} with the upper bounds on these quantities that we get from Corollary~\ref{thm:Vmean}.
\begin{prop}[Mean upper bound via Corollary~\ref{thm:Vmean}]\label{prop:mean:pmf}Suppose $\nb_0,\dots,\nb_d>0$. Then
\begin{align}
\|\Lhatth^d\|_{H_V} \les \l(\sqrt{\chi^2(\Unif_{d+1}||\nb)}+1\r)\frac{d}{\sqrt n}\label{Lmean-bd-pmf}\end{align}
If additionally, $d^2/n\Nmin$ is smaller than a certain small absolute constant, then
\begin{align}
\|\bar\theta^d-\hat\theta^d-\Lhatth^d\|_{H_V}&\les\l(\frac{d}{\sqrt{n\Nmin}}\r)^3 +\tau,\label{Rmean-bd-pmf}
\end{align} where $\tau=d\e(-C\sqrt{d}(n\Nmin)^{1/4})$. 
\end{prop}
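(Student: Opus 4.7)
The plan is to apply Corollary~\ref{thm:Vmean} to the marginal posterior $\pi^d\propto e^{-nv^d}$ and substitute in the explicit formulas for $\tilep,\epsilon_3,\efour{\s},\efive{\s}$ derived in Section~\ref{sec:c3:pmf}. The hypotheses of the corollary hold because Assumptions~\ref{assume:1} and~\ref{assume:c0} have already been verified in Section~\ref{sec:c3:pmf} (with $\s_0=4,\cgro=1$) once $d^2/(n\Nmin)$ is sufficiently small, and $v^d$ is $C^\infty$ on the open simplex $\Theta^d$, so the $C^5$ hypothesis for the tighter odd bound is automatic. I take the radius $\s=(n\Nmin/d^2)^{1/4}$, which exceeds the absolute constant $\sst$ under the smallness condition on $d^2/(n\Nmin)$.

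For the first inequality~\eqref{Lmean-bd-pmf}, Corollary~\ref{thm:Vmean} gives $\|\Lhatm\|_{H_V}\leq \tilep$. Lemma~\ref{lma:tildeps-pmf} then yields
$$\tilep \;\leq\; \sqrt{\tfrac53\chi^2(\Unif_{d+1}||\nb)}\,\tfrac{d+1}{\sqrt n} + \sqrt{\tfrac{2d(d-1)}{3n}} \;\les\; \bigl(\sqrt{\chi^2(\Unif_{d+1}||\nb)}+1\bigr)\tfrac{d}{\sqrt n},$$
which is the claim. Note the only requirements here are $\nb_j>0$ (for $\chi^2$ to be finite) plus the hypotheses of Corollary~\ref{thm:Vmean}. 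For the second inequality~\eqref{Rmean-bd-pmf}, I use the $C^5$ refinement of Corollary~\ref{thm:Vmean}, namely
$$\|\bar\theta^d-\hat\theta^d-\Lhatth^d\|_{H_V} \les \Es{\s}(\epsilon_3^2+\efour{\s}^2)(\epsilon_3+\efour{\s}^2) + d^{-1/2}\efive{\s}^3 + \taus{\s}.$$
With $\s=(n\Nmin/d^2)^{1/4}$ one has $\s^2 d/(n\Nmin)=1/\sqrt{n\Nmin}\les 1$, so the Lemma~\ref{lma:eps3:pmf} upper bound collapses to $\ek{k}{\s}\les d/\sqrt{n\Nmin}$ for $k=3,4,5$. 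Hence $\epsilon_3+\efour{\s}^2\les d/\sqrt{n\Nmin}$, $(\epsilon_3^2+\efour{\s}^2)(\epsilon_3+\efour{\s}^2)\les (d/\sqrt{n\Nmin})^3$, and $d^{-1/2}\efive{\s}^3\les (d/\sqrt{n\Nmin})^3$. The exponent $(\epsilon_3^2+\efour{\s}^2)\s^4$ inside $\Es{\s}$ is $\mathcal O(1)$ because $\s^4=n\Nmin/d^2$, so $\Es{\s}\les 1$. Finally $\taus{\s}=d\,\e(-\cgro \s d/8)=d\,\e(-C\sqrt d\,(n\Nmin)^{1/4})=\tau$, giving~\eqref{Rmean-bd-pmf}.

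The main subtlety is checking $\U(\s)\subset\Theta^d$ for the chosen $\s$, since the open simplex has a nearby boundary. Using the form of $H_v$ on the Dirichlet marginal (computed in Section~\ref{pmf:model}), a point at $H_v$-distance $\s\sqrt{d/n}$ from $\nb^d$ deviates in coordinate $j$ by at most $\s\sqrt{d\nb_j/n}$, so containment reduces to $\s\sqrt{d\Nmin/n}\ll\Nmin$, i.e., $\s\ll\sqrt{n\Nmin/d}$. Our choice gives $\s/\sqrt{n\Nmin/d}=(n\Nmin)^{-1/4}\ll 1$, which is automatic when $d^2/(n\Nmin)$ is small; this also ensures $\s\geq\sst$. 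Once these routine verifications are made, the two displays above follow directly from Corollary~\ref{thm:Vmean} combined with Lemmas~\ref{lma:eps3:pmf} and~\ref{lma:tildeps-pmf}.
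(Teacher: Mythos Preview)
Your proposal is correct and follows precisely the intended approach: apply Corollary~\ref{thm:Vmean} (using the $C^5$ refinement~\eqref{Rc5} for the remainder), choose the radius $\s=(n\Nmin/d^2)^{1/4}$ as in the proof of Proposition~\ref{prop:TV:pmf}, and feed in the explicit bounds on $\tilep,\epsilon_3,\efour{\s},\efive{\s}$ from Lemmas~\ref{lma:eps3:pmf} and~\ref{lma:tildeps-pmf}. One small remark: for~\eqref{Lmean-bd-pmf} you only need $\nb_j>0$, since the inequality $\|\Lhatth^d\|_{H_V}\leq\tilep$ is just Cauchy--Schwarz together with Lemma~\ref{lma:Lbd} and does not rely on the growth hypotheses of Corollary~\ref{thm:Vmean}; your parenthetical ``plus the hypotheses of Corollary~\ref{thm:Vmean}'' is unnecessary there.
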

Note that whenever $\chi^2(\Unif_{d+1}||\nb)\gtrsim1$, the upper bound~\eqref{Lmean-bd-pmf} on $\|\Lhatth^d\|_{H_V}$ is greater than the true value~\eqref{Lhat-mean-pmf} of $\|\Lhatth^d\|_{H_V}$ only by a constant factor. 

In contrast, the upper bound~\eqref{Rmean-bd-pmf} on $\|\bar\theta^d-\hat\theta^d-\Lhatth^d\|_{H_V}$ is a drastic overestimate of the true value of this remainder norm. To see this, we start with the true value and take two upper bounds:
\beqs
d^{-1}\sqrt{\chi^2(\mathrm{Unif}||\nb)}\l(\frac{d}{\sqrt{n}}\r)^3 &\les d^{-2}\sqrt{\sum_{j=0}^d\nb_j^{-1}}\l(\frac{d}{\sqrt{n}}\r)^3\\
&\leq d^{-3/2}\Nmin^{-1/2}\l(\frac{d}{\sqrt{n}}\r)^3.\eeqs The quantity in the second line is smaller than the upper bound $(d/\sqrt{n\Nmin})^3$ from~\eqref{Rmean-bd-pmf} by a factor of $d^{3/2}\Nmin^{-1}$, which is \emph{at least} $d^{2.5}$!

\section{Application to logistic regression posterior}\label{sec:log}
\renewcommand{\ground}{\beta}
In this section, we specialize our results to the setting of a posterior in a logistic regression model with Gaussian design, and a flat or Gaussian prior on the coefficient vector. In Section~\ref{sec:log:set}, we describe the model and provide the explicit formulas for the skew-corrected LA $\corlap$, and the mean correction~\eqref{Lm-def} in particular. 

Then in Section~\ref{sec:log:whp}, we obtain high-probability versions of the upper bounds from Section~\ref{sec:V:results} on both the corrected and uncorrected LA errors. 

In Section~\ref{sec:num}, we numerically demonstrate how incorporating the skew correction improves the approximation to $\pi$ with respect to estimating the mean and set probabilities. We also provide numerical lower bounds on the leading order terms $\LTV$ and $\Lhatm$ in the decomposition of the  TV and mean errors. Finally, we discuss the use of our polynomial time computable upper bound on $\LTV$ as a proxy for $\TV(\pi,\lap)$.

Omitted proofs from this section can be found in Appendix~\ref{app:sec:log}.
\subsection{Setting and formulas for leading order terms}\label{sec:log:set}
In logistic regression with Gaussian design, we are given i.i.d. feature vectors $X_i\in\R^d$ drawn according to a Gaussian distribution, and labels $Y_i\in\{0,1\}$ corresponding to $X_i$, $i=1,\dots,n$. The distribution of a label $Y$ conditioned on its corresponding feature vector $X$ is modeled as $Y\vert X\sim\mathrm{Bernoulli}(\sigma(b^\T X))$, i.e.
\beq\label{pYXb}p(Y\mid X,b) =\sigma(b^\T X)^{Y}(1-\sigma(b^\T X))^{1-Y} = \e(YX^\T b - \psi(X^\T b))\eeq where $\sigma$ is the sigmoid $\sigma(t)=(1+e^{-t})^{-1}$,  $\psi(t)=\log(1+e^t)$ is the antiderivative of $\sigma$, and $b\in\R^d$ is the unknown coefficient vector, the parameter of interest. We use $b$ in place of $x$ in this section. We consider a Gaussian prior $\mathcal N(0,\Sigma_0)$ on $b$, and we include the case of a flat prior by allowing $\Sigma_0^{-1}=0$. Then the posterior $\pi$ is given by
\beq\label{pi-def}
\pi(b\mid (X_i,Y_i)_{i=1}^n)\propto \e\l(\sum_{i=1}^nY_iX_i^\T b - \sum_{i=1}^n\psi(X_i^\T b) - \frac12b^\T \Sigma_0^{-1}b\r),\quad b\in\R^d.\eeq We can write $\pi\propto e^{-V}$, where
\beq\label{vndef}V(b)=nv(b)= -\sum_{i=1}^nY_iX_i^\T b + \sum_{i=1}^n\psi(X_i^\T b) + \frac12b^\T \Sigma_0^{-1}b.\eeq We assume the model is well-specified with ground truth parameter $b=\ground$, and we take $\mathcal N(0,M)$ for the design distribution. Thus the joint feature-label distribution is given by
\beq\label{joint}X_i\iid\mathcal N(0, M),\quad Y_i\vert X_i\sim\mathrm{Bernoulli}(\sigma(\ground^\T X_i)),\quad i=1,\dots,n.\eeq
Now, in Proposition~\ref{prop:log} below, we will show that $V$ has a unique global minimizer with high probability. Note that this is not guaranteed when the prior is flat, since then $V$ is strictly but not strongly convex. When a unique global minimizer exists, we denote it $\bhat$:
\beq\label{bhatdef}\bhat=\arg\min_{b\in\R^d}V(b).\eeq 
\begin{lemma}[Skew correction and upper bound formulas]\label{lma:skew:log}
Assuming $\bhat$ exists, we have the following formulas for $H_V$, the skew correction function $\Lhat$, and the mean skew correction $\delta\hat b$:
\begin{align}
H_V &= \nabla^2V(\hat b)=\sum_{i=1}^n\psi(\hat b^\T X_i)X_iX_i^\T  + \Sigma_0^{-1},\label{logHV}\\
\Lhat(b) &= -\frac16\sum_{i=1}^n\psi'(\hat b^\T X_i)(b^\T X_i-\hat b^\T X_i)^3,\label{logLhat}\\
\Lhatb  &= -\frac12\sum_{i=1}^n\psi'(\hat b^\T X_i)(X_i^\T H_V^{-1}X_i)H_V^{-1}X_i.\label{logLhatm}
\end{align}
Hence, the skew-adjusted Laplace approximation to a general observable expectation and to the posterior mean are given by
\beqs\label{LapGLM}
\E_{b\sim\pi}[g(b)]&\approx\E_{b\sim\lap}\l[g(b)\l(1-\frac16\sum_{i=1}^n\psi'(x_i^\T \bhat)(X_i^\T b-X_i^\T \bhat)^3\r)\r],\\
\bar b&\approx \bhat -\frac12\sum_{i=1}^n\psi'(\bhat^\T x_i)(X_i^\T H_V^{-1}X_i)H_V^{-1}X_i,
\eeqs where $\lap=\mathcal N(\bhat, H_V^{-1})$ and $\bpi=\int bd\pi(b)$ is the posterior mean. Moreover, we have the following upper bound on $\LTV$:
\beqs\label{tildec3form}
\LTV^2\leq\tilep^2 = \sum_{\ell,m=1}^n\sigma(X_\ell^T\bhat)&\sigma(X_m^T\bhat)\bigg[\frac16\l(X_\ell^TH_V^{-1}X_{m}\r)^3\\
&+\frac14\l(X_\ell^TH_V^{-1}X_{m}\r)\l(X_\ell^TH_V^{-1}X_{\ell}\r)\l(X_{m}^TH_V^{-1}X_{m}\r)\bigg]
\eeqs
\end{lemma}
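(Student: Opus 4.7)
The plan is to prove each formula by direct computation from the explicit form of $V$ in~\eqref{vndef}, combined with the general formulas stated earlier in Section~\ref{sec:V:results}.

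\emph{Derivatives of $V$.} Since each per-sample term in~\eqref{vndef} depends on $b$ only through the scalar $X_i^\T b$, the chain rule gives
\begin{align*}
\nabla^2 V(b) &= \sum_{i=1}^n \psi''(X_i^\T b)\,X_iX_i^\T + \Sigma_0^{-1},\\
\nabla^3 V(b) &= \sum_{i=1}^n \psi'''(X_i^\T b)\,X_i^{\otimes 3}.
\end{align*}
Evaluating the Hessian at $\bhat$ yields~\eqref{logHV}. Substituting $\nabla^3 V(\bhat)$ into the definition of $\Lhat$ and using the identity $\langle X_i^{\otimes 3}, (b-\bhat)^{\otimes 3}\rangle = (X_i^\T(b-\bhat))^3$ that follows from the rank-one structure gives~\eqref{logLhat}.

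\emph{Formula for $\Lhatb$.} By Corollary~\ref{thm:Vmean}, $\Lhatm = -\tfrac{1}{2}H_V^{-1}\langle \nabla^3 V(\bhat), H_V^{-1}\rangle$. The contraction factorizes summand-by-summand: for each $i$,
$$\langle X_i^{\otimes 3}, H_V^{-1}\rangle_k = X_{i,k}\sum_{l,m} X_{i,l}X_{i,m}(H_V^{-1})_{lm} = (X_i^\T H_V^{-1} X_i)\,X_{i,k}.$$
Pulling $H_V^{-1}$ outside the sum and collecting terms produces~\eqref{logLhatm}.

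\emph{Bound on $\tilep^2$ (and hence on $\LTV^2$).} By Definition~\ref{def:tilep},
$$\tilep^2 = \tfrac{1}{6}\|\nabla^3 W(0)\|_F^2 + \tfrac{1}{4}\|\langle \nabla^3 W(0), I_d\rangle\|^2, \qquad W(x) = V(\bhat+H_V^{-1/2}x).$$
By the chain rule, $\nabla^3 W(0) = \sum_{i=1}^n \psi'''(X_i^\T\bhat)\,u_i^{\otimes 3}$ with $u_i := H_V^{-1/2}X_i$. Expanding the Frobenius norm of this sum of rank-one symmetric tensors via $\langle u_\ell^{\otimes 3}, u_m^{\otimes 3}\rangle_F = \langle u_\ell,u_m\rangle^3$, and using $\langle u_i^{\otimes 3}, I_d\rangle = \|u_i\|^2 u_i$ for the second term, gives
\begin{align*}
\|\nabla^3 W(0)\|_F^2 &= \sum_{\ell,m}\psi'''(X_\ell^\T\bhat)\psi'''(X_m^\T\bhat)\,(X_\ell^\T H_V^{-1}X_m)^3,\\
\|\langle \nabla^3 W(0),I_d\rangle\|^2 &= \sum_{\ell,m}\psi'''(X_\ell^\T\bhat)\psi'''(X_m^\T\bhat)\,(X_\ell^\T H_V^{-1}X_m)(X_\ell^\T H_V^{-1}X_\ell)(X_m^\T H_V^{-1}X_m),
\end{align*}
after substituting $\|u_i\|^2 = X_i^\T H_V^{-1}X_i$ and $\langle u_\ell,u_m\rangle = X_\ell^\T H_V^{-1}X_m$. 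Combining the two pieces produces the stated expression for $\tilep^2$. The bound $\LTV^2 \leq \tilep^2$ follows immediately from Lemma~\ref{lma:Lbd} and Cauchy--Schwarz: $\LTV = \tfrac12\int|\Lhat|d\lap \leq \tfrac12\|\Lhat\|_{L^2(\lap)} = \tilep/2$, hence $\LTV^2 \leq \tilep^2/4 \leq \tilep^2$.

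The argument is entirely a mechanical computation; there is no conceptual obstacle. The only place requiring care is the bookkeeping in the final step, and even there the rank-one structure $\nabla^3 V(\bhat) = \sum_i \psi'''(X_i^\T \bhat) X_i^{\otimes 3}$ makes every tensor contraction reduce to a scalar product $X_\ell^\T H_V^{-1} X_m$, keeping the algebra transparent.
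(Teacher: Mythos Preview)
Your proposal is correct and follows essentially the same route as the paper: a direct chain-rule computation of $\nabla^2V$ and $\nabla^3V$ for \eqref{logHV}--\eqref{logLhatm}, and for \eqref{tildec3form} the same expansion of $\tilep^2$ via the rank-one structure $\nabla^3W(0)=\sum_i\psi'''(X_i^\T\bhat)\,u_i^{\otimes3}$ with $u_i=H_V^{-1/2}X_i$, reducing all tensor contractions to scalars $X_\ell^\T H_V^{-1}X_m$. Your use of $\psi''$ and $\psi'''$ agrees with the chain rule and with the paper's appendix proof; the appearance of $\psi$, $\psi'$, and $\sigma$ in the displayed statement of the lemma is a typo in the paper (note $\sigma=\psi'\neq\psi'''$).
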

\noindent The proof of~\eqref{logHV}-\eqref{logLhatm} follows by a short calculation using~\eqref{Lhatdef} and~\eqref{Lm-def}, and the fact that $\nabla^3V(b)=\sum_{i=1}^n\psi'(b^\T X_i)X_i^{\otimes 3}$. The proof of~\eqref{tildec3form} is in Appendix~\ref{app:sec:log}. We comment on the practical use of the upper bound~\eqref{tildec3form} in Section~\ref{dimsubsub}.
%
%
%
\begin{remark}
The skew-adjustment formulas~\eqref{LapGLM} actually apply to any generalized linear models with observed data $(X_i, Y_i)\in\R^d\times \R$, $i=1,\dots,n$ modeled as $Y\vert X\sim p(\cdot\mid X^\T b)$, where $p$ is an exponential family in canonical form: $p(y\mid\theta)=h(y)\e(y\theta-\psi(\theta))$ for some base density $h$ and normalizing constant $e^{\psi}$. The base density does not affect the posterior distribution. \end{remark}

\subsection{Verification of assumptions and high-probability error bounds}\label{sec:log:whp}
In this section, we use the results of Section~\ref{sec:V:results} to rigorously prove bounds on the error of the original and corrected LA, for posteriors arising from logistic regression. Following our work~\cite{katsBVM} on the Bernstein-von Mises theorem, we formulate our bounds through a statistical lens. This means that, instead of proving bounds in which the righthand side is a function of a fixed set of samples $(X_i, Y_i)_{i=1}^n$, we prove bounds which hold uniformly over all sets of samples falling in an event of high probability under the joint distribution of $(X_i, Y_i)_{i=1}^n$.

As discussed in the introduction, proving high-probability bounds allows us to get a handle on the dimension dependence of the model-dependent terms $c_3,c_4(\s),c_5(\s)$ appearing in $\epsilon_3,\efour{\s},\efive{\s}$. Although high-probability bounds are necessarily coarser than bounds tailored to a given realization of the data, it is very difficult to bound the coefficients $c_3,c_4(\s),c_5(\s)$ for given data, and unclear how these coefficients scale with $d$. Indeed, note that
$$c_k(\s) = \sup_{b\in\U(s)}\|\nabla^{k}v(b)\|_{H_{v}}=\sup_{b\in\U(s)}\sup_{u\neq0}\frac{\frac1n\sum_{i=1}^n\psi^{(k)}(X_i^{\T}b)(X_i^{\T}u)^3}{\l(\frac1n\sum_{i=1}^n\psi\big(X_i^{\T}\bhat\big)(X_i^{\T}u)^2 +u^\T(n\Sigma_0)^{-1}u\r)^{3/2}}$$ for $k=3,4,5$.

The following proposition proves that the assumptions on the random function $v$ are satisfied uniformly over a high probability event, and also bounds $c_3,c_4(\s),c_5(\s)$ uniformly over this event. 
\begin{prop}\label{prop:log}Let $(X_i, Y_i)$, $i=1,\dots, n$ be i.i.d., with $X_i\sim\mathcal N(0, M)$ and $Y_i\vert X_i\sim \mathrm{Bernoulli}(\sigma(X_i^\T \ground))$. Let $V$ be as in~\eqref{vndef}, i.e. the negative log posterior of the logistic regression coefficient vector given a Gaussian or flat prior $\pi_0=\mathcal N(0,\Sigma_0)$. There exist constants $C(\|\ground\|_{\M})>0$ such that if 
$$ \|\Sigma_{0}^{-1}\|_{\M} \leq C(\|\ground\|_{\M})n,\quad (\sqrt d\vee\log(2n))\sqrt{d/n}\leq C(\|\ground\|_{\M})\wedge1,$$ then the intersection of the following events holds with probability at least \\$1 -C\e(-C(\|\ground\|_{\M})\max(\sqrt d,\log n))$:
\begin{enumerate}
\item There exists a unique global minimizer $\hat b$ of $v$, and $H_{v}=\nabla^{2}v(\hat b)\succeq C(\|\ground\|_{\M})\M$.
\item We have the bounds
\beq\label{Hvnablak}\sup_{b\in\R^{d}}\|\nabla^{k}v(b)\|_{H_{v}} \leq C(\|\ground\|_{\M})\l(\frac{d^{k/2}}{n}\vee1\r),\qquad\forall k=3,4,5.\eeq
\item The lower bound~\eqref{assume:c0:eq} holds with $\s_0=4$ and $\cgro=1$.
\end{enumerate}
The probability is with respect to the joint feature-label distribution~\eqref{joint}.
\end{prop}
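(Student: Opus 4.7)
The proof splits naturally into three pieces, one for each claim, tied together by a common high-probability event for the data $(X_i, Y_i)_{i=1}^n$. The overall plan is: (a) a standard MLE consistency argument to locate $\bhat$ near $\ground$ and produce the Hessian lower bound of claim 1; (b) pointwise inequalities for the derivatives of $\psi$ combined with Gaussian tensor-moment concentration to give the operator-norm bounds of claim 2; (c) convexity of $V$ to upgrade a local quadratic lower bound into the global linear growth of claim 3.

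For claim 1, I would first observe that $V$ is convex (strictly so with probability one for $n \geq d$ under Gaussian design) and that $V$ is coercive with high probability because $\psi(t) \geq \max(t, 0) - \log 2$ and a standard no-separation argument shows $\frac{1}{n}\sum_i \max(X_i^\T u, 0) \gtrsim \|u\|_{\M}$ uniformly in $u$; this yields existence and uniqueness of $\bhat$. Next I would invoke a standard MLE concentration argument: the gradient at the truth satisfies $\|\nabla v(\ground)\|_{(n\M)^{-1}} \lesssim \sqrt{d/n}$ with high probability (it is a sub-Gaussian average of mean-zero vectors $(Y_i - \sigma(X_i^\T \ground))X_i$), and combining this with the pointwise Hessian lower bound $\mathbb E[\sigma'(X^\T \ground) X X^\T] \succeq c(\|\ground\|_{\M})\M$ gives $\|\bhat - \ground\|_{\M} \lesssim \sqrt{d/n}$. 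Finally, uniform concentration of the random map $b \mapsto \frac{1}{n}\sum \sigma'(X_i^\T b) X_i X_i^\T$ on an $\M$-neighborhood of $\ground$ (covering argument plus matrix Bernstein) transfers the pointwise Hessian lower bound from $\ground$ to $\bhat$.

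For claim 2, the key algebraic input is that $|\psi^{(k)}(t)| \leq C_k \sigma'(t)$ for $k = 3, 4, 5$, since each ratio $\psi^{(k)}/\sigma'$ is a bounded polynomial in $\sigma$ (verified by direct differentiation). Combining this with $H_v \succeq c(\|\ground\|_{\M})\M$ from claim 1, the problem of bounding
$$\|\nabla^k v(b)\|_{H_v} \;=\; \sup_{\|u\|_{H_v}=1} \frac{1}{n}\Bigl|\sum_{i=1}^n \psi^{(k)}(X_i^\T b)(X_i^\T u)^k\Bigr|$$
uniformly in $b \in \R^d$ reduces (after Cauchy--Schwarz to split the odd cases $k = 3, 5$) to bounding the empirical even moments $M_{2m}(u) = \frac{1}{n}\sum_i (X_i^\T u)^{2m}$ for $m = 1, 2, 3$, uniformly over $u$ with $\|u\|_{\M} \lesssim 1$. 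I would then use non-asymptotic concentration for the Gaussian tensor moment $T_{2m} = \frac{1}{n}\sum_i X_i^{\otimes 2m}$, whose expectation has constant $\M$-weighted operator norm and whose operator-norm fluctuation is of order $d^m/n$ with the required exponential tail; this gives $\sup_u M_{2m}(u) \lesssim 1 + d^m/n$ and hence $\|\nabla^k v(b)\|_{H_v} \lesssim 1 + d^{k/2}/n$.

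For claim 3, I would appeal to Remark~\ref{vconvA2}: for convex $v$, it suffices to verify \eqref{vsuff} on the boundary of $\U(4)$, after which convexity automatically extends the bound to all of $\Theta \setminus \U(4)$ with $\cgro = 1$ and $\s_0 = 4$. The boundary estimate follows from a third-order Taylor expansion of $v$ about $\bhat$, namely $v(b) - v(\bhat) \geq \tfrac{1}{2}\|b-\bhat\|_{H_v}^2 - \tfrac{1}{6} c_3(4)\|b-\bhat\|_{H_v}^3$, which exceeds $\tfrac{1}{4}\|b-\bhat\|_{H_v}^2$ on $\|b-\bhat\|_{H_v} = 4\sqrt{d/n}$ provided $c_3(4)\sqrt{d/n}$ is small, and this is precisely what claim 2 ensures under the standing hypothesis $(\sqrt d \vee \log(2n))\sqrt{d/n} \leq C(\|\ground\|_{\M})$ (which forces $d^2 \lesssim n$). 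The main obstacle will be in claim 2: arranging the Gaussian concentration so that a single event of probability $1 - C\exp(-C(\|\ground\|_{\M})\max(\sqrt d, \log n))$ simultaneously controls all three empirical moments uniformly over directions, with constants depending only on $\|\ground\|_{\M}$; this will require careful truncation since the Lipschitz constants of the maps $u \mapsto M_{2m}(u)$ grow with $\max_i \|X_i\|$, and aligning the three resulting tail probabilities with the MLE and Hessian concentration events demands bookkeeping.
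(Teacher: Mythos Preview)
Your plan is correct and matches the paper's strategy: locate $\bhat$ near $\ground$ via gradient and Hessian control at the truth, bound the higher derivatives by reducing to empirical Gaussian moments, and invoke convexity plus Remark~\ref{vconvA2} for the growth condition. A few tactical differences are worth noting.

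First, the inequality $|\psi^{(k)}|\le C_k\sigma'$ is true but unnecessary: the paper uses only $\|\psi^{(k)}\|_\infty\le C$, which already suffices once $H_v\succeq c(\|\ground\|_\M)\M$ converts the $H_v$-normalized operator norm to an $\M$-normalized one. Second, your ``main obstacle'' --- uniform control of $\sup_{\|u\|_\M=1}\tfrac1n\sum_i|X_i^\T u|^p$ with the right $\exp(-C\max(\sqrt d,\log n))$ tail --- is resolved in the paper by a direct appeal to Proposition~4.4 of Adamczak et al.\ (after padding $d$ up to $m=\max(d,(\log n)^2)$ so that the hypothesis $m\le n\le e^{\sqrt m}$ holds); this handles $p=3,4,5$ simultaneously with no Cauchy--Schwarz reduction to even moments and no truncation of $\max_i\|X_i\|$. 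Third, for claim~1 the paper does not argue coercivity/no-separation: it instead shows directly (Lemma~\ref{MLEexist}, a contraction-type argument from \cite{katsBVM}) that conditions (1)--(3) force a critical point in the ball $\|\cdot-\ground\|_\M\le s$, which by convexity is the unique global minimizer. Your route via coercivity also works but is a detour.
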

On the high probability event from this proposition, we see that Assumptions~\ref{assume:1} and~\ref{assume:c0} are satisfied. Thus Theorems~\ref{thm:Vgen},~\ref{thm:odd} and Corollaries~\ref{corr:corTV},~\ref{thm:Vmean},~\ref{thm:Vcov} can all be applied. Furthermore,~\eqref{Hvnablak} shows that if $\|\ground\|_M\leq C$ and $d^2/n\leq C$, then w.h.p. we have $c_3\leq C$, $\sup_{\s\geq0}c_4(\s)\leq C$, and $\sup_{\s\geq0}c_5(\s)\leq C\sqrt d$, for an absolute constant $C$. Hence
\beq\label{c345}\epsilon_3\leq C\epsilon,\qquad\sup_{\s\geq0} \efour{\s}^2\leq C\epsilon^2, \qquad \sup_{\s\geq0}\efive{\s}^3\leq C\sqrt d\epsilon^3\eeq w.h.p., where $\epsilon=d/\sqrt n$. Therefore, logistic regression is an example in which the small quantities $\epsilon_3,\efour{\s},\efive{\s}$ are not all bounded by $d/\sqrt n$ and moreover, they grow with $d$ at different rates. But interestingly enough, $\efive{\s}^3$ only ever appears in our bounds with a factor $d^{-1/2}$ in front; see~\eqref{godd-bd} and~\eqref{Rc5}. As a result, the high probability bounds we now prove \emph{depend purely on powers of $d/\sqrt n$}. 

The following corollary is an application of Theorems~\ref{thm:Vgen},~\ref{thm:odd} and the subsequent corollaries with a particular choice of $\s$; see Appendix~\ref{app:sec:log} for more details.
\begin{corollary}[High probability Laplace error bounds for logistic regression]\label{corr:logreg}
Consider the setting of Proposition~\ref{prop:log}. Let 
\beq\label{eps-tau}\epsilon=d/\sqrt n,\qquad\tau=d\e(-Cn^{1/4}\sqrt d).\eeq Suppose $\|\ground\|_M\leq C$. If $\epsilon$ and $n^{-1}\|\Sigma_0^{-1}\|_\M$ are smaller than sufficiently small absolute constants, then the following bounds hold on an event of probability at least $1 - C\e(-C\max(\sqrt d,\log n))$:
\beqs\label{bds-gam-log-1}
 \bigg|\int  gd\pi-\int gd\lap\bigg|&\lesssim \l(\af{g}\vee1\r)\tau+\begin{cases}\epsilon\vspace{5pt}\\ 
\epsilon^2,\quad\text{g even about $\bhat$},\end{cases}\\
 \bigg|\int  gd\pi-\int gd\corlap\bigg|&\lesssim \l(\af{g}\vee1\r)\tau+\begin{cases}\epsilon^2\vspace{5pt}\\ 
\epsilon^3 ,\quad\text{g odd about $\bhat$}.\end{cases}
\eeqs
for any ``standardized'' $g$ as in Theorem~\ref{thm:Vgen}. Furthermore, we have the following TV bounds:
\beqs
|\LTV|\les\epsilon,\qquad \l|\TV(\pi,\lap)-\LTV\r|\leq\TV\l(\pi,\; \corlap\r)\lesssim \epsilon^2+ \tau,\eeqs
the following mean error bounds:
\beqs
\|\Lhatb\|_{H_V} \les\epsilon,\qquad  \|\bpi - \hat b -\Lhatb \|_{H_V}&\lesssim \epsilon^3+\tau,\eeqs
and the following covariance error bound:
\beq
 \|H_V^{1/2}\big(\Sigpi -  H_V^{-1}\big)H_V^{1/2}\|\lesssim \epsilon^2+\tau.
\eeq
In each bound, the absolute constant suppressed by the $\lesssim$ symbol is deterministic (independent of the data realization).
 \end{corollary}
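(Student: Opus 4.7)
The plan is to place ourselves on the high-probability event supplied by Proposition~\ref{prop:log} and then apply Theorems~\ref{thm:Vgen} and~\ref{thm:odd} and the ensuing corollaries with a single, carefully chosen radius $\s$. Since $\|\ground\|_M$ is bounded by an absolute constant, the event of Proposition~\ref{prop:log} has probability at least $1 - C\exp(-C\max(\sqrt d,\log n))$ and on it Assumptions~\ref{assume:1} and~\ref{assume:c0} hold with $\s_0 = 4$, $\cgro = 1$, a unique mode $\bhat$ exists, and the derivative bounds~\eqref{Hvnablak} apply. Translating~\eqref{Hvnablak} through Definition~\ref{epsdef} and using $d/\sqrt n \leq 1$, I obtain the summary~\eqref{c345}
\beqsn
\epsilon_3 \les \epsilon,\qquad \sup_{\s \geq 0}\efour{\s}^2 \les \epsilon^2,\qquad \sup_{\s \geq 0}\efive{\s}^3 \les \sqrt d\,\epsilon^3,\qquad \tilep \leq \epsilon_3 \les \epsilon,
\eeqsn
all of which are insensitive to $\s$ because Proposition~\ref{prop:log} controls the derivative norms globally in $b \in \R^d$.

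Next I would set $\s = c_\star (n/d^2)^{1/4}$ for a small absolute constant $c_\star > 0$. Under the smallness hypothesis on $\epsilon$ this $\s$ exceeds $\sst = \max(4, 8\log(2e))$, hence is admissible. It gives $(\del{\s})\s^4 \les \epsilon^2 \s^4 \leq c_\star^4$, so that $\Es{\s}$ is bounded by an absolute constant once $c_\star$ is fixed small enough; and $\taus{\s} = d\exp(-\s d/8) \leq d\exp(-C n^{1/4}\sqrt d) = \tau$, matching~\eqref{eps-tau}.

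With $\s$ fixed, every assertion of the corollary follows by direct substitution. Theorem~\ref{thm:Vgen} yields $|\Delta_g(\corlap)| \les \Es{\s}(\del{\s}) + (\af{g}\vee 1)\taus{\s} \les \epsilon^2 + (\af{g}\vee 1)\tau$ and $|L(g)| \leq \tilep \les \epsilon$, producing the two lines of the first display; when $g$ is even about $\bhat$, $L(g)=0$ eliminates the leading $\epsilon$ term. For odd $g$, Theorem~\ref{thm:odd} gives $|\Delta_g(\corlap)| \les \Es{\s}(\epsilon_3^2 + \efour{\s}^2)(\epsilon_3 + \efour{\s}^2) + d^{-1/2}\efive{\s}^3 + (\af{g}\vee 1)\taus{\s}$, and here the prefactor $d^{-1/2}$ is precisely what absorbs the extra $\sqrt d$ in $\efive{\s}^3 \les \sqrt d\,\epsilon^3$, collapsing the whole expression to $\epsilon^3 + (\af{g}\vee 1)\tau$. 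The TV bounds come from Corollary~\ref{corr:corTV}, the mean bounds from Corollary~\ref{thm:Vmean} in its sharper $v \in C^5$ form~\eqref{Rc5} (applicable because $g(b) = b$ is odd about $\bhat$), and the covariance bound from Corollary~\ref{thm:Vcov}, all by the same substitution.

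The main obstacle is the choice of $\s$: it must be large enough that $\taus{\s}$ decays into $\tau$, yet small enough that $\Es{\s} = \exp((\del{\s})\s^4)$ stays bounded. The scaling $\s \asymp (n/d^2)^{1/4}$ is the one that balances these competing demands. That the same $\s$ also keeps $\efour{\s}$ and $\efive{\s}$ on the order of their values at $\bhat$ is given for free by the global derivative control of Proposition~\ref{prop:log}, and this is ultimately why the resulting high-probability bounds depend purely on powers of $d/\sqrt n$ rather than picking up additional powers of $d$ through model-dependent factors.
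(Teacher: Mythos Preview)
Your proposal is correct and follows essentially the same route as the paper: place yourself on the event from Proposition~\ref{prop:log}, use the global derivative bounds~\eqref{Hvnablak} to control $\epsilon_3,\efour{\s},\efive{\s}$ uniformly in $\s$, choose $\s\asymp\epsilon^{-1/2}=(n/d^2)^{1/4}$ so that $\Es{\s}$ stays bounded while $\taus{\s}$ becomes $\tau$, and then substitute into Theorems~\ref{thm:Vgen},~\ref{thm:odd} and Corollaries~\ref{corr:corTV},~\ref{thm:Vmean},~\ref{thm:Vcov}. The paper simply takes $\s=\epsilon^{-1/2}$ without the extra constant $c_\star$, absorbing the resulting absolute-constant exponent in $\Es{\s}$ into the $\lesssim$; otherwise the arguments coincide.
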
 
These high probability bounds reveal that $d\ll\sqrt n$ is sufficient for the Laplace approximation errors to be small, for a typical realization of the data. Furthermore, we note that our bound on the TV distance between $\pi$ and $\corlap$ can be construed as a skewed Bernstein-von Mises Theorem (BvM) in the logistic regression setting. To state the result asymptotically, we have shown that if $n\to\infty$ and $d^2/n\to0$, then $\TV\l(\pi,\; \corlap\r)$ converges to zero at rate $d^2/n$ with probability tending to 1 under the ground truth data distribution. This is analogous to the result of~\cite{durante2023skewed}, which proves a skewed BvM at rate $n^{-1}$ in the low-dimensional regime, using their skew-corrected approximation $\hat p_{\mathrm{GSN}}$.
 \begin{remark}[Strength of prior regularization]\label{rk:prior-strength}A large inverse covariance $\Sigma_0^{-1}$ corresponds to strong prior regularization. In Corollary~\ref{corr:logreg}, we have only required that $\|\Sigma_0^{-1}\|_\M\leq\delta n$ for a sufficiently small but absolute constant $\delta$. Thus the contribution $b^T\Sigma_0^{-1}b$ to the negative log posterior $V$ from~\eqref{vndef} is comparable to the contribution from the negative log likelihood --- i.e. both can be order $\mathcal O(n)$. In other words, Corollary~\ref{corr:logreg} does not require the prior to wash away in the large $n$ limit.
 
Still, it may seem surprising that we require $\|\Sigma_0^{-1}\|_M$ to be sufficiently small, since in Remark~\ref{rk:deriv:ell}, we noted that the stronger a Gaussian prior, the better the Laplace approximation. The reason for this requirement is that it allows us to show that the MAP $\hat b$ is close to the MLE, which is in turn close to the ground truth parameter. Thus $\hat b$ is bounded with high probability, which is useful for showing that the Laplace approximation bounds hold uniformly over a high probability event. Hence this is purely related to our proof technique. For a fixed realization of the data, it remains true that stronger prior regularization is more favorable for the LA.
 \end{remark}
\begin{remark}Recall that $H_v\succeq CM$ with high probability by point 1 of Proposition~\ref{prop:log}. This allows us to replace the $H_V$-weighted norms in the mean and covariance bounds by $M$-weighted norms. For example, if $M=I_d$, we get
\beqsn
\sqrt n\|\bar b- \hat b\|&\lesssim \epsilon+ \tau,\\
\sqrt n \|\bar b - (\hat b +\Lhatb )\|&\lesssim \epsilon^3+\tau,\\
n\|\Var_\pi(b) -  H_V^{-1}\|&\lesssim \epsilon^2+\tau.
\eeqsn 
\end{remark}

\subsection{Numerics: skew correction and lower bounds}\label{sec:num}
In this section, we demonstrate the improvement in accuracy due to the skew correction in dimension $d=2$. We then show for a range of $(d,n)$ pairs that the leading order terms $\LTV$ and $\|\Lhatb\|_{H_V}$ of the TV and mean errors are bounded from below by $d/\sqrt n$. In all of the simulations in this section, we take a flat prior ($\Sigma_0^{-1}=0$), ground truth $\ground=e_1=(1,0,\dots,0)$, and covariance $M=I_d$ for the Gaussian feature distribution, i.e. we draw the samples $X_i$ according to a standard normal distribution.

\subsubsection{Skew correction in $d=2$}
\begin{figure}
\centering
\vspace{-5pt}
\includegraphics[width=0.49\textwidth]{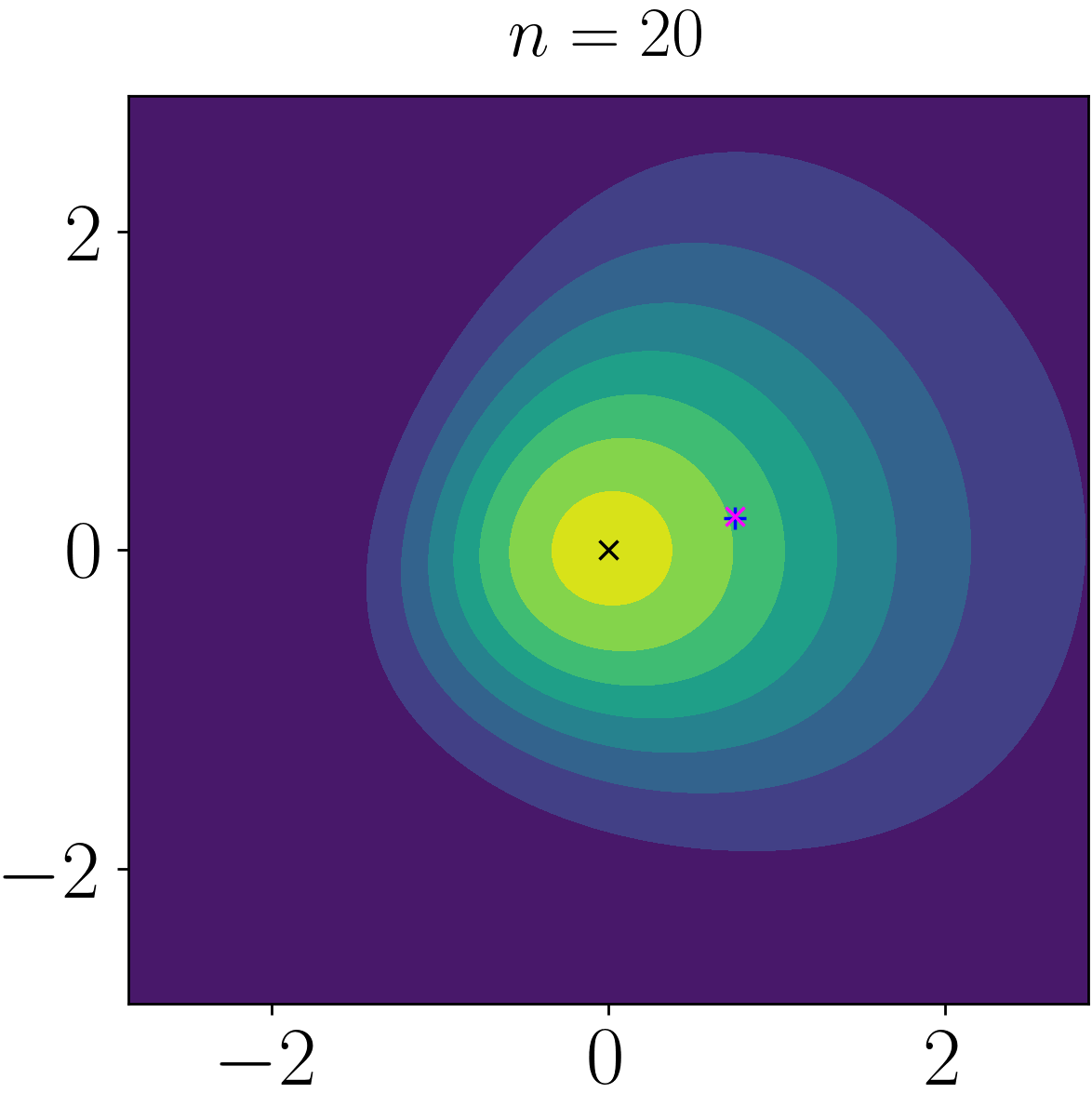}
\includegraphics[width=0.49\textwidth]{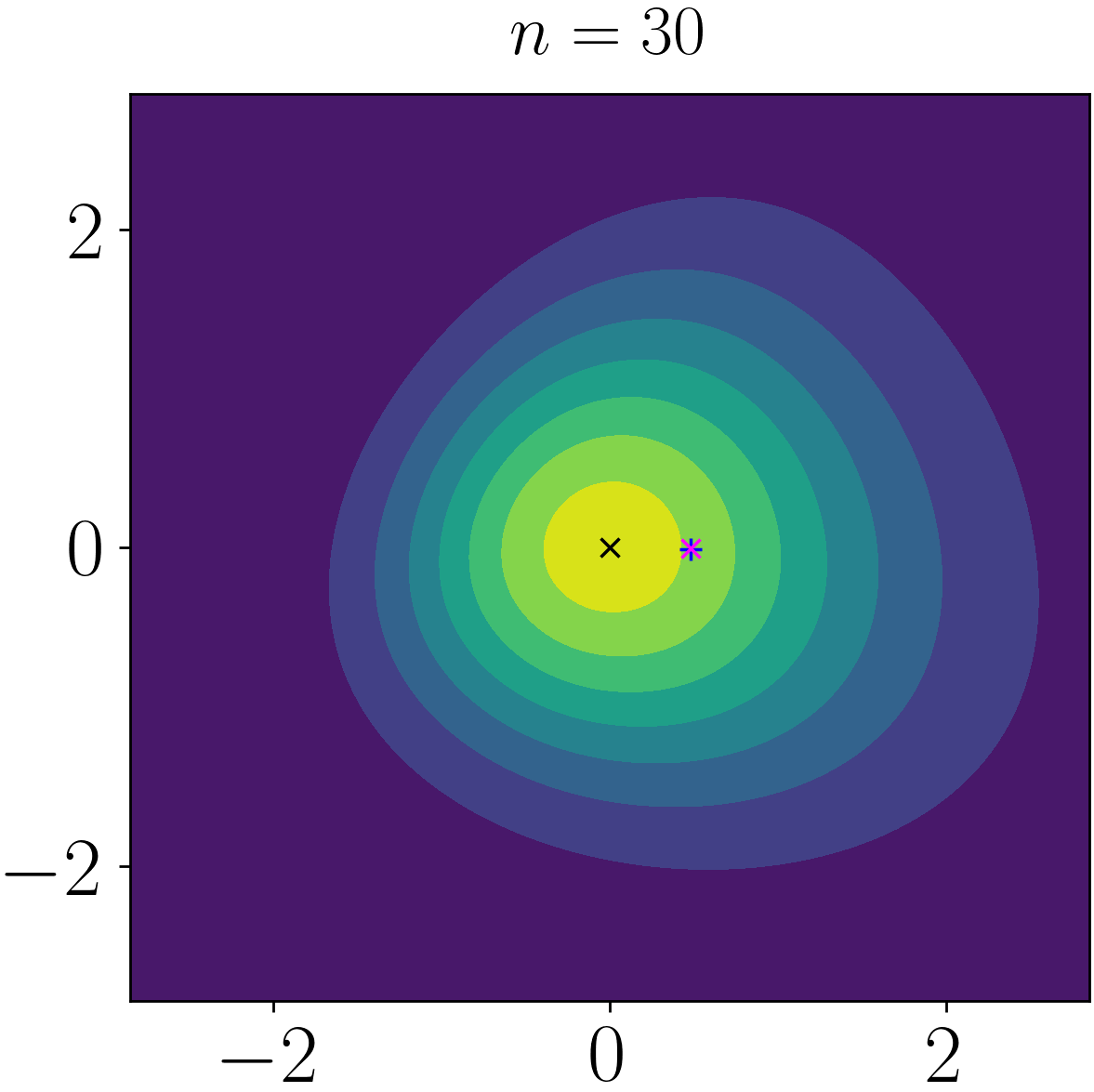}
\includegraphics[width=0.49\textwidth]{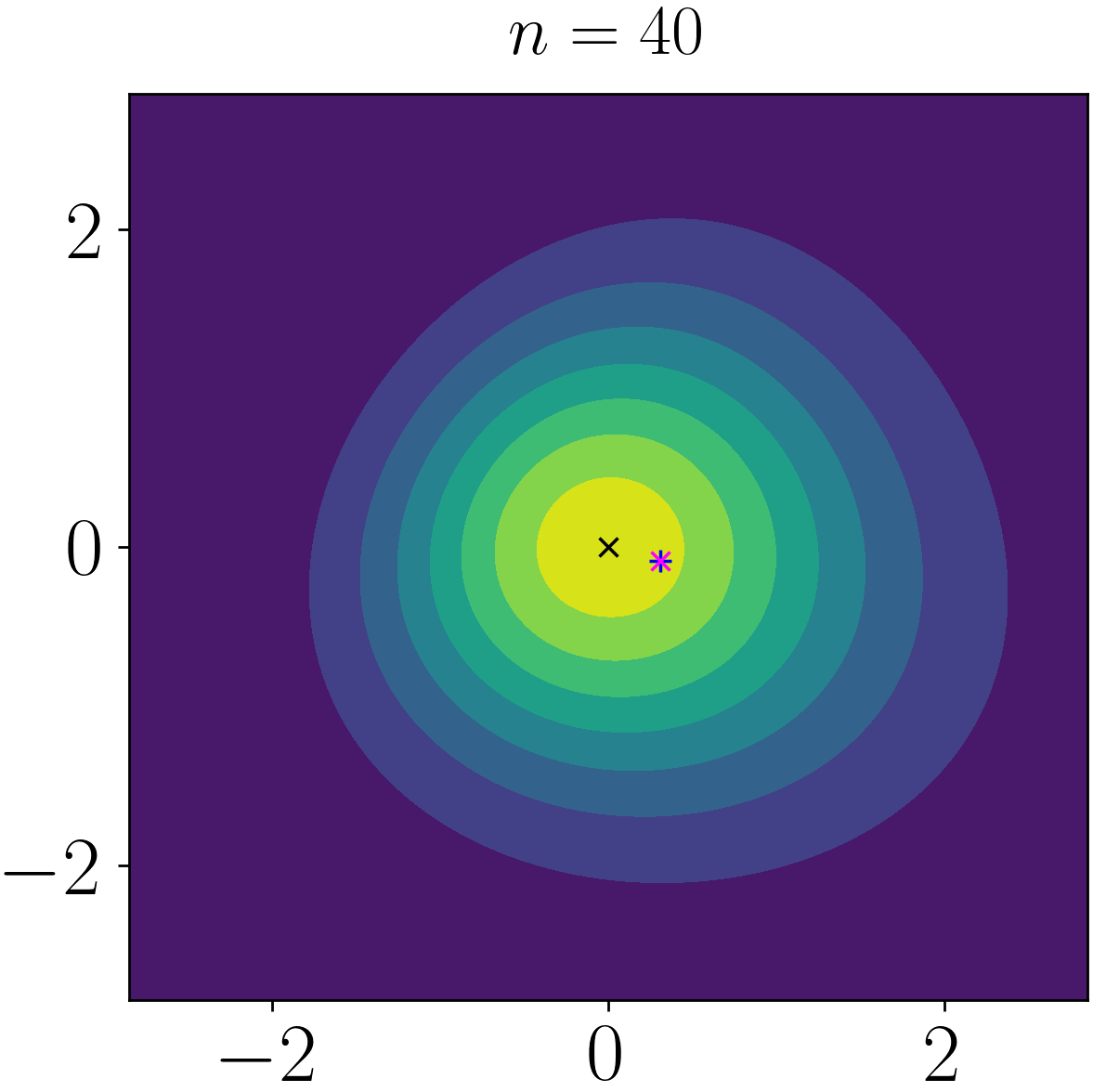}
\hspace{-7pt}
 \includegraphics[width=0.51\textwidth]{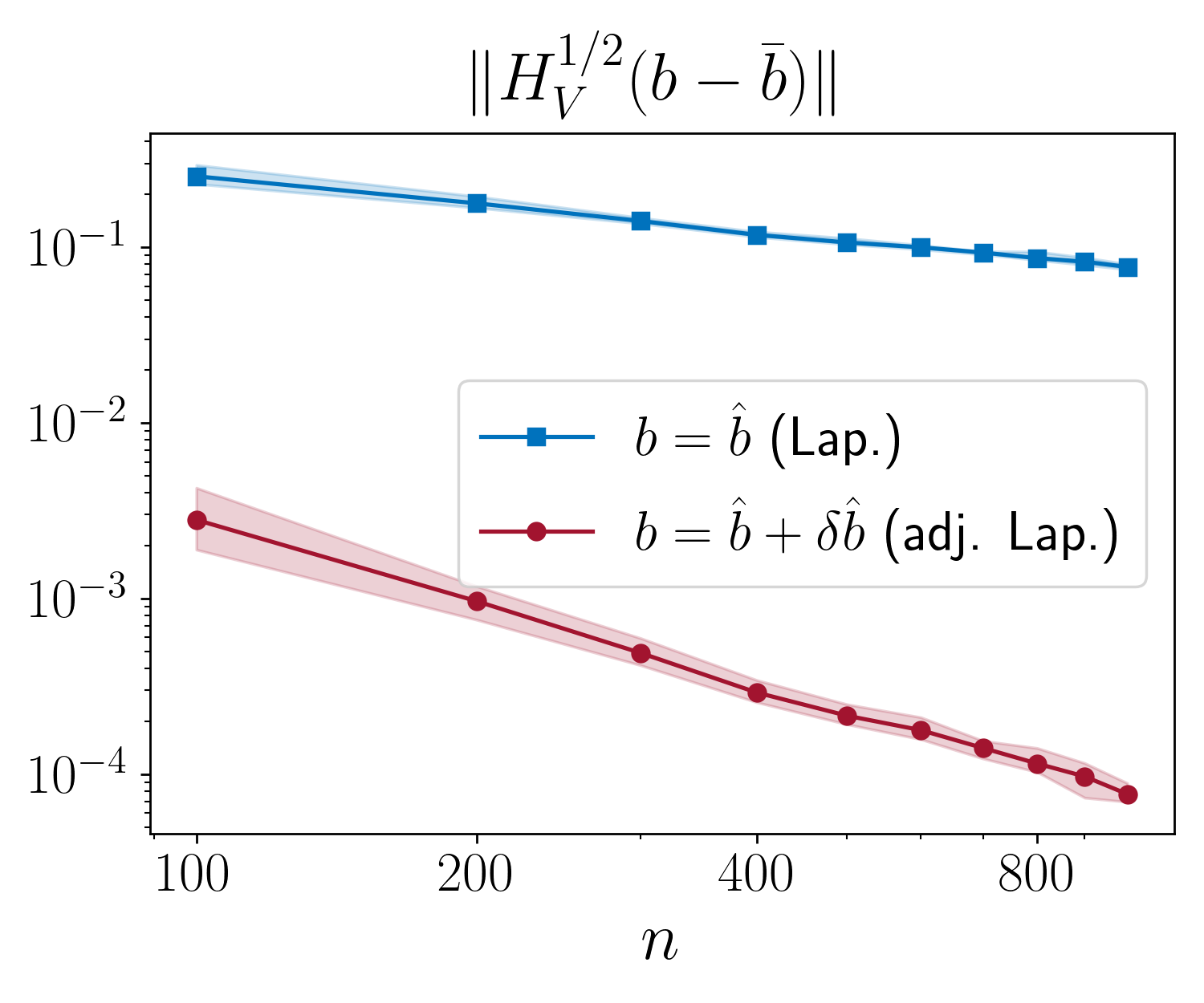}
\caption{Top and bottom left: contour plot of the density $\rho:=T_{\#}\pi$ for $n=20,30,40$. Here, $\pi\propto e^{-nv}$ is the logistic regression posterior from~\eqref{pi-def}, and the rescaling map $T(b)=H_V^{1/2}(b-\bhat )$ shifts the mode to zero, marked by a black x. The blue + marks the location of the (rescaled) true mean, and the magenta x marks the location of the (rescaled) skew adjusted Laplace approximation to the mean. On the $O(1)$ scale depicted here, the true mean is indistinguishable from the skew adjusted Laplace approximation to the mean. Bottom right: uncorrected and corrected Laplace approximation error to the mean, as a function of $n$. The slopes of the best-fit lines to these two curves are $-0.51$ and $-1.52$, respectively.}
 \label{fig:scaled-pdf}
 \end{figure}
The top and bottom left plots in Figure~\ref{fig:scaled-pdf} display the contours of the densities $\rho:=T_{\#}\pi$ for $n=20,30,40$, for a particular realization of the Gaussian features $X_i$. Here, $T$ is the rescaling map $T(b)=H_V^{1/2}(b-\bhat)$. Note that the pushforward by $T$ shifts the mode from $\bhat$ to zero, marked by a black X in the figure. As $n$ increases, we expect the approximation $\pi\approx\mathcal N(\bhat, H_V^{-1})$ to improve, or equivalently, the approximation $\rho\approx\mathcal N(0, I_d)$ to improve. And indeed, we see from the top and bottom left plots that the contours of $\rho$ become closer to perfect circles about zero for larger $n$. Nevertheless, $\rho$ is still noticeably skewed for all three values of $n$. 

This is evident from the contours themselves, and also from the fact that the mean, marked by a blue +, is shifted away from zero. Thus the mode is not a very accurate approximation to the mean. On the other hand, the skew-adjusted Laplace approximation to the mean, marked by a magenta X, is extremely accurate. In fact, on the scale depicted in these contour plots, the true mean is indistinguishable from the skew-adjusted Laplace estimate. The bottom right plot in Figure~\ref{fig:scaled-pdf} quantifies the corrected and uncorrected mean approximation error as a function of $n$, on a log-log scale. The solid blue and red curves are the average approximation errors based on ten posteriors, corresponding to ten different draws of $n$ samples. The slopes of the best-fit lines to these two curves are $-0.51$ and $-1.52$, respectively, as predicted by Corollary~\ref{thm:Vmean}. The shaded regions represent the spread of the error for the middle five of ten posteriors. The true mean is computed using two-dimensional quadrature. 
 
\begin{figure}
\centering
\vspace{-5pt}	
\includegraphics[width=0.47\textwidth]{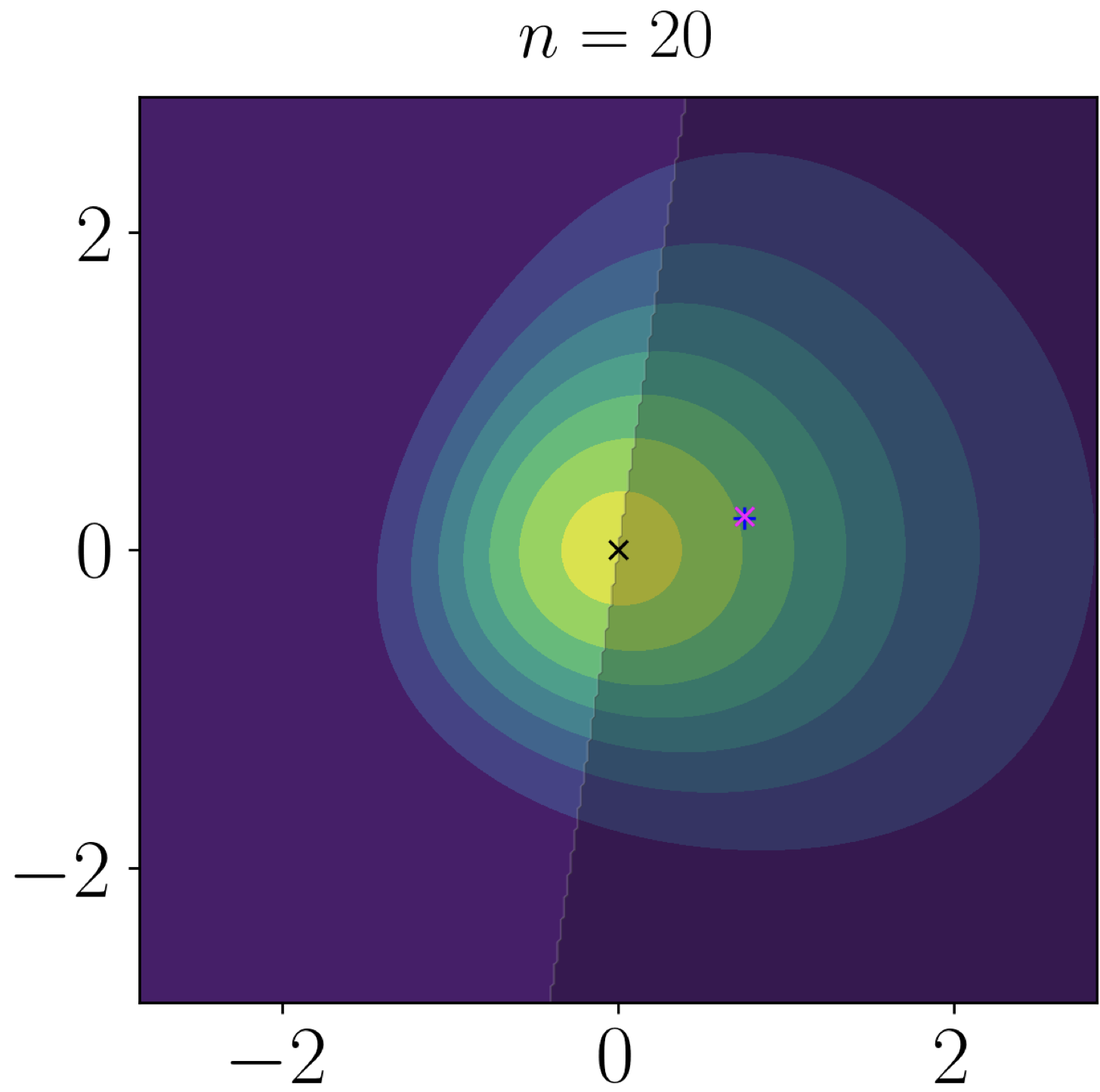}
\includegraphics[width=0.52\textwidth]{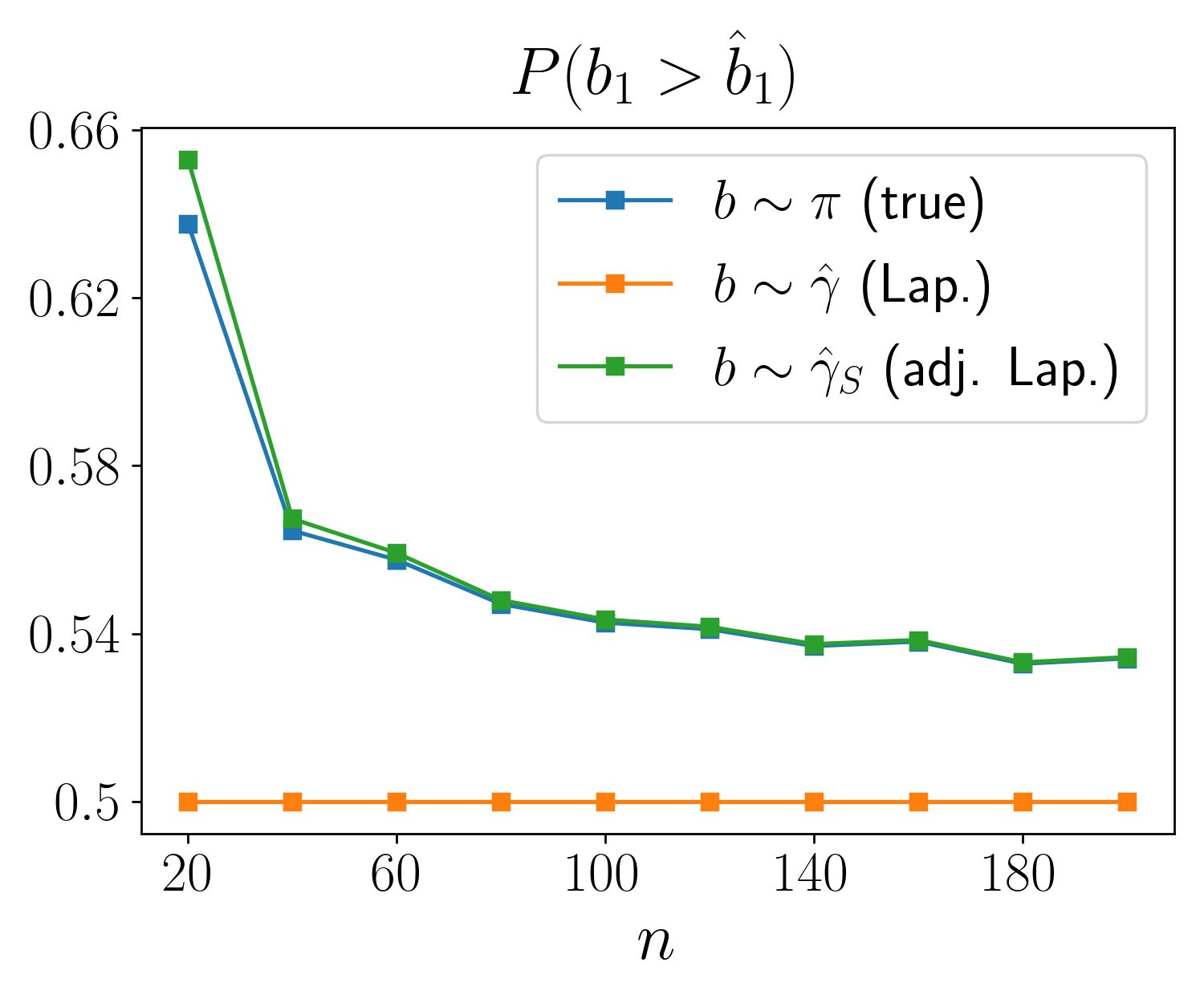}
\includegraphics[width=0.59\textwidth]{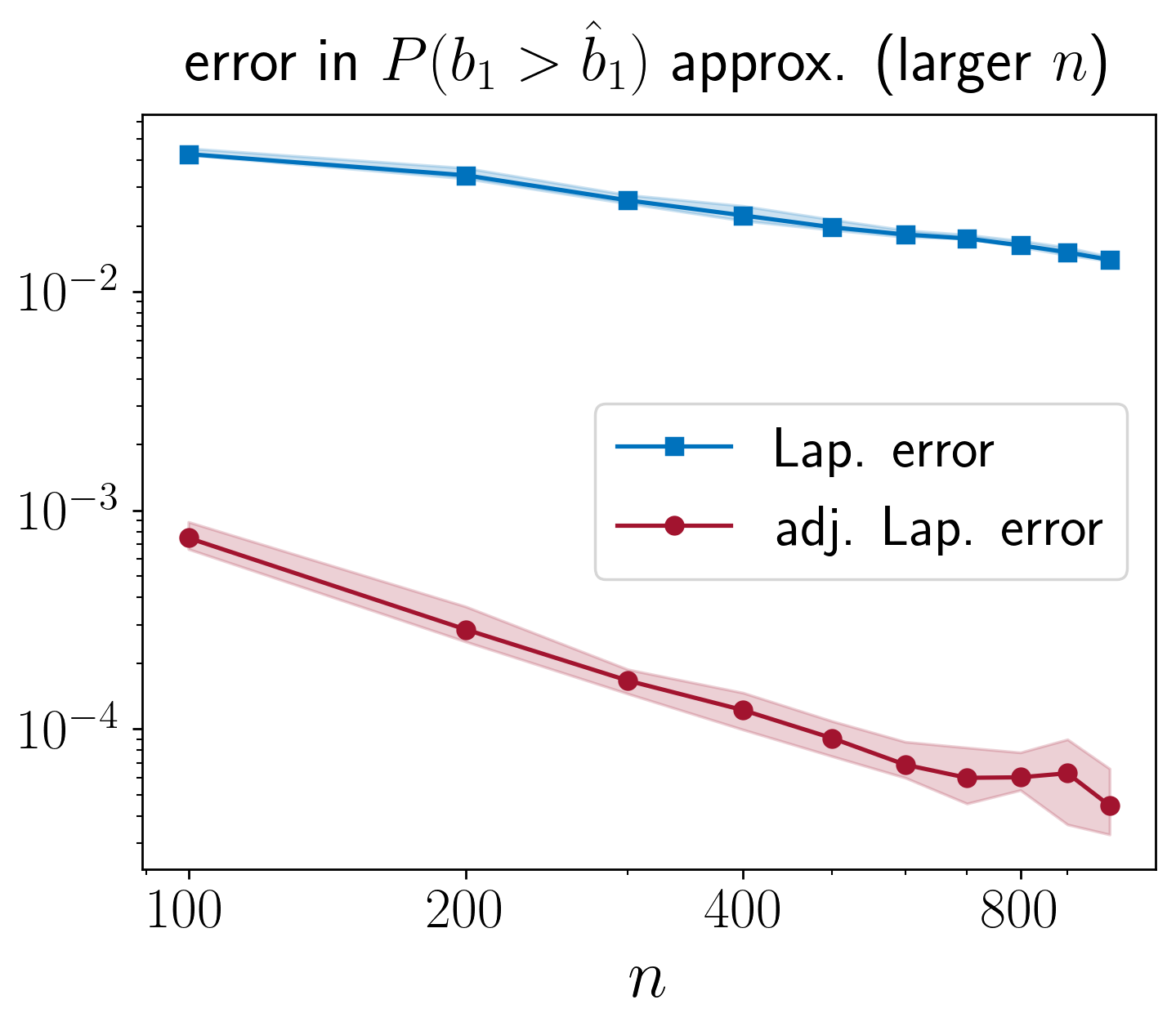}
\caption{Top left: contour plot of the rescaled logistic regression posterior density $\rho=T_{\#}\pi$ with $n=20$ samples. The darker region is the set $A:=\{T((b_1,b_2))\mid b_1\geq\bhat_1\}$, where $b=(b_1,b_2)$ is the two-dimensional coefficient parameter. It is clear from the plot that due to the skewness, $\rho$ assigns more mass to $A$ than to $A^c$, and therefore $\pi(b_1\geq\bhat_1)=\rho(A)>1/2$. Top right: probability of the event $\{b_1\geq\bhat_1\}$ as a function of $n$, under the posterior $\pi$ (blue), the Laplace distribution $\lap$ (orange), and the adjusted Laplace ``distribution'' $\corlap=\lap(1+\Lhat)$ (green). Note that $\lap(b_1\geq\bhat_1)=1/2$, since the event is symmetric about the mode and $\lap$ has no skew, while $\pi(b_1\geq\bhat_1)>1/2$ as expected. We see that $\corlap(b_1\geq\bhat_1)$ is much closer to the true probability, showing that incorporating the term $\Lhat$ into the Laplace approximation accounts for skew. Bottom: the errors $|\pi(b_1\geq\bhat_1)-\lap(b_1\geq\bhat_1)|$ (blue) and $|\pi(b_1\geq\bhat_1)-\corlap(b_1\geq\bhat_1)|$ (red), as a function of $n$, on a log-log scale. The slopes of the best-fit lines are -0.49 and -1.18, respectively, approximately matching the predictions from Corollary~\ref{corr:logreg}.}
 \label{fig:errs}
 \end{figure}
 
Figure~\ref{fig:errs} demonstrates the benefit of using the skew adjustment to improve the approximation to a posterior probability. The top left plot is the contour plot of $\rho=T_{\#}\pi$ for $n=20$, the same as the top left contour plot from Figure~\ref{fig:scaled-pdf}. The shaded region is the set $A:=\{T((b_1,b_2))\mid b_1\geq\bhat_1\}$. Due to the skew of $\rho$, it is clear that $\pi(b_1\geq\bhat_1)=\rho(A)$ is significantly greater than 1/2, which is the Laplace approximation to this set probability. This is confirmed in the top right plot, which depicts the true probability $\pi(b_1\geq\bhat_1)$ as a function of $n$ (blue), the Laplace approximation $\lap(b_1\geq\bhat_1)=1/2$ (orange) and the adjusted Laplace approximation $\corlap(b_1\geq\bhat_1)$ (green), as a function of $n$, for relatively small values of $n$. The true probability is computed using quadrature, and the adjusted Laplace probability is computed by drawing $10^6$ Gaussian samples. This plot demonstrates that for small values of $n$, the skew-adjustment has a very significant effect in improving the accuracy. 

Finally, the bottom plot in the figure depicts the errors $|\pi(b_1\geq\bhat_1)-\lap(b_1\geq\hat b_1)|$ (blue) and $|\pi(b_1\geq\bhat_1)-\corlap(b_1\geq\bhat_1)|$ (red), as a function of $n$, on a log-log scale. The slopes of the best-fit lines to the solid blue and red curves (which represent the average error based on ten posteriors) are -0.49 and -1.18, respectively. The shaded regions represent the spread of the error for the middle five of ten posteriors.

\subsubsection{Lower bounds}\label{dimsubsub}
Thanks to Corollary~\ref{corr:logreg}, we know that the remainder terms $|\TV(\pi,\lap)-\LTV|$ and $\|\bpi - \hat b -\Lhatb\|_{H_V}$ for the TV and mean errors are bounded above by $d^2/n +\tau$ with high probability, where $\tau$ is exponentially small when either $d$ or $n$ is large. We now combine this theoretical result with a numerical lower bound on the leading order terms $L:=\LTV$ and $L:=\|\Lhatb\|_{H_V}$. Namely, we show that both are bounded from below by $d/\sqrt n$.

We take a sequence of increasing dimensions $d$, and for each $d$ we consider $n=2d^2$ and $n=d^{2.5}$. For each $(d,n)$ pair, we compute $L$ for 20 $n$-sample posteriors with standard Gaussian design. The solid blue and red curves in Figure~\ref{fig:Lmean} represent the average of the 20 posteriors, for $n=2d^2$ and $n=d^{2.5}$, respectively. The shaded regions represent the spread of the middle 16 of 20 posteriors. From the blue curve ($n=2d^2$) we see that $L$ levels out to a constant for large $d$, suggesting $L$ is a function of $d/\sqrt n$. To prove this quantity indeed scales as $d/\sqrt n$ (and not e.g. some higher power of $d/\sqrt n$), we check that the slope of the solid red curve ($n=d^{2.5}$) is approximately $-0.25$, which would confirm $L\sim d^{-0.25} =  d/\sqrt{d^{2.5}}=d/\sqrt n$. And indeed, the best fit line has slope $-0.28$ for TV, and slope $-0.3$ for the mean. Therefore, the plots confirms that both leading order terms $L$ are lower bounded as $d/\sqrt n$.
\begin{figure}
 \centering
  \includegraphics[width=0.47\textwidth]{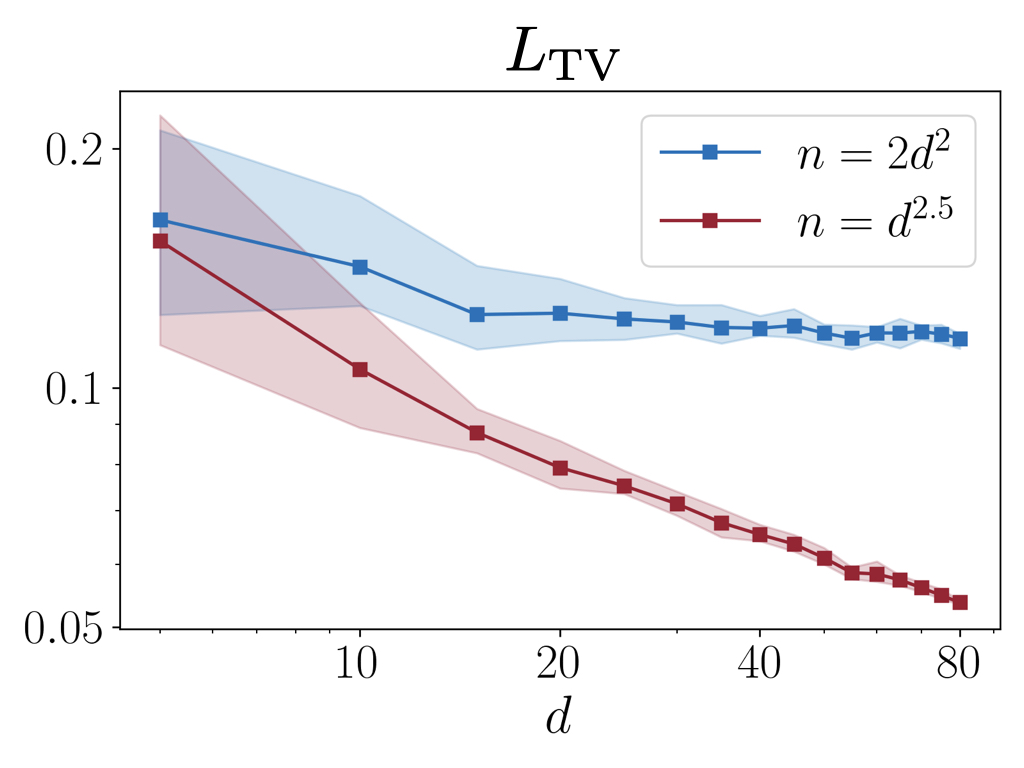}
 \includegraphics[width=0.49\textwidth]{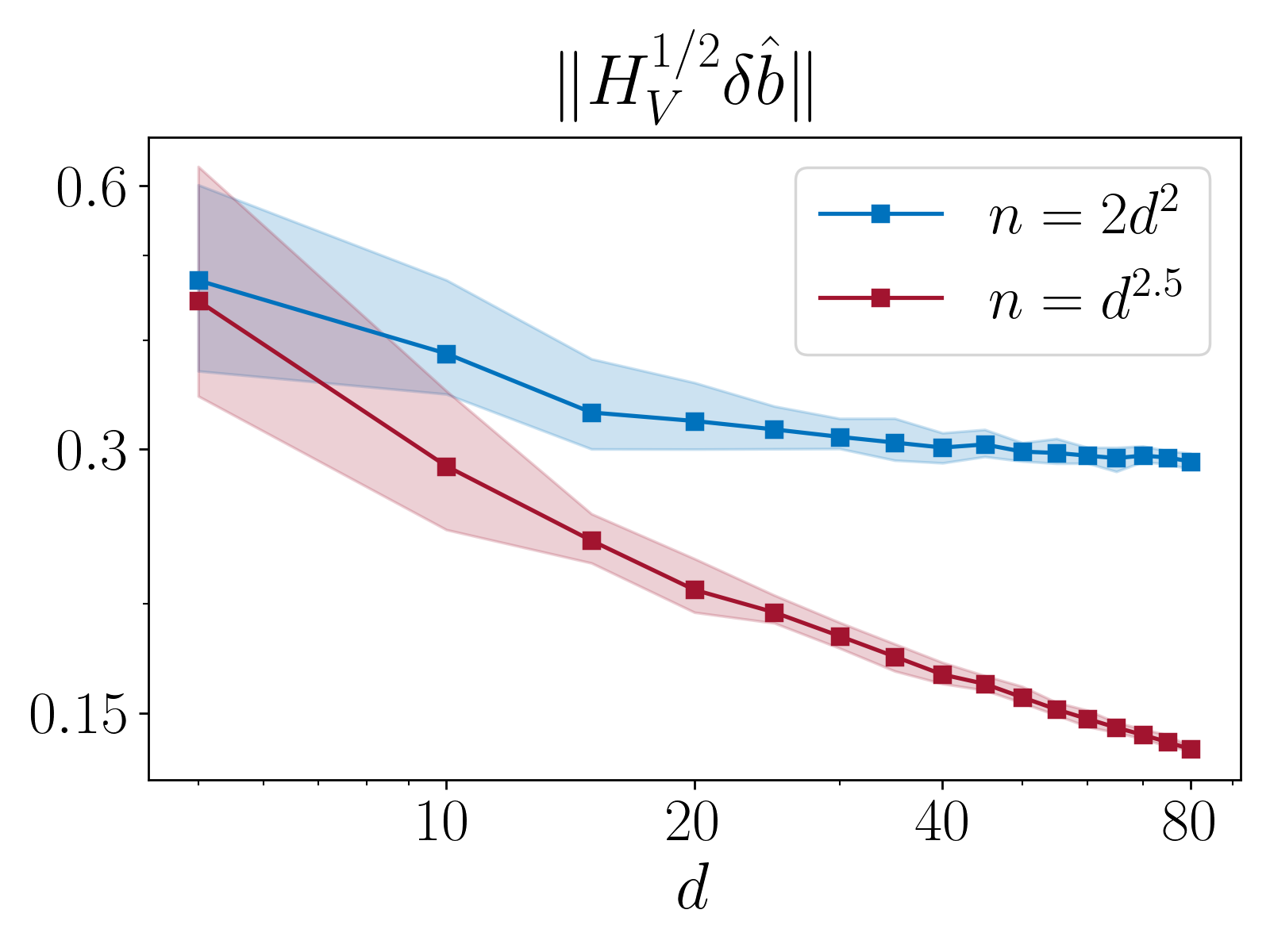}
 \caption{For a sequence of $d$ values, we compute the leading order terms $L:=\LTV$ (left) and $L:=\|\Lhatb\|_{H_V}$ (right) of the TV and mean errors, respectively. The measure $\pi\propto e^{-V}$ is the logistic regression posterior defined in~\eqref{pi-def}. The solid blue and red curves represent the average of the quantity $L$ over 20 posteriors, for $n=2d^2$ and $n=d^{2.5}$, respectively. The shaded regions represent the spread of $L$ for the middle 16 of 20 posteriors. When $d^2/n$ is constant (i.e. the $n=2d^2$ regime), we see that $L$ levels out to a constant for large $d$. When $n=d^{2.5}$, the value of $L$ scales approximately as $d^{-0.28}$ for TV and as $d^{-0.3}$ for the mean, based on the slope of the best fit line to the solid red curve.}
 \label{fig:Lmean}
 \end{figure}
 
We can combine our numerical confirmation that $\LTV\gtrsim d/\sqrt n$ and $\|\Lhatb\|_{H_V}\gtrsim d/\sqrt n$ with our rigorous high probability bounds on the remainder terms, i.e. $|\RTV|\les (d/\sqrt n)^2$ and $\|\bpi-\hat b-\Lhatb\|_{H_V}\les (d/\sqrt n)^3$. Together, these results give a heuristic argument that 
$$\TV(\pi,\lap)\gtrsim d/\sqrt n, \qquad \|\bpi-\hat b\|_{H_V}\gtrsim d/\sqrt n$$ with high probability.

This is a compelling demonstration of how the decomposition $\TV(\pi,\lap)=\LTV+\RTV$ can be used (and similarly for $\bpi = \hat b + \Lhatb + R_{\bpi}$). Without this decomposition, verifying that $\TV(\pi,\lap)\gtrsim d/\sqrt n$ numerically would require one to literally compute the TV distance between the two measures for an increasing sequence of dimensions $d$. This is precisely the kind of expensive numerical calculation we are trying to avoid. Instead, we can combine a rigorous bound on $\RTV$ with the numerical calculation of $\LTV$. This is \emph{much cheaper} than the numerical calculation of a full TV distance. Indeed, $\LTV$ is simply a Gaussian expectation, which can be feasibly calculated in moderate dimensions (e.g. up to $d=80$, as in Figure~\ref{fig:Lmean}).

\subsubsection{Computable,``fixed-sample'' upper bound via~\eqref{tildec3form}}
The high probability upper and lower bounds we have stated so far convey information on the level of orders of magnitude, but they do not allow us to precisely quantify the error for a given dataset of feature vectors and labels. We call such a bound, which is tailored to observed data, a ``fixed sample'' bound.  

In particular, \emph{computable} fixed sample bounds are desired. As discussed in the introduction and Section~\ref{subsub:compute}, upper bounds from prior works are either highly sub-optimal or else involve operator norms. Computing operator norms accurately requires solving a non-convex optimization problem, which requires exponential computational complexity in general.

Here, we have provided the upper bound~\eqref{tildec3form} on $\LTV$, which is computable in polynomial time. Moreover, our results imply that $\LTV$ is in fact the leading order term on the TV distance in the traditional sense. Therefore, the bound~\eqref{tildec3form} on $\LTV$ is a good approximation to an upper bound on the overall TV distance. To be more precise, we have shown that $\LTV\gtrsim d/\sqrt n$ and $\RTV\les (d/\sqrt n)^2$, which implies that $\RTV\leq \LTV$ if $d/\sqrt n$ is sufficiently small. Therefore, the upper bound~\eqref{tildec3form} is safely useable once the constants in these inequalities have been quantified. We leave this study to future work.

\section{Proof of Theorems~\ref{thm:Vgen} and~\ref{thm:odd}}\label{sec:overview}
In this section, we prove~\eqref{g-bd-2} and~\eqref{godd-bd}; that is, we bound the remainder $R(g):=\int gd\pi-\int g(1+\Lhat)d\lap$ in the case of general $g$ and odd $g$ (about $\mhat$). To do so, we first make a scale-removing and simplifying coordinate transformation in Section~\ref{sec:affine}. Then in Section~\ref{subsec:gendec}, we give a general decomposition for the remainder $R(g)$ into a sum of two error terms --- a local integral and a tail integral. This decomposition does not use anything specific about $g,\pi, \Lhat$, and $\lap$, except that $\int \Lhat d\lap=0$. In Section~\ref{subsec:apply}, we apply the decomposition to our specific setting. We also state a few key bounds on the resulting terms, which allow us to finish the proof of~\eqref{g-bd-2} and~\eqref{godd-bd}. In Section~\ref{sec:prop:r2k}, we outline the proofs of these key bounds.

In this section, we let $\gamma$ denote the standard normal distribution, and if $f\in L^p(\gamma)$, then we use the notation
$$\|f\|_p = \l(\int |f|^pd\gamma\r)^{1/p}.$$

See Appendix~\ref{app:sec:overview} for the omitted proofs from this section.

\subsection{Scale-Removing Coordinate Transformation}\label{sec:affine}
We start with a scale-removing coordinate transformation which transforms $\pi\to\rho$ and $\lap\to\gamma$. Here, $\gamma=\mathcal N(0,I_d)$ is standard normal, and $\rho$ is a small perturbation thereof. The benefit of this transformation is that whereas $\pi$ and $\lap$ are``moving targets'' --- both changing with $n$ --- the distribution $\gamma$ is fixed, and $\rho$ is converging to it. Showing $\pi$ is close to $\lap$ is equivalent to showing $\rho$ is close to $\gamma$, and the latter approximation analysis is conceptually clearer. Let $T(x) = H_V^{1/2}(x-\mhat )$, and $\rho=T_{\#}\pi\propto e^{-W}$, where \beq\label{W-def}W(x) = V\l(\mhat  + H_V^{-1/2}x\r)=nv\l(\mhat +H_v^{-1/2}x/\sqrt n\r).\eeq The function $W$ is defined on $\Omega:=T(\Theta)$ and minimized at zero, with Hessian $\nabla^2W(0)=I_d$. Therefore, its Laplace approximation is $\gamma$, the standard normal distribution. This is also precisely the pushforward under $T$ of the Laplace approximation $\lap$ to $\pi$, i.e. $\gamma=T_{\#}\lap$. 

Now, recalling the definition~\eqref{Lhatdef} of $\Lhat$, note that
\beqs\label{g-f}
\int gd\pi &-\int g(1+\Lhat)d\lap = \int fd\rho-\int f(1+p_3)d\gamma,\qquad f(x):=g(\mhat +H_V^{-1/2}x)\eeqs where \beq\label{p3def}p_3(x)=-\frac{1}{3!}\la\nabla^3W(0), x^{\otimes 3}\ra\eeq is the negative of the third order term in the Taylor expansion of $W$. Recall from~\eqref{gnorm} that $g$ has been standardized so that $\int gd\lap=0$ and $\int g^2d\lap=1$. Correspondingly, $\int fd\gamma=0$ and $\int f^2d\gamma=1$. 

Abusing notation, we redefine $\U(\s)=T(\U(s))$, which is given by
\beq\label{Unew}\U(\s)=\{x\in\R^d\; : \; \|x\|\leq\s\sqrt d\}.\eeq
The following lemma lists properties of $W$. The proof is immediate by the definition of $W$.
\begin{lemma}\label{ass:VtoW} Let $W$ be given by~\eqref{W-def}, and suppose $v$ satisfies Assumptions~\ref{assume:1}, \ref{assume:c0} with constants $\cgro,\s_0$. Then $W\in C^4$ and has unique global minimizer $x=0$, with $\nabla^2W(0)=I_d$. Furthermore, we have
\begin{align}
\|\nabla^3W(0)\|&=\frac{c_3}{\sqrt n},\label{c3W}\\
\sup_{\|x\|\leq \s\sqrt d}\|\nabla^4W(x)\|&=\frac{c_4(\s)}{n},\label{c4Wloc}\\
\sup_{\|x\|\leq \s\sqrt d}\|\nabla^5W(x)\|&=\frac{c_5(\s)}{n\sqrt n},\label{c5Wloc}\\
 W(x) - W(0)&\geq \cgro\sqrt d\|x\|\quad\forall x\in\Omega\setminus\U(\s_0).\label{Wglobd}
\end{align} The equation~\eqref{c5Wloc} holds whenever we make the stricter assumption that $v\in C^5$.
 \end{lemma}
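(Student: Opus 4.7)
My plan is to derive the five properties of $W$ by pushing the known properties of $v$ (and hence $V = nv$) through the affine change of variables $x \mapsto \mhat + H_V^{-1/2}x$. The key identity to exploit is that, by the chain rule, for any $k \geq 1$,
\begin{equation*}
\nabla^k W(x)[u_1, \ldots, u_k] = \nabla^k V\!\left(\mhat + H_V^{-1/2}x\right)\!\left[H_V^{-1/2}u_1, \ldots, H_V^{-1/2}u_k\right].
\end{equation*}
So differentiation of $W$ just applies $H_V^{-1/2}$ to each input slot of the corresponding derivative of $V$. The smoothness $W \in C^4$ follows immediately from $v \in C^4$ and the linearity of the change of variables. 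For the existence and uniqueness of the global minimizer at $0$, note that $x \mapsto \mhat + H_V^{-1/2}x$ is a bijection of $\R^d$ with $0 \mapsto \mhat$, so unique minimization of $W$ at $0$ is equivalent to unique minimization of $V$ (equivalently $v$) at $\mhat$, which is Assumption~\ref{assume:1}. Then $\nabla^2 W(0) = H_V^{-1/2}\nabla^2 V(\mhat)H_V^{-1/2} = H_V^{-1/2}H_V H_V^{-1/2} = I_d$.

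Next, to get the operator-norm identities~\eqref{c3W}--\eqref{c5Wloc}, I would substitute $u_1 = \cdots = u_k = u$ with $\|u\| = 1$ into the chain-rule identity and take the supremum. Setting $w = H_V^{-1/2}u$, the constraint $\|u\| = 1$ is equivalent to $\|w\|_{H_V} = 1$, so by the tensor identity of~\cite{symmtens} reviewed in the paper's Notation paragraph,
\begin{equation*}
\|\nabla^k W(x)\| = \sup_{\|w\|_{H_V} = 1} \bigl|\nabla^k V(\mhat + H_V^{-1/2}x)[w^{\otimes k}]\bigr| = \|\nabla^k V(\mhat + H_V^{-1/2}x)\|_{H_V}.
\end{equation*}
Since $V = nv$ and $H_V = nH_v$, a direct rescaling (substitute $\tilde w = \sqrt n\, w$) gives $\|\nabla^k V\|_{H_V} = n^{1-k/2}\|\nabla^k v\|_{H_v}$. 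Evaluating at $x = 0$ and $k = 3$ yields $\|\nabla^3 W(0)\| = n^{-1/2}c_3$, which is~\eqref{c3W}. For $k = 4, 5$, I need to match the local region: by the same $w = H_V^{-1/2}u$ substitution applied to the base point, $\|x\| \leq \s\sqrt d$ is equivalent to $\|H_V^{-1/2}x\|_{H_v} \leq \s\sqrt{d/n}$, i.e. the image point lies in the old $\U(\s)$. Taking the supremum on both sides and using $n^{1-k/2}c_k(\s)$ produces~\eqref{c4Wloc} and~\eqref{c5Wloc}; the $C^5$ regularity of $v$ is invoked only for the $k=5$ line.

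Finally, for the tail lower bound~\eqref{Wglobd}, I would write $W(x) - W(0) = n\bigl(v(\mhat + H_V^{-1/2}x) - v(\mhat)\bigr)$ and use Assumption~\ref{assume:c0}. The same calculation as above shows $\|H_V^{-1/2}x\|_{H_v} = \|x\|/\sqrt n$, so $\mhat + H_V^{-1/2}x \notin \U(\s_0)$ (in the original sense) iff $x$ lies outside the redefined $\U(\s_0) = \{x:\|x\|\le \s_0\sqrt d\}$. Applying Assumption~\ref{assume:c0} gives
\begin{equation*}
W(x) - W(0) \geq n \cdot \cgro\sqrt{d/n} \cdot \|x\|/\sqrt n = \cgro\sqrt d\,\|x\|,
\end{equation*}
completing the bound on $\Omega \setminus \U(\s_0)$.

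There is no genuine obstacle here — the entire lemma is a bookkeeping exercise in the chain rule together with the compatibility between the unweighted operator norm on $W$-derivatives and the $H_V$-weighted operator norm on $V$-derivatives. The only subtle point to be careful about is keeping track of factors of $n$ separately from factors of $H_V^{-1/2}$, since the decomposition $H_V = nH_v$ means the exponent $n^{1-k/2}$ shows up naturally only after switching between $\|\cdot\|_{H_V}$ and $\|\cdot\|_{H_v}$.
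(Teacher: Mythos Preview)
Your proposal is correct and is exactly the verification underlying the paper's claim; the paper simply states that ``the proof is immediate by the definition of $W$'' and omits all details. Your chain-rule computation, the identification $\|\nabla^kW(x)\|=\|\nabla^kV(\mhat+H_V^{-1/2}x)\|_{H_V}=n^{1-k/2}\|\nabla^kv\|_{H_v}$, and the matching of the local regions via $\|H_V^{-1/2}x\|_{H_v}=\|x\|/\sqrt n$ are precisely the bookkeeping the paper leaves to the reader.
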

The fact that $W$ is minimized at zero with $\nabla^2W(0)=I_d$, and the fact that higher order derivatives of $W$ are small at zero, implies $W(x)\approx W(0)+\|x\|^2/2$, and hence $\rho\propto e^{-W}$ is close to $\gamma\propto e^{-\|x\|^2/2}$.  We can therefore write $d\rho\propto e^rd\gamma$, for a ``small'' function $r$. This function will be specified in Section~\ref{subsec:apply} below.

  \subsection{Generic Decomposition}\label{subsec:gendec}We now temporarily step away from the specifics of Section~\ref{sec:affine} and consider a general setting in which $d\rho\propto e^rd\gamma$ for $r$ small. We are interested in understanding the error of approximating $\int fd\rho$ using $\int f(1+h)d\gamma$. We will bound the error $\int fd\rho - \int f(1+h)d\gamma$ in terms of various norms of $f$, $h$, and $r$.
  
\begin{defn}
For functions $f,h\in L^2(\gamma)$ such that $\int fd\gamma=\int hd\gamma=0$,  we define
\begin{equation}
\begin{split}
\bar f_{h} = f - \int f(1+h)d\gamma =  f - \int  fhd\gamma.
\end{split}
\end{equation}
\end{defn}
\begin{lemma}\label{lma:pretty}
Let $\rho$ be a probability measure on an open subset $\Omega\subset\R^d$, and $\gamma$ be a probability measure on $\R^d$. Suppose $d\rho\propto Qd\gamma$, where $Q(x)=e^{r(x)}$ when $x\in\Omega$, and $Q(x)=0$ when $x\in\Omega^c$. Let $f,h$ be such that $\int fd\gamma=\int hd\gamma=0$. Then
\beq\label{pretty}\int fd\rho - \int f(1+h)d\gamma = \frac{\int (Q-1-h)\bar f_{h}d\gamma}{\int Qd\gamma}=\frac{\mathrm{Loc}+\mathrm{Tail}}{\int Qd\gamma},\eeq 
where
\beq\label{loc-tail-def}
\mathrm{Loc} =\int_{\U} (e^r-1-h)\bar f_{h}d\gamma,\qquad\mathrm{Tail}=\int_{\Omega\setminus\U}e^r\bar f_hd\gamma -\int_{\U^c}(1+h)\bar f_hd\gamma,
\eeq
and $\U\subset\Omega\subset\R^d$ is an open set. Moreover if $\int_{\U}rd\gamma=0$, then $\int Qd\gamma\geq\gamma(\U)$.
\end{lemma}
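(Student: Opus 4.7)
The plan is to compute both sides of the identity directly and verify they agree, taking advantage of the normalizations $\int f\,d\gamma=\int h\,d\gamma=0$ to produce the cancellations that make $\bar f_h$ appear naturally. The only ``trick'' is the bookkeeping that splits the single integral $\int(Q-1-h)\bar f_h\,d\gamma$ into $\mathrm{Loc}+\mathrm{Tail}$; the rest is algebra and one application of Jensen's inequality.

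First I would write $\int f\,d\rho=\int fQ\,d\gamma/\int Q\,d\gamma$ and observe $\int f(1+h)\,d\gamma=\int fh\,d\gamma=:c$ (since $\int f\,d\gamma=0$), so $\bar f_h=f-c$. Putting things over a common denominator, the numerator of the lefthand side of \eqref{pretty} becomes $\int fQ\,d\gamma-c\int Q\,d\gamma=\int\bar f_hQ\,d\gamma$. To match the claimed form $\int(Q-1-h)\bar f_h\,d\gamma$, note that using $\int f\,d\gamma=0$ and $\int h\,d\gamma=0$ one has $\int\bar f_h\,d\gamma=-c$ and $\int h\bar f_h\,d\gamma=\int hf\,d\gamma-c\int h\,d\gamma=c$, so $\int(1+h)\bar f_h\,d\gamma=0$, giving
\begin{equation*}
\int(Q-1-h)\bar f_h\,d\gamma=\int Q\bar f_h\,d\gamma,
\end{equation*}
which establishes the first equality in \eqref{pretty}.

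Next I would split the integral using that $Q=e^r$ on $\Omega$ and $Q=0$ on $\Omega^c$, and then decompose $\Omega=\U\sqcup(\Omega\setminus\U)$ (permissible since $\U\subset\Omega$) and $\U^c=(\Omega\setminus\U)\cup\Omega^c$. Concretely,
\begin{equation*}
\int(Q-1-h)\bar f_h\,d\gamma=\int_{\U}(e^r-1-h)\bar f_h\,d\gamma+\int_{\Omega\setminus\U}(e^r-1-h)\bar f_h\,d\gamma-\int_{\Omega^c}(1+h)\bar f_h\,d\gamma.
\end{equation*}
The first term is $\mathrm{Loc}$; combining the second and third terms and rewriting $-\int_{\Omega\setminus\U}(1+h)\bar f_h\,d\gamma-\int_{\Omega^c}(1+h)\bar f_h\,d\gamma=-\int_{\U^c}(1+h)\bar f_h\,d\gamma$ yields $\mathrm{Tail}$.

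Finally, for the bound on the denominator, if $\int_{\U}r\,d\gamma=0$, I would apply Jensen's inequality to the convex function $e^x$ with respect to the normalized measure $\gamma|_{\U}/\gamma(\U)$:
\begin{equation*}
\int Q\,d\gamma\geq\int_{\U}e^r\,d\gamma\geq\gamma(\U)\exp\!\l(\frac{1}{\gamma(\U)}\int_{\U}r\,d\gamma\r)=\gamma(\U).
\end{equation*}
There is no real obstacle here; the only place to be careful is the bookkeeping of which set each integral is over, particularly the identification $\U^c=(\Omega\setminus\U)\cup\Omega^c$, which is what makes the Tail term collapse to the clean two-piece form stated in \eqref{loc-tail-def}.
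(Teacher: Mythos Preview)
Your proof is correct and essentially identical to the paper's own argument: the same identity $\int(1+h)\bar f_h\,d\gamma=0$ to rewrite $\int Q\bar f_h\,d\gamma$ as $\int(Q-1-h)\bar f_h\,d\gamma$, the same set decomposition $\R^d=\U\cup(\Omega\setminus\U)\cup\Omega^c$ to obtain $\mathrm{Loc}+\mathrm{Tail}$, and the same Jensen inequality on $\gamma|_\U$ for the denominator bound.
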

Next, we will further bound Loc and Tail. To bound Loc, we decompose it as
\beqs\label{eq:prelim}
\mathrm{Loc}=& \int_{\U} (e^r-1-r)\bar f_hd\gamma + \int_{\U}(r-h) \bar f_hd\gamma\eeqs We bound these two integrals using Cauchy-Schwarz and H\"{o}lder, and the fact that $e^r-1-r$ is the 2nd order Taylor remainder of the exponential function. To bound $\mathrm{Tail}$, we use that $|\bar f_h(x)|\leq | f(x)|+\| f\|_2\|h\|_2=|f(x)|+\|h\|_2$ in the first integral. (Here we are assuming $\|f\|_2=1$.) For the second integral, we apply H\"{o}lder's inequality to the functions $1+h$, $\bar f_h$, and $\ind_{\U^c}$ with powers 4,2,4, respectively.

Overall, we obtain the following preliminary bounds on Loc and Tail.
\begin{lemma}[Preliminary bounds]\label{lma:prelim}Suppose $\int fd\gamma=0$ and $\int f^2d\gamma=1$. Then 
\begin{align}
\l|\mathrm{Loc}\r|&\les(1+\|h\|_2)\|e^r\ind_{\U}\|_{4}\|r\ind_{\U}\|_{8}^{2} + (1+\|h\|_2)\|(r-h)\ind_{\U}\|_2,\label{preliloc}\\
\l|\mathrm{Tail}\r|&\les \int_{\Omega\setminus\U}(| f|+\|h\|_2)e^rd\gamma + (1+\|h\|_4)^2\gamma(\U^c)^{\frac14}.\label{prelitail}
\end{align} 
\end{lemma}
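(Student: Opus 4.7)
The plan is to bound each of the four integrals appearing in the Loc/Tail decomposition directly, via Hölder's inequality, using only the Taylor remainder bound for the exponential and one elementary auxiliary estimate. That auxiliary estimate is $\|\bar f_h\|_2\leq 1+\|h\|_2$, which follows from $\bar f_h=f-\int fh\,d\gamma$, the triangle inequality, Cauchy--Schwarz on $\int fh\,d\gamma$, and the normalization $\|f\|_2=1$. The same ingredients give the pointwise companion $|\bar f_h(x)|\leq|f(x)|+\|h\|_2$, which I use for the tail piece.

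For Loc I would start from the decomposition $\mathrm{Loc}=\int_\U(e^r-1-r)\bar f_h\,d\gamma+\int_\U(r-h)\bar f_h\,d\gamma$ already supplied in the text just above the lemma. The second integral is handled immediately by Cauchy--Schwarz, giving $\|(r-h)\ind_\U\|_2\|\bar f_h\|_2\leq(1+\|h\|_2)\|(r-h)\ind_\U\|_2$. For the first integral I would apply the pointwise Taylor bound $|e^r-1-r|\lesssim r^2 e^r$ and then Hölder's inequality at exponents $(8,8,4,2)$ to the four factors $r,\,r,\,e^r\ind_\U,\,\bar f_h$, obtaining
\[
\Bigl|\int_\U(e^r-1-r)\bar f_h\,d\gamma\Bigr|\lesssim \|r\ind_\U\|_8^2\,\|e^r\ind_\U\|_4\,\|\bar f_h\|_2,
\]
which, combined with the auxiliary estimate, yields the first piece of~\eqref{preliloc}.

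For Tail, the two pieces are treated separately as the text foreshadows. The integral over $\Omega\setminus\U$ is bounded pointwise using $|\bar f_h|\leq|f|+\|h\|_2$, yielding $\int_{\Omega\setminus\U}(|f|+\|h\|_2)e^r\,d\gamma$ without further work. For the integral over $\U^c$ I would apply Hölder's inequality with exponents $(4,2,4)$ to the triple $(1+h,\bar f_h,\ind_{\U^c})$, obtaining $\|1+h\|_4\,\|\bar f_h\|_2\,\gamma(\U^c)^{1/4}$; using $\|1+h\|_4\leq 1+\|h\|_4$ together with $\|\bar f_h\|_2\leq 1+\|h\|_2\leq 1+\|h\|_4$ collapses this to $(1+\|h\|_4)^2\gamma(\U^c)^{1/4}$, exactly the second piece of~\eqref{prelitail}.

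I do not anticipate any genuine obstacle: once the Loc/Tail decomposition is given, the lemma is a bookkeeping exercise in which the Hölder exponents are forced by the shape of the target bound. The only minor subtlety is the pointwise bound $|e^r-1-r|\lesssim r^2 e^r$, which is straightforward for $r\geq 0$ but reduces to $r^2/2$ when $r\leq 0$. I would reconcile the two by writing $|e^r-1-r|\leq \tfrac12 r^2 e^{r_+}\leq\tfrac12 r^2(1+e^r)$ and then using $\|e^r\ind_\U\|_4\gtrsim\gamma(\U)^{1/4}\gtrsim 1$ to absorb the additive $1$ into $\|e^r\ind_\U\|_4$ at the cost of an absolute constant.
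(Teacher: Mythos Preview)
Your proposal is correct and follows essentially the same route as the paper: the Tail bound is identical to what the paper sketches (pointwise bound $|\bar f_h|\leq |f|+\|h\|_2$ for the first piece, H\"older with exponents $4,2,4$ for the second), and for Loc the paper likewise uses the decomposition~\eqref{eq:prelim} together with $\|\bar f_h\|_2\leq 1+\|h\|_2$. The only difference is in handling $\int_\U(e^r-1-r)\bar f_h\,d\gamma$: the paper invokes its auxiliary Lemma~\ref{lma:exppre}, which bounds this via the integral-remainder formula $I(1)-I(0)-I'(0)$ for $I(t)=\int_\U e^{tr}\bar f_h\,d\gamma$ and then applies H\"older, landing directly on $\|r\ind_\U\|_8^2\|\bar f_h\|_2\|e^r\ind_\U\|_4$ without an additive constant. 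Your pointwise route works too, but note that both $\|e^r\ind_\U\|_4\geq\gamma(\U)^{1/4}$ (which needs $\int_\U r\,d\gamma=0$, also tacitly used by the paper via Lemma~\ref{lma:exppre}) and $\gamma(\U)\gtrsim 1$ are not stated in the lemma's hypotheses; the paper's interpolation argument needs only the former, so it is slightly cleaner at this generic level, though in the actual application (where $r=\hat r_3$ and $\gamma(\U)\geq 1/2$) both conditions hold.
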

\subsection{Application of generic decomposition}\label{subsec:apply}
We combine Lemma~\ref{lma:pretty} and~\ref{lma:prelim} with $\rho\propto e^{-W}$ on $\Omega$, and $\gamma$ given by the standard normal distribution. We will now specify the function $r$ such that $d\rho\propto e^rd\gamma$ on $\Omega$. To do so, we introduce a few important quantities.  
\begin{defn}[Remainder terms]\label{def-r}
Let
$$
p_3(x)=-\frac{1}{3!}\la\nabla^3W(0), x^{\otimes 3}\ra,\qquad p_4(x)=-\frac{1}{4!}\la\nabla^4W(0), x^{\otimes 4}\ra,$$
and define
$$
r_3(x)= -\big(W(x)-W(0) -\tfrac12\|x\|^2\big),\qquad r_4(x)= r_3(x) - p_3(x),\qquad r_5(x)=r_4(x)-p_4(x).
$$
%
\end{defn}
Note that $r_3 = p_3+r_4 = p_3+p_4+r_5$, and  $W(x)=W(0)+\|x\|^2/2 -r_3(x)$. In other words, $r_k$ is the $k$th order Taylor remainder of $-W$ about zero, for $k=3,4,5$. We also use the convention that for any continuous function $u$, we let
$$\hat u = u-\int_{\U}ud\gamma,$$ where $\U=\U(\s)$ is as in~\eqref{Unew}, for an $\s$ to be specified. Since $\U$ is symmetric about zero and $p_3$ is odd, we have $\hat p_3=p_3$, and hence $\hat r_3 = p_3 + \hat r_4 = p_3+\hat p_4 + \hat r_5.$ 

By Taylor's remainder theorem we have $r_k(x)=\frac{1}{k!}\la\nabla^kW(tx), x^{\otimes k}\ra$ for some $t=t(x)\in[0,1]$ and $k=3,4,5$. Therefore, the bounds~\eqref{c3W},~\eqref{c4Wloc},~\eqref{c5Wloc} show that the following scalings hold pointwise in $x$:
\beq\label{n-informal} r_3\sim p_3\sim n^{-1/2}, \qquad  r_4\sim  p_4\sim n^{-1}, \qquad  r_5\sim n^{-3/2}.\eeq

Now, the ``small'' function $r$ we choose to satisfy $d\rho\propto e^{r}d\gamma$ is $r:=\hat r_3$, which satisfies $\int_\U \hat r_3d\gamma=0$ by construction. Recall from~\eqref{g-f} and the following discussion that we are interested in studying $\int fd\rho - \int f(1+p_3)d\gamma$ for a ``standardized'' function $f$ (i.e. $\int fd\gamma=0$ and $\int f^2d\gamma=1$). Therefore, in the language of Section~\ref{subsec:gendec}, we take $h=p_3$. 

We now apply Lemma~\ref{lma:prelim}. Note that $r-h=\hat r_3-p_3=\hat r_4$, and that $\|\hat r_k\ind_\U\|_m\leq 2\|r_k\ind_\U\|_m$ for any $k=3,4,5$ and $m\geq1$. 
\begin{corollary} Suppose $\int fd\gamma=0$ and $\int f^2d\gamma=1$. Then
\begin{align}
\l|\mathrm{Loc}\r|&\les(1+\|p_3\|_2)\|e^{\hat r_3}\ind_{\U}\|_{4}\|r_3\ind_{\U}\|_{8}^{2} + (1+\|p_3\|_2)\|r_4\ind_{\U}\|_2,\label{appl:loc}\\
\l|\mathrm{Tail}\r|&\les \int_{\Omega\setminus\U}(| f|+\|p_3\|_2)e^{\hat r_3}d\gamma + (1+\|p_3\|_4)^2\gamma(\U^c)^{\frac14}.\label{appl:tail}
\end{align}
\end{corollary}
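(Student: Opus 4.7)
The proof is a direct substitution into Lemma~\ref{lma:prelim}, together with two elementary reductions, and is essentially bookkeeping: the substantive work is already done by Lemmas~\ref{lma:pretty} and~\ref{lma:prelim}. The plan is as follows.

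First, I verify the hypotheses of Lemma~\ref{lma:prelim} with $r := \hat r_3$ and $h := p_3$. Since $\rho\propto e^{-W}$ on $\Omega$ and $d\gamma\propto e^{-\|x\|^2/2}dx$, the definition $r_3(x) = -(W(x)-W(0)-\tfrac12\|x\|^2)$ gives $d\rho/d\gamma \propto e^{r_3}$; absorbing the constant $\int_{\U} r_3\,d\gamma$ into the normalization lets us equivalently write $d\rho \propto e^{\hat r_3}d\gamma$ on $\Omega$, so the required centering $\int_{\U}\hat r_3\,d\gamma=0$ holds by construction. The function $p_3$ is an odd polynomial, hence $\int p_3\,d\gamma=0$, and the standardization of $f$ supplies $\int f\,d\gamma=0$ and $\|f\|_2=1$.

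Second, I identify $r-h$ in terms of $\hat r_4$. Because $\U$ is symmetric about zero and $p_3$ is odd, $\int_{\U} p_3\,d\gamma = 0$, so $\hat p_3 = p_3$. Applying the hat operation to the pointwise identity $r_3 = p_3 + r_4$ yields $\hat r_3 = p_3 + \hat r_4$, hence $r-h = \hat r_3 - p_3 = \hat r_4$. Substituting $r=\hat r_3$, $h=p_3$, and $r-h=\hat r_4$ into~\eqref{preliloc} and~\eqref{prelitail} already gives the claimed bounds with the caveat that $\|\hat r_3\ind_{\U}\|_8$ and $\|\hat r_4\ind_{\U}\|_2$ appear in place of $\|r_3\ind_{\U}\|_8$ and $\|r_4\ind_{\U}\|_2$.

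Finally, I justify replacing each $\|\hat r_k\ind_{\U}\|_m$ by $\|r_k\ind_{\U}\|_m$ at the cost of an absolute constant. By the triangle inequality,
$$\|\hat r_k\ind_{\U}\|_m \le \|r_k\ind_{\U}\|_m + \Bigl|\int_{\U} r_k\,d\gamma\Bigr|\,\gamma(\U)^{1/m},$$
and by H\"older, $|\int_{\U} r_k\,d\gamma|\le \|r_k\ind_{\U}\|_m\,\gamma(\U)^{1-1/m}$. Combining these gives $\|\hat r_k\ind_{\U}\|_m \le (1+\gamma(\U))\|r_k\ind_{\U}\|_m\le 2\|r_k\ind_{\U}\|_m$, and the factor of $2$ is absorbed into the $\les$ symbol. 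There is no real obstacle; the purpose of this step is simply to cast the generic estimates of Lemma~\ref{lma:prelim} in a form convenient for the subsequent estimates of $\|p_3\|_m$, $\|r_k\ind_{\U}\|_m$, and $\|e^{\hat r_3}\ind_{\U}\|_4$ in terms of the key quantities $\epsilon_3$, $\efour{\s}$, and $\efive{\s}$.
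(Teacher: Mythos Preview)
The proposal is correct and matches the paper's approach exactly: the paper's proof consists of the two observations stated just before the corollary, namely that $r-h=\hat r_3-p_3=\hat r_4$ and that $\|\hat r_k\ind_\U\|_m\leq 2\|r_k\ind_\U\|_m$, after which the result is an immediate specialization of Lemma~\ref{lma:prelim}. Your write-up simply spells out these two points (and the verification of hypotheses) in more detail.
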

Now, in Lemmas~\ref{lma:r345} and~\ref{lma:exp} in the appendix, we prove the following key bounds:
\begin{equation}\label{eq:key}
\begin{gathered}
\|p_3\|_{k} \les_k\tilep\leq\epsilon_3,\qquad\|r_4\ind_{\U}\|_k \les_k\efour{\s}^{2},\\
\|e^{\hat r_3}\ind_{\U}\|_4 \leq \e\l(\l(\efour{\s}^2/12+ \epsilon_3^2/2\r)\s^4\r)\leq \Es{\s},
\end{gathered}
\end{equation}recalling the definition of $\Es{\s}$ from~\eqref{Esdef}. Recall from Definition~\ref{epsdef} that in terms of $n$, we have the scalings $\epsilon_3\sim n^{-1/2}$ and $\efour{\s}^2\sim n^{-1}$. Thus the order of magnitude of the first two bounds in~\eqref{eq:key} aligns with the order of magnitude of $p_3, r_4$ from the informal argument leading to~\eqref{n-informal}. See Section~\ref{sec:prop:r2k} for a discussion of the proof of~\eqref{eq:key}. These bounds then also imply that
\beq\label{eq:key:r3}
\|r_3\ind_{\U}\|_k \les_k \epsilon_3 + \efour{\s}^2.
\eeq Using~\eqref{eq:key} and~\eqref{eq:key:r3} in~\eqref{appl:loc}, we obtain the following overall bound on Loc:
\beq\label{key:loc}
|\mathrm{Loc}| \les \Es{\s}(\epsilon_3 + \efour{\s}^2)^2 + \efour{\s}^2 \les \Es{\s}(\epsilon_3^2 + \efour{\s}^2).
\eeq\\

Furthermore, we have the following bound on the tail integral, using that $f=(F-\int Fd\gamma)/\sqrt{\Var_{\gamma}(F)}$ for $F(x)=G(\mhat+H_V^{-1/2}x)$, and that $F$ satisfies the following bound as a consequence of $G$ satisfying~\eqref{gcond}:
\beq\label{fbd}
|F(y)-\medint\int Fd\gamma|\leq \af{F}e^{\cgro\sqrt d\|y\|/2}\quad\,\forall y\in\R^d,\quad \af{F}=\af{G}.
\eeq Here, $\af{G}$ is as in~\eqref{gcond}.
\begin{lemma}\label{lma:tail}
Let $f$ be normalized from $F$ as above, where $F$ satisfies~\eqref{fbd}. Also, suppose $\s\geq\sst=\max(\s_0, \frac{8}{\cgro}\log\frac{2e}{\cgro})$, and that $\tilep,\efour{\s}\leq C$. Then
\beq\label{eq:lma:tail}
|\mathrm{Tail}|\les \l| \int_{\Omega\setminus\U}(| f|+\|p_3\|_2)e^{\hat r_3}d\gamma + (1+\|p_3\|_4)^2\gamma(\U^c)^{\frac14}\r|\les \l(\af{f}\vee1\r)\taus{\s},
\eeq where $\af{f}=\af{F}/\sqrt{\Var_\gamma(F)}$ and $\taus{\s}=de^{-\cgro\s d/8}$.
\end{lemma}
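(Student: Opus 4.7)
The plan is to estimate the two pieces of the preliminary Tail bound separately, first converting $e^{\hat r_3}d\gamma$ into a Lebesgue integrand via the global growth of $W$, and then grinding through a polar-coordinate tail integral with Stirling. For the first piece $\int_{\Omega\setminus\U}(|f|+\|p_3\|_2)e^{\hat r_3}d\gamma$, I would rewrite using $r_3 = W(0) - W + \|x\|^2/2$ (from Definition~\ref{def-r}),
\[
e^{\hat r_3(x)}d\gamma(x) = e^{-\int_{\U}r_3\,d\gamma}\,(2\pi)^{-d/2}e^{-(W(x)-W(0))}\,dx,
\]
and observe that the scalar prefactor is harmless: decomposing $r_3 = p_3+r_4$ and applying the key bounds~\eqref{eq:key} gives $|\int_{\U}r_3\,d\gamma|\leq\|r_3\ind_{\U}\|_1\lesssim \tilep+\efour{\s}^2\lesssim 1$. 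On $\Omega\setminus\U(\s)\subseteq\Omega\setminus\U(\s_0)$, the global growth~\eqref{Wglobd} yields $W-W(0)\geq\cgro\sqrt d\|x\|$, so $e^{\hat r_3(x)}d\gamma(x)\lesssim (2\pi)^{-d/2}e^{-\cgro\sqrt d\|x\|}dx$ on this region. Combined with $|f(x)|\leq\af{f}e^{\cgro\sqrt d\|x\|/2}$ from~\eqref{fbd} and $\|p_3\|_2\lesssim\tilep\lesssim 1$, the integrand is dominated by $(\af{f}\vee 1)(2\pi)^{-d/2}e^{-\cgro\sqrt d\|x\|/2}$.

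The problem thus reduces to estimating $I := (2\pi)^{-d/2}\int_{\|x\|\geq\s\sqrt d}e^{-\cgro\sqrt d\|x\|/2}\,dx$ and showing $I\lesssim \taus{\s}=de^{-\cgro\s d/8}$. Passing to polar coordinates, the integral becomes $(2\pi)^{-d/2}\omega_{d-1}\int_{\s\sqrt d}^\infty r^{d-1}e^{-\cgro\sqrt d r/2}dr$. Since $\s\geq\sst\geq 8/\cgro$, the function $r\mapsto r^{d-1}e^{-\cgro\sqrt d r/4}$ is decreasing on $[\s\sqrt d,\infty)$ (derivative test), so splitting $e^{-\cgro\sqrt d r/2}=e^{-\cgro\sqrt d r/4}\cdot e^{-\cgro\sqrt d r/4}$ and pulling the first factor out yields $\int_{\s\sqrt d}^\infty r^{d-1}e^{-\cgro\sqrt d r/2}dr\lesssim (\s\sqrt d)^{d-1}(\cgro\sqrt d)^{-1}e^{-\cgro\s d/2}$. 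Using $(2\pi)^{-d/2}\omega_{d-1}=2^{1-d/2}/\Gamma(d/2)$ together with Stirling's $\Gamma(d/2)\gtrsim\sqrt d(d/(2e))^{d/2}$, the dimensional factors collapse via $(e/d)^{d/2}d^{d/2}=e^{d/2}$, giving $I\lesssim (\cgro d^{3/2})^{-1}\s^{d-1}e^{d/2-\cgro\s d/2}$. The choice $\sst\geq(8/\cgro)\log(2e/\cgro)$ is engineered precisely so that $\log\s + \tfrac12 - \tfrac{3\cgro\s}{8}\leq 0$, which after multiplying by $d$ delivers $\s^{d-1}e^{d/2-3\cgro\s d/8}\lesssim d^{5/2}$, and hence $I\lesssim de^{-\cgro\s d/8}=\taus{\s}$.

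The Stirling bookkeeping above is the principal obstacle: checking that the specific exponent $\cgro\s d/8$ (rather than a weaker fraction) survives after absorbing the factors $\s^{d-1}$, $e^{d/2}$, and $(e/d)^{d/2}$ into $e^{-\cgro\s d/2}$. The remaining piece $(1+\|p_3\|_4)^2\gamma(\U^c)^{1/4}$ is routine: $(1+\|p_3\|_4)^2\lesssim 1$ by $\|p_3\|_4\lesssim\tilep\lesssim 1$ from~\eqref{eq:key}, and standard Gaussian concentration $\gamma(\U^c)=\PP(\|Z\|\geq\s\sqrt d)\leq e^{-(\s-1)^2 d/4}$ for $\s\geq 2$ (which is implied by $\s\geq\sst\geq 8/\cgro\geq 8$ when $\cgro\leq 1$) yields $\gamma(\U^c)^{1/4}\leq e^{-(\s-1)^2 d/16}$, and $(\s-1)^2/16\geq \cgro\s/8$ follows with margin. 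Summing the two contributions, and distinguishing the cases $\af{f}\leq 1$ (where the $\|p_3\|_2$ term is the one being multiplied) and $\af{f}\geq 1$ (where $|f|$ dominates), gives the stated $|\mathrm{Tail}|\lesssim(\af{f}\vee 1)\taus{\s}$.
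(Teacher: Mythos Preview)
Your proposal is correct and follows essentially the same route as the paper's proof: convert $e^{\hat r_3}d\gamma$ into $e^{-(W-W(0))}dx$ times a bounded prefactor, use the global growth~\eqref{Wglobd} to dominate the integrand by $(\af f\vee 1)(2\pi)^{-d/2}e^{-\cgro\sqrt d\|x\|/2}$, then control the resulting radial integral and the Gaussian tail $\gamma(\U^c)^{1/4}$ separately. The only cosmetic difference is that the paper packages the radial estimate into an auxiliary lemma (Lemma~\ref{aux:gamma}, proved via the gamma distribution mgf), whereas you do it by hand with your exponent-splitting trick and Stirling; your inequality $\log\s+\tfrac12-\tfrac{3\cgro\s}{8}\le 0$ for $\s\ge\sst$ checks out and in fact gives a bound slightly stronger than the $d^{5/2}$ you target.
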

In the proof, we use a standard Gaussian tail inequality to bound $\gamma(\U^c)$. For the first integral, we use that $e^{\hat r_3}\gamma(x)\propto e^{W(0)-W(x)}$ when $x\in\Omega\setminus\U$. We then use the lower bound~\eqref{Wglobd} on the growth of $W$ in $\Omega\setminus\U$, and the upper bound~\eqref{fbd} on the growth of $F$. Together, these facts allow us to bound the first integral by another integral of the form $(2\pi)^{-d/2}\int_{\|x\|\geq \s\sqrt d}e^{-a\sqrt d\|x\|}dx$. This can be bounded using standard gamma integral tail bounds. \\

Finally, note that $\gamma(\U)=\PP(\|Z\|\geq\s\sqrt{d})\geq1/2$. This follows by a standard Gaussian tail bound, since $\s\geq(8/\cgro)\log(2e/\cgro)\geq8$. We use this lower bound on $\gamma(\U)$ together with the upper bounds~\eqref{eq:lma:tail} and~\eqref{key:loc} on Tail and Loc in the decomposition~\eqref{pretty}. This concludes the proof of the bound~\eqref{g-bd-2} in Theorem~\ref{thm:Vgen}.

\paragraph*{Proof of Theorem~\ref{thm:odd}: odd $f$}
We now briefly discuss how to prove Theorem~\ref{thm:odd}; see Appendix~\ref{app:sec:overview} for the full proof. First note that if $g$ is odd about $\mhat$, then $f$ is odd about zero. Now, the main change is that we decompose Loc not as~\eqref{eq:prelim}, but rather as
\beqs\label{eq:prelim:odd}
\mathrm{Loc} = &\int_\U (e^{r}-1-r-r^2/2)fd\gamma + \int_\U (r+r^2/2-h)fd\gamma\\
&+\int fhd\gamma\l( \int_\U (e^{r}-1-r)d\gamma + \int_\U (r-h)d\gamma\r).
\eeqs
We again take $r=\hat r_3$ and $h=p_3$. Let us give some intuition for the size of the terms in this decomposition. For simplicity, we only discuss the $n$ dependence here. We have $$h=p_3\sim n^{-1/2},\qquad r=\hat r_3\sim p_3\sim n^{-1/2},\qquad r-h=\hat r_3 - p_3 = \hat r_4\sim n^{-1}.$$ Informally, this leads to the following orders of magnitude:
\beqs
\int_\U (e^{r}-1-r-r^2/2)fd\gamma\sim r^3&\sim n^{-3/2},\\
\int fhd\gamma\int_\U (e^{r}-1-r)d\gamma \sim hr^2&\sim n^{-3/2},\\
\int fhd\gamma\int_\U (r-h)d\gamma \sim h(r-h) &\sim n^{-3/2}.\eeqs
Thus these three terms match the order of accuracy stated in~\eqref{godd-bd}. The remaining question is about the size of $\int_\U (r+r^2/2-h)fd\gamma$. It turns out that due to $f$ being odd, the leading order contribution to this integral vanishes, and the remainder is of order $n^{-3/2}$ as well.

\subsection{Proof of Key Bounds~\eqref{eq:key}}\label{sec:prop:r2k}
Recall from~\eqref{p3def} that $p_3(x) = \la -\tfrac{1}{3!}\nabla^3W(0), x^{\otimes3}\ra$. We start by discussing the first bound in~\eqref{eq:key}, on the quantity
 \beq\label{p32k}\|p_3\|_{k}=\E\l[\l|\lla \tfrac{1}{3!}\nabla^3W(0), Z^{\otimes 3}\rra\r|^{k}\r]^{1/k},\qquad Z\sim\mathcal N(0, I_d).\eeq %
 \begin{lemma}\label{lma:prop:r2k}
It holds
 \begin{align}
\|p_3\|_2^2 &= \frac16\|\nabla^3W(0)\|_F^2 + \frac14\|\la \nabla^3W(0), I_d\ra\|= \tilep^2,\label{eqp31}\\
 \|p_3\|_{k} &\les_k\|p_3\|_2=\tilep,\quad k\geq2.\label{eqp32}\end{align}
 \end{lemma}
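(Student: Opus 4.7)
The plan is to establish~\eqref{eqp31} by a direct Isserlis (Wick) computation of the sixth Gaussian moment, and then obtain~\eqref{eqp32} as an immediate consequence of Gaussian hypercontractivity.

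Write $T := \nabla^3 W(0)$, which is a symmetric order-3 tensor. By definition,
$$\|p_3\|_2^2 = \tfrac{1}{36}\sum_{i,j,k,i',j',k'} T_{ijk}T_{i'j'k'}\,\E\bigl[Z_iZ_jZ_kZ_{i'}Z_{j'}Z_{k'}\bigr],$$
and Isserlis' theorem expresses the sixth moment as a sum over the $5!! = 15$ perfect matchings of the six positions into pairs, each pair contributing a Kronecker delta. I would partition these 15 matchings according to how many pairs stay within the ``unprimed'' triple $\{i,j,k\}$. Since a within-triple pair on one side forces one on the other by parity, and three such pairs are combinatorially impossible, the 15 matchings split into: 6 ``all-cross'' matchings (bijections from $\{i,j,k\}$ to $\{i',j',k'\}$) and 9 matchings carrying exactly one within-triple pair on each side plus one cross pair.

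For each of the 6 all-cross matchings, full symmetry of $T$ collapses the contraction to $\sum_{i,j,k} T_{ijk}^2 = \|T\|_F^2$, giving a total contribution of $6\|T\|_F^2$. For each of the 9 ``within-triple'' matchings --- e.g.\ the matching $\{(i,j),(i',j'),(k,k')\}$ --- the contraction becomes $\sum_{k}\bigl(\sum_i T_{iik}\bigr)^2 = \|\la T, I_d\ra\|^2$, and the symmetry of $T$ shows the other eight cases reduce to the same sum after relabeling. Assembling the pieces,
$$\|p_3\|_2^2 = \tfrac{1}{36}\bigl(6\|T\|_F^2 + 9\|\la T, I_d\ra\|^2\bigr) = \tfrac{1}{6}\|T\|_F^2 + \tfrac{1}{4}\|\la T, I_d\ra\|^2,$$
which equals $\tilep^2$ by Definition~\ref{def:tilep}.

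For~\eqref{eqp32}, I would invoke Gaussian hypercontractivity (Nelson's inequality): any polynomial of degree at most $m$ in a standard Gaussian vector obeys $\|p\|_k \leq (k-1)^{m/2}\|p\|_2$ for every $k \geq 2$, with a constant independent of the ambient dimension. Since $p_3$ is a homogeneous degree-3 polynomial in $Z$, this immediately gives $\|p_3\|_k \leq (k-1)^{3/2}\,\tilep$, which is~\eqref{eqp32}. The only real obstacle in the whole argument is the combinatorial bookkeeping for the Isserlis expansion, specifically verifying via the symmetry of $T$ that all 9 within-triple matchings contribute the same quantity $\|\la T, I_d\ra\|^2$; after that, both claims follow with no further work.
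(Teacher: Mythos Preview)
Your proof is correct, but differs from the paper's in both parts. For~\eqref{eqp31}, the paper works with $T=\tfrac{1}{3!}\nabla^3W(0)$ and uses the Hermite decomposition $\la T,x^{\otimes3}\ra=\la T,\H_3(x)\ra+3\la T,I_d\ra^\T x$; orthogonality of the third and first Wiener chaoses then splits $\|p_3\|_2^2$ into $\|\la T,\H_3\ra\|_2^2=6\|T\|_F^2$ (computed via a separate lemma) plus $9\|\la T,I_d\ra\|^2$. Your direct Isserlis expansion achieves the same identity by combinatorics alone, trading the Hermite machinery for bookkeeping over the 15 matchings. For~\eqref{eqp32}, the paper retains the decomposition, applies hypercontractivity only to the pure third-chaos piece $\la T,\H_3\ra$, and bounds the linear piece by standard Gaussian moments before reassembling via the triangle inequality. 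Your route---invoking the general hypercontractive bound $\|p\|_k\leq(k-1)^{3/2}\|p\|_2$ for degree-3 Gaussian polynomials in one stroke---is more direct and avoids the decomposition entirely. The paper's approach is more structural (the chaos decomposition recurs elsewhere, cf.\ Remark~\ref{rk:hermite}), while yours is more self-contained; both yield the stated constants without loss.
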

 \begin{proof}
For brevity, let $T:=\tfrac{1}{3!}\nabla^3W(0)$. Now, note that \beq\label{Ssymm}p_3(x)=\la T, x^{\otimes 3}\ra=\la T, \H_3(x)\ra +3\la T, I_d\ra^\T x,\eeq where $\H_3(x)$ is the tensor of third order Hermite polynomials of $x$. See Appendix~\ref{app:hermite-primer} for a brief primer on Hermite polynomials and in particular the definition of $\H_3$, from which~\eqref{Ssymm} follows. Next, a simple calculation in Lemma~\ref{lma:TH32} in the appendix shows that
\beq\label{laTH3ra}
\|\la T, \H_3\ra\|_2^2=\E\l[\la T, \H_3(Z)\ra^{2}\r]= 6\|T\|_F^2.\eeq Now by orthogonality of Hermite polynomials, we have that $\la T, \H_3(x)\ra$ is orthogonal to $\la T, I_d\ra^\T x$ with respect to $\gamma$, i.e. $\E[\la T, \H_3(Z)\ra\la T, I_d\ra^\T Z]=0$. Therefore, the representation~\eqref{Ssymm} of $p_3$ leads to
\beq\label{Ep32}\|p_3\|_2^2= \|\la T, \H_3\ra\|_2^2 + 9\E\l[\l(\la T, I_d\ra^\T Z\r)^2\r] = 6\|T\|_F^2 +9\|\la T, I_d\ra\|^2.\eeq Recalling the definition of $T$, we see that~\eqref{Ep32} is precisely the desired equality~\eqref{eqp31}. Next, we again use~\eqref{Ssymm} to bound $\|p_3\|_{k}$ by
 \beqs\label{Ebigla}
\|p_3\|_{k} &\leq \|\la T, \H_3\ra\|_{k} + 3\E\l[\l|\la T, I_d\ra^\T Z\r|^{k}\r]^{1/k}\les_k \|\la T, \H_3\ra\|_{k}+\|\la T, I_d\ra\|.
\eeqs
Finally, the crucial observation is that by hypercontractivity of the Ornstein-Uhlenbeck semigroup, for which the Hermite polynomials are eigenfunctions, we can relate the $k$ norm of $\la T, H_3\ra$ to the $2$ norm:
\beq\label{SH3}\|\la T, \H_3\ra\|_{k} \les_k \|\la T, \H_3\ra\|_2=\sqrt{6}\|T\|_F.\eeq See Lemma~\ref{hypercon} in the appendix for the proof of the inequality; the equality is by~\eqref{laTH3ra}. Substituting~\eqref{SH3} into~\eqref{Ebigla} and using~\eqref{Ep32} concludes the proof of~\eqref{eqp32}.
 \end{proof}
 \begin{remark}\label{rk:hermite}
 The use of Hermite polynomials is inspired by our earlier work~\cite{KRVI} on the approximation accuracy of Gaussian VI. There, Hermite polynomials played a much more conceptually important role. In the present work, they are purely a technical tool which simplifies calculations.
\end{remark}



The second bound in~\eqref{eq:key} states $\|\rfour\ind_{\U}\|_{k}\les_k\efour{\s}^2$. Proving this bound is straightforward. We use Taylor's remainder theorem to write $\rfour(x)=\la\nabla^4W(tx), x^{\otimes4}\ra/24$ for some $t=t(x)\in[0,1]$. We then use the bound~\eqref{c4Wloc} to conclude $|\rfour(x)|\leq c_4(\s)\|x\|^4/(24n)$ for $x\in\U$. From here we get $\|\rfour\ind_{\U}\|_{k}\lesssim_k c_4(\s)d^2/n=\efour{\s}^2$ by straightforward Gaussian calculations. See Lemma~\ref{lma:r345} in the appendix for the full proof. \\

Finally, the third bound in~\eqref{eq:key} states $\|e^{\hat r_3}\ind_{\U}\|_4 \leq \e((\efour{\s}^2/12+ \epsilon_3^2/2)\s^4)$. We first write $\hat r_3 = p_3+\hat r_4$ and bound $e^{\hat r_4}$ with a sup norm. This is straightforward, so here we will only discuss how to bound $\|e^{p_3}\ind_{\U}\|_4$.  Now, in absolute value, $p_3(x)$ gets as high as $\|\nabla^3W(0)\|d\sqrt d/\sqrt n$ over $x\in\U$. The large values of $|p_3|$ seem to present an obstacle to bounding an integral of $e^{p_3}$ over $\U$. However, note that  $p_3$ is $L$-Lipschitz on $\U$, with a Lipschitz constant 
$$L=\sup_{x\in\U}\|\nabla p_3(x)\|\leq \|\nabla^3W(0)\|(\s\sqrt d)^2 = (c_3d/\sqrt n)\s^2=\epsilon_3\s^2,$$ in this region. Therefore, we can apply Herbst's argument on the exponential integrability of Lipschitz functions with respect to the Gaussian distribution (or any measure satisfying a log Sobolev inequality). This leads to the bound $\l\|e^{p_3}\mathbbm{1}_{\U}\r\|_4\les \e(CL^2) = \e(C\epsilon_3^2\s^4)$. See Lemma~\ref{lma:exp} in the appendix for the full proof. Note that unlike the bound~\eqref{eqp32} on $\|p_3\|_k$, which involves only $\tilep$, the bound on $\|e^{p_3}\ind_\U\|_4$ involves the larger quantity $\epsilon_3$. This is because our technique to bound $\|e^{p_3}\ind_\U\|_4$ requires taking a \emph{supremum} over all $x\in\U$, resulting in the appearance of the operator norm $\|\nabla^3W(0)\|$. 
\\

%
%

\section*{Acknowledgments}
Thank you to Philippe Rigollet for useful discussions, and to Rishabh Dudeja for suggesting to use hypercontractivity, which significantly simplified the proof of a key lemma.


%
\appendix

\section{Proofs from Section~\ref{sec:V}}\label{app:main}
First, we give the formula for $\tilep$ in terms of $V$. Recall that we have defined $\tilep$ via Definition~\ref{def:tilep}. It also holds, 
\begin{align}\label{def:app:tilep}
 \tilep^2 = \frac1{6}\|\nabla^3V(\mhat)\|^2_{F;H_V^{-1}} + \frac1{4}\|H_V^{-1/2}\lla \nabla^3V(\mhat), H_V^{-1}\rra\|^2
 \end{align} Here, 
$$
\|T\|^2_{F;A} = \sum_{i,j,k=1}^d\la T, (A^{1/2})_i\otimes (A^{1/2})_j\otimes (A^{1/2})_k\ra^2
$$ for a symmetric tensor $T$ and a symmetric matrix $A\succ0$, with $(A^{1/2})_i$ the $i$th column of $A^{1/2}$. 

\begin{proof}[Proof of claim in Remark~\ref{vconvA2}]Convexity of $v$ implies that $$c(r):=\inf_{\|x-\mhat\|_{H_v}=r\sqrt{d/n}}\;\frac{v(x)-v(\mhat)}{\sqrt{d/n}\|x-\mhat\|_{H_v}}$$ is increasing, and hence
$$\inf_{\|x-\mhat\|_{H_v}\geq \s_0\sqrt{d/n}}\;\frac{v(x)-v(\mhat)}{\sqrt{d/n}\|x-\mhat\|_{H_v}}=c(\s_0).$$ We now bound $c(\s_0)$ from below. We use that if $\|x-\mhat\|_{H_v}=\s_0\sqrt{d/n}$ then
\beqs
v(x)-v(\mhat) = &\frac12\|x-\mhat\|_{H_v}^2 +\frac{1}{3!}\la\nabla^3v(\mhat), (x-\mhat)^{\otimes 3}\ra + \frac{1}{4!}\la\nabla^4v(\xi), (x-\mhat)^{\otimes 4}\ra\\
 \geq &\frac12\|x-\mhat\|_{H_v}^2 -\frac{c_3}{6}\|x-\mhat\|_{H_v}^3 - \frac{c_4(\s_0)}{24}\|x-\mhat\|_{H_v}^4\\
 = &\|x-\mhat\|_{H_v}\l(\frac {\s_0}2\sqrt{d/n}\r)\l(1 - \frac{c_3}{3}\s_0\sqrt{d/n} - \frac{c_4(\s_0)}{12}(\s_0\sqrt{d/n})^2\r).
\eeqs
If $\frac{c_3}{3}\s_0\sqrt{d/n} + \frac{c_4(\s_0)}{12}(\s_0\sqrt{d/n})^2\leq1/2$ then~\eqref{assume:c0:eq} is satisfied with this $\s_0$ and $\cgro=\s_0/4$. Substituting $\s_0=4$ into this inequality and rearranging terms gives the stated condition $c_3\sqrt{d/n}+c_4(4)d/n\leq 3/8$.
\end{proof}
\begin{proof}[Proof of bound in Example~\ref{corr:mgf}]
Let $G(x)=\e(u^\T H_V^{1/2}(x-\mhat))$. Then $\int Gd\lap = \E[e^{u^\T Z}] = e^{\|u\|^2/2}$, and $\Var_{\lap}(g)=\Var(e^{u^\T Z}) = e^{2\|u\|^2} - e^{\|u\|^2}$ by standard calculations. Also, we have
\beqs
\l|G(x)-\int Gd\lap\r|& =\l|\e(u^\T H_V^{1/2}(x-\mhat))-\e(\|u\|^2/2)\r| \\
&\leq \max\l(e^{\|u\|^2/2}, \e(u^\T H_V^{1/2}(x-\mhat))\r)\leq e^{\|u\|^2/2}e^{\|u\|\|x-\mhat\|_{H_V}}
\eeqs
and hence~\eqref{gcond} is satisfied with $\af{G}=e^{\|u\|^2/2}$, as long as $\|u\|\leq \cgro\sqrt d/2$. Now, let $g=(G-\int Gd\lap)/\sqrt{\Var_{\lap}(G)}$. It follows by~\eqref{g-bd-2} that
\beqs\label{Gdpilap}
\l|M(u)-\int Gd\corlap\r|&=\l|\int Gd\pi-\int Gd\corlap\r|=\sqrt{\Var_{\lap}(G)}\l|\int gd\pi - \int gd\corlap\r|\\
&\leq \sqrt{\Var_{\lap}(G)}\Es{\s}\l(\del{\s}\r) +  \af{G}\taus{\s}\\
&= e^{\|u\|^2/2}\sqrt{e^{\|u\|^2}-1}\Es{\s}\l(\del{\s}\r) +  e^{\|u\|^2/2}\taus{\s}.
\eeqs
We show below that
\beq\label{intGlap}\int Gd\corlap = e^{\|u\|^2/2}\l(1-\lla\nabla^3W(0), \; \tfrac16u^{\otimes3}+\tfrac12I_d\otimes u\rra\r).\eeq Substituting this formula into~\eqref{Gdpilap} and dividing by $e^{\|u\|^2/2}$ gives the stated bound. Now, to show~\eqref{intGlap}, recall that $\int Gd\lap = e^{\|u\|^2/2}$. Hence we need only show that $\int G\Lhat d\lap = -e^{\|u\|^2/2}\la\nabla^3W(0), \; \tfrac16u^{\otimes3}+\tfrac12I_d\otimes u\ra$. Indeed, using a change of variables we have
\beqs
\int G\Lhat d\lap =-\frac{1}{3!} \E\l[e^{u^\T Z}\la\nabla^3W(0), Z^{\otimes3}\ra\r] = -\frac{1}{3!}e^{\|u\|^2/2} \E\l[\la\nabla^3W(0), (Z+u)^{\otimes3}\ra\r],
\eeqs and a straightforward Gaussian calculation concludes the proof.
\end{proof}

\begin{proof}[Proof of Corollary~\ref{corr:corTV}]We have $\TV(\pi,\corlap)=\sup_{A}\Delta_{\ind_A}(\corlap)$, where the supremum is over all Borel sets $A\in\R^d$. Note that $G=\ind_A$ satisfies $|G(x)-\int Gd\lap| \leq 1$ for all $x$, so~\eqref{gcond} is satisfied with $\af{G}=1$. Furthermore, $\sqrt{\Var_{\lap}(G)}\leq1$ as well. Thus we can apply the bound~\eqref{g-bd-2} on $\Delta_g(\corlap)$, where $g$ is the standardization of $G=\ind_A$. Multiplying both sides by $\sqrt{\Var_{\lap}(G)}\leq1$ does not change the righthand side. Taking the supremum over all Borel sets $A$ then gives $\TV(\pi,\corlap)\les \Es{\s}(\del{\s}) + \taus{\s}$, proving the righthand inequality in~\eqref{TVcorlap}. 

Next, it is straightforward to show that $$\sup_AL(\ind_A)=\sup_A\int\ind_A\Lhat d\lap = \int\max(\Lhat,0) d\lap = \frac12\int|\Lhat|d\lap=\LTV,$$ using that $\int\Lhat d\lap=0$. Therefore,
\beqs\label{ARTV}
|\RTV|&=|\TV(\pi,\lap)-\LTV|= \l|\sup_A\Delta_{\ind_A}(\lap)- \sup_AL(\ind_A)\r| = \l|\sup_A\{L(\ind_A)+R(\ind_A)\}- \sup_AL(\ind_A)\r| \\
&\leq\sup_A|R(\ind_A)|=\sup_{A}\Delta_{\ind_A}(\corlap)=\TV(\pi,\corlap),\eeqs proving the first inequality in~\eqref{TVcorlap}.


The bound on $\LTV$ is immediate from Lemma~\ref{lma:Lbd}; namely, $\LTV=\|\Lhat\|_{L^1(\hat\gamma)}/2\leq\|\Lhat\|_{L^2(\hat\gamma)}/2=\tilep/2$. Finally, if $A$ is an even set about $\mhat$, then $L(\ind_A)=0$, and hence $|\pi(A)-\lap(A)|=|\Delta_{\ind_A}(\lap)|=|R(\ind_A)|\leq \TV(\pi,\corlap)$ by~\eqref{ARTV}.
\end{proof}
\begin{proof}[Proof of Corollary~\ref{thm:Vmean}]
%
%

We begin by showing that $\Lhatm$ defined in~\eqref{Lm-def} is precisely$\int x\Lhat(x)\lap(x)dx$. Indeed, let $Z\sim\mathcal N(0, I_d)$. Then
\beqs
\int  x \Lhat(x) d\lap(x)&= -\frac16\E[(\mhat+H_V^{-1/2}Z)\la\nabla^3V(\mhat),(H_V^{-1/2}Z)^{\otimes 3}\ra]\\
&=-\frac16H_V^{-1/2}\E[Z\la\nabla^3V(\mhat),(H_V^{-1/2}Z)^{\otimes 3}\ra]\\
&= -\frac12H_V^{-1}\E[\la\nabla^3V(\mhat), (H_V^{-1/2}Z)^{\otimes 2}\ra]\\
&=-\frac12H_V^{-1}\la\nabla^3V(\mhat), H_V^{-1}\ra = \Lhatm,\eeqs using Gaussian integration by parts (i.e. $\E[Zf(Z)]=\E[\nabla f(Z)]$) to get the third line. We conclude that $\mhat+\Lhatm$ is the first moment of $\corlap$, and of course $\mhat$ is the first moment of $\lap$. Therefore, letting $g_u(x)=u^\T H_V^{1/2}(x-\mhat)$, we have
\beqsn
\|H_V^{1/2}(\mpi - \mhat - \Lhatm)\|& = \sup_{\|u\|=1}\int G_ud\pi - \int G_ud\corlap.
\eeqsn
Now, note that $G=G_u$ is already in standardized form, since $\int Gd\lap=0$ and $\int G^2d\lap =\Var_{\lap}(G_u)=1$ for all $\|u\|=1$. Furthermore,
$|G(x)|\leq \|x-\mhat\|_{H_v}.$ We can therefore take
$$\af{G}=\sup_{x\in\R^d}\|x-\mhat\|_{H_v}e^{-\cgro\sqrt d\|x-\mhat\|_{H_v}/2}=\sup_{t\geq 0}te^{-\cgro\sqrt dt/2} = e^{-1}\frac{2}{\cgro\sqrt d} \les 1$$ to satisfy~\eqref{gcond}. Here we used the assumption $\cgro\gtrsim1/\sqrt d$.
An application of Theorems~\ref{thm:Vgen} and~\ref{thm:odd} concludes the proof of~\eqref{cor-mean} and~\eqref{Rc5}. Finally, by the same reasoning as above,
$$
\|H_V^{1/2}\Lhatm\|= \sup_{\|u\|=1}\int g_u\Lhat d\lap \leq\|\Lhatm\|_{L^2(\lap)}=\tilep$$ by Cauchy-Schwarz.
\end{proof}
 \begin{proof}[Proof of Corollary~\ref{thm:Vcov}]
Let $G_u(x)=(u^\T H_V^{1/2}(x-\mhat))^2$, $\|u\|=1$. Note that $G_u$ is even about $\mhat$. Then
\beqs
\|H_V^{1/2}&(\Sigpi -H_V^{-1})H_V^{1/2}\|= \sup_{\|u\|=1}\l|\Var_{X\sim\pi}(u^\T H_V^{1/2}(X-\mhat))-1\r|\\
&\leq\sup_{\|u\|=1}\l|\E_{X\sim\pi}\l[\l(u^\T H_V^{1/2}(X-\mhat)\r)^2\r]-1\r| +  \|H_V^{1/2}(\mpi - \mhat )\|^2\\
&=\sup_{\|u\|=1}\l|\int  G_ud\pi - \int  G_ud\lap\r| +  \|H_V^{1/2}(\mpi - \mhat )\|^2\\
&=\sup_{\|u\|=1}|R(G_u)|+  \|H_V^{1/2}(\mpi - \mhat )\|^2.
\eeqs To get the last line we used that $\int  G_ud\pi - \int  G_ud\lap=\Delta_{G_u}(\lap)=L(G_u)+R(G_u)=R(G_u)$, since $G_u$ is even.

For the bound on $\|H_V^{1/2}(\mpi - \mhat )\|^2$, we use Corollary~\ref{thm:Vmean}. For the bound on $\int G_ud\pi - \int G_ud\lap$, we use that $\int G_ud\lap=1$, and that $|G_u(x)-\int G_ud\lap|\leq 1+\|x-\mhat\|_{H_V}^2$, so that we can take
$$\af{G_u}=\sup_{t\geq0}(1+t^2)e^{-\cgro\sqrt dt/2} \leq 1 + \frac{4}{(\cgro\sqrt d/2)^2}e^{-2}\les1$$ to satisfy~\eqref{gcond}. Finally, note that $\Var_{\lap}(G_u)=\Var((u^\T Z)^2)= 2$ for all $\|u\|=1$. The bound on $\int G_ud\pi - \int G_ud\lap$ now follows by~\eqref{g-bd-2} from Theorem~\ref{thm:Vgen} (upon multiplying both sides by $\sqrt{\Var(G_u)}=\sqrt 2$). Combining this bound with the one on $\|H_V^{1/2}(\mpi - \mhat )\|^2$ gives
\beqs
\|H_V^{1/2}(\Sigpi -H_V^{-1})H_V^{1/2}\| \les &\Es{\s}\l(\del{\s}\r) + \taus{\s} + \l(\tilep+\Es{\s}\l(\del{\s}\r) + \taus{\s}\r)^2\\
\les &\Es{\s}^2\l(\del{\s}\r) +  \taus{\s}.
\eeqs 
\end{proof}
 \begin{proof}[Proof of Lemma~\ref{lma:log-reg-TV}]We first check that Assumptions~\ref{assume:1} and~\ref{assume:c0} are satisfied and bound $\epsilon_3$ and $\efour{\s}$. Note that $\nabla^2v(x)=\E[\psi''(Z^\T x)ZZ^\T]$, so $u^\T\nabla^2v(x)u =\E[\psi''(Z^\T x)(u^\T Z)^2]$. Since $\psi''(Z^\T x)(u^\T Z)^2$ is nonnegative, we infer that if $u^\T\nabla^2v(x)u=0$ then $\psi''(Z^\T x)(u^\T Z)^2=0$ almost surely. But $\psi''(Z^\T x)>0$, so then $u^\T Z=0$ almost surely, which happens only if $u=0$. Hence $\nabla^2v(x)\succ0$, so $v$ is strictly convex. Moreover, we have $\nabla v(e_1)=0$. Thus $e_1$ is a strict local minimizer and hence a global minimizer, by convexity.

Next, we compute that for $k\geq2$, 
\beq\label{nablak-log-inf}\nabla^kv(x)=\E[\psi^{(k)}(Z^\T x)Z^{\otimes k}],\eeq so in particular,
\beq\label{HV-log-inf}H_{v}=\nabla^2v(e_1)=\E[\psi''(Z_1)ZZ^\T ]= \mathrm{diag}(a_{2,2}, a_{2,0}\dots, a_{2,0}).\eeq  From~\eqref{nablak-log-inf} and~\eqref{HV-log-inf} we get
\beqs
\|\nabla^kv(x)\|_{H_v} &= \sup_{u\neq0}\frac{\E[\psi^{(k)}(Z^\T x)(Z^\T u)^{k}]}{(u^\T H_vu)^{k/2}} \leq \|\psi^{(k)}\|_\infty \E[|Z_1|^k]\frac{\|u\|^k}{\|u\|^k\lambda_{\min}(H_v)^{k/2}} \\
&=  \|\psi^{(k)}\|_\infty \E[|Z_1|^k]\min(a_{2,2},a_{2,0})^{-k/2} = C_k,
\eeqs since all of the quantities in the last line depend only on $\psi$ and $k$, which depends neither on $d$ nor on $n$. Thus we have shown that $c_3,c_4(\s)\leq C$ for all $\s\geq 0$. Hence $\epsilon_3,\efour{\s}\leq C\epsilon$ for all $\s$, where $\epsilon=d/\sqrt n$. Using this and convexity of $v$, Remark~\ref{vconvA2} implies that Assumption~\ref{assume:c0} is satisfied with $\s_0=4$ and $\cgro=1$ whenever $\sqrt{d/n}$ is small enough.

We can now apply Corollaries~\ref{corr:corTV} and~\ref{thm:Vmean} to get upper bounds on the remainder terms $|\TV-\LTV|$ and $\|H_V^{1/2}(\mpi-\mhat-\Lhatm)\|$. We choose $\s=\epsilon^{-1/2}$, which satisfies the required condition $\s\geq\sst=\max(4,8\log(2e))$ for all $\epsilon$ small enough (we have used $\s_0=4$ and $\cgro=1$). Now, since $\epsilon_3\leq C\epsilon$ and $\sup_{\s\geq0}\efour{\s}\leq C\epsilon$ by the above calculations, we conclude that $\Es{\s}=\e((\del{\s})\s^4)\leq C$. Moreover, $\taus{\s}=d\e(-\cgro\s d/8) =d\e(-\epsilon^{-1/2}d/8) = d\e(-n^{1/4}\sqrt d/8)$. Substituting these values of $\taus{\s}$ and $\Es{\s}$ into~\eqref{TVcorlap} and~\eqref{cor-mean} concludes the proof.
\end{proof}
\begin{proof}[Proof of Lemma~\ref{lma:leading-lb-log}]
Let us compute $\LTV$ and $H_V^{1/2}\Lhatb$. Taking $x=e_1$ in~\eqref{nablak-log-inf} gives $\nabla^3V(e_1)=n\E[\psi'''(Z_1)Z^{\otimes 3}]$, and recall from~\eqref{HV-log-inf} that $H_{V}= n\mathrm{diag}(a_{2,2}, a_{2,0}\dots, a_{2,0})$. Now, for a fixed vector $x\in\R^d$, we compute
\beqs
\la\nabla^3V(e_1), x^{\otimes3}\ra &= n\E[\psi'''(Z_1)(x^\T Z)^{3}]\\
&= n\E[\psi'''( Z_1)(x_1Z_1)^3] + 3nx_1\E[\psi'''( Z_1)Z_1]\E[(x_{2:d}^\T Z_{2:d})^2]\\
&= n\l(a_{3,3}x_1^3 + 3a_{3,1}x_1\|x_{2:d}\|^2\r).\eeqs Hence
\beq\label{b-prep}|\la\nabla^3V(e_1), x^{\otimes3}\ra|\geq n\l(3|a_{3,1}||x_1|\|x_{2:d}\|^2 - |a_{3,3}||x_1|^3\r).\eeq Now substitute $$x=H_{V}^{-1/2}Z = n^{-1/2}\l(a_{2,2}^{-1/2}Z_1, a_{2,0}^{-1/2}Z_2,\dots,  a_{2,0}^{-1/2}Z_d\r)$$ into~\eqref{b-prep} and take expectations on both sides:
\beqs
\LTV&=\frac{1}{12}\E|\la\nabla^3V(e_1), (H_{V}^{-1/2}Z)^{\otimes3}\ra|\\
&\geq \frac14\frac{(d-1)}{\sqrt n}\frac{|a_{3,1}|}{a_{2,2}^{1/2}a_{2,0}}\E[|Z_1||Z_2|^2] -\frac{1}{12}\frac{|a_{3,3}|}{a_{2,2}^{3/2}}\frac{\E[|Z_1|^3]}{\sqrt n}\\
&\geq \frac{1}{a_{2,2}^{1/2}\sqrt n}\l(\frac{|a_{3,1}|}{8a_{2,0}}(d-1) -\frac{|a_{3,3}|}{4a_{2,2}}\r), 
\eeqs as desired. Here we used that $\E|Z_1|\geq1/2$ and $\E[|Z_1|^3]\leq3$.

Next, we substitute the expressions for $\nabla^3V(e_1)$ and $H_V$ into~\eqref{Lm-def} to get
\beqs
H_V^{1/2}\Lhatm = &-\frac{1}{2}H_{V}^{-1/2}\la\nabla^3V(e_1), H_{V}^{-1}\ra=-\frac{n}{2}\E\l[\psi'''( Z_1)Z^\T H_{V}^{-1}ZH_{V}^{-1/2}Z\r]\\
=&-\frac{1}{2}\E\bigg[\psi'''( Z_1)\bigg(a_{2,2}^{-1}Z_1^2+\sum_{j=2}^da_{2,0}^{-1}Z_j^2\bigg)H_{V}^{-1/2}Z\bigg]\\
=&-\frac{1 }{2a_{2,2}}\E\l[\psi'''( Z_1)Z_1^2H_{V}^{-1/2}Z\r] \\
&\qquad- \frac{1}{2 a_{2,0}}\sum_{j=2}^d\E\l[\psi'''(Z_1)Z_j^2H_{V}^{-1/2}Z\r]\\
=& -\frac{a_{3,3}}{2 a_{2,2}^{3/2}\sqrt n}e_1 - \frac{(d-1)a_{3,1}}{2  a_{2,0}a_{2,2}^{1/2}\sqrt n}e_1\\
=&-\frac{1}{2\sqrt n a_{2,2}^{1/2}}\l(\frac{a_{3,3}}{a_{2,2}}+(d-1)\frac{a_{3,1}}{a_{2,0}}\r)e_1.
\eeqs Taking the norm of this vector gives the desired formula for $\|H_V^{1/2}\Lhatm\|$. 
 \end{proof}
 \section{Proofs from Section~\ref{sec:pmf}}\label{app:sec:pmf}
 \renewcommand{\ground}{\theta^*}
 Define
 \beqs
 \Theta^d &= \l\{(\theta_1,\dots,\theta_d)\; : \; \sum_{i=1}^d\theta_i<1,\,0<\theta_i<1\,\,\forall i\r\}\subset\R^d,\\
\Theta_{d}&=\l\{(\theta_0,\theta_1,\dots,\theta_d)\; : \; \sum_{i=0}^d\theta_i=1,\,0<\theta_i<1\,\,\forall i\r\}\subset\R^{d+1},\\
\Omega_d &=\l\{(u_0',u_1',\dots,u_d')\; : \; \sum_{i=0}^du_i'=0\r\}\subset\R^{d+1},\\
 \R_+^{d+1} &=\l \{\theta\in\R^{d+1}\; : \; \theta_i>0\;\,\forall i=0,1,\dots,d+1\r\}.
 \eeqs
 \subsection{Preliminary calculations and observations}
 \subsubsection{Conversions between $d$ and $d+1$ dimensions}\label{subsub:dim}
We denote vectors in $\R^d$ with a superscript $d$ and index them starting at 1, i.e. $u^d=(u_1,\dots, u_d)$. We index vectors in $\R^{d+1}$ starting at zero, i.e. $u=(u_0,u_1,\dots, u_d)$. We let $e_0,e_1,\dots,e_d$ denote the standard unit vectors in $\R^{d+1}$.

 Let $M\in\R^{(d+1)\times d}$ be given by 
 $$M=\begin{pmatrix}
 -1 &-1 &\dots & -1\\
1 &0&\dots&0\\
0&1&\dots&0\\
\vdots &\vdots &\vdots&\vdots\\
0&0&\dots&1
 \end{pmatrix}.$$
We define the following transformations from $\R^d\to\R^{d+1}$:
\beqs
u^d&\mapsto u'=Mu^d\\
\theta^d&\mapsto \theta=e_0+M\theta^d
\eeqs
Thus the first transformation is indicated by removing the $d$ and adding the prime, while the second transformation is indicated by removing the $d$ only. Note that although we remove the superscript $d$, we are actually adding a coordinate in both cases. Explicitly, if $u^d=(u_1,\dots, u_d)$ then $u'=(u_0,u_1,\dots, u_d)$, where $u_0= -\sum_{i=1}^du_i$. Similarly if $\theta^d=(\theta_1,\dots,\theta_d)$ then $\theta=(\theta_0,\theta_1,\dots,\theta_d)$, with $\theta_0=1-(\theta_1+\dots+\theta_d)$. Note that if $\theta^d\in\Theta^d$ then $\theta\in\Theta_{d}$. Also, note that $u'\in\Omega_d$ for all $u\in\R^d$. Finally, let $\theta^d, \tau^d\in\R^d$. Then
$$(\theta^{d}-\tau^{d})'=\theta-\tau.$$
Next, define $v^d:\Theta^d\to\R$ and $ v:\R_{+}^{d+1}\to\R$ by 
\beqs\label{def:barv} 
v^d(\theta^d)&=-\nb_0\log\l(1-\sum_{j=1}^d\theta_j\r)-\sum_{j=1}^d\nb_j\log\theta_j,\\
v(\theta)&=-\sum_{j=0}^d\nb_j\log\theta_j.\eeqs Thus $$v^d(\theta^d)= v(\theta)= v(e_0+M\theta^d).$$ Note that $\nabla v^d(\theta^d)=M^\T\nabla v(\theta)$ and therefore $\la\nabla v^d(\theta^d),u^d\ra = \la\nabla v(\theta), u'\ra$. More generally, we have
 \beqs
 \nabla^2v^d(\theta^d) &= M^\T\nabla^2v(\theta)M,\\
 \la\nabla^k v^d(\theta^d), (u^d)^{\otimes k}\ra &= \la\nabla^k v(\theta), {u'}^{\otimes k}\ra,\\
 \la\nabla^3 v^d(\theta^d), A\otimes b^d\ra &=\la\nabla^3 v(\theta), MAM^\T\otimes b'\ra,\\
 \la\nabla^3v^d(\theta^d), A\ra &=M^\T\la\nabla^3 v(\theta), MAM^\T\ra.
 \eeqs

 \subsubsection{Derivatives of $v^d$ and $ v$, and chi-squared divergence}\label{subsub:vbarv}
 We see from~\eqref{def:barv} that the derivatives of $v$ are 
 \beq\label{barkderiv}\nabla^k v(\theta) = (-1)^k(k-1)!\sum_{j=0}^d\frac{\nb_j}{\theta_j^k}e_j^{\otimes k}.\eeq
 Note that $\nabla v(\nb) = \ind$, the all ones vector in $\R^{d+1}$. Therefore $\nabla v^d(\nb^d)=M^\T\ind=0$. Moreover, fix $\theta^d\in\Theta^d$. Then
 $$\la\nabla^2v^d(\theta^d), (u^d)^{\otimes2}\ra = \la\nabla^2 v(\theta), {u'}^{\otimes2}\ra = \sum_{j=0}^d\nb_j({u_j'}/\theta_j)^2,$$ which is strictly positive when $u^d\neq0$, because we have assumed the $N_j$, and hence also the $\nb_j$, are strictly positive. This implies $v^d$ is strictly convex on $\Theta^d$, so that $\nb^d$ is the unique global minimizer of $v^d$ in $\Theta^d$. Let $\nb^{-1} = \{\nb_j^{-1}\}_{j=0}^d$ and similarly, $(\nb^d)^{-1} = \{ \nb_j^{-1}\}_{j=1}^d$. We compute
 \beqs\label{Hvandco}
 H_v&=\nabla^2v^d(\nb^d)=M^\T\nabla^2 v( \nb)M = M^\T\mathrm{diag}(\nb^{-1})M = \mathrm{diag}((\nb^d)^{-1}) + \nb_0^{-1}\ind\ind^\T,\\
 H_v^{-1}&=\mathrm{diag}(\nb^d)-\nb^d(\nb^d)^\T,\qquad MH_v^{-1}M^\T = \mathrm{diag}(\nb)-\nb\nb^\T.
 \eeqs
 The equations in the second line follow by direct calculations using $H_v$ from the first line. These calculations imply that the Laplace approximation to $\pi^d\propto e^{-nv^d}$ is 
 $$\lap^d=\mathcal N(\nb^d, n^{-1}H_v^{-1})=\mathcal N(\nb^d, n^{-1}[\mathrm{diag}(\nb^d)-\nb^d(\nb^d)^\T]).$$ Moreover, since $\theta=e_0+M\theta^d$ is the pmf corresponding to $\theta^d$, we see that 
 \beq\label{th-thd}\theta^d\sim\lap^d\implies\theta\sim \mathcal N(\nb, n^{-1}MH_v^{-1}M^\T) = \mathcal N(\nb, n^{-1}[\mathrm{diag}(\nb)-\nb\nb^\T]).\eeq

 Next, note that \beq\label{unorm}\|u^d\|^2_{H_v} = (u^d)^\T H_vu^d = {u'}^\T\nabla^2 v(\nb)u' = \sum_{j=0}^d\frac{{u_j'}^2}{\nb_j}.\eeq In particular, since $(\theta^d-\nb^d)'=\theta-\nb$, we have
 \beq
 \|\theta^d - \nb^d\|^2_{H_v} = \sum_{j=0}^d\frac{(\theta_j-\nb_j)^2}{\nb_j}=:\chi^2(\theta||\nb).
 \eeq Therefore, the neighborhood $\U(\s)$ from Definition~\ref{Udef} is given by a chi-squared ball around $\nb$:
\beq\label{Uchi}\U(\s)=\{\theta^d\in\R^d\; : \;\chi^2(\theta||\nb)\leq \s^2d/n\}.\eeq
\begin{remark}
We will make frequent use of the bound
\beq\label{max2chi}
\max_{j=0,\dots,d}\l|\frac{\theta_j}{\nb_j}-1\r|^2\leq \chi^2(\theta||\nb)/\Nmin
\eeq which follows directly from the above definition of $\chi^2$ divergence.
\end{remark}
We also take note of the $\chi^2$ divergence between $\nb$ and the uniform distribution $\Unif_{d+1}:=\l(\frac{1}{d+1},\dots,\frac{1}{d+1}\r)$: 
\beqs\label{chiunif}
\chi^2(\Unif||\nb) &= (d+1)^{-2}\l(\sum_{j=0}^d\nb_j^{-1}\r)-1.
\eeqs
%
\subsubsection{Skew corrections}\label{app:skew:pmf}
For the density correction function $\Lhat$, we use the definition~\eqref{Lhatdef} of $\Lhat$, the properties discussed in Section~\ref{subsub:dim}, and the third derivative tensor of $ v$ given by~\eqref{barkderiv} with $k=3$, to get
\beqs\label{lhat-pmf}
\Lhat(\theta) &=-\frac n6\la\nabla^3v^d(\nb^d), (\theta^d-\nb^d)^{\otimes3}\ra = -\frac n6\la\nabla^3 v(\nb), (\theta- \nb)^{\otimes3}\ra \\
&= \frac n3\sum_{j=0}^d\frac{(\theta_j-\nb_j)^3}{\nb_j^2}.
\eeqs
Next, we compute $(\Lhatth)'=M\Lhatth$. Recall the notation $\nb^{-1} = \{ \nb_j^{-1}\}_{j=0}^d$. Using the definition~\eqref{Lm-def} of $\Lhatth$ and the formula for $MH_v^{-1}M^\T$ from~\eqref{Hvandco}, we have
\beqs
n(\Lhatth)' &=-\frac12MH_v^{-1}\la\nabla^3v^d(\nb^d), H_v^{-1}\ra = -\frac12MH_v^{-1}M^\T\la\nabla^3 v(\nb), MH_v^{-1}M^\T\ra\\
&=\l(\mathrm{diag}(\nb)-\nb\nb^\T\r)\lla\sum_{j=0}^d\nb_j^{-2}e_j^{\otimes 3}, \,\mathrm{diag}(\nb)-\nb\nb^\T\rra\\
&=\l(\mathrm{diag}(\nb)-\nb\nb^\T\r)\l(\nb^{-1}-\ind\r)=\ind-(d+1)\nb -\nb+\nb = \ind-(d+1)\nb.
\eeqs
Hence
\beq
(\Lhatth)' =\frac1n\ind-\frac{d+1}{n}\nb.
\eeq In particular, we compute using~\eqref{unorm} that
\beqs
\|\Lhatth\|^2_{H_V}&=n\|\Lhatth\|^2_{H_v}=n\sum_{j=0}^d\frac{\l[(\Lhatth)_j'\r]^2}{\nb_j}=n^{-1}\sum_{j=0}^d\frac{(1-(d+1)\nb_j)^2}{\nb_j}\\
&=n^{-1}\l(\bigg[\sum_{j=0}^d\nb_j^{-1}\bigg]- (d+1)^2\r)=\chi^2(\Unif_{d+1}||\nb)\frac{(d+1)^2}{n},
\eeqs using~\eqref{chiunif} to get the last equality.

\subsection{Proofs from Section~\ref{sec:c3:pmf}}\label{app:sec:c3:pmf}
\begin{proof}[Proof of Lemma~\ref{lma:eps3:pmf}]
Using~\eqref{barkderiv} and the identities from Sections~\ref{subsub:dim} and~\ref{subsub:vbarv}, we have
\beqs
c_3&=\|\nabla^3v^d(\nb^d)\|_{H_v} = \sup\l\{\la\nabla^3 v(\nb), {u'}^{\otimes3}\ra\; : \;  {u'}^\T \nabla^2 v(\nb) u' = 1\r\} \\
&= \sup\l\{-2\sum_{j=0}^d\frac{{u_j'}^3}{{\nb_j}^2}\; : \; \sum_{j=0}^du_j'=0, \sum_{j=0}^d\frac{{u_j'}^2}{\nb_j}=1\r\}.
\eeqs
Using Lagrange multipliers we get that $u_j'=t_1\nb_j$ for all $j$ in some nonempty index set $I\subset\{0,1,\dots,d\}$, and $u_j'=t_2\nb_j$ for all other $j$. Let $P=P(I)=\sum_{j\in I}\nb_j$. Writing the constraints in terms of $P$, $t_1$, and $t_2$, we get 
\beqs
t_1P+t_2(1-P)&=0,\\
t_1^2P+t_2^2(1-P)&=1.\eeqs 
Solving for $t_1,t_2$ we get $t_1=\mp\sqrt{(1-P)/P}$ and $t_2=\pm\sqrt{P/(1-P)}$. These two possible pairs $(t_1,t_2)$ give rise to a positive objective value and its negative, so we simply take the absolute value of the result. We get
\beqs
M(P):=-2\sum_{j=0}^d\frac{u_j^3}{{\nb_j}^2} &= -2\sum_{j\in I}t_1^3\nb_j - 2\sum_{j\in I^c}t_2^3\nb_j = -2t_1^3P -2t_2^3(1-P)\\
&=2\l|\frac{(1-P)^{3/2}}{\sqrt P} - \frac{P^{3/2}}{\sqrt{1-P}}\r| = 2\frac{|1-2P|}{\sqrt{P(1-P)}}
\eeqs Replacing $P$ by $1-P$ above does not change the answer, so assume without loss of generality that $P<1/2$. Then we can write
$$M(P)=2\frac{1-2P}{\sqrt{P(1-P)}}$$ It remains to choose the index set $I$, which determines $P$. Since $P\mapsto M(P)$ is decreasing on $[0,1]$, we should take $P$ as small as possible. The smallest possible value of $P$ over nonempty index sets $I$ is $P=\Nmin$. Hence
$$c_3=\max_IM(P(I)) = 2\frac{1-2\Nmin}{\sqrt{\Nmin}\sqrt{1-\Nmin}}.$$ The equation~\eqref{eq:eps3:pmf} for $\epsilon_3=c_3d/\sqrt n$ now follows.

A similar Lagrange multiplier argument shows that for $k=4,5$, we have $c_k(0)=\max_IM(P(I))$, where now
$$M(P)= (k-1)!\l(\frac{(1-P)^{\frac k2}}{P^{\frac k2-1}} -\frac{P^{\frac k2}}{(1-P)^{\frac k2-1}}\r) = (k-1)!\frac{(1-P)^{k-1}-P^{k-1}}{(P(1-P))^{\frac k2-1}}$$ This function is also decreasing, so we again choose $P=\Nmin$ to get
$$c_k(\s)\geq c_k(0)\geq (k-1)!\frac{(1-\Nmin)^{k-1}-\Nmin^{k-1}}{\sqrt{\Nmin(1-\Nmin)}^{k-2}}\geq \frac{c}{\sqrt{\Nmin}^{k-2}},$$ where the last inequality uses the assumption $\Nmin\leq 1/3$. Finally, recall that $\efour{\s} = c_4(\s)^{\frac12}(d/\sqrt n)$ and $\efive{\s} =c_5(\s)^{\frac13}(d/\sqrt n)$. Taking the square root and the cube root in the above bound, in the case $k=4$ and $k=5$ respectively, finishes the proof of the lower bound on $\ek{k}{\s}$ in~\eqref{eq:eps4:pmf}. The same lower bound also holds on $\ethree{\s}$ by the equation~\eqref{eq:eps3:pmf}.

Next, we prove the upper bound in~\eqref{eq:eps4:pmf}. Recall that $c_k(\s)=\sup\{\|\nabla^kv^d(\theta^d)\|_{\nabla^2v^d(\nb^d)}\,:\, \theta^d\in\U(\s)\}$. Fix $\theta^d\in\U(s)$. Again using~\eqref{barkderiv} and the identities from Sections~\ref{subsub:dim} and~\ref{subsub:vbarv}, we have
\beqs
\|\nabla^kv^d(\theta^d)\|_{\nabla^2v^d(\nb^d)}&= \sup\l\{\la\nabla^k v(\theta),{u'}^{\otimes k}\ra\;: \; {u'}^\T \nabla^2 v(\nb)u'=1\r\}\\
&=\sup\l\{(-1)^k(k-1)!\sum_{j=0}^d\nb_j\l(\frac{u_j'}{\theta_j}\r)^k\; : \sum_{j=0}^du_j'=0,\; \sum_{j=0}^d\nb_j\l(\frac{u_j'}{\nb_j}\r)^2=1 \r\}\\
&\leq\sup\l\{(k-1)!\sum_{j=0}^d\nb_j\l(\frac{u_j'}{\theta_j}\r)^k\; : \;u'\in\R^{d+1}, \sum_{j=0}^d\nb_j\l(\frac{u_j'}{\nb_j}\r)^2=1 \r\}
\eeqs
To get the third line, we dropped the constraint on the sum of the $u_j'$, and noted that we can always choose the sign of the $u_j'$'s to align with $(-1)^k$, so we can drop $(-1)^k$ from the objective. Using Lagrange multipliers, we see that the maximum is achieved at a $u'$ such that $u_j'=0$ for $j\in I$ and $u_j' = \lambda\theta_j^{\frac{k}{k-2}}/{\nb_j}^{\frac{2}{k-2}}$ for $j\in\{0,\dots,d\}\setminus I$, where $I$ is some non-empty index set. The constraint $\sum_{j=0}^d\nb_j(u_j'/\nb_j)^2=1$ reduces to $\lambda^2S=1$ and the objective $(k-1)!\sum_{j=0}^d\nb_j(u_j'/\theta_j)^k$ reduces to $(k-1)!\lambda^kS$, where $S=\sum_{j\in I}\nb_j(\theta_j/\nb_j)^{\frac{2k}{k-2}}$. Thus $\lambda=1/\sqrt S$, and the objective is given by $(k-1)!\lambda^kS=(k-1)!S^{1-\frac k2}$, which is maximized when $S=\min_{j=0,\dots,d}\nb_j(\theta_j/\nb_j)^{\frac{2k}{k-2}}$. We conclude that
\beq\label{nablatemp}
\|\nabla^kv^d(\theta^d)\|_{\nabla^2v^d(\nb^d)}\leq (k-1)!\max_j\l(\frac{\theta_j}{\nb_j}\r)^{k}\l(\frac{1}{\Nmin}\r)^{\frac k2-1}
\eeq
Next, by~\eqref{Uchi} we have that $\chi^2(\theta||\nb)\leq\s^2d/n$ for all $\theta\in\U(\s)$. Moreover, the bound~\eqref{max2chi} implies we have
\beq\label{nablasup}\sup_{j=0,\dots,d}\l|\frac{\theta_j}{\nb_j}-1\r|^2\leq \chi^2(\theta||\nb)/\Nmin \leq \frac{\s^2d}{n\Nmin}.\eeq
Using~\eqref{nablasup} in~\eqref{nablatemp} gives
$$
c_k(\s)=\sup_{\theta^d\in\U(\s)}\|\nabla^kv^d(\theta^d)\|_{H_v}\leq (k-1)!\l(\sqrt{\frac{\s^2d}{n\Nmin}} +1\r)^k\l(\frac{1}{\sqrt{\Nmin}}\r)^{k-2}.
$$ Recalling that $\ek{k}{\s}=c_k(\s)^{\frac{1}{k-2}}(d/\sqrt n)$ for $k=3,4,5$ concludes the proof of~\eqref{eq:eps4:pmf}. 
\end{proof}
\begin{proof}[Proof of Lemma~\ref{lma:tildeps-pmf}]
Let $\theta^d\in\R^d$ be a random vector with distribution $\lap^d = \mathcal N(\nb^d, n^{-1}(\mathrm{diag}(\nb^d)-\nb^d(\nb^d)^\T))$. Recall from~\eqref{th-thd} that this implies
$$\theta\sim\lap=\mathcal N\l(\nb, n^{-1}(\mathrm{diag}(\nb)-\nb\nb^\T)\r).$$

We now use the formula~\eqref{lhat-pmf} for $\Lhat$ to get
\beqs
\tilep&=\E_{\theta^d\sim\lap^d}[\Lhat(\theta^d)^2] = \frac{n^2}{9}\sum_{j=0}^d\frac{1}{\nb_j^4}\E_{\theta\sim\lap}\l[(\theta_j-\nb_j)^6\r]+\frac{n^2}{9}\sum_{j\neq k}\frac{1}{\nb_j^2\nb_k^2}\E_{\theta\sim\lap}\l[(\theta_j-\nb_j)^3(\theta_k-\nb_k)^3\r]\\
&=\frac{15}{9n}\sum_{j=0}^d\frac{1}{\nb_j^4}(\nb_j(1-\nb)_j)^3 + \frac{1}{9n}\sum_{j\neq k}\frac{1}{\nb_j^2\nb_k^2}\l[6(-\nb_j\nb_k)^3 + 9(-\nb_j\nb_k)\nb_j(1-\nb_j)\nb_k(1-\nb_k)\r]
\eeqs
To get the third line, we used that $\E[X^6]=15\sigma_X^2$ and $\E[X^3Y^3] = 6\sigma_{XY}^3 + 9\sigma_{XY}\sigma_X^2\sigma_Y^2$ for mean zero Gaussians $X,Y$ with variance $\sigma_X^2$, $\sigma_Y^2$, respectively, and covariance $\sigma_{XY}$. Simplifying the last line, and multiplying by $n$ to simplify the expressions, we get
\beqs
n\tilep&=\frac{15}{9}\sum_{j=0}^d\frac{(1-\nb_j)^3}{\nb_j} + \frac{1}{9}\sum_{j\neq k}\l[-6\nb_j\nb_k- 9(1-\nb_j)(1-\nb_k)\r]\\
&=\frac{15}{9}\sum_{j=0}^d\l[\nb_j^{-1} - 3 + 3\nb_j - \nb_j^2\r] +  \sum_{j\neq k}\l[(\nb_j+\nb_k)-1-\frac{15}{9}\nb_j\nb_k\r]\\
&=\frac{15}{9}\sum_{j=0}^d\nb_j^{-1} -\frac{15}{9}\l(\sum_{j=0}^d\nb_j\r)^2 -5(d+1) + 5\sum_{j=0}^d\nb_j - d(d+1) + 2d\sum_{j=0}^d\nb_j\\
&= \frac{5}{3}\sum_{j=0}^d\nb_j^{-1}-\frac53 -5(d+1) + 5-d(d+1)+2d=\frac{5}{3}\sum_{j=0}^d\nb_j^{-1}-4d-d^2.
\eeqs
Next, write $\sum_{j=0}^d\nb_j^{-1}=(d+1)^2\chi^2(\Unif_{d+1}||\nb) +(d+1)^2$, using~\eqref{chiunif}. Substituting this above, we get
\beqs
n\tilep &= \frac53(d+1)^2\chi^2(\Unif_{d+1}||\nb) +\frac53(d+1)^2 -4d-d^2 \\
&=\frac53(d+1)^2\chi^2(\Unif_{d+1}||\nb) +\frac23d^2-\frac23d.
\eeqs
\end{proof}
\subsection{Proofs from Section~\ref{sec:TV:pmf}}\label{app:sec:TV:pmf}
\begin{proof}[Proof of Proposition~\ref{prop:TV:pmf}]
The bound~\eqref{LTV-bd-pmf} follows immediately from the observation $\LTV\leq \|\Lhat\|_{L^2(\lap)}=\tilep$, and the bound on $\tilep$ from Lemma~\ref{lma:tildeps-pmf}. To prove~\eqref{RTV-bd-pmf}, we use Corollary~\ref{corr:corTV}.
As noted following Remark~\ref{rk:order:pmf}, Assumption~\ref{assume:c0} is satisfied with $\s_0=4$ and $\cgro=1$ provided $d/n\Nmin$ is smaller than an absolute constant, which holds since we have assumed the stricter condition that $\epsilon^2:=d^2/n\Nmin$ is small. Next, Lemma~\ref{lma:eps3:pmf} yields the following bound on $\del{\s}$:
\beq\label{e234}
\del{\s} \les\frac{d^2}{n\Nmin}\l(1 + \l(\sqrt{\frac{\s^2d}{n\Nmin}} +1\r)^4\r)\les \frac{d^2}{n\Nmin}\l(1 +\frac{\s^4d^2}{n\Nmin}\r),
\eeq
where we have used that $1/(n\Nmin)^2\les 1/(n\Nmin)$, since $d^2/(n\Nmin)$ and hence also $1/(n\Nmin)$ is bounded by a small constant. We now take $\s^4=\epsilon^{-2}=n\Nmin/d^2$, which satisfies the requirement $\s=\epsilon^{-1/2}\geq\sst =\min(4, 8\log(2e))$ when $d^2/n\Nmin$ is small. We also show at the end of the proof that $\U(\s)\subset\Theta$. The bound~\eqref{e234} now implies that $\Es{\s}=\e((\del{\s})\s^4)\leq C$, and $\del{\s}\les d^2/(n\Nmin)$. Furthermore,
$$\taus{\s} = d\e\l(-\cgro\s d/8\r) = d\e\l(-C\s d\r) = d\e\l(-C\sqrt d(n\Nmin)^{1/4}\r).$$ Substituting these bounds into~\eqref{TVcorlap} concludes the proof of~\eqref{RTV-bd-pmf}, pending the proof that $\U(\s)=\U(1/\sqrt{\epsilon})\subset\Theta$, which we now show. Fix $\theta\in\U(\s)$. We will show that $\theta_i>0$ for all $i$, which suffices to prove $\theta\in\Theta$. By~\eqref{max2chi} we have
$$\frac{\theta_i}{\bar N_i}\geq 1-\frac{\chi(\theta||\bar N)}{\sqrt{\Nmin}}\geq 1-\frac{\sqrt d}{\sqrt{n\Nmin}\sqrt\epsilon}\geq 1-\sqrt{\epsilon}>0$$ as long as $\epsilon<1$.\end{proof}
To prove Theorem~\ref{prop:pmf:lb}, we start with a preliminary lemma. 
\begin{lemma}\label{lma:deltaI}
Suppose $\Nmin\geq 6/\sqrt{n+d}$. Then for all strict, nonempty subsets $\I$ of $\{0,1,\dots,d\}$, it holds
\beq
\frac{||\I| - d\nb_\I|}{\sqrt{n+d}\sqrt{\nb_\I(1-\nb_\I)}} \leq \frac12,
\eeq where $\nb_\I=\sum_{i\in\I}\nb_i$. 
\end{lemma}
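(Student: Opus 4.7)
Let $k=|\I|$. The plan is to reduce to the ``centered'' quantity $|k-(d+1)\nb_\I|$, for which Cauchy-Schwarz yields a clean inequality against $\chi^2(\Unif_{d+1}||\nb)$. By the triangle inequality applied to $k-d\nb_\I=(k-(d+1)\nb_\I)+\nb_\I$ (and using $\nb_\I\geq 0$),
$$|k-d\nb_\I|\;\leq\;|k-(d+1)\nb_\I|+\nb_\I,$$
so it will suffice to show that each of the two resulting terms, after dividing by $\sqrt{n+d}\sqrt{\nb_\I(1-\nb_\I)}$, is bounded by a constant summing to at most $1/2$. I will also use throughout the observation that $\Nmin\leq 1/(d+1)$ automatically (since $(d+1)\Nmin\leq \sum_j\nb_j=1$), so together with the hypothesis $\Nmin\geq 6/\sqrt{n+d}$ one gets $\sqrt{n+d}\geq 6(d+1)\geq 12$.

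For the centered piece I would write $k-(d+1)\nb_\I=\sum_{i\in\I}(1-(d+1)\nb_i)=-\sum_{i\in\I^c}(1-(d+1)\nb_i)$ and apply Cauchy--Schwarz with weights $\nb_i$ on whichever of $\I$ or $\I^c$ has smaller total mass:
$$|k-(d+1)\nb_\I|^2\;\leq\;\min(\nb_\I,1-\nb_\I)\sum_{i=0}^d\frac{(1-(d+1)\nb_i)^2}{\nb_i}\;=\;\min(\nb_\I,1-\nb_\I)\cdot(d+1)^2\chi^2(\Unif_{d+1}||\nb),$$
where the last equality is exactly~\eqref{chi-form}. The symmetrizing inequality $\min(a,1-a)\leq 2a(1-a)$ for $a\in[0,1]$ then replaces the minimum by $2\nb_\I(1-\nb_\I)$, and the crude estimate $\chi^2(\Unif_{d+1}||\nb)\leq \frac{1}{(d+1)\Nmin}$ (from $\sum_j\nb_j^{-1}\leq(d+1)/\Nmin$) combined with $\Nmin\sqrt{n+d}\geq 6$ gives
$$\frac{|k-(d+1)\nb_\I|^2}{(n+d)\,\nb_\I(1-\nb_\I)}\;\leq\;\frac{2(d+1)}{(n+d)\Nmin}\;\leq\;\frac{d+1}{3\sqrt{n+d}}\;\leq\;\frac{1}{18},$$
using $\sqrt{n+d}\geq 6(d+1)$ in the final step.

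For the auxiliary $\nb_\I$ piece, the bounds $\nb_\I\leq 1$ and $1-\nb_\I\geq \Nmin$ yield
$$\frac{\nb_\I}{\sqrt{(n+d)\,\nb_\I(1-\nb_\I)}}\;=\;\sqrt{\frac{\nb_\I}{(n+d)(1-\nb_\I)}}\;\leq\;\sqrt{\frac{1}{(n+d)\Nmin}}\;\leq\;\sqrt{\frac{1}{6\sqrt{n+d}}}\;\leq\;\frac{1}{\sqrt{72}},$$
using $\sqrt{n+d}\geq 12$ at the end. Adding the two contributions gives
$\tfrac{1}{\sqrt{18}}+\tfrac{1}{\sqrt{72}}=\tfrac{1}{3\sqrt 2}+\tfrac{1}{6\sqrt 2}=\tfrac{\sqrt 2}{4}<\tfrac{1}{2}$,
completing the proof. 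The only mildly delicate ingredient is the symmetrization $\min(\nb_\I,1-\nb_\I)\leq 2\nb_\I(1-\nb_\I)$, which is what allows the Cauchy--Schwarz bound to pair cleanly with the full $\sqrt{\nb_\I(1-\nb_\I)}$ normalization in the denominator; the rest is a mechanical combination of $\Nmin\geq 6/\sqrt{n+d}$ with the automatic bound $\Nmin\leq 1/(d+1)$.
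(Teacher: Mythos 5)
Your proof is correct, but it takes a genuinely different route from the paper's. The paper argues by a case split on whether $\nb_\I\leq 1/2$ or $\nb_\I\geq 1/2$, bounds $\sqrt{\nb_\I(1-\nb_\I)}$ below by $\nb_\I$ or $\nb_{\I^c}$ accordingly, and then uses only the crude elementary facts $|\I|\Nmin\leq\nb_\I$ and $d\leq\Nmin^{-1}$ to arrive at $3/(\sqrt{n+d}\,\Nmin)\leq 1/2$. You instead center the numerator as $|\I|-(d+1)\nb_\I$ plus an $O(\nb_\I)$ correction, control the centered piece by weighted Cauchy--Schwarz against $(d+1)^2\chi^2(\Unif_{d+1}||\nb)$, and avoid the case split entirely via the symmetrization $\min(a,1-a)\leq 2a(1-a)$; all the individual steps check out (in particular $1-\nb_\I\geq\Nmin$ holds because $\I^c$ is nonempty, and the arithmetic $1/\sqrt{18}+1/\sqrt{72}=\sqrt2/4<1/2$ is right). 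The paper's argument is shorter and more elementary; yours yields a slightly sharper uniform constant and, more interestingly, makes visible that the quantity being controlled is governed by $\chi^2(\Unif_{d+1}||\nb)$ --- the same model-dependent factor that appears in the TV lower bound and in $\|\Lhatth\|_{H_V}$ for this example --- so your intermediate bound $|\,|\I|-(d+1)\nb_\I|^2\les (d+1)^2\chi^2(\Unif_{d+1}||\nb)\,\nb_\I(1-\nb_\I)$ degrades gracefully (indeed vanishes) as $\nb$ approaches uniform, which the paper's worst-case estimate does not capture.
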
 The proof is at the end of the section.
\begin{proof}[Proof of Theorem~\ref{prop:pmf:lb}]Let $\theta^d\sim\pi$ and $X^d\sim\lap$, so that $\theta\sim\Dir(N_0+1,N_1+1,\dots, N_d+1)$ and $ X\sim\mathcal N(\nb, n^{-1}(\mathrm{diag}(\nb)-\nb\nb^\T))$. Next, given a set $\I\subset\{0,1,\dots,d\}$, define
$$ X_{\I}=\sum_{i\in \I} X_i,\qquad\theta_{\I}=\sum_{i\in \I}\theta_i,\qquad  \nb_\I= \sum_{i\in\I}\nb_i,\qquad  N_\I = n\nb_\I.$$ Also, let $\I^c=\{0,1,\dots,d\}\setminus I$. Note that 
$$\PP( X_\I\leq\nb_\I)=1/2$$ for all nonempty, strict subsets $\I$ of $\{0,1,\dots,d\}$. Therefore,
\beq\label{TV-lb}\TV(\pi,\lap)\geq\sup_\I  \l|\PP( \theta_\I\leq\nb_\I)-\PP( X_\I\leq\nb_\I)\r|= \sup_\I\l|\PP( \theta_\I\leq\nb_\I)-1/2\r|,\eeq where we take the supremum over a certain subset of all the nonempty, strict subsets $\I$ of $\{0,1,\dots,d\}$.

Now, by the aggregation property of the Dirichlet~\cite{dirichlet}, it holds $(\theta_\I, \theta_{\I^c})\sim\Dir( N_\I+|\I|,  N_{\I^c} + d+1-|\I|)$, which is the same as saying that
$$\theta_\I\sim\mathrm{Beta}(a,b),\qquad a= N_\I+|\I|, \quad b= N_{\I^c} + d+1-|\I|.$$ Therefore,
$$\PP (\theta_\I\leq \nb_\I) = I_{\nb_\I}(a,b) = \frac{B(\nb_\I; a,b)}{B(a,b)},$$ where $I_x(a,b)$ is the regularized incomplete beta function, defined as the ratio of the incomplete beta function and the beta function. Now, since $a,b$ are positive integers, we can use that the incomplete beta function is the complementary cdf of a binomial distribution~\cite[p. 52]{wadsworth1960introduction} to get that
$$\PP (\theta_\I\leq \nb_\I)=I_{\nb_\I}(a,b) = \PP (\mathrm{Bin}(n+d, \nb_\I)\geq a),$$ where $\mathrm{Bin}(n+d, \nb_\I)$ is a binomial distribution with $n+d$ trials and success probability $\nb_\I$. We have
$$
\mathrm{Bin}(n+d, \nb_\I)=\sum_{i=1}^{n+d}B_i,\qquad B_i\iid\mathrm{Bernoulli}(\nb_\I),$$ and hence
\beqs\label{theta-nb}
\PP (\theta_\I\leq \nb_\I)&=\PP (\mathrm{Bin}(n+d, \nb_\I)\geq a) = \PP \l(\sum_{i=1}^{n+d}B_i\geq a\r) \\
&= \PP \l(S_{n+d} \geq \frac{a-(n+d)\nb_\I}{\sqrt{(n+d)\nb_\I(1-\nb_\I)}}\r)\\
&= \PP \l(S_{n+d} \geq \frac{|\I|-d\nb_\I}{\sqrt{(n+d)\nb_\I(1-\nb_\I)}}\r).\eeqs We used the definition of $a$ to get the last equality, and we have defined
$$S_{n+d}:=\sqrt{(n+d)\nb_\I(1-\nb_\I)}^{\,-1}\sum_{i=1}^{n+d}(B_i-\nb_\I).$$ Next, we apply Theorem~\ref{berry}, the Berry-Esseen theorem, with $Y_i = B_i - \nb_\I$. We have $\sigma^2=\nb_\I(1-\nb_\I)$, and we compute
$$\beta = \E\l[\l|B_i-\nb_\I\r|^3\r]=\nb_\I(1-\nb_\I)^3 + (1-\nb_\I)\nb_\I^3=\nb_\I(1-\nb_\I)\l[(1-\nb_\I)^2+\nb_\I^2\r]\leq2\nb_\I(1-\nb_\I)=2\sigma^2.$$

We conclude that
\beq\label{BE}
\sup_{t\in\R}|\PP (S_{n+d}\geq t)-(1-\Phi(t))| \leq \frac12\frac{\beta}{\sigma^3\sqrt{n+d}} \leq\sqrt{(n+d)\nb_\I(1-\nb_\I)}^{\,-1}
\eeq where $\Phi$ is the standard Gaussian CDF. Now, fix $t=\frac{|\I|-d\nb_\I}{\sqrt{(n+d)\nb_\I(1-\nb_\I)}}$. Using~\eqref{BE} and that $\Phi(0)=1/2$, we have
\beqs\label{BE2}
\l|1/2- \PP (S_{n+d} \geq t) \r|&\geq \l| 1/2-(1-\Phi(t))\r| -\l|(1-\Phi(t))- \PP (S_{n+d} \geq t) \r|\\
&\geq  \l|\Phi(0)-\Phi(t)\r|-\sqrt{(n+d)\nb_\I(1-\nb_\I)}^{\,-1}
\eeqs Next, using that $|t|\leq 1/2$ by Lemma~\ref{lma:deltaI}, we have 
$$\l|\Phi(0)-\Phi(t)\r| = \l|\int_0^t\frac{1}{\sqrt{2\pi}}e^{-x^2/2}dx\r| \geq \frac{|t|}{\sqrt{2\pi}}e^{-t^2/2}\geq \frac {|t|}{3}.$$ Substituting this lower bound into~\eqref{BE2} and using the definition of $t$ gives
\beqs\label{BE3}
\l|\frac12-\PP \l(S_{n+d} \geq \frac{|\I|-d\nb_\I}{\sqrt{(n+d)\nb_\I(1-\nb_\I)}}\r)\r| &\geq  \frac{||\I|-d\nb_\I|}{3\sqrt{(n+d)\nb_\I(1-\nb_\I)}}-\frac{1}{\sqrt{(n+d)\nb_\I(1-\nb_\I)}}\\
&= \frac{||\I|-d\nb_\I|-3}{3\sqrt{(n+d)\nb_\I(1-\nb_\I)}}\geq \frac{||\I|-(d+1)\nb_\I|-4}{3\sqrt{(n+d)\nb_\I(1-\nb_\I)}}
\eeqs
Now, recall from~\eqref{theta-nb} that the lefthand side of~\eqref{BE3} is precisely $|\PP (\theta_\I\leq \nb_\I)-\frac12|$. Returning to~\eqref{TV-lb}, we now take the supremum over $\I\in\mathscr{I}$, the set of all nonempty, strict subsets $\I$ of $\{0,1,\dots,d\}$ for which $||\I|-(d+1)\nb_\I|\geq4$. This gives
\beqs\label{TVfinally}
\TV(\pi^d,\lap^d) &\geq \sup_{\I\in\mathscr{I}}\frac{||\I|-(d+1)\nb_\I|-4}{3\sqrt{(n+d)\nb_\I(1-\nb_\I)}}\geq \frac23\sup_{\I\in\mathscr{I}}\frac{||\I|-(d+1)\nb_\I|-4}{\sqrt{n+d}}\\
&=\frac23\frac{d+1}{\sqrt{n+d}}\sup_{\I\in\mathscr{I}}|\Unif(\I)-\nb_\I| -\frac{8}{3\sqrt{n+d}},
\eeqs where $\Unif(\I)=|\I|/(d+1)$, which is the probability of set $\I$ under the uniform pmf. Similarly, note that $\nb_\I$ is the probability of set $\I$ under the pmf $\nb$. We claim that if $\TV(\Unif,\nb)\geq4/(d+1)$, then $\sup_{\I\in\mathscr{I}}|\Unif(\I)-\nb_\I|=\TV(\Unif,\nb)$. Indeed, using standard properties of TV distance and the assumption $\TV(\Unif,\nb)\geq4/(d+1)$, it holds
$$\frac{4}{d+1}\leq \TV(\Unif,\nb)=\sup_{\I\subseteq\{0,1,\dots,d\}}|\Unif(\I)-\nb_\I| = \Unif(\I_0)-\nb_{\I_0},$$ where $\I_0=\{i\in\{0,\dots,d\}\; : \; \Unif(\{i\}) > \nb_i\}$. Therefore, $\I_0$ must lie in $\mathscr{I}$, and therefore the supremum of $|\Unif(\I)-\nb_\I|$ over all $\I\in\mathscr{I}$ is precisely the TV distance. Substituting this into~\eqref{TVfinally} gives
\beq
\TV(\pi^d,\lap^d) \geq \frac23\frac{d+1}{\sqrt{n+d}}\l(\TV(\Unif,\nb) - \frac{4}{d+1}\r)
\eeq
Finally, since we have in fact assumed the stricter inequality $\TV(\Unif,\nb)\geq 6/(d+1)$, it holds $4/(d+1)\leq \tfrac23\TV(\Unif,\nb)$. Substituting this above gives
$\TV(\pi,\lap)\geq \frac29\TV(\Unif_{d+1},\nb)\frac{d+1}{\sqrt{n+d}}$. We now note that $6/\sqrt{n+d}\leq\Nmin\leq 1/(d+1)$ implies $n\geq d$, so $\tfrac29(d+1)/\sqrt{n+d}\geq\tfrac19d/\sqrt n$
\end{proof}
\begin{proof}[Proof of Lemma~\ref{lma:deltaI}] We consider two cases. First, suppose $\nb_\I\leq1/2$. Then $\nb_\I(1-\nb_\I)\geq\nb_\I^2$ and hence
\beqs
\frac{||\I| - d\nb_\I|}{\sqrt{n+d}\sqrt{\nb_\I(1-\nb_\I)}} &\leq \frac{|\I| + d\nb_\I}{\sqrt{n+d}\,\nb_\I} \leq \frac{|\I|}{\sqrt{n+d}\,\nb_\I} + \frac{d}{\sqrt{n+d}}\\
&\leq \frac{1}{\sqrt{n+d}\,\Nmin} + \frac{d}{\sqrt{n+d}} \leq \frac{2}{\sqrt{n+d}\,\Nmin}\leq \frac13.
\eeqs We used that $\nb_\I\geq|\I|\Nmin$ to get the first inequality in the second line, and the fact that $d\leq\Nmin^{-1}$ (since $\Nmin\leq 1/(d+1)$) to get the second inequality in the second line.

Next, suppose $\nb_\I\geq1/2$, so that $1-\nb_\I = \nb_{\I^c}\leq 1/2$, and $\nb_\I(1-\nb_\I)\geq\nb_{\I^c}^2$. Note that
$$\l||\I| - d\nb_\I \r|= \l|d+1-|\I^c| - d\nb_\I\r| =\l|d\nb_{\I^c} + 1-|\I^c|\r|\leq d\nb_{\I^c} + 1+|\I^c|\leq d\nb_{\I^c} + 2|\I^c|,$$ where the last inequality holds since $|\I^c|\geq1$ by assumption. By similar logic as in the first case, we get
\beqs
\frac{||\I| - d\nb_\I|}{\sqrt{n+d}\sqrt{\nb_\I(1-\nb_\I)}} &\leq \frac{2}{\sqrt{n+d}\,\Nmin} + \frac{d}{\sqrt{n+d}} \leq \frac{3}{\sqrt{n+d}\,\Nmin}\leq \frac12.
\eeqs
\end{proof}
\begin{theorem}[Berry-Esseen Theorem,~\cite{il2010refinement}]\label{berry}
Let $Y_1,\dots, Y_n$ be i.i.d. mean zero random variables, with $\E[Y_i^2]=\sigma^2$ and $\E[|Y_i|^3]=\beta<\infty$. Let $S_n = \frac{1}{\sigma\sqrt n}(Y_1+\dots+Y_n)$. Then
\beq
\sup_{t\in\R}\l|\PP(S_n\leq t)-\Phi(t)\r|\leq C\frac{\beta}{\sigma^3\sqrt n},\qquad C\leq 0.4785,
\eeq where $\Phi$ is the cdf of the standard normal distribution.
\end{theorem}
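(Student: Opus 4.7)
The final stated result is Theorem~\ref{berry}, the classical Berry--Esseen bound. Since this is a well-known theorem with many published proofs, I outline the standard characteristic-function approach and indicate where the sharp constant $0.4785$ requires extra work.

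The plan is to work in the Fourier domain and apply Esseen's smoothing inequality. Let $\phi(t) = \E[e^{itY_1/\sigma}]$ be the characteristic function of the standardized summand, so that $S_n$ has characteristic function $\phi_n(t) := \phi(t/\sqrt n)^n$. Let $F_n$ denote the cdf of $S_n$ and $\Phi$ the standard Gaussian cdf. Esseen's smoothing inequality states that for every $T>0$,
\begin{equation*}
\sup_{t\in\R}|F_n(t)-\Phi(t)| \;\les\; \int_{-T}^{T}\frac{|\phi_n(t)-e^{-t^2/2}|}{|t|}\,dt \;+\; \frac{1}{T},
\end{equation*}
so the task reduces to bounding the Fourier-side difference $|\phi_n(t)-e^{-t^2/2}|$ on a suitable interval and choosing $T$ optimally.

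First I would expand $\phi$ near the origin. Because $Y_1$ has mean zero, variance $\sigma^2$, and $\E|Y_1|^3=\beta$, Taylor's theorem with integral remainder gives
\begin{equation*}
\phi(s) \;=\; 1-\tfrac{s^2}{2}+r(s),\qquad |r(s)|\leq\tfrac{\beta}{6\sigma^3}|s|^3.
\end{equation*}
Substituting $s=t/\sqrt n$ and taking the $n$th power, one shows via the elementary inequality $|z^n-w^n|\leq n\,|z-w|\max(|z|,|w|)^{n-1}$ applied to $z=\phi(t/\sqrt n)$ and $w=e^{-t^2/(2n)}$, together with the bound $|\phi(t/\sqrt n)|\leq e^{-t^2/(3n)}$ valid for $|t|\leq c\sqrt n\,\sigma^3/\beta$, that
\begin{equation*}
|\phi_n(t)-e^{-t^2/2}| \;\les\; \frac{\beta}{\sigma^3\sqrt n}\,|t|^3\,e^{-t^2/4}
\end{equation*}
throughout the range $|t|\leq c\sqrt n\,\sigma^3/\beta$ for an explicit absolute constant $c$. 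Dividing by $|t|$ and integrating against the Gaussian weight gives a contribution of order $\beta/(\sigma^3\sqrt n)$ to Esseen's bound.

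Next I would control the ``tail'' region in Esseen's integral (or use the $1/T$ term) by choosing $T\asymp \sigma^3\sqrt n/\beta$. This choice makes the $1/T$ remainder in Esseen's inequality of order $\beta/(\sigma^3\sqrt n)$ as well, matching the main term. Combining the two contributions yields the bound
\begin{equation*}
\sup_{t\in\R}|F_n(t)-\Phi(t)| \;\leq\; \frac{C\beta}{\sigma^3\sqrt n}
\end{equation*}
for some absolute constant $C$.

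The main obstacle is the sharp constant $0.4785$. The rough argument above produces a considerably larger constant; achieving $0.4785$ requires Shevtsova's refinement of Esseen's smoothing lemma, together with a careful optimization over the parameter $T$ and a sharper treatment of the cubic remainder in the Taylor expansion near the origin. Since the theorem is quoted verbatim from~\cite{il2010refinement}, I would simply cite their argument for the optimal constant rather than reproducing that optimization, noting that only the order of the bound $\beta/(\sigma^3\sqrt n)$ is needed in the application to Theorem~\ref{prop:pmf:lb}.
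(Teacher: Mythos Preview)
The paper does not prove Theorem~\ref{berry}; it is stated as a quoted external result from~\cite{il2010refinement} and used as a black box in the proof of Theorem~\ref{prop:pmf:lb}. Your sketch of the characteristic-function/Esseen-smoothing argument is a correct outline of the classical proof, and your decision to cite the reference for the sharp constant $0.4785$ matches exactly what the paper does.
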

\section{Proofs from Section~\ref{sec:log}}\label{app:sec:log}
\renewcommand{\ground}{\beta}
\begin{proof}[Proof of~\eqref{tildec3form}]
We use the definition of $\tilep$ and the fact that $W(b)=V(\bhat + H_V^{-1/2}b)$ to get
\beqs\label{nv0}
\tilep^2=&\frac16\|\nabla^3W(0)\|_F^2 + \frac14\|\la\nabla^3W(0), I_d\ra\|^2\\
=&\frac16\sum_{i,j,k=1}^d\la\nabla^3W(0), e_i\otimes e_j\otimes e_k\ra^2 + \frac14\sum_{k=1}^d\la\nabla^3W(0), I_d\otimes e_k\ra^2\\
=&\frac16\sum_{i,j,k=1}^d\la\nabla^3V(\bhat), H_V^{-1/2}e_i\otimes H_V^{-1/2}e_j\otimes H_V^{-1/2}e_k\ra^2 \\
&+ \frac14\sum_{k=1}^d\la\nabla^3V(\bhat), H_V^{-1}\otimes H_V^{-1/2}e_k\ra^2\eeqs
Next, recall that $\nabla^3V(\bhat) = \sum_{\ell=1}^na_\ell X_\ell^{\otimes 3}$, where $a_\ell=\psi'''(X_\ell^T\bhat)$. Therefore,
\beqs\label{nv1}
\la\nabla^3V(\bhat), H_V^{-1/2}e_i&\otimes H_V^{-1/2}e_j\otimes H_V^{-1/2}e_k\ra \\
&= \sum_{\ell=1}^na_\ell(X_\ell^TH_V^{-1/2}e_i)(X_\ell^TH_V^{-1/2}e_j)(X_\ell^TH_V^{-1/2}e_k)\\
&=\sum_{\ell=1}^na_\ell (B_\ell)_i(B_\ell)_j(B_\ell)_k,\eeqs where $B_\ell= H_V^{-1/2}X_\ell$. Similarly,
\beq\label{nv2}
\la\nabla^3V(\bhat), H_V^{-1}\otimes H_V^{-1/2}e_k\ra =  \sum_{\ell=1}^na_\ell\|B_\ell\|^2(B_\ell)_k.\eeq
Using~\eqref{nv1},~\eqref{nv2} in the last line of~\eqref{nv0}, we get
\beqs
\tilep^2=&\frac16\sum_{i,j,k=1}^d\l(\sum_{\ell=1}^na_\ell (B_\ell)_i(B_\ell)_j(B_\ell)_k\r)^2 \\
&+ \frac14\sum_{k=1}^d\l(\sum_{\ell=1}^na_\ell \|B_\ell\|^2(B_\ell)_k\r)^2\\
=&\sum_{\ell,m=1}^na_\ell a_m\l[\frac16(B_\ell^TB_m)^3 + \frac14\|B_\ell\|^2\|B_m\|^2B_\ell^TB_m\r]
\eeqs Now, substituting the definition of $a_\ell, a_m, B_\ell, B_m$ we get the expression in~\eqref{tildec3form}.
\end{proof}
Our proof of Proposition~\ref{prop:log} is based on the following lemma.
\begin{lemma}\label{MLEexist}
Let $v$ be convex and $C^3$ on $\R^{d}$, and let $0<\lambda\leq1$. If (1) $\|\M^{-1/2}\nabla v(\ground)\|\leq 2\lambda s$, (2) $\nabla^2v(\ground)\succeq\lambda\M\succ0$, and (3) $\|\nabla^{2}v(b)-\nabla^{2}v(\ground)\|_{\M}\leq\lambda/4$ for all $\|b-\ground\|_{\M}\leq s$, then the function $v$ has a unique global minimizer $\MLE$. Moreover, $\hat b$ satisfies $\|\MLE-\ground\|_\M\leq s$ and $\nabla^{2}v(\hat b)\succeq\frac{3\lambda}{4}\M$.
\begin{proof}
In~\cite[Corollary F.6]{katsBVM}, it is shown that if $v\in C^3$ and (1), (2), and (3) are satisfied, then there exists a point $\MLE$ such that $\nabla v(\MLE)=0$, and $\MLE$ satisfied $\|\MLE-\beta\|_\M\leq s$ and $\nabla^{2}v(\MLE)\succeq\frac{3\lambda}{4}\M\succ0$. But since $v$ is convex, the fact that $\nabla v(\MLE)=0$ and $\nabla^2v(\MLE)\succ0$ implies $\MLE$ is the unique global minimizer.
\end{proof}
\end{lemma}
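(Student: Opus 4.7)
The plan is to separate the lemma into a quantitative existence statement (whose content is essentially \cite[Corollary F.6]{katsBVM}) and a short global uniqueness argument supplied here via the additional assumption that $v$ is convex on all of $\R^d$. I would proceed in three steps.

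\textbf{Step 1 (normalize).} First I would change variables to put $\beta$ at the origin and $\M$ equal to the identity. Setting $\tilde b = \M^{1/2}(b - \beta)$ and $\tilde v(\tilde b) = v(\beta + \M^{-1/2}\tilde b)$, the hypotheses become $\|\nabla \tilde v(0)\| \leq 2\lambda s$, $\tilde A := \nabla^2 \tilde v(0) \succeq \lambda I$, and $\|\nabla^2 \tilde v(\tilde b) - \tilde A\|_{\op} \leq \lambda/4$ for $\|\tilde b\| \leq s$. The conclusion we need is a point $\tilde b^\ast$ with $\|\tilde b^\ast\| \leq s$, $\nabla \tilde v(\tilde b^\ast) = 0$, and $\nabla^2 \tilde v(\tilde b^\ast) \succeq (3\lambda/4) I$; these then transform back to the statements in the lemma. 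The Hessian bound is immediate: combining the second and third normalized hypotheses gives $\nabla^2 \tilde v(\tilde b) \succeq \tilde A - (\lambda/4)I \succeq (3\lambda/4) I$ for every $\tilde b$ in the closed ball $B = \{\|\tilde b\| \leq s\}$.

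\textbf{Step 2 (existence of a zero of $\nabla \tilde v$ in $B$).} The natural approach is a Banach fixed-point argument for the Newton-like map $F(\tilde b) = \tilde b - \tilde A^{-1}\nabla \tilde v(\tilde b)$, whose fixed points are precisely the zeros of $\nabla \tilde v$. Using the fundamental theorem of calculus to write
\[
F(\tilde b_1) - F(\tilde b_2) = \tilde A^{-1}\int_0^1 \bigl[\tilde A - \nabla^2 \tilde v(\tilde b_2 + t(\tilde b_1 - \tilde b_2))\bigr](\tilde b_1 - \tilde b_2)\, dt,
\]
together with $\|\tilde A^{-1}\|_{\op} \leq \lambda^{-1}$ and the Hessian bound from (3), one obtains that $F$ is a $(1/4)$-contraction on $B$. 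A fixed point in $B$ then exists once invariance $F(B) \subset B$ is established, for which the normalized hypothesis on $\nabla \tilde v(0)$ is used to control $\|F(0)\|$; the result is a unique $\tilde b^\ast \in B$ with $\nabla \tilde v(\tilde b^\ast) = 0$.

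\textbf{Step 3 (global optimality).} This is where the hypothesis that $v$ is globally convex on $\R^d$ is used (Step 2 gave only that $\tilde b^\ast$ is a critical point). Any critical point of a convex function is automatically a global minimizer, so $\hat b = \beta + \M^{-1/2}\tilde b^\ast$ minimizes $v$ over $\R^d$. Uniqueness follows because the set of global minimizers of a convex function is convex, and $v$ is strictly convex in a neighborhood of $\hat b$ thanks to $\nabla^2 v(\hat b) \succeq (3\lambda/4)\M \succ 0$; any other global minimizer would lie in a convex minimizing set containing $\hat b$, contradicting strict convexity nearby.

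\textbf{Main obstacle.} The delicate point is the invariance check in Step 2: a naive triangle inequality gives $\|F(\tilde b)\| \leq \|F(0)\| + (1/4)\|\tilde b\|$ with $\|F(0)\| \leq \lambda^{-1}\|\nabla \tilde v(0)\|$, and one must then verify that the iterates stay inside $B$ given the stated bound on $\|\nabla \tilde v(0)\|$. This bookkeeping, and the derivation of the precise quantitative conclusion $\|\hat b - \beta\|_\M \leq s$ together with $\nabla^2 v(\hat b) \succeq \tfrac{3\lambda}{4}\M$, is exactly what \cite[Corollary F.6]{katsBVM} provides; the present lemma then only adds the one-line convexity argument of Step 3 to upgrade the critical point to the unique global minimizer.
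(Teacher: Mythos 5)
Your proposal is correct and matches the paper's proof: both defer the quantitative existence of a critical point $\hat b$ with $\|\hat b-\ground\|_\M\leq s$ and $\nabla^2 v(\hat b)\succeq\tfrac{3\lambda}{4}\M$ to \cite[Corollary F.6]{katsBVM}, and then upgrade that critical point to the unique global minimizer via the convexity of $v$ exactly as in your Step 3. The only difference is that you additionally sketch how the cited corollary would be proved (the Newton-map contraction argument), which the paper does not reproduce.
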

We will check the conditions of this lemma in the following results. But first, we decompose $v$ into two parts. Recall from~\eqref{vndef} the definition of $v=n^{-1}V$. Note that we can write $v(b)=\ell(b) + n^{-1}\frac12b^\T\Sigma_0^{-1}b$, where
$$\ell(b)=-\frac1n\sum_{i=1}^nY_iX_i^\T b + \frac1n\sum_{i=1}^n\psi(X_i^\T b)$$ is the normalized, negative log likelihood. The derivatives of $\ell$ are given by
\beqs\label{nablalog}
\nabla^k\ell(b)&=\frac1n\sum_{i=1}^n\psi^{(k)}(b^\T X_i)X_i^{\otimes k},\quad \forall k=1,2,\dots.\eeqs

\begin{lemma}[Adaptation of Lemma 7, Chapter 3,~\cite{pragyathesis}]\label{lma:pragya}
Suppose $d<n$. Then  for some $\lambda=\lambda(\|\ground\|_{M})$ the event
$$\{\nabla^2\ell(\ground)\succeq\lambda\M\}=\l\{\frac1n\sum_{i=1}^n\psi''(X_i^{\T} \ground)X_iX_i^{\T} \succeq\lambda\M\r\}$$ has probability at least $1-4e^{-Cn}$ for an absolute constant $C$. 
\end{lemma}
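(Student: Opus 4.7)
The plan is to reduce the problem to the isotropic case, compute a lower bound on the population matrix $\mathbb{E}[\psi''(X^\T\beta)XX^\T]$, and then transfer this lower bound to the empirical matrix via a scalar Bernstein inequality combined with a standard $\epsilon$-net argument over the sphere.

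First I would make the substitution $Z_i := M^{-1/2}X_i \iid \mathcal{N}(0,I_d)$ and $u := M^{1/2}\beta$, so that $X_i^\T\beta = u^\T Z_i$ and $\|u\| = \|\beta\|_M$. Writing $\nabla^2\ell(\beta) = M^{1/2}\bigl[\tfrac1n\sum_{i=1}^n \psi''(u^\T Z_i)Z_iZ_i^\T\bigr]M^{1/2}$, the claim reduces to showing that the bracketed matrix is at least $\lambda I_d$ with high probability, for some $\lambda = \lambda(\|u\|)>0$. For the population bound, I would decompose $Z = \xi\hat u + Z_\perp$ with $\xi = \hat u^\T Z \sim \mathcal{N}(0,1)$ independent of $Z_\perp$, where $\hat u = u/\|u\|$. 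Rotational symmetry yields
$$\mathbb{E}\bigl[\psi''(u^\T Z)ZZ^\T\bigr] = a\,\hat u\hat u^\T + b(I-\hat u\hat u^\T), \quad a:=\mathbb{E}[\psi''(\|u\|\xi)\xi^2],\; b:=\mathbb{E}[\psi''(\|u\|\xi)],$$
and since $\psi''>0$ everywhere, both $a$ and $b$ are strictly positive. Hence $\mathbb{E}[\psi''(u^\T Z)ZZ^\T] \succeq \mu I$ with $\mu := \min(a,b) > 0$ depending only on $\|u\| = \|\beta\|_M$.

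Next, for each fixed $v \in \mathbb{S}^{d-1}$, I would study the scalar statistic $Y_i(v) := \psi''(u^\T Z_i)(v^\T Z_i)^2$. Since $\psi''\leq 1/4$, the $Y_i(v)$ are bounded above by $(v^\T Z_i)^2/4$, hence sub-exponential with parameters uniform in $v$; by the previous step, their mean is at least $\mu$. A standard sub-exponential Bernstein bound then yields the lower-tail estimate $\mathbb{P}\bigl(\tfrac1n\sum_i Y_i(v) < \mu/2\bigr) \leq e^{-cn}$ for some $c = c(\|u\|) > 0$, crucially independent of $d$. Union-bounding over a $\tfrac14$-net $\mathcal{N}$ of $\mathbb{S}^{d-1}$ of cardinality at most $12^d$, the failure probability is at most $12^d e^{-cn}\leq e^{-c'n}$ whenever $d < n$.

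To pass from the net to the full sphere, I would use the standard inequality $\lambda_{\min}(A) \geq \min_{v\in\mathcal{N}} v^\T A v - 2\epsilon\|A\|$ together with a high-probability upper bound of the form $\|\tfrac1n\sum\psi''(u^\T Z_i)Z_iZ_i^\T\|\leq K(\|u\|)$; such an upper bound follows directly from $\psi''\leq 1/4$ and standard concentration of the Gaussian sample covariance $\tfrac1n\sum Z_iZ_i^\T$ (e.g.\ Davidson--Szarek, which gives an $e^{-cn}$ bound when $d<n$). Choosing $\epsilon$ small enough in terms of $\mu$ and $K$ then delivers $\nabla^2\ell(\beta) \succeq \lambda M$ with $\lambda = \mu/4 = \lambda(\|\beta\|_M)$, the three failure events combining to at most $4e^{-Cn}$. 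The main obstacle is the uniform-over-directions control: naive matrix Chernoff with the unbounded summands $\psi''(u^\T Z_i) Z_iZ_i^\T$ would only give an $e^{-cn/d}$ exponent, insufficient for the stated bound. Invoking a \emph{scalar} Bernstein bound direction by direction instead produces a $d$-free $e^{-cn}$ exponent, and the $d\log 12$ penalty from the net is harmlessly absorbed using the hypothesis $d<n$.
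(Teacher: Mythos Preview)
The paper does not supply its own proof of this lemma; it is cited as an adaptation of a result from an external thesis, with only the statement and a one-line remark on the monotonicity of $\lambda(\cdot)$ appearing in the text. So there is no in-paper argument to compare against.

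Your strategy is the standard one and is essentially correct: whiten to the isotropic case, compute the population lower bound $\mu=\min(a,b)$ from the block-diagonal structure in the $\hat u$-frame, apply a scalar sub-exponential Bernstein bound direction by direction, and pass from a net to the whole sphere via an operator-norm correction. There is, however, one step that does not go through as written. You assert $12^d e^{-cn}\le e^{-c'n}$ ``whenever $d<n$'', but this requires $c>(d/n)\log 12$; since your Bernstein constant $c$ scales like $\min(\mu^2,\mu)$ with $\mu=\mu(\|\beta\|_M)$, and $\mu$ can be made arbitrarily small by taking $\|\beta\|_M$ large, the bare hypothesis $d<n$ is insufficient. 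The same issue recurs when you later let the net resolution $\epsilon$ depend on $\mu$ and $K$: the net cardinality becomes $(C/\epsilon)^d$ with $\epsilon=\epsilon(\|\beta\|_M)$, so the union-bound penalty is again $\|\beta\|_M$-dependent.

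What your argument actually delivers is the lemma with $C=C(\|\beta\|_M)$ in the exponent rather than an absolute $C$. This is entirely sufficient for the paper's purposes --- the lemma is only invoked inside Proposition~\ref{prop:log} in the regime $d^2\ll n$ with bounded $\|\beta\|_M$, and the final high-probability bound there is stated with a $C(\|\beta\|_M)$ anyway. If you want to push the $\|\beta\|_M$-dependence entirely into $\lambda$ and keep the concentration constants absolute, insert the pointwise bound $\psi''(u^\T Z_i)\ge\psi''(\|u\|)\,\ind\{|\hat u^\T Z_i|\le 1\}$ before concentrating: the prefactor $\psi''(\|u\|)$ is absorbed into $\lambda$, while the residual matrix $\tfrac1n\sum_i\ind\{|\hat u^\T Z_i|\le 1\}Z_iZ_i^\T$ has a law that, by rotational invariance, does not depend on $u$ at all, so every constant in its concentration analysis is absolute.
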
 The function $\|b\|_\M\mapsto\lambda(\|b\|_\M)$ is nonincreasing, so if $\|\ground\|_{\M}$ is bounded above by a constant then $\lambda=\lambda(\|\ground\|_{\M})$ is bounded away from zero by a constant.
\begin{lemma}[Lemma 6.2 in~\cite{katsBVM} with $r=2\lambda s$]\label{bern-gauss}
Let $X_i\iid \mathcal N(0, \M)$, $i=1,\dots, n$, and $Y_i\mid X_i\sim\mathrm{Bernoulli}(\psi'(X_{i}^\T \ground))$. 
If $d/n\leq 1/8$ and $r\geq 4\sqrt2$, then the event
$$\{\|\M^{-1/2}\nabla\ell(\ground)\|\leq r\sqrt{d/n}\}=\l\{\l\|\frac1n\sum_{i=1}^n(Y_i-\E[Y_i\mid X_i])\M^{-1/2}X_i\r\|\leq r\sqrt{d/n}\r\}$$ has probability at least $1-e^{-n/4}-e^{-dr^{2}/16}$.
\end{lemma}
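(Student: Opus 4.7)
My plan is to exploit the following structural observation. Setting $Z_i := \M^{-1/2}X_i \sim \mathcal N(0,I_d)$ i.i.d.\ and $\epsilon_i := Y_i - \psi'(X_i^\T \ground)$, the quantity to bound is $S := \tfrac{1}{n}\sum_i \epsilon_i Z_i$, where $|\epsilon_i|\le 1$ and $\E[\epsilon_i\mid X_i]=0$. Crucially, $\epsilon_i$ depends on $Z_i$ only through the one-dimensional projection $Z_i^{\parallel} := \hat w^\T Z_i$ with $\hat w := \M^{1/2}\ground/\|\M^{1/2}\ground\|$. Decomposing $Z_i = Z_i^{\parallel}\hat w + Z_i^{\perp}$ with $Z_i^{\perp}:=(I-\hat w\hat w^\T)Z_i$, the Gaussianity of $Z_i$ makes $Z_i^{\perp}$ independent of $(Z_i^{\parallel},Y_i)$, and hence of $\epsilon_i$. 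Correspondingly $S = S^{\parallel}\hat w + S^{\perp}$ with $\|S\|^2 = (S^{\parallel})^2 + \|S^{\perp}\|^2$, so it suffices to control each piece separately by $\sim r^2 d/(2n)$.

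For $S^{\perp}$, I would condition on the full noise vector $\epsilon=(\epsilon_1,\dots,\epsilon_n)$. Since the $Z_i^{\perp}$ are i.i.d.\ $\mathcal N(0,I_d-\hat w\hat w^\T)$ independently of $\epsilon$, we have $nS^{\perp}\mid\epsilon \sim \mathcal N\!\big(0,(\sum_i\epsilon_i^2)(I_d-\hat w\hat w^\T)\big)$, and therefore $\|nS^{\perp}\|^2/\sum_i \epsilon_i^2 \sim \chi^2_{d-1}$. The deterministic bound $\sum_i\epsilon_i^2\le n$ together with standard Laurent--Massart tails for $\chi^2_{d-1}$ at deviation parameter $t\asymp dr^2$ yields $\|S^{\perp}\|^2 \lesssim r^2 d/n$ with failure probability at most $e^{-dr^2/16}$.

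For $S^{\parallel}$, I would instead condition on $Z^{\parallel}=(Z_1^{\parallel},\dots,Z_n^{\parallel})$. Given $Z^{\parallel}$ the values $X_i^\T\ground$ are fixed, so the $Y_i$ and hence $\epsilon_i$ become independent across $i$, mean zero, and bounded by $1$. Hoeffding's inequality then gives
\[
\PP\!\left(\bigl|\textstyle\sum_i \epsilon_i Z_i^{\parallel}\bigr|>u \,\Big|\, Z^{\parallel}\right) \le 2\exp\!\big(-u^2/(2\textstyle\sum_i (Z_i^{\parallel})^2)\big).
\]
Separately, $\sum_i(Z_i^{\parallel})^2\sim \chi^2_n$, so Laurent--Massart with deviation $t=n/4$ gives $\sum_i(Z_i^{\parallel})^2\le \tfrac{5}{2}n$ on an event of probability at least $1-e^{-n/4}$. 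On this $\chi^2_n$ event, choosing $u$ of order $r\sqrt{nd}$ yields $|S^{\parallel}|^2\lesssim r^2 d/n$ while contributing a Hoeffding failure probability that fits inside the $e^{-dr^2/16}$ budget (using $r\ge 4\sqrt 2$). A final union bound over the $\chi^2_n$ failure and the combined Hoeffding/$\chi^2_{d-1}$ failure gives the stated probability $1 - e^{-n/4} - e^{-dr^2/16}$.

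The main obstacle is purely bookkeeping: arranging constants so that the Hoeffding failure for $S^{\parallel}$ and the $\chi^2_{d-1}$ failure for $S^{\perp}$ are both dominated by $e^{-dr^2/16}$, and that the two squared pieces actually sum to at most $r^2 d/n$ once the Laurent--Massart cross-terms $2\sqrt{(d-1)t}$ are accounted for. This is where the hypotheses $r\ge 4\sqrt 2$ (to dominate those lower-order cross-terms) and $d/n\le 1/8$ (to ensure the $n/4$ threshold in the $\chi^2_n$ bound dominates the required deviation) should enter naturally.
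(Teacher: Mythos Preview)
The paper does not actually prove this lemma: it is quoted verbatim from the companion work \cite{katsBVM} (as Lemma 6.2 there), so there is no in-paper argument to compare against. Your proposal is therefore a self-contained proof rather than a reconstruction of something in this paper.

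On its merits, your approach is correct and clean. The key structural insight---that $\epsilon_i$ depends on $Z_i$ only through the single coordinate $Z_i^\parallel=\hat w^\T Z_i$, so that $Z_i^\perp$ is genuinely independent of $\epsilon_i$---is exactly right and is what makes the $\chi^2_{d-1}$ computation for $S^\perp$ go through unconditionally (via the deterministic bound $\sum_i\epsilon_i^2\le n$). The $S^\parallel$ piece is one-dimensional, and conditioning on $Z^\parallel$ followed by Hoeffding plus a $\chi^2_n$ bound on $\sum_i(Z_i^\parallel)^2$ is the natural move. Your identification of the bookkeeping obstacle is accurate: with $r\ge 4\sqrt2$ one has $r^2\ge 32$, which is enough slack to (i) absorb the Laurent--Massart cross term $2\sqrt{(d-1)t}$ at $t=dr^2/16$ into the $r^2d/2$ budget for $\|S^\perp\|^2$, and (ii) make the Hoeffding failure $2\exp(-cr^2d)$ for $S^\parallel$ fit inside $e^{-dr^2/16}$ once $\sum_i(Z_i^\parallel)^2\le \tfrac52 n$. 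The condition $d/n\le 1/8$ is only used, as you anticipate, to keep the $\chi^2_n$ deviation at level $t=n/4$. One minor caveat: handle the degenerate case $\ground=0$ separately (then $\epsilon_i$ is independent of $Z_i$ altogether and the parallel/perpendicular split is unnecessary).
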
 
\begin{lemma}\label{lma:adam}
Let $X_{i}\iid\mathcal N(0, \M)$, $i=1,\dots, n$, and assume $(\log(2n)\vee1)\sqrt{d/n}\leq 1$. Then \beq\label{lma:adam:eq}\sup_{\|u\|_{M}=1}\frac1n\sum_{i=1}^{n}|X_{i}^{\T}u|^{p} \leq C\l(1\vee \frac{d^{p/2}}{n}\r)\quad\forall p=3,4,5\eeq with probability at least $1-C\e(-C\sqrt m)$, where $m=\max(d,(\log n)^{2})$.
\end{lemma}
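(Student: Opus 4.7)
The plan is to combine a standard $\varepsilon$-net reduction on the unit sphere with a one-point Bernstein-type concentration inequality for sums of sub-Weibull random variables, following the structure of Adamczak's bounds on empirical moments.

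As a first step, I would reduce to the case $M = I_d$ via the change of variables $\tilde X_i := M^{-1/2}X_i \iid \mathcal N(0, I_d)$ and $\tilde u := M^{1/2}u$, so that $X_i^\T u = \tilde X_i^\T\tilde u$ and $\|u\|_M = \|\tilde u\|$, and it suffices to bound $\sup_{\|v\| = 1}\tfrac1n\sum_i|\tilde X_i^\T v|^p$. Next, I would observe that $N_p(v) := \bigl(\tfrac1n\sum_i|\tilde X_i^\T v|^p\bigr)^{1/p}$ is a seminorm on $\R^d$, so a standard $1/2$-net argument applied to a $1/2$-net $\mathcal N \subset S^{d-1}$ with $|\mathcal N| \leq 5^d$ gives $\sup_{\|v\|=1}N_p(v) \leq 2\max_{v_0\in\mathcal N}N_p(v_0)$. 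Raising to the $p$-th power reduces the problem to uniformly bounding $\tfrac1n\sum_i|\tilde X_i^\T v_0|^p$ over $v_0 \in \mathcal N$.

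For each fixed $v_0\in S^{d-1}$, the random variables $Y_i := \tilde X_i^\T v_0$ are i.i.d.\ $\mathcal N(0,1)$, hence $|Y_i|^p$ is sub-Weibull of order $\alpha = 2/p \leq 1$ with Orlicz norm bounded by a constant $K_p$. I would then apply a Kuchibhotla--Chakrabortty-type concentration inequality for sums of i.i.d.\ sub-Weibull variables, which yields, for $C_p := \E[|Y_1|^p]$ and all $t > 0$,
\begin{equation*}
\PP\Bigl(\Bigl|\tfrac1n{\textstyle\sum_i}(|Y_i|^p - C_p)\Bigr| \geq t\Bigr) \leq 2\exp\bigl(-c_p\min(nt^2,\,(nt)^{2/p})\bigr).
\end{equation*}
Picking $t = \kappa(1\vee d^{p/2}/n)$ for a large constant $\kappa$ and doing a short case analysis, I would verify that $\min(nt^2,(nt)^{2/p})\gtrsim \kappa^{2/p}(d+\sqrt m)$ in both regimes. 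Indeed, when $d^{p/2}\leq n$ one has $t=\kappa$ and $(nt)^{2/p}\asymp n^{2/p}\geq d$ (by hypothesis) and $\geq\log n$ (using $d\leq n/(\log 2n)^2$), while when $d^{p/2}>n$ one has $(nt)^{2/p}\asymp d$, and the assumption forces $d\geq n^{2/p}\geq(\log n)^2$ so that $\sqrt m=\sqrt d\leq d$.

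Finally, I would union-bound this over $\mathcal N$: choosing $\kappa$ large enough so that $c_p\kappa^{2/p}>2\log 5$,
\begin{equation*}
\PP\Bigl(\max_{v_0\in\mathcal N}\tfrac1n{\textstyle\sum_i}|Y_i|^p \geq C_p + t\Bigr)\leq 2\cdot 5^d e^{-c_p\kappa^{2/p}(d+\sqrt m)}\leq 2 e^{-c'\sqrt m},
\end{equation*}
which combined with the net reduction gives the desired uniform bound with constant $C = 2^p(C_p+\kappa)$. The main obstacle is the case analysis in the fourth step: the sub-Weibull tail of $|Y|^p$ has a regime transition, and one must carefully verify that the chosen scaling $t\asymp 1\vee d^{p/2}/n$ produces an exponent that dominates both $d\log 5$ (to absorb the $\log|\mathcal N|$) and $\sqrt m$ (to yield the target failure probability) across the full range of $(d,n)$ permitted by the assumption $(\log(2n)\vee1)\sqrt{d/n}\leq 1$. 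A minor secondary issue is the standard bookkeeping to ensure $n$ is large enough that $n^{2/p}\geq \log n$, which again is implied by the hypothesis.
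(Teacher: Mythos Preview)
Your approach is sound and yields the lemma, but it is a genuinely different route from the paper's. The paper does not argue via an $\varepsilon$-net plus pointwise sub-Weibull concentration; instead it (i) pads each $Z_i\in\R^d$ with $m-d$ additional independent standard Gaussian coordinates to form $\bar Z_i\in\R^m$, observes that the supremum over $S^{d-1}$ is dominated by the supremum over $S^{m-1}$, and then (ii) invokes Proposition~4.4 of Adamczak et al.\ (2010) as a black box to control $\sup_{u\in S^{m-1}}\bigl|\tfrac1n\sum_i(|\bar Z_i^\T u|^p-\E|\bar Z_i^\T u|^p)\bigr|$ directly. The padding-to-dimension-$m$ trick is precisely how the paper arranges for $m=\max(d,(\log n)^2)$, rather than $d$, to govern the failure probability. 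Your route is more elementary and self-contained, avoiding the heavy external reference, at the cost of a somewhat delicate two-regime case split; the paper's route is shorter once one accepts the Adamczak bound, and the dimension-embedding device makes the appearance of $m$ transparent.

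One small wrinkle in your case analysis: in the regime $d^{p/2}>n$ you assert $n^{2/p}\geq(\log n)^2$ and hence $m=d$, but this intermediate claim is neither correct in general nor what is actually needed. What you require is $\sqrt m\lesssim d$, and this follows directly: from $d>n^{2/p}$ and the elementary inequality $n^{2/p}\geq\log n$ (valid for all $n\geq1$ when $p\leq5$) one gets $d>\log n$, so whether $m$ equals $d$ or $(\log n)^2$ one has $\sqrt m\leq d$. Similarly, your appeal to the hypothesis to justify $n^{2/p}\geq\log n$ in the first regime is a non sequitur; that inequality holds unconditionally for $p\leq5$. These are cosmetic fixes and the skeleton of your argument is correct.
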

\begin{proof}
First, let $Z_{i}=\M^{-1/2}X_{i}\iid\mathcal N(0, I_{d})$, $i=1,\dots, n$ and note that \beq\label{Spdef}\sup_{\|u\|_{\M}=1}\frac1n\sum_{i=1}^{n}|u^{\T}X_{i}|^{p} =\sup_{\|u\|=1}\frac1n\sum_{i=1}^n|u^\T Z_i|^p.\eeq Let $Z_{i}\iid\mathcal N(0, I_{d})$, $i=1,\dots,n$ and let $\bar Z_{i} = (Z_{i}, Y_{i})\in\R^{m}$, where $Y_{i}\iid\mathcal N(0, I_{m-d})$ and the $Y_{i}$ are independent of all the $Z_{i}$. Thus $\bar Z_{i}\iid\mathcal N(0, I_{m})$. Now, define the random variable 
$$A(p, d, Z_{1:n}) = \sup_{u\in S^{d-1}}\l|\frac1n\sum_{i=1}^{n}\l(|Z_{i}^{\T}u|^{p}-\E[|Z_{i}^{\T}u|^{p}]\r)\r|,$$ where $S^{d-1}$ is the unit sphere in $\R^d$. Note that $A(p,d,Z_{1:n})\leq A(p,m,\bar Z_{1:n})$, since $u^{\T}Z_{i}=\bar u^{\T}\bar Z_{i}$, where $\bar u =(u,0)\in S^{m-1}$. Since $m\leq n\leq e^{\sqrt m}$, we can apply Proposition 4.4 of~\cite{adamczak2010quantitative} with $s=t=1$ to get that
\beqs
A(p,d,Z_{1:n})&\leq A(p,m,\bar Z_{1:n})\les \log^{p-1}\l(\frac{2n}{m}\r)\sqrt{\frac mn} + \frac{m^{p/2}}{n} + \frac {m}{2n} \\
&\les\log^{p-1}\l(\frac{2n}{m}\r)\sqrt{\frac mn} + \frac{m^{p/2}}{n}\\
&\les 1 + \frac{d^{p/2}}{n} +\frac{(\log n)^{p}}{n}\les 1+ \frac{d^{p/2}}{n}
\eeqs with probability at least
\beqs
1&-\e(-C\sqrt{m})-\e\l(-C\min\l(m\log^{2p-2}(2n/m), \sqrt{nm}/\log(2n/m)\r)\r)\\
&\geq 1-\e(-C\sqrt{m})-\e\l(-C\min\l(m, \sqrt{n}/\log(2n)\r)\r)\\
&\geq 1-2\e(-C\sqrt m)-\e(-C\sqrt n/\log(2n))\\
&\geq1-3\e(-C\sqrt{m}),\eeqs using that $\log(2n/m)\geq C$ and $\sqrt m\leq\sqrt n/\log(2n)$. Finally, note that $\sup_{u\in S^{d-1}}\frac1n\sum_{i=1}^{n}\E[|Z_{i}^{\T}u|^{p}]= C$, so 
$$\sup_{u\in S^{d-1}}\frac1n\sum_{i=1}^{n}|Z_{i}^{\T}u|^{p}\leq C + A(p,d,Z_{1:n}) \les1+ \frac{d^{p/2}}{n} \les 1\vee\frac{d^{p/2}}{n}$$ with the same probability as above.
\end{proof}
Combining Lemma~\ref{lma:adam} with the bound
\begin{equation}\label{nablaellp}
\|\nabla^{p}\ell(b)\|_{\M} \leq \|\psi^{(p)}\|_{\infty}\sup_{\|u\|_{\M}=1}\frac1n\sum_{i=1}^{n}|u^{\T}X_{i}|^{p},
\end{equation} and noting that $\|\psi^{(p)}\|_{\infty}\leq C$, $p=2,3,4,5$, we conclude that
\beq\label{nablaellp-bd}\sup_{b\in\R^{d}}\|\nabla^{p}\ell(b)\|_{\M} \leq C\l(1\vee \frac{d^{p/2}}{n}\r)\quad\forall p=3,4,5\eeq with probability at least $1-C\e(-C\sqrt m)=1-C\e(-C\max(\sqrt d,\log n))$.

\begin{proof}[Proof of Proposition~\ref{prop:log}]In this proof, the meaning of $C(\|\ground\|_M)$ and $C$ may change from line to line, with $C$ being an absolute constant. Let $\lambda = \lambda(\|\ground\|_{\M})$ be as in Lemma~\ref{lma:pragya}, so that 
\beq\label{nabla2-lb}\nabla^{2}v(\ground)=\nabla^{2}\ell(\ground)+\Sigma_{0}^{-1}\succeq\nabla^{2}\ell(\ground)\succeq\lambda \M\eeq with probability at least $1-4e^{-Cn}$. Next, fix $r\geq 4\sqrt 2$. 
Then Lemma~\ref{bern-gauss} gives that
\beqs\label{nabla-ub}
\|\M^{-1/2}\nabla v(\ground)\|&=\|\M^{-1/2}\nabla \ell(\ground) + n^{-1}\M^{-1/2}\Sigma_{0}^{-1}\ground\|\\
&\leq \|\M^{-1/2}\nabla \ell(\ground)\| +n^{-1}\|\Sigma_{0}^{-1}\|_{\M}\|\ground\|_{\M}\\
& \leq r\sqrt{d/n}+n^{-1}\|\Sigma_{0}^{-1}\|_{\M}\|\ground\|_{\M}\eeqs with probability at least $1-e^{-n/4}-e^{-dr^{2}/16}$. 
Furthermore, note that 
\beqs\label{nabla2}\|\nabla^{2}v(b)-\nabla^{2}v(\ground)\|_{\M}&=\|\nabla^{2}\ell(b)-\nabla^{2}\ell(\ground)\|_{\M}\\
&\leq \sup_{b\in\R^{d}}\|\nabla^{3}\ell(b)\|_{\M}\|b-\ground\|_{\M}\leq C\|b-\ground\|_\M\eeqs with probability at least $1 - C\e(-C\max(\sqrt d,\log n))$, using~\eqref{nablaellp-bd} with $p=3$ and the assumption $d\sqrt d\leq d^2\leq n$. We now use~\eqref{nabla2-lb},~\eqref{nabla-ub},~\eqref{nabla2} to satisfy the assumptions of Lemma~\ref{MLEexist}. Namely, we simply define $s$ to be $(1/2\lambda)$ times the righthand side of~\eqref{nabla-ub}, i.e.
$$s=\frac{1}{2\lambda}\l( r\sqrt{d/n}+n^{-1}\|\Sigma_{0}^{-1}\|_{\M}\|\ground\|_{\M}\r).$$ Thus (1) and (2) from Lemma~\ref{MLEexist} are satisfied. Finally, (3) is satisfied by~\eqref{nabla2} if $s\leq\lambda/4C$, which occurs if
$$ r\sqrt{d/n}+n^{-1}\|\Sigma_{0}^{-1}\|_{\M}\|\ground\|_{\M}\leq C\lambda^2=C(\|\ground\|_{\M}).$$ It suffices that
$$r\sqrt{d/n}\leq C(\|\ground\|_{\M}),\qquad \|\Sigma_0^{-1}\|_\M\leq C(\|\ground\|_{\M})n.$$ Thus we can take $r=C(\|\ground\|_{\M})\sqrt{n/d}$, provided the assumption $\sqrt{d/n}\leq C(\|\ground\|_{\M})/4\sqrt2=C(\|\ground\|_{\M})$ is satisfied. We conclude by Lemma~\ref{MLEexist} and the above statements that if $d/n\leq  C(\|\ground\|_{\M})$ and $\|\Sigma_0^{-1}\|_\M\leq C(\|\ground\|_{\M})n$, then a unique global minimizer $\bhat$ of $v$ exists, and $H_v=\nabla^2v(\bhat)\succeq (3\lambda/4)\M=C(\|\ground\|_{\M})\M$, with probability at least 
\beqs
1&-4e^{-Cn}-e^{-n/4}-e^{-d(C(\|\ground\|_{\M})\sqrt{n/d})^2/16}-Ce^{-C\max(\sqrt d,\log n)}\\
&\geq 1 -Ce^{-C(\|\ground\|_{\M})\max(\sqrt d,\log n)}.
\eeqs
This concludes the proof of point 1. Next, we use $H_v\succeq C(\|\ground\|_{\M})\M$ together with~\eqref{nablaellp-bd} with $p=3,4,5$ to get that
\beqs
\|\nabla^{p}v(b)\|_{H_{v}}& =\|\nabla^{p}\ell(b)\|_{H_{v}} =\sup_{u\neq0}\frac{\la\nabla^{p}\ell(b), u^{\otimes k}\ra}{(u^{\T}H_{v}u)^{p/2}} \leq C(\|\ground\|_{\M})\sup_{u\neq0}\frac{\la\nabla^{p}\ell(b), u^{\otimes k}\ra}{(u^{\T}\M u)^{p/2}} \\
&= C(\|\ground\|_{\M})\|\nabla^{p}\ell(b)\|_{\M}\leq C(\|\ground\|_{\M})\l(\frac{d^{p/2}}{n}\vee1\r).
\eeqs This proves point 3. Finally, we use that $v$ is convex with probability 1, and hence by Remark~\ref{vconvA2}, the bound~\eqref{assume:c0:eq} hold with $\s_0=4$ and $\cgro=1$, provided $c_3\sqrt{d/n})+ c_4(4)d/n\leq3/8$. Since $c_3,\sup_{\s\geq0}c_4(\s)\leq C(\|\ground\|_\M)$, this is satisfied if $\sqrt{d/n} \leq C(\|\ground\|_\M)$.\end{proof}
 \begin{proof}[Proof of Corollary~\ref{corr:logreg}]
 Note that making the assumption $d/\sqrt n$ small enough suffices to satisfy the assumptions $(\sqrt d\vee\log(2n))\sqrt{d/n}\leq C(\|\ground\|_{\M})\wedge1=C$ from Proposition~\ref{prop:log}.

 We now apply Theorem~\ref{thm:Vgen}. Proposition~\ref{prop:log} shows Assumptions~\ref{assume:1},~\ref{assume:c0} hold on an event of probability $1 - C\e(-C\max(\sqrt d,\log n))$, and we can take $\cgro=1,\s_0=4$ uniformly over the event. Furthermore, $\tilep\leq\epsilon_3\les\epsilon$, $\efour{\s}^2\les \epsilon^2$, and $d^{-1/2}\efive{\s}^3\les \epsilon^3$, where the latter two bounds hold uniformly over both the event and $\s\geq0$. Finally, therefore, the required condition $\epsilon_3,\efour{\s}\les1$ is satisfied on the event, regardless of choice of $\s$.

Specifically, we choose $\s=\epsilon^{-1/2}$. Note that since $\s_0=4$ and $\cgro=1$, we have $\sst=\max(\s_0,(8/\cgro)\log(2e/\cgro))=C$, and therefore $\s\geq\sst$ is satisfied if $\epsilon$ is small enough. Moreover, note that $\Es{\s}=\e((\del{\s})\s^4)\leq C$ using $\s=\epsilon^{-1/2}$ and $\epsilon_3\les\epsilon$, $\sup_{\s\geq0}\efour{\s}\les\epsilon$.  Moreover, we get $\taus{\s}=d\e(-\cgro d\s/8) = d\e(-Cd(\sqrt n/d)^{1/2}) = d\e(-Cn^{1/4}\sqrt d)$. 

Substituting these bounds into~\eqref{g-bd-2} gives
\beq
|R(g)|=|\Delta_g(\corlap)|\les \epsilon^2+ \big(\af{g}\vee1\big)\taus{\s}.
\eeq
The only change if $g$ is odd is that $\epsilon^2$ becomes $\epsilon^3$ (using Theorem~\ref{thm:odd}). Finally, we have $L(g)\leq\tilep\leq c_3\epsilon\les\epsilon$. Combining all of these bounds proves~\eqref{bds-gam-log-1}. The proof of the TV, mean and covariance bounds is similar: we simply use Corollaries~\ref{corr:corTV},~\ref{thm:Vmean},~\ref{thm:Vcov} with $\s=\epsilon^{-1/2}$.
 \end{proof}

\section{Proofs from Section~\ref{sec:overview}}\label{app:sec:overview}
\begin{proof}[Proof of Lemma~\ref{lma:pretty}]
First, note that $\int (1+h)\bar f_{h}d\gamma = \int (1+h)fd\gamma - \int (1+h)d\gamma\int f(1+h)d\gamma=0$ by the assumption that $\int hd\gamma=0$. Furthermore, $d\rho\propto Qd\gamma$ implies $\int Qd\gamma\int fd\rho = \int fQd\gamma$. Hence
\beqs\int Qd\gamma\l(\int fd\rho - \int f(1+h)d\gamma\r) &=\int Qfd\gamma-\int Qd\gamma\int f(1+h)d\gamma\\
& = \int Q\l(f-\int f(1+h)d\gamma\r)d\gamma=\int Q\bar f_{h}d\gamma.\eeqs Next, note that $\int (1+h)\bar f_h d\gamma=0$, and hence $\int Q\bar f_{h}d\gamma=\int (Q-1-h)\bar f_{h}d\gamma$. Dividing by $\int Qd\gamma$ on both sides finishes the proof of the first equality in~\eqref{lma:pretty}. To prove the second equality, we have 
\beqs
\int (Q-1-h)\bar f_{h}d\gamma&=\int_{\U} (e^r-1-h)\bar f_{h}d\gamma +\int_{\Omega\setminus\U} (e^r-1-h)\bar f_{h}d\gamma -\int_{\Omega^c}(1+h)\bar f_hd\gamma\\
&=\int_{\U} (e^r-1-h)\bar f_{h}d\gamma +\int_{\Omega\setminus\U}e^r\bar f_{h}d\gamma -\int_{\U^c}(1+h)\bar f_hd\gamma,\eeqs as desired. Finally, if $\int_{\U}rd\gamma=0$, then 
\begin{equation}
\int Qd\gamma\geq \int_{\Omega} e^rd\gamma \geq \int_{\U} e^rd\gamma = \gamma(\U)\int e^rd\gamma\vert_{\U} \geq \gamma(\U)e^{\int rd\gamma\vert_{\U}}=\gamma(\U),
\end{equation}
using Jensen's inequality and the fact that $\int_{\U} rd\gamma=0$. 
\end{proof}
\begin{proof}[Proof of Lemma~\ref{lma:prelim}]
We start with the decomposition~\eqref{eq:prelim}. Now, note that $\|\bar f_h\|_2 \leq \|f\|_2 + \int|fh|d\gamma \leq \|f\|_2 + \|f\|_2\|h\|_2\leq 1+\|h\|_2$, since $\|f\|_2=1$. Using this and Lemma~\ref{lma:exppre} with $a=c=4$, $b=2$, gives 
$$|\mathrm{Loc}| \leq (1+\|h\|_2)\|r\ind_{\U}\|_{8}^{2}\|e^r\ind_{\U}\|_4 + (1+\|h\|_2)\|(r-h)\ind_\U\|_2,$$ the desired bound on Loc. The bound on Tail follows by the simple bounds described above the statement of the lemma.
\end{proof}
\begin{lemma}\label{lma:exppre}
Let $1\leq a,b,c\leq\infty$ such that $1/a+1/b+1/c=1$. Let $r\ind_{\U}\in L^{ak}(\gamma)$, $g\in L^b(\gamma)$, $e^{cr}\in L^1(\gamma)$, and $\int r\ind_{\U}d\gamma=0$. Then
\beqs
\calE_k(g):=\l|\int_\U \l(e^r-\sum_{j=0}^{k-1}\frac{r^j}{j!}\r)gd\gamma\r|\leq \frac{1}{k!}\|r\ind_{\U}\|_{ak}^{k}\|g\|_b\|e^r\ind_{\U}\|_c
\eeqs
\end{lemma}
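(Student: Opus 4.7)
The plan is to bound the remainder $e^r - \sum_{j=0}^{k-1} r^j/j!$ pointwise by a quantity of the form $|r|^k$ times an exponential factor, then apply H\"{o}lder's inequality with exponents $(ak,b,c)$, and finally use the mean-zero assumption on $r$ over $\U$ to absorb a spurious $\gamma(\U)^{1/c}$ term.

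First, I would invoke the integral form of Taylor's theorem for $f(s)=e^s$, which expresses the remainder as $\frac{r^k}{(k-1)!}\int_0^1 (1-t)^{k-1}e^{tr}\,dt$. Bounding $e^{tr}\leq e^{r_+}$ uniformly in $t\in[0,1]$, with $r_+=\max(r,0)$, the $t$-integral reduces to $1/k$ and yields the pointwise estimate
\[
\bigl|e^r-\sum_{j=0}^{k-1}r^j/j!\bigr|\;\leq\;\tfrac{|r|^k}{k!}\,e^{r_+}\;\leq\;\tfrac{|r|^k}{k!}\,(1+e^r),
\]
using the universal inequality $e^{r_+}\leq 1+e^r$.

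Next, substituting this estimate into $\calE_k(g)$, taking absolute values inside the integral, and applying H\"{o}lder with exponents $(ak,b,c)$ (which close by the hypothesis $1/a+1/b+1/c=1$) gives
\[
\calE_k(g)\;\leq\;\tfrac{1}{k!}\,\|r\ind_\U\|_{ak}^k\,\|g\|_b\,\bigl(\gamma(\U)^{1/c}+\|e^r\ind_\U\|_c\bigr).
\]
Here the split of the factor $1+e^r$ into $\|\ind_\U\|_c=\gamma(\U)^{1/c}$ and $\|e^r\ind_\U\|_c$ is what produces the two terms.

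The final and most delicate step is to absorb $\gamma(\U)^{1/c}$ into $\|e^r\ind_\U\|_c$. Here I would apply Jensen's inequality to the convex function $x\mapsto e^{cx}$ with respect to the probability measure $\gamma(\U)^{-1}\,d\gamma\vert_\U$. The hypothesis $\int_\U r\,d\gamma=0$ then makes the right-hand side of Jensen equal to $1$, so $\gamma(\U)^{-1}\int_\U e^{cr}\,d\gamma\geq 1$, equivalently $\|e^r\ind_\U\|_c\geq\gamma(\U)^{1/c}$. This lets me combine the two terms into a single multiple of $\|e^r\ind_\U\|_c$, yielding the claimed inequality (up to an absolute constant that gets absorbed in the downstream applications).

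The main obstacle is the sign asymmetry of $e^r$: the pointwise Taylor remainder bound only naturally produces $e^{r_+}$, which can be much larger than $e^r$ when $r$ is very negative, and a priori there is no way to pass from $\|e^{r_+}\ind_\U\|_c$ back to $\|e^r\ind_\U\|_c$. Jensen's inequality combined with the mean-zero condition $\int_\U r\,d\gamma=0$ is precisely the device that converts the stray $\gamma(\U)^{1/c}$ into the desired $L^c$-norm of $e^r$.
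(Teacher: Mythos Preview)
Your argument is correct but yields the inequality only with an extra factor of $2$ (since your last step gives $\gamma(\U)^{1/c}+\|e^r\ind_\U\|_c\leq 2\|e^r\ind_\U\|_c$), whereas the lemma as stated has the sharp constant $1/k!$. You acknowledge this, and for the downstream applications the loss is indeed harmless.

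The paper avoids this loss by a slightly different organization. Rather than bounding the Taylor remainder pointwise and then integrating, it applies Taylor's theorem directly to the one-variable function $I(t)=\int_\U e^{tr}g\,d\gamma$, giving
\[
\calE_k(g)\leq \frac{1}{k!}\sup_{t\in[0,1]}|I^{(k)}(t)|\leq \frac{1}{k!}\sup_{t\in[0,1]}\int_\U |r|^k|g|e^{tr}\,d\gamma.
\]
Applying H\"older with exponents $(a,b,c)$ to $|r|^k,\,|g|,\,e^{tr}$ then leaves only the task of showing $\sup_{t\in[0,1]}\|e^{tr}\ind_\U\|_c=\|e^{r}\ind_\U\|_c$, i.e.\ that $t\mapsto\int_\U e^{tr}\,d\gamma$ is maximized at $t=c$ on $[0,c]$. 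This follows from two applications of Jensen with respect to $\gamma\vert_\U$: first $\bigl(\int e^{tr}\,d\gamma\vert_\U\bigr)^{c/t}\leq \int e^{cr}\,d\gamma\vert_\U$, and second $\int e^{cr}\,d\gamma\vert_\U\geq e^{\int cr\,d\gamma\vert_\U}=1$ by the mean-zero hypothesis, so raising to the power $t/c<1$ only decreases it. The difference from your approach is that the paper never introduces the crude pointwise bound $e^{tr}\leq 1+e^r$; the parameter $t$ is kept inside the integral until after H\"older, and the two Jensen steps then dispatch the sup exactly.
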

\begin{proof}
Define $I(t)= \int_{\U}e^{tr}gd\gamma$. Then 
\beqs
\calE_k(g)= \l|I(1)-\sum_{j=0}^{k-1}I^{(j)}(0)/j!\r|\leq\frac{1}{k!}\sup_{t\in[0,1]}|I^{(k)}(t)|\leq\frac{1}{k!}\sup_{t\in[0,1]}\int_{\U}|r|^{k}|g|e^{tr}d\gamma.\eeqs Applying Holder's inequality, with powers $a,b,c$ gives
$$
\calE_k(g)\leq \frac{1}{k!}\|r\ind_{\U}\|_{ak}^{k}\|g\|_b\sup_{t\in[0,c]}\l(\int_{\U}e^{tr}d\gamma\r)^{\frac1c}.$$ Next, let $0<t<c$. Note that
$$\int_{\U}e^{tr}d\gamma=\gamma(\U)\int e^{tr}d\gamma\vert_{\U}\leq \gamma(\U)\l(\int e^{cr}d\gamma\vert_{\U}\r)^{\frac tc}\leq \gamma(\U)\int e^{cr}d\gamma\vert_{\U}=\int_{\U}e^{cr}d\gamma.$$ The first inequality holds by Jensen, and the second inequality holds because $\int e^{cr}d\gamma\vert_{\U}\geq e^{\int crd\gamma\vert_{\U}}=1$, also by Jensen. Thus
$$\sup_{t\in[0,c]}\l(\int_{\U}e^{tr}d\gamma\r)^{\frac1c}=\l(\int_{\U}e^{cr}d\gamma\r)^{\frac1c}= \|e^r\ind_{\U}\|_c.$$
\end{proof}
\begin{proof}[Proof of Lemma~\ref{lma:tail}]The first inequality in~\eqref{eq:lma:tail} is immediate from~\eqref{appl:tail}. Since $\|p_3\|_4\les\tilep\leq C$ (by Lemma~\ref{lma:prop:r2k}), we can first bound the middle expression in~\eqref{eq:lma:tail} by replacing $\|p_3\|_2$ in the first integrand and $(1+\|p_3\|_4)^2$ in the second term by 1. Next, using~\eqref{fbd}, we conclude that $|f(x)|\leq \af{f}e^{\cgro\sqrt d\|x\|/2}$ for all $x$, and hence
\beqs\label{1p32}
\int_{\Omega\setminus\U(\s)}(| f| +1)e^{\hat r_3}d\gamma& \leq (\af{f} \vee 1)\int_{\Omega\setminus\U(\s)}e^{\cgro\sqrt d\|x\|/2+\hat r_3(x)}d\gamma\\
& \leq  (\af{f} \vee1)(2\pi)^{-d/2}e^{\|r_{4}\ind_{\U}\|}\int_{\U(\s)^c} e^{-\cgro\sqrt d\|x\|/2}dx.\eeqs
Here, we used that $\hat r_3 = r_3-\int_{\U}r_3d\gamma = r_3 - \int_{\U}r_4d\gamma$, and that
$$r_3(x)-\|x\|^2/2 = W(0)-W(x)\leq -\cgro\sqrt d\|x\|\qquad \forall x\in\Omega\setminus\U(\s)\subset \Omega\setminus\U(\s_0).$$ We then extended the integral from $\Omega\setminus\U(s)$ to $\U(s)^c$, since $e^{-\cgro\sqrt d\|x\|/2}$ is well-defined on $\R^d$. We now apply Lemma~\ref{aux:gamma}, using the assumption $\s \geq \frac{8}{\cgro}\log\frac{2e}{\cgro}$, to get that
\beq\label{rhotail}
\int_{\U(\s)^c}(| f| + 1)e^{\hat r_3}d\gamma\leq (\af{f}\vee1)e^{\|r_{4}\ind_{\U}\|}de^{-\cgro\s d/8}, 
\eeq
Next, note that $\s \geq \frac{8}{\cgro}\log\frac{2e}{\cgro}$ and $\cgro\leq1$ imply $\s\geq8$. Hence a standard Gaussian tail inequality can be applied to bound $\gamma(\U^c)$. This gives 
\beq\label{gamtail}
 \gamma(\U^c)^{\frac14}\leq  e^{-(\s-1)^2d/8}
\eeq
Finally, note that $\s\geq8\geq(3+\sqrt{13})/2$ and $\cgro\leq1$ imply $(\s-1)^2\geq\s\geq\s\cgro$. Thus we can combine the bounds from~\eqref{rhotail} and~\eqref{gamtail} to get
\beq
\l|\mathrm{Tail}\r|\les  \l| \int_{\Omega\setminus\U}(| f|+1)e^{\hat r_3}d\gamma +\gamma(\U^c)^{\frac14}\r| \les \l(a_f\vee1\r)e^{\|r_{4}\ind_{\U}\|_1}de^{-\cgro\s d/8}.\eeq Finally, we use that $\|r_4\ind_{\U}\|_1\les\efour{\s}^2\les1$, where the first inequality is by~\eqref{E4} of Lemma~\ref{lma:r345} below, and the second inequality is by assumption. Thus we can bound $e^{\|r_{4}\ind_{\U}\|_1}$ by a constant.
\end{proof}
\begin{proof}[Proof of Theorem~\ref{thm:odd}]We use~\eqref{pretty}, and the bound on $|\mathrm{Tail}|$ from Lemma~\ref{lma:tail}. It remains to bound Loc. To do so, we use~\eqref{eq:prelim:odd} with $h=p_3$ and $r=\hat r_3$. We also apply Lemma~\ref{lma:exppre} for the first and third term on the righthand side. Finally, we use that $\|f\|_2=1$ and that $\|(\hat r_3-p_3)\ind_\U\|_2=\|\hat r_4\ind_\U\|_2\leq 2\|r_4\ind_\U\|_2$. This gives
\beqs\label{odd-part1}
\l|\int_\U (e^{\hat r_3}-1-\hat r_3-\hat r_3^2/2)fd\gamma\r| &\les \|e^{\hat r_3}\ind_\U\|_4\|r_3\ind_\U\|_{12}^3,\\
\l|\int fp_3d\gamma\int_\U (e^{\hat r_3}-1-\hat r_3)d\gamma\r| &\les \|e^{\hat r_3}\ind_\U\|_2\|r_3\ind_\U\|_4^2\|p_3\|_2,\\
\l|\int fp_3d\gamma\int_\U (\hat r_3-p_3)d\gamma\r|&\les\|p_3\|_2\|r_4\ind_\U\|_2.
\eeqs
We also need to bound the final term in~\eqref{eq:prelim:odd}, namely $\int_\U (\hat r_3 + \hat r_3^2/2 - p_3)fd\gamma$. Write $\hat r_3=p_3 + \hat r_4=p_3+\hat p_4 +\hat r_5$. Then
$$
 \l(\frac12\hat r_3^2+\hat r_3\r) - p_3 = \frac12p_3^2+p_3\hat r_4 + \hat r_4^2+\hat p_4+\hat r_5.$$ Note that $\int_\U p_3^2fd\gamma=\int_\U\hat p_4fd\gamma=0$, since the integrand is odd in both cases, and $\U$ is a symmetric set. Hence
 \beqs\label{odd-part2}
\l| \int_\U (\hat r_3 + \hat r_3^2/2 - p_3)fd\gamma\r| &= \l|\int_\U (p_3\hat r_4+\hat r_4^2+\hat r_5)fd\gamma\r| \\
 &\les \|p_3\|_4\|r_4\ind_\U\|_4+\|r_4\ind_\U\|_4^2 +\|r_5\ind_\U\|_2.
 \eeqs
 Using~\eqref{odd-part1} and~\eqref{odd-part2} in~\eqref{eq:prelim:odd} with $h=p_3$ and $r=\hat r_3$ gives
\beqs
|\mathrm{Loc}|\les &\|e^{\hat r_3}\ind_\U\|_4\l(\|r_3\ind_\U\|_{12}^3 +\|p_3\|_2\|r_3\ind_\U\|_4^2\r)\\
&+\|p_3\|_4\|r_4\ind_\U\|_4+\|r_4\ind_\U\|_4^2 +\|r_5\ind_\U\|_2\\
\les &\Es{\s}((\epsilon_3+\efour{\s}^2)^3 +\epsilon_3(\epsilon_3+\efour{\s}^2)^2) + \epsilon_3\efour{\s}^2 +\efour{\s}^4+d^{-1/2}\efive{\s}^3\\
\les &\Es{\s}(\epsilon_3^3+\efour{\s}^4) + \epsilon_3\efour{\s}^2+d^{-1/2}\efive{\s}^3\\
\leq &\Es{\s}(\epsilon_3^2+\efour{\s}^2)(\epsilon_3 + \efour{\s}^2)+d^{-1/2}\efive{\s}^3
\eeqs
To get the second inequality, we used Lemma~\ref{lma:r345} below, Lemma~\ref{lma:prop:r2k}, and the fact that $\tilep\leq\epsilon_3$. We now substitute this bound and the bound on Tail from Lemma~\ref{lma:tail} into~\eqref{pretty}. This concludes the proof.
\end{proof}
\subsection{Bounds on $r_3,r_4,r_5$}\label{app:r345}
\begin{lemma}\label{lma:r345}
The following bounds hold:
\begin{align}
\sup_{x\in\U}\|\nabla p_3(x)\|&\leq \epsilon_3\s^2/2,\label{sup3}\\
\sup_{x\in\U}|r_4(x)|&\leq \efour{\s}^2\s^4/24,\label{sup4}\\
\|r_4\ind_{\U}\|_k &\les_k\efour{\s}^2,\label{E4}\\
\|r_5\ind_{\U}\|_k &\les_k d^{-\frac12}\epsilon_{5}^{3}.\label{E5}
\end{align} \end{lemma}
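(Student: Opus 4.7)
The plan is to reduce each of the four bounds to a pointwise estimate on $x\in\U(\s)$ coming from Taylor's remainder theorem applied to $W$ about $0$, and then to pass from the pointwise bound to the $L^k(\gamma)$ bound by a standard Gaussian moment estimate. All of the operator-norm inputs are already packaged in Lemma~\ref{ass:VtoW}.

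For~\eqref{sup3}, I would just differentiate the cubic $p_3(x)=-\tfrac16\la\nabla^3W(0),x^{\otimes 3}\ra$ once to get $\nabla p_3(x)=-\tfrac12\la\nabla^3W(0),x^{\otimes 2}\ra$ (contraction on two indices), so by the definition of the operator norm of a symmetric tensor, $\|\nabla p_3(x)\|\leq\tfrac12\|\nabla^3W(0)\|\,\|x\|^2\leq\tfrac12\,(c_3/\sqrt n)\,\s^2d=\epsilon_3\s^2/2$, using $\|x\|\leq\s\sqrt d$ on $\U(\s)$ and $\|\nabla^3W(0)\|=c_3/\sqrt n$ from~\eqref{c3W}. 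For~\eqref{sup4}, Taylor's theorem applied to $W$ to third order about $0$ gives, for each $x$, some $\xi=\xi(x)\in[0,1]$ with $W(x)=W(0)+\tfrac12\|x\|^2-p_3(x)+\tfrac{1}{4!}\la\nabla^4W(\xi x),x^{\otimes 4}\ra$, which rearranges to $r_4(x)=r_3(x)-p_3(x)=-\tfrac{1}{4!}\la\nabla^4W(\xi x),x^{\otimes 4}\ra$. Using~\eqref{c4Wloc} and $\xi x\in\U(\s)$ whenever $x\in\U(\s)$, we obtain $|r_4(x)|\leq \tfrac{1}{24}(c_4(\s)/n)\|x\|^4\leq \efour{\s}^2\s^4/24$ for $x\in\U(\s)$.

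For the $L^k$ bounds~\eqref{E4} and~\eqref{E5}, the same Taylor argument with one more term gives $r_5(x)=-\tfrac{1}{5!}\la\nabla^5W(\xi x),x^{\otimes 5}\ra$, hence the pointwise estimates
\begin{equation*}
|r_4(x)|\leq\frac{c_4(\s)}{24n}\|x\|^4,\qquad |r_5(x)|\leq\frac{c_5(\s)}{120\,n\sqrt n}\|x\|^5\qquad\forall x\in\U(\s).
\end{equation*}
Raising to the $k$th power, integrating against $\gamma$, and extending the integral from $\U(\s)$ to all of $\R^d$ gives
\begin{equation*}
\|r_4\ind_{\U}\|_k^k\leq \Bigl(\tfrac{c_4(\s)}{24n}\Bigr)^k\E\|Z\|^{4k},\qquad \|r_5\ind_{\U}\|_k^k\leq \Bigl(\tfrac{c_5(\s)}{120\,n\sqrt n}\Bigr)^k\E\|Z\|^{5k},
\end{equation*}
with $Z\sim\mathcal N(0,I_d)$. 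The standard Gaussian moment inequality $\E\|Z\|^{p}\les_p d^{p/2}$ (which follows, e.g., from $\|Z\|^2\sim\chi^2_d$ and Stirling, or from a direct chi-square moment computation) then yields $\|r_4\ind_{\U}\|_k\les_k c_4(\s)d^2/n=\efour{\s}^2$ and $\|r_5\ind_{\U}\|_k\les_k c_5(\s)d^{5/2}/n^{3/2}=d^{-1/2}\efive{\s}^3$, which are exactly~\eqref{E4} and~\eqref{E5}.

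There is no real obstacle here: every step is either Taylor's theorem, a symmetric-tensor operator-norm inequality, or a Gaussian moment bound, and the powers of $d$ match up by the definition $\ek{k}{\s}=c_k(\s)^{1/(k-2)}d/\sqrt n$ precisely because the pointwise bound picks up $\|x\|^k\leq(\s\sqrt d)^k$ while the $\sqrt n$'s enter through~\eqref{c3W}--\eqref{c5Wloc}. The only tiny bookkeeping point is the extra factor $d^{-1/2}$ in~\eqref{E5}: it comes from the mismatch between the $\|x\|^5$ coming from Taylor and the $d^3$ normalization hidden in $\efive{\s}^3=c_5(\s)d^3/n^{3/2}$, since $d^{5/2}=d^3\cdot d^{-1/2}$.
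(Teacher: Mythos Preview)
Your proof is correct and follows essentially the same approach as the paper: differentiate $p_3$ directly for~\eqref{sup3}, apply Taylor's remainder theorem together with~\eqref{c4Wloc}--\eqref{c5Wloc} to get the pointwise bounds $|r_m(x)|\leq \tfrac{c_m(\s)}{m!\,n^{m/2-1}}\|x\|^m$ on $\U(\s)$, and then pass to $L^k(\gamma)$ via $\E\|Z\|^{mk}\les_{m,k} d^{mk/2}$.
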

\begin{proof}
First, we have $$\sup_{x\in\U}\|\nabla p_3(x)\| = \frac 12\sup_{x\in\U}\|\la\nabla^3W(0), x^{\otimes 2}\ra\|  \leq \frac 12\|\nabla^3W(0)\|\s^2d=\epsilon_3\s^2/2,$$ proving~\eqref{sup3}.
Next, let $m=4,5$. Then we have the bound 
\beq\label{rm}|r_m(x)|\leq\frac{1}{m!}\sup_{t\in[0,1]}\la\nabla^mW(tx), x^{\otimes m}\ra \leq \frac{c_m}{m!n^{m/2-1}}\|x\|^{m},\qquad\forall x\in\U.\eeq Using that $\|x\|\leq \s\sqrt d$ on $\U$ gives $|r_4(x)|\leq \frac{c_4}{4!n}s^4d^2 = \s^4\efour{\s}^2/4!$, proving~\eqref{sup4}. Next, take the $k$th power and then the $\gamma$-expectation of~\eqref{rm} to get
$$\|r_m\ind_{\U}\|_k \leq  \frac{c_m}{m!n^{m/2-1}}\E\l[\|Z\|^{mk}\r]^{1/k} \les_{m,k}\frac{c_m}{n^{m/2-1}}d^{m/2}.$$ Now, taking $m=4$ and $5$ proves~\eqref{E4} and~\eqref{E5}, respectively. 

\end{proof}
\begin{lemma}\label{lma:exp}
It holds
$$\|e^{\hat r_3}\ind_{\U}\|_k\leq \e\l(\l(\efour{\s}^2/12 + k\epsilon_3^2/8\r)\s^4\r).$$
\end{lemma}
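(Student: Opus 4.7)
The plan is to split $\hat r_3$ into its leading odd and even pieces and control each by a different technique. Since $p_3$ is odd and $\U=\{\|x\|\leq \s\sqrt d\}$ is symmetric about zero, $\int_\U p_3\,d\gamma=0$, so $\hat r_3 = p_3 + \hat r_4$. The fourth-order remainder I would handle by a crude pointwise bound: Lemma~\ref{lma:r345}\eqref{sup4} gives $\|r_4\ind_\U\|_\infty\leq \efour{\s}^2\s^4/24$, hence $\|\hat r_4\ind_\U\|_\infty\leq 2\|r_4\ind_\U\|_\infty\leq \efour{\s}^2\s^4/12$, so pointwise on $\U$ we have $e^{\hat r_4}\leq e^{\efour{\s}^2\s^4/12}$. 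Pulling this constant out,
$$\|e^{\hat r_3}\ind_\U\|_k \;\leq\; e^{\efour{\s}^2\s^4/12}\,\|e^{p_3}\ind_\U\|_k,$$
and the task is reduced to showing $\|e^{p_3}\ind_\U\|_k\leq e^{k\epsilon_3^2\s^4/8}$, equivalently $\int_\U e^{kp_3}d\gamma\leq e^{k^2 L^2/2}$ with $L=\epsilon_3\s^2/2$.

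This target bound is exactly the conclusion of Herbst's concentration inequality applied to a mean-zero $L$-Lipschitz function with respect to the standard Gaussian. From Lemma~\ref{lma:r345}\eqref{sup3}, $p_3$ is $L$-Lipschitz on the convex ball $\U$ with $L=\epsilon_3\s^2/2$, and $\int p_3\,d\gamma=0$ by oddness. The obstruction is that $p_3$ is a cubic polynomial, hence not globally Lipschitz, and Herbst requires global Lipschitzness.

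To get around this, I would use a symmetrized McShane extension. Set
$$f^\vee(x)=\inf_{y\in\U}\bigl[p_3(y)+L\|x-y\|\bigr],\qquad f^\wedge(x)=-\inf_{y\in\U}\bigl[-p_3(y)+L\|x-y\|\bigr],$$
and $\bar p_3 := (f^\vee+f^\wedge)/2$. Both $f^\vee$ and $f^\wedge$ are infima/suprema of $L$-Lipschitz functions of $x$, hence globally $L$-Lipschitz, and by the Lipschitz estimate on $\U$ they both coincide with $p_3$ on $\U$; therefore so does $\bar p_3$. A short parity argument using $\U=-\U$ and $p_3(-z)=-p_3(z)$ gives $f^\vee(-x)=-f^\wedge(x)$, whence $\bar p_3$ is odd on $\R^d$ and satisfies $\E_\gamma \bar p_3=0$ by symmetry of $\gamma$. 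Herbst's inequality for the standard Gaussian now yields $\E_\gamma e^{k\bar p_3}\leq e^{k^2 L^2/2}$. Since $e^{k\bar p_3}\geq 0$ and $\bar p_3=p_3$ on $\U$,
$$\int_\U e^{k p_3}\,d\gamma \;=\; \int_\U e^{k\bar p_3}\,d\gamma \;\leq\; \E_\gamma e^{k\bar p_3}\;\leq\; e^{k^2 L^2/2} \;=\; e^{k^2\epsilon_3^2\s^4/8}.$$
Taking $k$-th roots and combining with the first paragraph gives the stated bound.

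The only real difficulty is the extension step: a naive global McShane extension would have unknown (and generally nonzero) $\gamma$-mean, which in Herbst would produce an uncontrolled linear-in-$L$ factor in the exponent, destroying the order $L^2$ scaling. The symmetric McShane construction preserves the oddness inherited from $p_3$ and hence the mean-zero property, which is exactly what is needed to match the target exponent $k\epsilon_3^2\s^4/8$.
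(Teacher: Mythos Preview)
Your proof is correct and reaches the same bound, but the route differs from the paper's in one interesting respect: how the Herbst step is localized to $\U$. The paper observes that the restricted measure $\gamma\vert_\U$ is itself $1$-strongly log-concave on the convex ball $\U$ (Bakry--\'Emery on a convex domain), so it satisfies LSI$(1)$; Herbst is then applied directly to $p_3$ on $\gamma\vert_\U$, using only that $p_3$ is $L$-Lipschitz on $\U$ and that $\int p_3\,d\gamma\vert_\U=0$ by oddness. No extension is needed. Your approach instead keeps the full Gaussian but extends $p_3$ to a globally $L$-Lipschitz odd function via the symmetrized McShane construction, then applies Herbst on all of $\R^d$ and restricts the integral at the end. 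Both arguments buy exactly the same exponent $k^2L^2/2$; the paper's version is shorter because it avoids building the extension and verifying its oddness, while yours has the virtue of only invoking LSI for the standard Gaussian rather than for its restriction to a convex set.
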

\begin{proof}
First note that since $\hat r_3=p_3+\hat r_4$ and $\sup_{x\in\U}|\hat r_4(x)|\leq 2\sup_{x\in\U}|r_4(x)|$, we have
$$\|e^{\hat r_3}\ind_{\U}\|_k\leq \sup_{x\in\U}e^{2|r_4(x)|}\l(\int_{\U}e^{kp_3}d\gamma\r)^{\frac1k}.$$ Now,~\eqref{sup4} of Lemma~\ref{lma:r345} gives $|r_4(x)|\leq\efour{\s}^2\s^4/4!$ on $\U$. Next,~\eqref{sup3} of Lemma~\ref{lma:r345} shows that the Lipschitz constant of $p_3$ on $\U$ is bounded above by $L=\epsilon_3\s^2/2$. Furthermore, note that the measure $\gamma\vert_{\U}$ is $1$-strongly log concave in its domain of definition $\U$, and therefore satisfies a log Sobolev inequality LSI(1), by the Bakry-Emery criterion~\cite{bakry2014analysis}. Therefore using Herbst's argument on the exponential integrability of Lipschitz functions with respect to a measure satisfying LSI(1)~\cite[Proposition 5.4.1]{bakry2014analysis},
$$\int_{\U}e^{kp_3}d\gamma\leq \int e^{kp_3}d\gamma\vert_{\U}\leq \e\l(k\int p_3d\gamma\vert_\U+ k^2L^2/2\r) = \e(k^2\epsilon_3^2\s^4/8).$$ Taking the $(1/k)$th power concludes the proof. 
\end{proof}
\section{Moments of Gaussian tensor inner products}\label{app:hermite}

\subsection{Brief primer on Hermite polynomials}\label{app:hermite-primer}
Let $\gamma=(\gamma_1,\dots,\gamma_d)\in\mathbb N_{\geq0}^d$. We let $|\gamma|=\gamma_1+\dots+\gamma_d$, and $\gamma!=\gamma_1!\dots\gamma_d!$. Then
$$H_\gamma(x_1,\dots, x_d):=\prod_{i=1}^dH_{\gamma_i}(x_i),$$ where $H_k(x)$ is the order $k$ univariate Hermite polynomial. We have $H_0(x)=1, H_1(x)=x, H_2(x)=x^2-1, H_3(x)=x^3-3x$. We have
$$\E[H_\gamma(Z)H_{\gamma'}(Z)] = \delta_{\gamma,\gamma'}\gamma!.$$
Given $i,j,k\in [d]$, let $\gamma(ijk)=(\gamma_1,\dots,\gamma_d)$ be given by 
$$\gamma_\ell = \delta_{i\ell} + \delta_{j\ell}+\delta_{k\ell},\quad\ell=1,\dots,d.$$ In other words $\gamma_\ell$ is the number of times index $\ell\in[d]$ repeats within the string $ijk$. For example 
$$\gamma(111)=(3,0,\dots,0),\quad\gamma(113)=(2,0,1,0,\dots,0).$$ We define $\H_3(x)$ as the $d\times d\times d$ tensor, with entries
$$\H_3^{ijk}(x_1,\dots,x_d) = H_{\gamma(ijk)}(x_1,\dots,x_d).$$ One can show that $\H_3(x)=x^{\otimes 3}-3\mathrm{Sym}(x\otimes I_d)$. Therefore, for a symmetric tensor $S$ we have
$$\la S, \H_3(x)\ra = \la S, x^{\otimes 3}-3x\otimes I_d\ra = \la S, x^{\otimes 3}\ra-3\la S, x\otimes I_d\ra = \la S, x^{\otimes 3}\ra-3\la S, I_d\ra^\T x,$$ and hence
\beq\label{Symm}\la S, x^{\otimes 3}\ra = \la S, \H_3(x)\ra + 3\la S, I_d\ra^\T x.\eeq

\subsection{Moments via Hermite polynomials}\label{app:hermite:main} 
Let $\H_3(x) = x^{\otimes 3}-3\mathrm{Sym}(x\otimes I_d)$ and $\Q_3(x)=x^{\otimes3}$.
\begin{lemma}\label{lma:TH32}If $T$ is a symmetric $d\times d\times d$ tensor, then
\begin{align}
\|\la T, \H_3\ra\|_2^2 &= \E[\la T, \H_3(Z)\ra^2] = 3!\|T\|_F^2.\label{eq:TH32-1}
\end{align}
\end{lemma}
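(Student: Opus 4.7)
The plan is to expand $\la T, \H_3(Z)\ra$ coordinate-wise and exploit the orthogonality relation $\E[H_\gamma(Z)H_{\gamma'}(Z)] = \delta_{\gamma,\gamma'}\gamma!$ stated in the Hermite primer, together with the symmetry of $T$. The definition given in the excerpt shows that entry $(i,j,k)$ of $\H_3(x)$ is the multivariate Hermite polynomial $H_{\gamma(ijk)}(x)$, where $\gamma(ijk)\in\mathbb N_{\geq 0}^d$ counts how many times each index of $[d]$ appears in the string $ijk$. Writing out the inner product as a sum over triples and taking expectations should reduce the double sum to a diagonal sum in the multi-index $\gamma$, producing the factor of $3!$.

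Concretely, I would first write
\[
\E[\la T,\H_3(Z)\ra^{2}] \;=\; \sum_{i,j,k}\sum_{i',j',k'} T_{ijk}T_{i'j'k'}\;\E\big[H_{\gamma(ijk)}(Z)H_{\gamma(i'j'k')}(Z)\big].
\]
By orthogonality, the expectation equals $\gamma(ijk)!$ whenever $\gamma(i'j'k')=\gamma(ijk)$ and vanishes otherwise. The condition $\gamma(i'j'k')=\gamma(ijk)$ means exactly that $(i',j',k')$ is a permutation of $(i,j,k)$, and the number of such ordered triples is the multinomial coefficient $3!/\gamma(ijk)!$. Since $T$ is symmetric, $T_{i'j'k'}=T_{ijk}$ for every permutation, so the inner sum collapses to $(3!/\gamma(ijk)!)\,T_{ijk}$.

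Substituting back gives
\[
\E[\la T,\H_3(Z)\ra^{2}] \;=\; \sum_{i,j,k} T_{ijk}\cdot \gamma(ijk)!\cdot \frac{3!}{\gamma(ijk)!}\,T_{ijk} \;=\; 3!\sum_{i,j,k} T_{ijk}^{\,2} \;=\; 3!\,\|T\|_F^{2},
\]
which is the claim. There is no real obstacle here: the only thing to be careful about is the counting of how many ordered triples $(i',j',k')$ share a given occupancy vector $\gamma$, and the cancellation of $\gamma!$ between the orthogonality factor and the multinomial count. If one prefers to avoid the combinatorial counting, an equivalent route is to use the identity \eqref{Symm} to write $\la T,\H_3(Z)\ra = \la T,Z^{\otimes 3}\ra - 3\la T,I_d\ra^{\T}Z$, expand the square, and evaluate the Gaussian moments $\E[Z_{i}Z_{j}Z_{k}Z_{i'}Z_{j'}Z_{k'}]$ via Isserlis' theorem; the cross-terms cancel and the $Z^{\otimes 6}$ contribution produces $6\|T\|_F^{2}+9\|\la T,I_d\ra\|^{2}$, from which subtracting $9\|\la T,I_d\ra\|^{2}$ leaves the desired $6\|T\|_F^{2}$. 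I will present the Hermite-orthogonality proof, as it is the shortest.
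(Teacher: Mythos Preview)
Your proof is correct and essentially identical to the paper's: both use the Hermite orthogonality relation $\E[H_\gamma(Z)H_{\gamma'}(Z)]=\delta_{\gamma,\gamma'}\gamma!$ together with the multinomial count $3!/\gamma!$ of ordered triples with occupancy vector $\gamma$, and the symmetry of $T$. The only cosmetic difference is that the paper first groups the inner product as $\la T,\H_3(Z)\ra=\sum_{|\gamma|=3}\frac{3!}{\gamma!}T_\gamma H_\gamma(Z)$ before squaring, whereas you expand directly over triples and then collapse; the combinatorics and cancellations are the same.
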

\begin{proof}
Note that given a $\gamma$ with $|\gamma|=3$, there are $3!/\gamma!$ tuples $(i,j,k)\in[d]^3$ for which $\gamma(ijk)=\gamma$. We let $T_{\gamma}$ denote $T_{ijk}$ for any $ijk$ for which $\gamma(ijk)=\gamma$. This is well-defined since $T$ is symmetric. Now, since both $T$ and $\H_3$ are symmetric tensors, we can write the inner product between $T$ and $\H_3$ by grouping together equal terms. In other words, for all $3!/\gamma!$ tuples $i,j,k$ such that $\gamma(ijk)=\gamma$, we have $T_{ijk}\H_3^{ijk}=T_{\gamma}H_{\gamma}$. Therefore,
$$\la T, \H_3(Z)\ra = \sum_{|\gamma|=3}\frac{3!}{\gamma!}T_\gamma H_\gamma(Z).$$ Using this formula, we get
\beqs
\E[\la T, \H_3(Z)\ra^2] &=\sum_{|\gamma|=3,|\gamma'|=3}^d\frac{3!}{\gamma!}\frac{3!}{\gamma'!}T_{\gamma}T_{\gamma'}\E[H_\gamma(Z)H_{\gamma'}(Z)]\\
&=\sum_{|\gamma|=3}^d\frac{3!}{\gamma!}\frac{3!}{\gamma!}T_{\gamma}^2\gamma!=3!\sum_{|\gamma|=3}^d\frac{3!}{\gamma!}T_{\gamma}^2 \\
&= 3!\sum_{i,j,k=1}^dT_{ijk}^2=3!\|T\|_F^2.
\eeqs
\end{proof}
\begin{lemma}\label{hypercon}
If $T$ is a symmetric $d\times d\times d$ tensor, then
\begin{align}
\|\la T, \H_3\ra\|_{k} &\leq (k-1)^{3/2} \|\la T,\H_3\ra\|_2.\label{eq:hypercon1}\\
\end{align}
\end{lemma}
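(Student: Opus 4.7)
The plan is to invoke Nelson's hypercontractive inequality for the Ornstein--Uhlenbeck (OU) semigroup on Gaussian space, exploiting the fact that Hermite polynomials are eigenfunctions. Recall that the OU semigroup $\{P_t\}_{t\geq 0}$ acts on $L^2(\gamma)$ and is diagonalized by the multivariate Hermite polynomials: for every multi-index $\gamma\in\mathbb N_{\geq0}^d$,
\[
P_t H_\gamma = e^{-|\gamma|t}\,H_\gamma.
\]
Nelson's inequality (see e.g. Bakry--Gentil--Ledoux, Theorem 5.2.3) states that for $1<p\le q<\infty$ and $t\geq 0$ satisfying $e^{2t}\geq (q-1)/(p-1)$,
\[
\|P_t f\|_q \leq \|f\|_p \qquad \forall f\in L^p(\gamma).
\]

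First I would specialize to $p=2$ and $q=k\geq 2$. The condition $e^{2t}\geq k-1$ is tight when $e^{t}=\sqrt{k-1}$; fix this choice of $t$. Second, apply the inequality to $f=\langle T,\H_3\rangle$. Since $\H_3$ is built entirely out of Hermite polynomials $H_\gamma$ with $|\gamma|=3$ (as recalled in Appendix~\ref{app:hermite-primer}), $f$ is a linear combination of such $H_\gamma$, and by eigenfunction expansion
\[
P_t f = e^{-3t} f.
\]
Plugging this into Nelson's inequality with our choice of $t$ gives
\[
e^{-3t}\,\|f\|_k = \|P_t f\|_k \leq \|f\|_2,
\]
which rearranges to $\|f\|_k \leq e^{3t}\|f\|_2 = (k-1)^{3/2}\|f\|_2$, the desired bound.

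There is no real obstacle; the only thing to double-check is the degree-3 homogeneity of $\langle T,\H_3\rangle$ as a Hermite polynomial. This is immediate from the representation $\langle T,\H_3(x)\rangle = \sum_{|\gamma|=3}(3!/\gamma!)\,T_\gamma H_\gamma(x)$ used in the proof of Lemma~\ref{lma:TH32}, and this representation makes the eigenfunction step transparent. For $k=2$ the statement is trivial (the constant is $1$), so we may assume $k>2$ in applying Nelson with strict inequality $p<q$.
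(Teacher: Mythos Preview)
Your proposal is correct and is essentially identical to the paper's own proof: both invoke hypercontractivity of the Ornstein--Uhlenbeck semigroup (Nelson's inequality with $p=2$), use that $\langle T,\H_3\rangle$ lies in the third Wiener chaos so $P_t f=e^{-3t}f$, and choose $t$ with $e^{2t}=k-1$ to conclude $\|f\|_k\le (k-1)^{3/2}\|f\|_2$.
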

\begin{proof}
Let $\mathcal L$ be the generator for the $d$-dimensional Ornstein-Uhlenbeck process. Then it is known that $(\mathcal LH_\gamma)(x)=-|\gamma|H_\gamma(x)$, i.e. the $H_\gamma$ are eigenfunctions of $\mathcal L$ with corresponding eigenvalues $-|\gamma|$. Hence, $P_t\la T,\H_3\ra= e^{-3t}\la T, \H_3\ra$ for any tensor $T$, where $P_t=e^{t\mathcal L}$. This is because $|\gamma|=3$ for all $H_\gamma$ making up the tensor $\H_3$. Now, by hypercontractivity (see e.g. Chapter 5.2.2 of~\cite{bakry2014analysis}), we have
$$e^{-3t}\|\la T, H_3\ra\|_{q(t)}=\|P_t\la T,\H_3\ra\|_{q(t)}\leq \|\la T,\H_3\ra\|_2,$$ where $q(t)=1+e^{2t}$. Setting $k=q(t)$ we get $e^{3t}=(k-1)^{3/2}$, so that
\beq
\|\la T, \H_3\ra\|_{2k} \leq (k-1)^{3/2} \|\la T,\H_3\ra\|_2,
\eeq as desired.
\end{proof}
\section{Auxiliary Results}\label{app:aux}

\begin{lemma}\label{aux:gamma}Let $a,b>0$. Then
\beqs
I:=\frac{1}{(2\pi)^{d/2}}\int_{\|x\|\geq a\sqrt d}^\infty e^{-b\sqrt d\|x\|}dx\leq \frac de\e\l(\l[\log\l(\frac{e}{b}\r)- \frac12ab\r]d\r)
\eeqs In particular, if $a \geq \frac{4}{b}\log\frac{e}{b}$ then 
$$I\leq \frac de e^{-abd/4}.$$
\end{lemma}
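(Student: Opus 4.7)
The plan is to pass to polar coordinates and bound the resulting one-dimensional radial integral by a sup-type estimate. Using that the integrand is radial and that the unit sphere in $\R^{d}$ has surface area $\omega_{d-1}=2\pi^{d/2}/\Gamma(d/2)$, I would write
\begin{equation*}
I \;=\; \frac{\omega_{d-1}}{(2\pi)^{d/2}}\int_{a\sqrt d}^{\infty} r^{d-1}e^{-b\sqrt d\,r}\,dr \;=\; \frac{2^{1-d/2}}{\Gamma(d/2)}\int_{a\sqrt d}^{\infty} r^{d-1}e^{-b\sqrt d\,r}\,dr.
\end{equation*}
Then I would split $e^{-b\sqrt d\,r}=e^{-b\sqrt d\,r/2}\cdot e^{-b\sqrt d\,r/2}$ and bound $r^{d-1}e^{-b\sqrt d\,r/2}$ uniformly in $r\geq 0$ by its pointwise maximum, attained at $r^{*}=2(d-1)/(b\sqrt d)$ with value $(2(d-1)/(eb\sqrt d))^{d-1}\leq(2\sqrt d/(eb))^{d-1}$. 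The residual factor integrates to $(2/(b\sqrt d))e^{-abd/2}$ on $[a\sqrt d,\infty)$, so after collecting constants
\begin{equation*}
I \;\leq\; \frac{2^{d/2+1}\,d^{d/2-1}}{\Gamma(d/2)\,e^{d-1}\,b^{d}}\;e^{-abd/2}.
\end{equation*}

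The hard part will be verifying the purely dimensional inequality $2^{d/2+1}\,d^{d/2-2}\leq\Gamma(d/2)\,e^{2(d-1)}$, which is exactly what is needed to bring the prefactor above into the form $(d/e)(e/b)^{d}$ promised by the lemma. For $d\geq 3$ I would invoke Stirling's lower bound $\Gamma(x+1)\geq\sqrt{2\pi x}(x/e)^{x}$ at $x=d/2-1$; after simplification, the claim reduces to a quantity that decays at rate $(2\sqrt 3/e^{3/2})^{d}$ being dominated by one that grows at worst polynomially in $d$, which holds with exponential slack since $2\sqrt 3/e^{3/2}<1$. The case $d=2$ reduces to the explicit inequality $4\leq e^{2}$. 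For $d=1$ the sup-estimate above is too lossy (it would give a leading constant $2\sqrt 2/\sqrt\pi>1$), so I would instead evaluate the integral directly as $I=\sqrt{2/\pi}\,e^{-ab}/b$ and verify the claim via the trivial bound $\sqrt{2/\pi}<1\leq e^{ab/2}$.

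Finally, the ``in particular'' statement follows at once by substituting the hypothesis into the first bound: when $a\geq(4/b)\log(e/b)$, we have $ab/4\geq\log(e/b)$, hence $\log(e/b)-ab/2\leq -ab/4$, so the first bound yields $I\leq(d/e)\,e^{-abd/4}$.
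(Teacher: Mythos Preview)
Your proof is correct. Both you and the paper begin with polar coordinates and reduce to bounding $\int_{a\sqrt d}^\infty r^{d-1}e^{-b\sqrt d\,r}\,dr$, but the radial integral is handled differently. The paper rescales to an incomplete gamma integral $\int_{abd}^\infty u^{d-1}e^{-u}\,du$ and bounds it by a Chernoff-type tail inequality for the $\Gamma(d,1)$ distribution (stated as a separate auxiliary lemma), combined with the crude estimate $\Gamma(d/2)\geq(d/2e)^{d/2-1}$ for the sphere constant. You instead split $e^{-b\sqrt d\,r}$ into two equal factors, bound $r^{d-1}e^{-b\sqrt d\,r/2}$ by its maximum over $r\geq 0$, and integrate the remaining exponential explicitly. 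Your route is self-contained---no auxiliary Chernoff lemma is needed---but pays for this with the small-$d$ case analysis and the Stirling verification of the dimensional inequality; the paper's argument trades that for the external tail bound. Both methods deliver the same exponential rate $e^{-abd/2}$ and differ only in how the polynomial-in-$d$ prefactor is controlled. (A minor point: for $d=2$ your inequality actually reads $2\leq e^2$ rather than $4\leq e^2$, but this is immaterial.)
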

\begin{proof}
Switching to polar coordinates and then changing variables, we have
\beqs\label{ISd}
I&=\frac{S_{d-1}}{(2\pi)^{d/2}}\int_{a\sqrt d}^\infty u^{d-1}e^{-b\sqrt du}du= \frac{S_{d-1}}{(2\pi)^{d/2}(b\sqrt d)^{d}}\int_{abd}^\infty u^{d-1}e^{-u}du,
\eeqs where $S_{d-1}$ is the surface area of the unit sphere. Now, we have 
$$ \frac{S_{d-1}}{(2\pi)^{d/2}} = \frac{2\pi^{d/2}}{\Gamma(d/2)(2\pi)^{d/2}}\leq 2\frac{(2e/d)^{d/2-1}}{2^{d/2}}=\l(\frac ed\r)^{\frac d2-1},$$
using that $\Gamma(d/2)\geq (d/2e)^{d/2-1}$. To bound the integral in~\eqref{ISd}, we use Lemma~\ref{gamma} with $\lambda = abd$ and $c=d$. Combining the resulting bound with the above bound on $S_{d-1}/(2\pi)^{d/2}$, we get
\beqs
I &\leq \l(\frac ed\r)^{\frac d2-1}(b\sqrt d)^{-d}e^{-tabd}\l(\frac{d}{1-t}\r)^d= \frac de\l(\frac{\sqrt e}{b(1-t)}\r)^de^{-tabd}
\eeqs Taking $t=1/2$ and noting that $2\sqrt e\leq e$ concludes the proof.
\end{proof}
\begin{lemma}\label{gamma}For all $\lambda,c>0$ and $t\in(0,1)$ it holds
$$\int_\lambda^\infty u^{c-1}e^{-u}du\leq  e^{-\lambda t}\l(\frac{c}{1-t}\r)^{c}.$$
\end{lemma}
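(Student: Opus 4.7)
The plan is to split the integrand so that the exponential decay at $\lambda$ is peeled off first, leaving what remains as a standard Gamma integral. Concretely, I would write $e^{-u}=e^{-tu}\,e^{-(1-t)u}$ and use monotonicity of the first factor on $[\lambda,\infty)$ to get the pointwise bound $e^{-tu}\le e^{-t\lambda}$. Pulling the constant $e^{-t\lambda}$ out and then enlarging the remaining domain from $[\lambda,\infty)$ to $[0,\infty)$ (legitimate since the integrand is nonnegative) yields
\[
\int_{\lambda}^{\infty}u^{c-1}e^{-u}\,du\;\le\;e^{-t\lambda}\int_{0}^{\infty}u^{c-1}e^{-(1-t)u}\,du\;=\;\frac{e^{-t\lambda}\,\Gamma(c)}{(1-t)^{c}},
\]
where the final equality comes from the substitution $v=(1-t)u$.

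The only remaining step is to replace $\Gamma(c)$ by $c^{c}$. This is a classical bound on $c\ge 1$, which is the relevant range in the application: in Lemma~\ref{aux:gamma} one takes $c=d$. At $c=1$ both sides equal $1$, and differentiating the logarithms reduces the inequality to the standard digamma estimate $\psi(c)\le \log c + 1$ on $[1,\infty)$; alternatively, for integer $c$ one can invoke Robbins's form of Stirling. Inserting $\Gamma(c)\le c^{c}$ into the display above delivers the claimed bound $e^{-t\lambda}\bigl(c/(1-t)\bigr)^{c}$.

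I expect no real obstacle: the splitting and domain extension are bookkeeping, and the only step with any content is the Gamma bound. The only subtlety worth flagging is that the replacement $\Gamma(c)\le c^{c}$ is tight in behaviour only for $c\ge 1$, so if one wanted the lemma in full generality for small $c$, one would have to keep the explicit $\Gamma(c)$ on the right-hand side rather than $c^{c}$; this is not an issue for the use in Lemma~\ref{aux:gamma} since $c=d$ there.
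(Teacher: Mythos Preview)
Your argument is correct and is essentially the paper's proof unpacked: the paper phrases the same computation as a Chernoff bound $\mathbb{P}(X\ge\lambda)\le e^{-\lambda t}\E[e^{tX}]=e^{-\lambda t}(1-t)^{-c}$ for $X\sim\Gamma(c,1)$, then multiplies by $\Gamma(c)$ and uses $\Gamma(c)\le c^{c}$. Your observation that $\Gamma(c)\le c^{c}$ only holds for $c\ge 1$ is well taken (indeed the lemma as stated fails for small $c$), and as you note this is harmless since the only application has $c=d\ge 1$.
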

\begin{proof}
Let $X$ be a random variable with gamma distribution $\Gamma(c, 1)$. Then the desired integral is given by $\Gamma(c)\mathbb P(X\geq\lambda)$. 
Now, the mgf of $\Gamma(c,1)$ is $\E[e^{Xt}]=(1-t)^{-c}$, defined for $t<1$. Hence for all $t\in(0,1)$ we have
\beq\label{mgf}
\mathbb P(X\geq\lambda)\leq e^{-\lambda t}(1-t)^{-c}.\eeq  Multiplying both sides by $\Gamma(c)$ and using that $\Gamma(c)\leq c^c$ gives the desired bound.
\end{proof}
\begin{lemma}\label{TATF}Let $S$ be a symmetric $d\times d\times d$ tensor and $A$ be a symmetric $d\times d$ matrix. Then
\beq
\|\la S, A\ra\|\leq d\|A\|\|S\|,\qquad \|S\|_F \leq d\|S\|,
\eeq where $\|\cdot\|_F$ is the Frobenius norm.
\end{lemma}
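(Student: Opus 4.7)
The plan is to prove both inequalities by reducing to the variational characterization of the symmetric tensor operator norm, i.e. $\|S\|=\sup_{\|u_1\|=\|u_2\|=\|u_3\|=1}\la S,u_1\otimes u_2\otimes u_3\ra$, which is available since $S$ is symmetric. Neither step is conceptually hard; the key is just to decompose the ambient matrix or tensor in a way that lets us probe $S$ one unit vector at a time.

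For the first inequality, I would note that $\|\la S, A\ra\|=\sup_{\|u\|=1}u^\T\la S, A\ra=\sup_{\|u\|=1}\la S, u\otimes A\ra$. Then I would diagonalize the symmetric matrix $A=\sum_{i=1}^{d}\lambda_i v_iv_i^\T$ with orthonormal $v_i$ and $|\lambda_i|\leq\|A\|$, so that
\begin{equation*}
\la S, u\otimes A\ra=\sum_{i=1}^{d}\lambda_i\la S, u\otimes v_i\otimes v_i\ra.
\end{equation*}
Each summand is bounded by $|\lambda_i|\|S\|\leq\|A\|\|S\|$ by the variational characterization applied to the unit vectors $u, v_i, v_i$. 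Summing over the $d$ eigen-directions and taking the supremum over $\|u\|=1$ gives $\|\la S, A\ra\|\leq d\|A\|\|S\|$.

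For the second inequality, I would slice $S$ along its first index: writing $S^{(i)}\in\R^{d\times d}$ for the matrix with entries $S^{(i)}_{jk}=S_{ijk}$, one has $\|S\|_F^2=\sum_{i=1}^{d}\|S^{(i)}\|_F^2$. For each slice, the standard matrix bound $\|M\|_F^2\leq\mathrm{rank}(M)\|M\|_{\op}^2\leq d\|M\|_{\op}^2$ gives $\|S^{(i)}\|_F^2\leq d\|S^{(i)}\|_{\op}^2$, and the operator norm of the slice satisfies
\begin{equation*}
\|S^{(i)}\|_{\op}=\sup_{\|v\|=\|w\|=1}\la S, e_i\otimes v\otimes w\ra\leq\|S\|,
\end{equation*}
again by the variational characterization. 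Assembling, $\|S\|_F^2\leq d\sum_{i=1}^{d}\|S^{(i)}\|_{\op}^2\leq d^2\|S\|^2$, and taking square roots yields $\|S\|_F\leq d\|S\|$.

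There is no real obstacle here; the only thing to be careful about is invoking the non-symmetric form of the operator norm (equality of $\sup_{\|u\|=1}\la S, u^{\otimes 3}\ra$ with $\sup_{\|u_1\|=\|u_2\|=\|u_3\|=1}\la S, u_1\otimes u_2\otimes u_3\ra$), which the paper has already recorded for symmetric tensors via Theorem 2.1 of \cite{symmtens}. Both factors of $d$ in the bounds are sharp up to constants and arise from the dimensions along which we must sum after reducing to rank-one probes.
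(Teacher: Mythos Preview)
Your proof is correct and follows essentially the same approach as the paper: eigendecompose $A$ and bound each rank-one term by $\|S\|$ for the first inequality, and slice $S$ along one index then use $\|M\|_F^2\leq d\|M\|_{\op}^2$ together with $\|S^{(i)}\|_{\op}\leq\|S\|$ for the second. The arguments match the paper's almost line for line, with only cosmetic differences in notation.
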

\begin{proof}
Note that $\|\la S,  A\ra\| = \sup_{\|u\|=1}\la S, A\ra^\T u = \sup_{\|u\|=1}\la S, A\otimes u\ra$. Let $A=\sum_{i=1}^d\lambda_iv_iv_i^\T $ be the eigendecomposition of $A$, where the $v_i$ are unit vectors. Then
\beq\label{TA}\|\la S, A\ra\| = \sup_{\|u\|=1}\la S, A\otimes u\ra\leq\sum_{i=1}^d|\lambda_i|\l|\la S, v_i\otimes v_i\otimes u\ra\r|\leq d\|A\|\|S\|.\eeq For the second inequality, write $S=[A_1,\dots, A_d]$ where each $A_i\in\R^{d\times d}$. That is, $S_{ijk} = (A_i)_{jk}$. Note that $\|A_i\|\leq \|S\|$ for each $i=1,\dots,d$. We now have
\beq\label{TF}\|S\|_F^2=\sum_{i=1}^d\mathrm{Tr}(A_i^\T A_i)\leq\sum_{i=1}^dd\|A_i\|^2\leq d^2\|S\|^2.\eeq
\end{proof}
\bibliographystyle{plain} 
\bibliography{bibliogr_Lap}  

\end{document}